\documentclass[12pt]{amsart}
\usepackage{amsmath,amssymb,amsthm,amscd}

\usepackage{graphicx}

\hoffset=-26mm \frenchspacing \emergencystretch=5pt \tolerance=400
\unitlength=1mm \textwidth=17cm

\newtheorem{formula}{}[section]
\newtheorem{proposition}[formula]{Proposition}
\newtheorem{corollary}[formula]{Corollary}
\newtheorem{lemma}[formula]{Lemma}
\newtheorem{theorem}[formula]{Theorem}

\newtheorem{question}{Question}

\theoremstyle{definition}
\newtheorem{definition}[formula]{Definition}
\newtheorem{construction}[formula]{Construction}
\newtheorem{example}[formula]{Example}

\theoremstyle{remark}
\newtheorem{remark}[formula]{Remark}


\begin{document}

\title[Hyperelliptic four-manifolds defined by vector-colorings of simple polytopes]{Hyperelliptic four-manifolds defined by vector-colorings of simple polytopes}
\author[Nikolai Erokhovets]{Nikolai Erokhovets}
\address{National Research University Higher School of Economics, Moscow\& Department of Mechanics and Mathematics, Lomonosov Moscow State University}
\email{erochovetsn@hotmail.com}

\def\sgn{\mathrm{sgn}\,}
\def\bideg{\mathrm{bideg}\,}
\def\tdeg{\mathrm{tdeg}\,}
\def\sdeg{\mathrm{sdeg}\,}
\def\grad{\mathrm{grad}\,}
\def\ch{\mathrm{ch}\,}
\def\sh{\mathrm{sh}\,}
\def\th{\mathrm{th}\,}

\def\mod{\mathrm{mod}\,}
\def\In{\mathrm{In}\,}
\def\Im{\mathrm{Im}\,}
\def\Ker{\mathrm{Ker}\,}
\def\Hom{\mathrm{Hom}\,}
\def\Tor{\mathrm{Tor}\,}
\def\rk{\mathrm{rk}\,}
\def\codim{\mathrm{codim}\,}

\def\ko{{\mathbf k}}
\def\sk{\mathrm{sk}\,}
\def\RC{\mathrm{RC}\,}
\def\gr{\mathrm{gr}\,}

\def\R{{\mathbb R}}
\def\C{{\mathbb C}}
\def\Z{{\mathbb Z}}
\def\A{{\mathcal A}}
\def\B{{\mathcal B}}
\def\K{{\mathcal K}}
\def\M{{\mathcal M}}
\def\N{{\mathcal N}}
\def\E{{\mathcal E}}
\def\G{{\mathcal G}}
\def\D{{\mathcal D}}
\def\F{{\mathcal F}}
\def\L{{\mathcal L}}
\def\V{{\mathcal V}}
\def\H{{\mathcal H}}

\thanks{The study has been funded within the framework of the HSE University Basic Research Program}



\subjclass[2010]{
57S12, 
57S17, 
57S25, 
52B05, 
52B70, 
57R18, 
57R91
}

\keywords{Non-free action of a finite group, convex polytope, real moment-angle manifold, hyperelliptic manifold,
rational homology sphere, Hamiltonian subcomplex}

\begin{abstract}
Toric topology assigns to each simple  convex $n$-polytope $P$ with $m$ facets an~$n$-dimensional real moment angle 
manifold $\mathbb R\mathcal{Z}_P$ with a canonical action of $\mathbb Z_2^m=(\mathbb Z/2\mathbb Z)^m$.
We consider  (non-necessarily free) actions of subgroups $H\subset \mathbb Z_2^m$ 
on  $\mathbb R\mathcal{Z}_P$.  
The orbit space $N(P,H)=\mathbb R\mathcal{Z}_P/H$ has an action of~$\mathbb Z_2^m/H$.
For general $n$ we introduce the notion of a~Hamiltonian $\mathcal{C}(n,k)$-subcomplex in the boundary of 
an $n$-polytope $P$ generalizing the notions of a Hamiltonian cycle  ($k=2$), 
Hamiltonian  theta-subgraph ($k=3$) and Hamiltonian $K_4$-subgraph ($k=4)$ in the $1$-skeleton of a $3$-polytope.
Each $\mathcal{C}(n,k)$-subcomplex $C\subset \partial P$ corresponds to~a~subgroup  
$H_C\subset\mathbb Z_2^m$ such that $N(P,H_C)\simeq S^n$. We prove that in~dimensions $n\leqslant 4$ this
correspondence is~a~bijection. Any~subgroup $H\subset \mathbb Z_2^m$ defines a complex $\mathcal{C}(P,H)\subset \partial P$. 
We prove that each Hamiltonian $\mathcal{C}(n,k)$-subcomplex $C\subset \mathcal{C}(P,H)$ inducing $H$
corresponds to~a~hyperelliptic involution $\tau_C\in\mathbb Z_2^m/H$ on~the~manifold $N(P,H)$ 
(that is, an~involution with the orbit space homeomorphic to $S^n$) 
and in dimensions $n\leqslant 4$ this correspondence is a bijection.
We prove that for the geometries $\mathbb X= \mathbb S^4$, $\mathbb S^3\times\mathbb R$, $\mathbb S^2\times \mathbb S^2$, 
$\mathbb S^2\times \mathbb R^2$, $\mathbb S^2\times \mathbb L^2$, and $\mathbb L^2\times \mathbb L^2$
there exists a compact right-angled $4$-polytope $P$ with a free action of $H$
such that the~geometric manifold  $N(P,H)$ has a~hyperelliptic involution in~$\mathbb Z_2^m/H$, 
and for $\mathbb X=\mathbb R^4$, $\mathbb L^4$, $\mathbb L^3\times \mathbb R$ and $\mathbb L^2\times \mathbb R^2$ 
there are no such polytopes.
\end{abstract}
\maketitle
\tableofcontents
\setcounter{section}{0}
\section{Introduction}

The paper is motivated by the following question asked by A.D.~Mednykh:
{\it for which four-dimensional geometries there is a~geometric hyperelliptic manifold obtained by a generalization of 
the $3$-dimensional construction by A.D.~Mednykh and A.Yu.~Vesnin \cite{M90, VM99M, VM99S2} obtained in~\cite{E24}.} 
We give a partial answer to this question. 

For an introduction to the polytope theory we refer to \cite{Gb03, Z95}. We will use definitions and notations
from \cite{E24}, but for convenience try to write them explicitly. In this paper for~topological spaces 
the~notation $X\simeq Y$ means that $X$ and $Y$ are homeomorphic, and for~complexes $C_1\simeq C_2$ 
means that $C_1$ and $C_2$ are equivalent.

Toric topology (see \cite{BP15,DJ91}) assigns to each simple convex $n$-polytope $P$ with $m$ facets  $F_1,\dots, F_m$
the~{\it real moment-angle manifold} 
$$
\mathbb R\mathcal{Z}_P=P\times \mathbb Z_2^m/\sim, 
\text{ where }(p,a)\sim(q,b)\text{ if and only if }p=q \text{ and }a-b\in\langle \boldsymbol{e}_i\colon p\in F_i\rangle,
$$ 
and $\boldsymbol{e}_1,\dots, \boldsymbol{e}_m$ is the standard basis in $\mathbb Z_2^m$.
$\mathbb R\mathcal{Z}_P$ is a smooth manifold with a~smooth action of~$\mathbb Z_2^m$ 
such that $\mathbb R\mathcal{Z}_P/\mathbb Z_2^m=P$.

We consider  (non-necessarily free) actions of subgroups $H\subset \mathbb Z_2^m$ on  $\mathbb R\mathcal{Z}_P$.  
Each subgroup $H$ of rank $m-r$ can be described as~a~kernel of~an~epimorphism $\mathbb Z_2^m\to\mathbb Z_2^r$
mapping the basis vector $\boldsymbol{e}_i$ to $\Lambda_i\in\mathbb Z_2^r$.
\begin{definition}
We call a mapping $\Lambda\colon \{F_1,\dots,F_m\}\to \mathbb Z_2^r$, $F_i\to \Lambda_i$,
such that $\langle \Lambda_1,\dots,\Lambda_m\rangle=\mathbb Z_2^r$ a {\it vector-coloring of $P$ of rank $r$}.
The subgroup  $H$ is~uniquely defined by $\Lambda$, while $\Lambda$ is~defined up~to
a~linear automorphism of $\mathbb Z_2^r$. A vector-coloring is {\it linearly independent} if for each 
face $F_{i_1}\cap \dots\cap F_{i_k}\ne\varnothing$ the vectors $\Lambda_{i_1}$, $\dots$, $\Lambda_{i_k}$
are linearly independent. Each vector-coloring $\Lambda$ of~rank~$r$ defines the orbit space
$N(P,\Lambda)=\mathbb R\mathcal{Z}_P/H$ with an action of $\mathbb Z_2^r\simeq \mathbb Z_2^m/H$.
\end{definition}
It can be shown that the action of $H$ is free if and only if $\Lambda$ is linearly independent. 
In this case $N(P,\Lambda)$ is a smooth manifold. In particular, 
linearly independent vector-colorings of rank $n$ 
correspond to {\it small covers} $N(P,\Lambda)$ introduced in \cite{DJ91}. 

Linearly independent-vector colorings also arise in~the following construction of geometric
manifolds developed in~the~papers by A.Yu.~Vesnin and A.D.~Mednykh in~\cite{MV86, V87, M90, VM99M, V17}.
\begin{construction}\label{MVconstr}
Let $P$ be a compact right-angled polytope in some geometry $\mathbb X$ (in this paper we will 
use geometries of the form $\mathbb X= \mathbb X_1\times\dots\times \mathbb X_k$, where $\mathbb X_i\in \{\mathbb S^{n_i},\mathbb R^{n_i},\mathbb L^{n_i}\}$ and by~a~right-angled polytope we mean the product of the corresponding right-angled polytopes). 

The polytope $P$ corresponds to a right-angled Coxeter group 
$$
\langle \rho_1,\dots,\rho_m\rangle/(\rho_1^2,\dots,\rho_m^2, \rho_i\rho_j=\rho_j\rho_i,
\text{ if }F_i\cap F_j\ne\varnothing).
$$
This group is isomorphic to a subgroup $G(P)$ of isometries of~$\mathbb X$ generated by~reflections in~facets of~$P$,
where $\rho_i$ corresponds to~the~reflection in~$F_i$. The group $G(P)$ acts on~$\mathbb X$ discretely, 
$P$~is~a~fundamental domain and the orbit space, and for any point of~$P$ its stabilizer is generated by reflections in facets containing this point (see \cite[Theorem 1.2 in Chapter 5]{VS88}).

A linearly independent vector coloring $\Lambda$ of rank $r$ defines 
the~epimorphism $\varphi_{\Lambda}\colon G(P)\to \mathbb Z_2^r$
by the rule $\varphi_{\Lambda}(\rho_i)=\Lambda(F_i)$.  
The subgroup ${\rm Ker\,} \varphi_{\Lambda}$ acts freely on $\mathbb X$, 
and the quotient space $\mathbb X/{\rm Ker\,} \varphi_{\Lambda}$ is~a~closed manifold with the~geometric structure modelled on $\mathbb X$. It is easy to 
see that $N(P,\Lambda)\simeq \mathbb X/{\rm Ker\,} \varphi_{\Lambda}$ (see more details in \cite[Construction 4.11]{E22M}).
\end{construction}

For a~general (not necessarily linearly independent) vector-coloring $\Lambda$
is was proved in \cite[Theorem 5.1]{E24} (jointly with D.V.~Gugnin) that $N(P,\Lambda)$ is a closed
topological manifold if and only if for each face $F_{i_1}\cap \dots\cap F_{i_k}\ne\varnothing$ {\bf different}
vectors among $\{\Lambda_{i_1}, \dots, \Lambda_{i_k}\}$ are linearly independent.
(This result also can be extracted from the general results 
 by~M.A.~Mikhailova and C.~Lange \cite{M85,LM16,L19}.) This gives rise to the following definition. 
\begin{definition}\cite[Definition 2.1]{E24}
A coloring $c$ of a simple polytope $P$ in~$l$ colors is~a~surjective mapping from 
$\{F_1,\dots, F_m\}$ to~a~finite set of $l$ elements. For convenience we identify the~set with 
$[l]=\{1,\dots, l\}$, but in this paper often this is a subset in $\mathbb Z_2^r$. 
For any coloring $c$ define a complex $\mathcal{C}(P,c)\subset \partial P$ as~follows. 
Its ``facets'' are connected components of~unions of~all~the~facets of~$P$ of the same color, 
``$k$-faces'' are connected components of~intersections of~$(n-k)$ different facets. By definition each $k$-face
is a union of $k$-faces of $P$. We choose a~linear order of all the facets $G_1$, $\dots$, $G_M$. Two 
complexes $\mathcal{C}(P,c)$ and $\mathcal{C}(Q,c')$ are  {\it equivalent} (we write $\mathcal{C}(P,c)\simeq \mathcal{C}(Q,c')$) 
if there is~a homeomorphism $P\to Q$ sending facets of $\mathcal{C}(P,c)$ to~facets of~$\mathcal{C}(Q,c')$. 
\end{definition}
Each $k$-face of~$\mathcal{C}(P,c)$ is a~connected orientable $k$-manifold, perhaps with a~boundary
(see Lemma~\ref{lem:or}). The~closed manifold $N(P,\Lambda)$ is orientable 
if and only if for some change of coordinates in $\mathbb Z_2^r$ we have $\Lambda_i=(1,\lambda_i)$ 
(see~\cite[Corollary 1.15]{E24}). We call the~mapping $\lambda\colon F_i\to \lambda_i$ an~{\it affine coloring} of~$P$ 
of~rank $(r-1)$.

It can be shown (see \cite[Proposition 2.6]{E24}) that for any coloring $c$ of the simplex  $\Delta^n$ in $k$ colors
the~complex $\mathcal{C}(\Delta^n,c)$ is equivalent to the complex given on 
$$
S^n_{k,\geqslant 0}=\{x_1^2+\dots+x_{n+1}^2=1, x_1\geqslant 0,\dots, x_k\geqslant 0\}
$$ 
by the facets $S^n_{k,\geqslant}\cap \{x_i=0\}$, $i=1,\dots, k$, and (\cite[Corollary 2.11]{E24}) to the complex given on 
$$
B^n_{k-1,\geqslant}=\{(x_1,\dots,x_n)\in\mathbb R^n\colon x_1\geqslant 0,\dots, x_{k-1}\geqslant 0, x_1^2+\dots+x_n^2\leqslant 1\}
$$
by the facets  $B^n_{k-1,\geqslant}\cap\{x_s=0\}$, $s=1,\dots, k-1$, and  $B^n_{k-1,\geqslant}\cap\{x_1^2+\dots+x_n^2=1\}$.
We denote the~equivalence class of these complexes $\mathcal{C}(n,k)$.
For $n=3$ the complex $\mathcal{C}(3,1)$ has a~single facet $\partial P$, the complex 
$\mathcal{C}(3,1)$ corresponds a simple edge-cycle in $\partial P$, $\mathcal{C}(3,2)$ -- to~a~theta-subgraph in~$\partial P$
consisting of two different vertices of $P$ and three disjoint (outside these vertices) simple edge-paths connecting them,
and $\mathcal{C}(3,4)$ corresponds to a~$K_4$-subgraph in~$\partial P$ consisting of $4$ disjoint vertices of $P$ 
pairwise connected by a set of disjoint simple edge-paths. 

It is easy to see that if for a vector coloring of rank $r$ the complex $\mathcal{C}(P,\Lambda)$ 
is equivalent to~$\mathcal{C}(n,r)$, then $N(P,\Lambda)\simeq S^n$  (see  \cite[Construction 5.8]{E24}). 
The converse trivially holds for~$n=2$. It was proved in \cite[Theorem 10.1]{E24} that the converse 
also holds for $n=3$. {\bf The first main result} (Theorem \ref{th:n34}) of~this paper states that this holds for $n=4$. 
Namely, {\it for~a~vector-coloring $\Lambda$ of~rank~$r$ of~a~simple polytope of dimension $n\leqslant 4$ we have 
$N(P,\Lambda)\simeq S^n$ if and only if $\mathcal{C}(P,\Lambda)\simeq \mathcal{C}(n,r)$.} For general $n$
we prove (Lemma \ref{lem:RHDS} and Corollary \ref{cor:NPSn}) that {\it if~$N(P,\Lambda)\simeq S^n$, then for 
each $\omega=\omega_1\sqcup \dots\sqcup\omega_k\subset[M]$ 
the set $\left(\bigcup\limits_{i_1\in \omega_1} G_{i_1}\right)\cap\dots
\cap \left(\bigcup\limits_{i_k\in \omega_k} G_{i_k}\right)$ is~a~ rational homology $(n-k)$-disk if~$\omega\ne[M]$, and
a~rational homology $(n-k)$-sphere if~$\omega=[M]$. 
In~particular,  $M\leqslant n+1$ and each $k$-face of the complex $\mathcal{C}(P,\Lambda)$
is either a $k$-$RHD$ or a $k$-$RHS$ (by definition we assume that for $k<0$
a~$k$-$RHD$ is a point, and a~$k$-$RHS$ is empty).}

On the base of these results we study hyperelliptic involutions in the group $\mathbb Z_2^r$ acting 
on~the~manifold $N(P,\Lambda)$.
\begin{definition}
Following \cite{VM99S1} we call a closed $n$-manifold $M$ {\it hyperelliptic} if it has an~involution $\tau$ such that
$M/\langle\tau\rangle\simeq S^n$. The corresponding involution $\tau$ is~called a~{\it hyperelliptic involution}.
\end{definition}
 
In dimension $n=3$ in the papers \cite{M90, VM99M, VM99S2} A.D.~Mednykh and A.Yu.Vesnin
constructed examples of~hyperelliptic $3$-manifolds
in five of eight Thurston's geometries: $\mathbb S^3$, $\mathbb R^3$, $\mathbb L^3$, $\mathbb S^2\times\mathbb R$, and 
$\mathbb L^2\times\mathbb R$. Each manifold was constructed using a~Hamiltonian cycle, a~Hamiltonian theta-subgraph
or~a~Hamiltonian $K_4$-subgraph in~the $1$-skeleton of a~compact 
right-angled $3$-polytope, where a~subgraph is~{\it Hamiltonian},
if it contains all vertices of the polytope. In \cite[Theorem 11.5]{E24} it was proved that for a~$3$-manifold $N(P,\Lambda)$
over a~$3$-polytope $P$ hyperelliptic involutions in~$\mathbb Z_2^r$ are~in~bijection with 
a~Hamiltonian empty set, cycles, theta-subgraphs and $K_4$-subgraphs in the $1$-skeleton 
of~$\mathcal{C}(P,\Lambda)$ inducing $\Lambda$, where 
for~a~Hamiltonian subgraph $\Gamma$ of the above types in the $1$-skeleton of~$\mathcal{C}(P,c)$
there is~a~canonical way to construct a vector-coloring $\widetilde{\Lambda}_{\Gamma}$ of $P$
{\it induced by $\Gamma$} such that $\mathcal{C}(P,c)=\mathcal{C}(P,\widetilde{\Lambda}_{\Gamma})$. 
Also in \cite[Theorem 11.7]{E24} the classification of vector-colorings with 
more than one hyperelliptic involutions was presented. 

In this paper we generalize \cite[Theorem 11.5]{E24} to dimension $n=4$. For this
purpose we introduce the~notion of a {\it Hamiltonian $\mathcal{C}(n,k)$-subcomplex} $C\subset \mathcal{C}(P,c)$
(Definition~\ref{def:CnkH}) and explain (Construction~\ref{con:LHC}) how to build a canonical induced vector-coloring 
$\widetilde{\Lambda}_C$ of rank $k+1$ 
such that $\mathcal{C}(P,c)=\mathcal{C}(P,\widetilde{\Lambda}_C)$ and $N(P,\widetilde{\Lambda}_C)$
has a canonical hyperelliptic involution in $\mathbb Z_2^{k+1}$. Then for a vector-coloring $\Lambda$
of rank $r$ any Hamiltonian $\mathcal{C}(n,r-1)$-subcomplex $C\subset \mathcal{C}(P,\Lambda)$ inducing $\Lambda$
corresponds to a hyperelliptic involution $\tau_C\in \mathbb Z_2^r$. Our {\bf second main result}  
(Theorem~\ref{th:Hamsc}) is that {\it in dimensions $n\leqslant 4$ this correspondence is a bijection}.

{\bf The third main result} (Theorem~\ref{th:geomn=4} and Remark~\ref{rem:4hi}) of this paper
is~the~answer to~the~following question in~dimension $n=4$.
\begin{question}
For which geometries of the form $\mathbb X= \mathbb X_1\times\dots\times \mathbb X_k$, where $\mathbb X_i\in \{\mathbb S^{n_i},\mathbb R^{n_i},\mathbb L^{n_i}\}$ theres exists a right-angled polytope $P$ and a~linearly 
independent vector-coloring $\Lambda$ of rank $r$ such that the geometric manifold $N(P,\Lambda)$ has a hyperelliptic
involution in~the~group~$\mathbb Z_2^r$ canonically acting on it.
\end{question}
In dimension $n=4$ there are $10$ geometries of this form. We prove that {\it for
the geometries $\mathbb S^4$, $\mathbb S^3\times \mathbb R$, 
$\mathbb S^2\times \mathbb S^2$, $\mathbb S^2\times \mathbb R^2$, $\mathbb S^2\times \mathbb L^2$,
and $\mathbb L^2\times \mathbb L^2$ there is such a pair $(P,\Lambda)$, and for 
the~geometries $\mathbb R^4$, $\mathbb L^4$, $\mathbb L^3\times \mathbb R$, $\mathbb L^2\times\mathbb R^2$
there are no such pairs.  }

The paper is organized as follows.

In Section~\ref{sec:RHS} on the base of results from \cite{CP17,CP20} we give a~criterion
(Lemma~\ref{pr:rhs}) when $N(P,\Lambda)$ is a rational homology $n$-sphere and specify~it 
(Theorem~\ref{th:rhs34}) for~dimensions $n\leqslant 4$.

In Section~\ref{sec:Sn4} we prove our {\bf first main result} (Theorem~\ref{th:n34}) -- 
a criterion when $N(P,\Lambda)\simeq S^n$ for $n\leqslant 4$. For the~proof we~use
the~Armstrong theorem and results from Section~\ref{sec:RHS}. For general $n$
we give a necessary condition (Lemma~\ref{lem:RHDS} and Corollary~\ref{cor:NPSn}).

In Section~\ref{sec:hyp} we study hyperelliptic involutions in $\mathbb Z_2^r$ acting 
on manifolds $N(P,\Lambda)$. 

In Subsection~\ref{ssec:Hams} we introduce 
the notions of a~{\it subcomplex} $\mathcal{C}(P,c_2)\subset \mathcal{C}(P,c_1)$ (Construction~\ref{con:subcom})
and a~{\it Hamiltonian subcomplex} (Definition~\ref{def:Ham}). We give a~criterion  (Lemma~\ref{lem:Hamsc})
when the subcomplex $\mathcal{C}(P,c_2)\subset \mathcal{C}(P,c_1)$  is Hamiltonian. We introduce 
(Definition \ref{def:n-2f}) the~notion of~{\it defining $(n-2)$-faces} of a subcomplex and 
reformulate (Proposition~\ref{cor:n-2f}) the criterion in~terms of~the~defining faces. In 
Proposition~\ref{prop:critdef} we give a criterion when a set of disjoint $(n-2)$-faces is a set of defining
faces of a Hamiltonian subcomplex.

In Subsection~\ref{ssec:bip} we introduce (Definition~\ref{def:adjg}) the~notion of~the
{\it adjacency graph} of a Hamiltonian subcomplex $\mathcal{C}(P,c_2)\subset \mathcal{C}(P,c_1)$.
If this graph is bipartite we call the Hamiltonian subcomplex {\it bipartite} (Definition~\ref{def:bip}).
\begin{itemize}
\item In Construction~\ref{con:indfrom} we show how to~induce the~vector-coloring from 
a~bipartite Hamiltonian subcomplex and in Example~\ref{ex:vci} we explain how a~bipartite Hamiltonian
subcomplex $C$ defines a canonical vector-coloring {\it induced by}~$C$. 
\item  In~Proposition~\ref{propo:taubpH}
we show that for a~closed manifold $N(P,\Lambda)$ there is a bijection 
between the involutions in $\mathbb Z_2^r\setminus\{0\}$ and proper bipartite
Hamiltonian subcomplexes $C\subset \mathcal{C}(P,\Lambda)$ such that $\Lambda$
is~induced from some vector-coloring of $C$. 
\item In Proposition~\ref{prop:Plman} we~give 
a~criterion when for a closed manifold $N(P,\Lambda)$ and an~involution $\tau\in\mathbb Z_2^r$
the orbit space $N(P,\Lambda)/\langle\tau\rangle$ is also a closed manifold. 
\item In Proposition~\ref{prop:indman}
we prove that if the vector-coloring of a bipartite Hamiltonian subcomplex defines a~closed manifold, 
then so does the~induced vector-coloring.
\end{itemize}

In Subsection~\ref{ssec:4c} using the notion
of a bipartite Hamiltonian subcomplex we reformulate (Corollary~\ref{cor:4cp}) the~four color theorem
for a simple $3$-polytope $P$ in terms of a disjoint union of simple edge-cycles containing all the vertices of $P$.

In Subsection~\ref{ssec:faces} we study the structure of faces of the complex $\mathcal{C}(P,c)$ 
(Lemmas \ref{lem:cc} and \ref{lem:intG}, Corollary~\ref{cor:fG}). 
\begin{itemize}
\item In Proposition~\ref{prop:Gk} for a closed
manifold $N(P,\Lambda)$ and a face $G$ of~$\mathcal{C}(P,\Lambda)$ we describe 
the~submanifold in $N(P,\Lambda)$ arising as a preimage of $G$ under the~projection $N(P,\Lambda)\to P$. 
\item In Proposition~\ref{prop:kdefk+1} we describe the face structure of defining faces $M_q$ of a~Hamiltonian subcomplex
$C\subset \mathcal{C}(P,c)$ in~terms of~the~intersections of~$M_q$ with faces of~$C$.
\item  In Proposition~\ref{prop:Gtilde}
and Corollary~\ref{cor:GneMq} we describe $k$-faces of a Hamiltonian subcomplex $C\subset \mathcal{C}(P,c)$ 
as~unions of~$k$-faces of~$\mathcal{C}(P,c)$ separated by $(k-1)$-faces of defining faces.  
\end{itemize}

In Subsection~\ref{ssec:bc} for a closed manifold $N(P,\Lambda)$ and an involution
$\tau\in\mathbb Z_2^r$ such that $N(P,\Lambda)/\langle\tau\rangle$ is also a closed manifold
we describe the mapping $N(P,\Lambda)\to N(P,\Lambda)/\langle\tau\rangle$ as~a~two-sheeted branched covering.

In Subsection~\ref{ssec:cnk} we introduce (Definition~\ref{def:CnkH}) the notions 
of~a~$\mathcal{C}(n,k)$-{\it subcomplex} and~a~{\it Hamiltonian $\mathcal{C}(n,k)$-subcomplex}. 
\begin{itemize}
\item In Corollary~\ref{cor:HCnk} we describe explicitly the subgroup $H(C)\subset\mathbb Z_2^m$ corresponding
to~a~$\mathcal{C}(n,k)$-subcomplex $C\subset \partial P$. 
\item In Corollary~\ref{cor:cnk4} we reformulate 
our {\bf first main result} in terms of  \linebreak$\mathcal{C}(n,k)$-subcomplexes $C\subset \partial P$
of a polytope $P$ of dimension $n\leqslant 4$. 
\item In Proposition~\ref{prop:cnkbh}
we prove that for any facet $\widetilde{G}$ of a~proper Hamiltonian $\mathcal{C}(n,k)$-subcomplex $C\subset\mathcal{C}(P,c)$
the adjacency graph of facets of $\mathcal{C}(P,c)$ lying in $\widetilde{G}$ is a~tree. In~particular, 
any proper Hamiltonian $\mathcal{C}(n,k)$-subcomplex is bipartite.
\item In Construction~\ref{con:LHC} we show that any proper Hamiltonian 
$\mathcal{C}(n,k)$-subcomplex $C\subset\mathcal{C}(P,c)$ induces a canonical vector-coloring 
$\widetilde{\Lambda}_C$~of rank $(k+1)$ such that $\mathcal{C}(P,c)=\mathcal{C}(P,\widetilde{\Lambda}_C)$
and $N(P,\widetilde{\Lambda}_C)$ is a hyperelliptic manifold with a canonical hyperelliptic involution
in $\mathbb Z_2^{k+1}$ corresponding to $C$.
\item  In~Theorem~\ref{th:Hamsc} we formulate our 
{\bf second main result}, namely that for $n\leqslant 4$ and a vector-coloring of rank $r$ 
of~a~simple $n$-polytope $P$ such that $N(P,\Lambda)$ is a closed manifold
hyperelliptic involutions in $\mathbb Z_2^r$ are in bijection with proper Hamiltonian $\mathcal{C}(n,r-1)$-subcomplexes 
$C\subset\mathcal{C}(P,\Lambda)$ inducing $\Lambda$. 
\item In Proposition~\ref{prop:treek} we prove
that for a~proper Hamiltonian $\mathcal{C}(n,r)$-subcomplex $C\subset \mathcal{C}(P,c)$ if~a~$k$-face 
$\widetilde{G}$ of $C$ is not a circle, then the~adjacency graph $\Gamma(\widetilde{G})$ of~$k$-faces 
$G$ of~$\mathcal{C}(P,c)$ lying in~$\widetilde{G}$ is~a~tree.
\item  Using this fact in Proposition~\ref{prop:fkform}
we obtain the formula expressing the number of $k$-faces of~a~complex $\mathcal{C}(P,c)$
in~terms of~the~numbers of~$k$-faces and $(k-1)$-faces of~defining faces 
of~a~proper~Hamiltonian $\mathcal{C}(n,r)$-subcomplex.
\item  In Proposition~\ref{prop:Mq} we prove that for any defining
$(n-2)$-face $M_q$ of a proper Hamiltonian $\mathcal{C}(n,r)$-subcomplex $C\subset\mathcal{C}(P,c)$
its facets can be colored in $(r-1)$ colors in such a way that adjacent facets have different colors.
\end{itemize}

In Subsection~\ref{ssec:geom} we study geometric structures on hyperelliptic manifolds $N(P,\Lambda)$.
Our main tool is Construction~\ref{con:hypgeom} of a~geometric hyperelliptic manifold using 
a proper~Hamiltonian $\mathcal{C}(n,k)$-subcomplex $C$ in the boundary of a compact right-angled polytope $P$
in some geometry $\mathbb X$. This construction is a direct generalization of the construction
from \cite{M90, VM99M, VM99S2} based on a Hamiltonian cycle, theta- or $K_4$-subgraph 
in~the~boundary of~a~right-angled $3$-polytope. 
\begin{itemize}
\item In Corollary~\ref{cor:In} we show that $\mathbb I^n$
does not admit Hamiltonian \linebreak $\mathcal{C}(n,k)$-subcomplexes for~$n\leqslant 4$. In particular, the geometry 
$\mathbb R^n$ does not admit Construction~\ref{con:hypgeom} for $n\geqslant 4$.
\item In~Corollaries~\ref{cor:n-2n-2} 
and~\ref{cor:nn-1} and Remark~\ref{rem:nn} we show that if $P$ admits a~Hamiltonian 
$\mathcal{C}(n,n-1)$-subcomplex, then each defining
face is a polytope with only even-gonal $2$-faces. 
\item In Proposition~\ref{prop:Ln} we show
that if a $4$-polytope $P$ admits a~proper Hamiltonian $\mathcal{C}(4,r)$-subcomplex, then $P$
has at least one $2$-face with three or four edges. In particular, the geometry $\mathbb L^4$
does not admit Construction~\ref{con:hypgeom}, as well as $\mathbb L^n$ for $n\geqslant 5$
since in~the~latter geometries there are no compact right-angled polytopes due to results by~V.V.~Nikulin~\cite{N82} 
(see~\cite{V17}). 
\item In~Proposition~\ref{prop:IX} we show that if a $4$-polytope $P=Q\times I$
admits a~proper Hamiltonian $\mathcal{C}(4,r)$-subcomplex, then one of the defining faces is a triangle.
In~particular, the~geometries $\mathbb L^3\times \mathbb R$ and $\mathbb L^2\times \mathbb R^2$ do not admit
Construction~\ref{con:hypgeom}. 
\item In Proposition~\ref{prop:Sn} we show that $\Delta^n$ admits 
a~Hamiltonian $\mathcal{C}(n,n)$-subcomplex. In~particular, the~geometry $\mathbb S^n$ admits 
Construction~\ref{con:hypgeom} for $n\geqslant 2$. 
\item In Proposition~\ref{prop:Sn-1R} we show that 
the prism $\Delta^{n-1}\times I$ admits Hamiltonian~$\mathcal{C}(n,n)$ and~$\mathcal{C}(n,n+1)$-subcomplexes 
for~$n\geqslant 4$. In particular, the geometry $\mathbb S^{n-1}\times\mathbb R$ admits 
Construction~\ref{con:hypgeom} for $n\geqslant 3$.
\item  In Construction~\ref{con:cutF} we describe
the~operation of~cutting off a~face and show that the~initial polytope defines a~proper Hamiltonian
subcomplex in the resulting polytope. As~a~result in~Corollary~\ref{cor:spsq} we show that 
the~polytope $P^n=\Delta^p\times \Delta^q$
has a~Hamiltonian $\mathcal{C}(n,n+1)$-subcomplex for $p,q\geqslant 1$. In particular, 
the geometry $\mathbb S^p\times\mathbb S^q$, $p,q\geqslant 2$, admits Construction~\ref{con:hypgeom}.  
\item In Proposition~\ref{prop:Dn-1} we show that if  $P^n$ has a~Hamiltonian $\mathcal{C}(n,n+1)$-subcomplex such
that its defining faces $\{M_1,\dots, M_s\}$ do not intersect a facet $F_i\simeq \Delta^{n-1}$,
then for any $k\geqslant 1$ the polytope $\Delta^k\times P$ has a~Hamiltonian 
$\mathcal{C}(n+k,n+k+1)$-subcomplex, and in~Corollary~\ref{cor:pqI} we show that
for any $p,q\geqslant 1$ the~polytope $P^n=\Delta^p\times\Delta^q\times I$
has a~Hamiltonian $C(n, n+1)$-subcomplex. 
In~particular, the~geometries $\mathbb S^p\times \mathbb S^q\times \mathbb R$, $p,q\geqslant 2$, and 
$\mathbb S^p\times \mathbb R^2$, $p\geqslant 2$, admit~Construction~\ref{con:hypgeom}.
\item In Proposition~\ref{prop:nn} we show that if an $n$-polytope $P$ has a~Hamiltonian $\mathcal{C}(n,n+1)$-subcomplex $C$ such
that its defining faces $\{M_1,\dots, M_s\}$ do not intersect an $(n-2)$-face $G\subset P$, then $G\simeq \Delta^{n-2}$,
and $P$ has a~Hamiltonian $\mathcal{C}(n,n)$-subcomplex. Using this fact in Corollary~\ref{cor:pk}
we show that $\Delta^2\times \Delta^{n-2}$ has a Hamiltonian~$\mathcal{C}(n,n)$-subcomplex for~$n\geqslant 3$,
and in Corollary~\ref{cor:SxX2} we show that for $p\geqslant 1$ the polytope $P^n=\Delta^p\times P_k$,
where $P_k$ is~a~$k$-gon, $k\geqslant 3$, admits Hamiltonian $\mathcal{C}(n,n)$- and $\mathcal{C}(n,n+1)$-subcomplexes. 
In~particular, the~geometries $\mathbb S^p\times \mathbb S^2$, $\mathbb S^p\times \mathbb R^2$ and 
$\mathbb S^p\times \mathbb L^2$, $p\geqslant 2$, admit Construction~\ref{con:hypgeom}.

\item In Example~\ref{ex:l2l2} we show that the polytope $P_5\times P_5$ admits  
a~Hamiltonian $\mathcal{C}(4,5)$-subcomplex. In particular,
the geometry $\mathbb L^2\times\mathbb L^2$ admits Construction~\ref{con:hypgeom}. 
\item In Lemma~\ref{lem:RZF} we show that if for a~defining $(n-2)$-face $F$ of~a~Hamiltonian $\mathcal{C}(n,k)$-subcomplex $C\subset \partial P$ different facets of $F$ correspond to different facets of $C$, then 
in~the~branch set of~the~covering $N(P,\widetilde{\Lambda}_C)\to S^n$ the~preimage of~$F$ 
is~a~disjoint union of $2^{k-1-m_F}$ copies of~$\mathbb R\mathcal{Z}_F$, where $m_F$ is the number 
of facets of $F$.
\item Using Lemma~\ref{lem:RZF} and Proposition~\ref{prop:Ham2sh} for each 
Hamiltonian $\mathcal{C}(n,k)$-subcomplex $C\subset \partial P$  arising
in this subsection we describe the branch set of the corresponding $2$-sheeted
branched covering of~$S^n$.
\end{itemize}
We conclude Subsection~\ref{ssec:geom}  with Theorem~\ref{th:geomn=4}, 
which together with Remark~\ref{rem:4hi} form our {\bf third main result}.

\section{Rational homology $4$-spheres among $N(P,\Lambda)$}\label{sec:RHS}
\begin{definition}
We call a topological space $X$ a {\it rational homology $n$-sphere} ($n$-$RHS$), 
if~$X$ is~a~compact closed topological $n$-manifold 
and $H_k(X,\mathbb Q)=H_k(S^n,\mathbb Q)$ for all $k$. By definition $X$ is an~$n$-$RHS$
for $n<0$, if $X=\varnothing$.

We call  $X$ a {\it rational homology $n$-disk} ($n$-$RHD$), if $X$ 
is~a~compact orientable topological $n$-manifold with a boundary 
such that $H_k(X,\mathbb Q)=H_k(D^n,\mathbb Q)$ for all $k$, where 
$D^n=\{(x_1,\dots, x_n)\in\mathbb R^n\colon x_1^2+\dots+x_n^2\leqslant 1\}$ is an $n$-disk.
By definition $X$ is an~$n$-$RHD$
for $n<0$, if~$X={\rm pt}$.

\end{definition} 
\begin{lemma}\label{lem:DS}
If $X$ is an $n$-$RHD$, then $\partial X$ is an $(n-1)$-$RHS$ for $n\geqslant 1$.
\end{lemma}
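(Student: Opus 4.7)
The plan is to deduce the lemma from Lefschetz duality together with the long exact sequence of the pair $(X,\partial X)$, working over $\mathbb Q$ throughout. First I would observe that since $X$ is a compact topological $n$-manifold with boundary, $\partial X$ is automatically a closed topological $(n-1)$-manifold, and orientability of $X$ propagates to $\partial X$ via an invariant collar neighbourhood. So only the rational homology of $\partial X$ needs to be computed.

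Applying Lefschetz duality to the compact orientable manifold-with-boundary $X$ yields
\[
H_k(X,\partial X;\mathbb Q)\;\cong\; H^{n-k}(X;\mathbb Q)\;\cong\; H_{n-k}(X;\mathbb Q),
\]
where the second isomorphism is universal coefficients over a field. By the $RHD$-hypothesis, the right-hand side equals $\mathbb Q$ for $k=n$ and $0$ otherwise. Feeding this together with the values $H_0(X;\mathbb Q)=\mathbb Q$, $H_k(X;\mathbb Q)=0$ for $k\geqslant 1$ into the long exact sequence of the pair, I read off, assuming $n\geqslant 2$: for $1\leqslant k\leqslant n-2$ both neighbouring groups vanish and hence $H_k(\partial X;\mathbb Q)=0$; at $k=n-1$ the sequence collapses to $0\to\mathbb Q\to H_{n-1}(\partial X;\mathbb Q)\to 0$, giving $H_{n-1}(\partial X;\mathbb Q)\cong\mathbb Q$; and at $k=0$ the sequence collapses to $0\to H_0(\partial X;\mathbb Q)\to\mathbb Q\to 0$, giving connectedness of $\partial X$.

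The degenerate case $n=1$ merges the two endpoints above: the relevant segment becomes $0\to\mathbb Q\to H_0(\partial X;\mathbb Q)\to\mathbb Q\to 0$, so $H_0(\partial X;\mathbb Q)\cong\mathbb Q^{2}=H_0(S^{0};\mathbb Q)$, consistent with an $RHD$-arc having a two-point boundary. There is no substantive obstacle in this argument; the only care needed is to verify orientability of $\partial X$ from that of $X$, to remember that ``$(n-1)$-$RHS$'' for $n=1$ means $S^{0}$ (two points), and to note that connectedness of $X$, required for a clean invocation of Lefschetz duality with a single fundamental class, is supplied by the hypothesis $H_0(X;\mathbb Q)=\mathbb Q$.
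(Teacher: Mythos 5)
Your argument is correct and is essentially the paper's own proof: both deduce the statement from the long exact sequence of the pair $(X,\partial X)$ combined with Poincar\'e--Lefschetz duality $H_k(X,\partial X;\mathbb Q)\simeq H^{n-k}(X;\mathbb Q)$ and universal coefficients over $\mathbb Q$; you merely spell out the diagram chase (including the $n=1$ case and orientability of $\partial X$) that the paper leaves implicit.
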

\begin{proof}
This follows from the long exact sequence in homology 
$$
\dots\to \widetilde{H}_k(\partial X)\to \widetilde{H}_k(X)\to H_k(X,\partial X)\to \widetilde{H}_{k-1}(\partial X)\to \dots
$$
the Poincare-Lefschetz duality $H_k(X,\partial X)\simeq H^{n-k}(X)$ and the universal coefficients
formula.
\end{proof}
 
We will use the following result,
which was first proved for small covers and $\mathbb Q$ coefficients in~\cite{ST12, T12}. Let us identify the subsets
$\omega\subset[m]=\{1,\dots,m\}$ with vectors $\boldsymbol{x}\in\mathbb Z_2^m$ by~the~rule $\omega=\{i\colon x_i=1\}$.
For a vector-coloring $\Lambda$ of rank $(r+1)$ denote by ${\rm row}\,\Lambda$ the subspace in $\mathbb Z_2^m$ 
generated by the row vectors of the matrix $\Lambda$. Equivalently, 
$$
{\rm row}\,\Lambda=\{(x_1,\dots,x_m)\in\mathbb Z_2^m\colon\exists\boldsymbol{c}\in(\mathbb Z_2^{r+1})^*\colon x_i=
\boldsymbol{c}\Lambda_i, i=1,\dots,m\}.
$$
Remind that $P_{\omega}=\bigcup_{i\in\omega}F_i$.
\begin{theorem}\cite[Theorem 4.5]{CP17}{\rm (see also \cite[Theorem 1.1]{CP20})}\label{HNPLth}
Let $\Lambda$ be a vector-coloring of rank $(r+1)$ of a simple $n$-polytope $P$ and $R$ be
a commutative ring in which $2$ is a unit. Then there is an $R$-linear isomorphism
$$
H^k(N(P,\Lambda), R)\simeq\bigoplus_{\omega\in{\rm row}\,\Lambda}\widetilde{H}^{k-1}(P_{\omega}, R)
$$
\end{theorem}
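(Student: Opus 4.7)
The plan is to deduce the isomorphism from a $\mathbb Z_2^m$-equivariant Hochster-type formula for the real moment-angle manifold $\mathbb R\mathcal Z_P$, combined with the transfer for the regular covering $\mathbb R\mathcal Z_P\to N(P,\Lambda)$ of $2$-power degree.

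First I would recall (or re-prove cellularly) a $\mathbb Z_2^m$-equivariant Cai-type decomposition: for any commutative ring $R$ with $2\in R^\times$, there is an $R$-linear isomorphism
$$
H^k(\mathbb R\mathcal Z_P, R)\;\simeq\;\bigoplus_{\omega\subset[m]}\widetilde H^{k-1}(P_\omega, R),
$$
where $\mathbb Z_2^m$ acts on the $\omega$-th summand through the character $\chi_\omega\colon \boldsymbol x\mapsto (-1)^{\langle \boldsymbol x,\boldsymbol\omega\rangle}\in R^\times$, with $\boldsymbol\omega$ the characteristic vector of $\omega$. A clean route is to model $\mathbb R\mathcal Z_P$ cellularly using the free $\mathbb Z_2^m$-resolution $P\times\mathbb Z_2^m\to \mathbb R\mathcal Z_P$, so that the cellular cochain complex of $\mathbb R\mathcal Z_P$ with $R$-coefficients is $C^*(P,R)\otimes R[\mathbb Z_2^m]$ with a differential twisted by the identifications along facets. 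Since $2\in R^\times$, the group algebra splits canonically as $R[\mathbb Z_2^m]\simeq\bigoplus_\omega R_{\chi_\omega}$, and one identifies the $\chi_\omega$-isotypic subcomplex with the relative cochain complex $C^*(P,P_\omega;R)$, yielding $\widetilde H^{*-1}(P_\omega,R)$ on cohomology.

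Next, set $H={\rm Ker}\,\varphi_\Lambda\subset \mathbb Z_2^m$, so that $N(P,\Lambda)=\mathbb R\mathcal Z_P/H$ is a regular covering of order $|H|=2^{m-r-1}$. Because $|H|$ is a unit in $R$, the classical transfer argument for regular coverings provides a natural $R$-linear isomorphism
$$
H^*(N(P,\Lambda),R)\;\simeq\;H^*(\mathbb R\mathcal Z_P,R)^H.
$$
Applying this to the Cai decomposition, the $\omega$-th summand survives iff $\chi_\omega|_H\equiv 1$, which by bilinearity holds iff $\boldsymbol\omega$ lies in the annihilator $H^\perp\subset \mathbb Z_2^m$. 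By linear duality $H^\perp$ equals the image of the transpose of $\varphi_\Lambda\colon\mathbb Z_2^m\to\mathbb Z_2^{r+1}$, which, unwinding the definitions, is exactly ${\rm row}\,\Lambda$. This gives the stated isomorphism.

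The main delicate point is the $\mathbb Z_2^m$-equivariance of the Cai decomposition with the correct characters: one must choose a $\mathbb Z_2^m$-invariant cellular model on $\mathbb R\mathcal Z_P$ and carefully bookkeep how the generators $\boldsymbol e_i$ permute indexing after the facet identifications, so that the chosen splitting of $R[\mathbb Z_2^m]$ exactly matches the geometric $H$-action on $H^*(\mathbb R\mathcal Z_P,R)$. Once equivariance is in place, the transfer step and the identification $H^\perp={\rm row}\,\Lambda$ are formal, and no finer information about $P$ is needed beyond the combinatorics entering the $P_\omega$'s.
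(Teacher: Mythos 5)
The paper itself does not prove this statement: it is imported from Choi--Park \cite{CP17} (see also \cite{CP20}), and your outline --- an equivariant Cai/Hochster-type splitting of $H^*(\mathbb R\mathcal Z_P;R)$ into character-isotypic pieces indexed by $\omega\subset[m]$, passage to $H$-invariants by a transfer, and the linear-algebra identification $H^\perp=\mathrm{row}\,\Lambda$ for $H=\mathrm{Ker}\,\varphi_\Lambda$ --- is essentially the standard route to it, matching the cited proofs in spirit. The character bookkeeping ($\chi_\omega$ acting by $\pm1$), the splitting of $R[\mathbb Z_2^m]$ into one-dimensional isotypic pieces using $2\in R^{\times}$, the identification of the $\chi_\omega$-isotypic subcomplex with $C^*(P,P_\omega;R)$, and the duality $\mathrm{Ker}(\varphi_\Lambda)^\perp=\mathrm{row}\,\Lambda$ are all correct, and you rightly flag the equivariance of the decomposition as the point requiring care.

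There is, however, one genuine flaw in the justification of the quotient step: you claim that $N(P,\Lambda)=\mathbb R\mathcal Z_P/H$ is a regular covering of degree $|H|=2^{m-r-1}$ and invoke the transfer for coverings. That is true only when $\Lambda$ is linearly independent; for a general vector-coloring of rank $r+1$ --- which is exactly the generality of the theorem and the one this paper needs (the whole point is non-free actions, and the theorem is applied to arbitrary affine colorings in Proposition~\ref{pr:rhs}) --- $H$ has fixed points on $\mathbb R\mathcal Z_P$ and the quotient map is not a covering, so the covering-space transfer is not available as stated. The needed isomorphism $H^*(\mathbb R\mathcal Z_P/H;R)\simeq H^*(\mathbb R\mathcal Z_P;R)^H$ still holds, because for an arbitrary action of a finite group whose order is invertible in $R$ one has a transfer obtained by averaging over the group on equivariant cochains after choosing an $H$-invariant (e.g.\ barycentrically subdivided) triangulation, or by working in \v{C}ech--Alexander--Spanier cohomology (Bredon's book on compact transformation groups); since $\mathbb R\mathcal Z_P$ and its quotient are compact polyhedra this agrees with singular cohomology. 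With the covering-space transfer replaced by this finite-transformation-group transfer (and noting that a summand with $\chi_\omega|_H\not\equiv 1$ has zero invariants precisely because $2$ is a unit), your argument goes through.
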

Let is remind that a closed orientable manifold $N(P,\Lambda)$ is defined by a~an affine coloring $\lambda$
of rank $r$, where for some change of coordinates in $\mathbb Z_2^{r+1}$ we have $\Lambda_i=(1,\lambda_i)$. 
Theorem~\ref{HNPLth} and the universal coefficients formula imply
\begin{proposition}\label{pr:rhs}
Let $\lambda$ be an affine coloring of rank $r$ of a simple $n$-polytope $P$.
The space $N(P,\lambda)$ is a rational homology $n$-sphere if and only if 
one of~the~following equivalent conditions holds:
\begin{enumerate}
\item $\bigcup\limits_{i\colon \lambda_i\in \pi }F_i$ is~an $(n-1)$-$RHD$~for any affine hyperplane $\pi\subset\mathbb Z_2^r$;
\item  $\bigcup\limits_{i\colon \lambda_i\in \pi }F_i$ is~an $(n-1)$-$RHD$~for any affine hyperplane $\pi\subset\mathbb Z_2^r$
passing through some  pint $\boldsymbol{p}\in\mathbb Z_2^r$.
\end{enumerate}
\end{proposition}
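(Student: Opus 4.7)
The strategy is to apply Theorem \ref{HNPLth} with coefficients $R = \mathbb{Q}$ and to unpack the row space of the rank-$(r+1)$ matrix $\Lambda$ with columns $\Lambda_i = (1, \lambda_i)$. A row vector $\boldsymbol{c}\Lambda$ indexed by $\boldsymbol{c} = (c_0, \boldsymbol{c}') \in \mathbb{Z}_2 \times (\mathbb{Z}_2^r)^*$ corresponds to the subset $\omega_{\boldsymbol{c}} = \{i \colon c_0 + \langle \boldsymbol{c}', \lambda_i\rangle = 1\} \subseteq [m]$. The pair $(0,0)$ yields $\omega = \varnothing$, the pair $(1,0)$ yields $\omega = [m]$, and each pair with $\boldsymbol{c}' \ne 0$ yields $\omega(\pi) = \{i \colon \lambda_i \in \pi\}$ for the unique affine hyperplane $\pi = \{y \in \mathbb{Z}_2^r \colon \langle \boldsymbol{c}', y\rangle = 1 - c_0\}$. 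Since the $\lambda_i$ affinely span $\mathbb{Z}_2^r$, all $2^{r+1}$ row vectors are distinct, giving a bijection with $\{\varnothing, [m]\} \sqcup \{\text{affine hyperplanes of } \mathbb{Z}_2^r\}$.

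Substituting this splitting into Theorem \ref{HNPLth} yields
$$H^k(N(P,\lambda), \mathbb{Q}) \simeq \widetilde{H}^{k-1}(\varnothing, \mathbb{Q}) \oplus \widetilde{H}^{k-1}(\partial P, \mathbb{Q}) \oplus \bigoplus_{\pi} \widetilde{H}^{k-1}(P_{\omega(\pi)}, \mathbb{Q}).$$
The first summand contributes $\mathbb{Q}$ only in degree $k = 0$; using $\partial P \simeq S^{n-1}$, the second contributes $\mathbb{Q}$ only in degree $k = n$. These two already reproduce $H^*(S^n, \mathbb{Q})$, so $N(P,\lambda)$ is a rational homology $n$-sphere if and only if $\widetilde{H}^*(P_{\omega(\pi)}, \mathbb{Q}) = 0$ for every affine hyperplane $\pi \subset \mathbb{Z}_2^r$. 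Assuming $N(P,\lambda)$ is a closed orientable $n$-manifold, each $P_{\omega(\pi)}$ inherits the structure of a compact orientable $(n-1)$-manifold with non-empty boundary (the latter because the $\lambda_i$ are not all contained in any single affine hyperplane), so the vanishing of reduced rational cohomology is exactly the condition that $P_{\omega(\pi)}$ is an $(n-1)$-$RHD$. This gives the equivalence with condition (1).

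For $(1) \Leftrightarrow (2)$, the implication $(1) \Rightarrow (2)$ is trivial. Conversely, fix any $\boldsymbol{c}' \in (\mathbb{Z}_2^r)^* \setminus \{0\}$ and let $\pi_0, \pi_1$ be the two parallel affine hyperplanes $\{y \colon \langle \boldsymbol{c}', y\rangle = j\}$, $j = 0, 1$. Exactly one of them, say $\pi_b$, contains $\boldsymbol{p}$, and (2) asserts that $P_{\omega(\pi_b)}$ is an $(n-1)$-$RHD$. The partition $\omega(\pi_0) \sqcup \omega(\pi_1) = [m]$ gives $P_{\omega(\pi_0)} \cup P_{\omega(\pi_1)} = \partial P \simeq S^{n-1}$ with intersection $\partial P_{\omega(\pi_b)}$, which is an $(n-2)$-$RHS$ by Lemma \ref{lem:DS}. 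Using the Mayer--Vietoris sequence in $\partial P$ (or equivalently the long exact sequence of the pair $(\partial P, P_{\omega(\pi_b)})$ combined with excision and Poincar\'e--Lefschetz duality on $P_{\omega(\pi_{1-b})}$) one deduces $\widetilde{H}^*(P_{\omega(\pi_{1-b})}, \mathbb{Q}) = 0$, so it too is an $(n-1)$-$RHD$.

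The main obstacle I anticipate is verifying that each $P_{\omega(\pi)}$ automatically carries the topology of a compact orientable $(n-1)$-manifold with boundary once $N(P,\lambda)$ is known to be a closed manifold; this ought to follow from the affine-independence condition on $\lambda$ at each face of $P$ by analyzing the link of each vertex of $P$ inside $P_{\omega(\pi)}$. Once this is settled, the rest is a direct consequence of Theorem \ref{HNPLth} together with a standard Mayer--Vietoris computation on $S^{n-1}$.
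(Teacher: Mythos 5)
Your proposal is correct and follows essentially the same route as the paper, whose proof is just the one-line appeal to Theorem~\ref{HNPLth} together with universal coefficients: your identification of ${\rm row}\,\Lambda$ with $\varnothing$, $[m]$ and the affine hyperplanes of $\mathbb Z_2^r$, and your Mayer--Vietoris argument for $(2)\Rightarrow(1)$ (the paper would use the Alexander-duality argument of Lemma~\ref{lem:k=2}), are exactly the details being suppressed. The obstacle you flag at the end is not an issue: Lemma~\ref{lem:or} (quoted from \cite[Corollary 2.4]{E24}) gives that each nonempty proper $P_{\omega}$ is a compact orientable $(n-1)$-manifold with boundary without any assumption that $N(P,\lambda)$ is a manifold, and the rank condition on $\lambda$ guarantees each $\omega(\pi)$ is proper and nonempty.
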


We will use the following {\it Generalized Shoenflies theorem}.

\begin{theorem}\cite[Theorem 5]{B60}\label{th:GST} Let $h$ be a homeomorphic embedding of $S^{n-1}\times I$ into $S^n$.
Then the closure of either complementary domain of $h(S^{n-1}\times\frac{1}{2})$ is homeomorphic to $D^n$.
\end{theorem}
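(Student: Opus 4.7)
The plan is to follow Brown's original argument, whose crux is to show that one closed complementary domain, say $B$, is \emph{cellular} in $S^n$ — that is, $B=\bigcap_k D_k$ for a decreasing sequence of embedded closed $n$-disks with $D_{k+1}\subset\mathrm{int}\,D_k$ — and then invoke the general principle that collapsing a cellular set in $S^n$ yields $S^n$ again. The bicollared hypothesis encoded in the product neighborhood $h(S^{n-1}\times I)$ is the decisive input here; without it the statement fails (Alexander's horned sphere).

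First I would use the product structure to construct a sequence of self-homeomorphisms $\varphi_k\colon S^n\to S^n$, each equal to the identity outside $h(S^{n-1}\times I)$, such that the compositions $\Phi_k=\varphi_k\circ\cdots\circ\varphi_1$ push $B$ into progressively smaller neighborhoods of some chosen point $p\in B$. If each $\varphi_k$ is chosen to move points off $B$ by a summable amount, the sequence $\Phi_k$ is uniformly Cauchy and converges to a continuous surjection $\pi\colon S^n\to S^n$ whose only non-trivial point preimage is $B$. This realizes $S^n/B\cong S^n$.

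Next I would recover the disk structure of $A$. Restricted to $A$, the map $\pi$ collapses $\partial A=h(S^{n-1}\times\{1/2\})$ to the single point $p$ and is injective on $\mathrm{int}\,A$, identifying $\mathrm{int}\,A$ with $S^n\setminus\{p\}\cong\mathbb R^n$. The collar of $\partial A$ in $A$ provided by $h|_{S^{n-1}\times[1/2,1/2+\varepsilon]}$ then upgrades this open identification to a homeomorphism $A\cong D^n$ by the standard collaring argument; running the argument with the roles of $A$ and $B$ reversed handles the other domain.

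The hardest step will be the convergence in the second paragraph: the $\varphi_k$ must be chosen so that the compositions are uniformly Cauchy in the sup metric, the limit $\pi$ is surjective, and the non-trivial point preimage of $\pi$ is exactly $B$. This is essentially Bing's shrinking criterion, and verifying it requires, at each stage $k$, exploiting the sub-bicollar of $h(S^{n-1}\times\{1/2-1/k\})$ to build a homeomorphism that pushes $B$ inward by a controlled amount while being small off a prescribed neighborhood. An alternative I might consider is a Mazur-style swindle — writing $S^n$ as an infinite iterated boundary connected sum $A\#_\partial B\#_\partial A\#_\partial B\#_\partial\cdots$ and reassociating — but Brown's shrinking framework is the more flexible route and is what I would commit to.
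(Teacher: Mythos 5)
The paper offers no proof of this statement: it is quoted directly from Brown's paper [B60], so what has to be judged is whether your sketch of Brown's shrinking argument actually works, and it does not. The decisive problem is in your second paragraph. You require each $\varphi_k$ to be the identity outside $h(S^{n-1}\times I)$, yet the set $B$ you propose to compress contains a nonempty open subset of $S^n\setminus h(S^{n-1}\times I)$: by invariance of domain the points of $h(S^{n-1}\times\{0\})$ are not interior points of the image of the collar, so the complementary domain of the middle sphere on that side is strictly larger than the open half-collar. Every composition $\Phi_k$ fixes that open set pointwise, so $\Phi_k(B)$ can never be confined to a small neighbourhood of a point; the construction is impossible as specified. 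Dropping the support condition does not help, because the only structure available is the collar near $\partial B$, and homeomorphisms manufactured from it move nothing in the unknown region deep inside $B$. The assertion that $B$ can be pushed into arbitrarily small prescribed neighbourhoods of a point (Bing shrinking for the decomposition whose one nondegenerate element is $B$) is essentially the cellularity statement that is the heart of the theorem; you have assumed the crux rather than proved it. In Brown's paper the limit-of-homeomorphisms argument is applied only where shrinking is elementary, namely to a nested sequence of genuine $n$-cells; the substantive work is to produce such cells, via the quotient map that collapses the two complementary domains of the \emph{entire} collar $h(S^{n-1}\times I)$ onto the suspension of $S^{n-1}$ (the standard $S^n$) and a comparison with that standard model, not via collar-supported pushes of a complementary domain.

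The final step of your third paragraph is also circular. From $\operatorname{int}A\cong\mathbb R^n$, $\partial A\cong S^{n-1}$ collared, and $A/\partial A\cong S^n$ you conclude $A\cong D^n$ ``by the standard collaring argument''; there is no such elementary argument. Pushing $\partial A$ inward along the collar exhibits $A$ as an external collar attached to the closure of a complementary domain of the bicollared sphere $h(S^{n-1}\times\{1/2+\varepsilon\})$ sitting inside $\operatorname{int}A\cong\mathbb R^n\subset S^n$, so the claim that this is a ball is exactly the generalized Schoenflies theorem again; equivalently, ``a compact manifold with interior $\mathbb R^n$ and collared boundary $S^{n-1}$ is $D^n$'' is a known \emph{corollary} of the theorem, not an available lemma. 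Relatedly, even granting $S^n/B\cong S^n$, that fact alone does not deliver $A\cong D^n$: in Brown's scheme what one shows is that the two ``end'' regions beyond $h(S^{n-1}\times\{0\})$ and $h(S^{n-1}\times\{1\})$ are cellular, and each closed complementary domain of the middle sphere is then a cellular set with a collar attached, which is how the cell structure is finally extracted. As written, your proposal hides the difficulty at both of these points, so it does not constitute a proof.
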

\begin{lemma}\label{lem:rhd4}
Let $n\leqslant 4$. If $P_{\omega}$ is an $(n-1)$-$RHD$, then it is homeomorphic 
to~an~$(n-1)$-disk~$D^{n-1}$. 
\end{lemma}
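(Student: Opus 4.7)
The argument proceeds by dimension. For $n \le 2$ the hypothesis on $P_\omega$ directly pins it down to $D^{n-1}$ via the classification of compact $0$- and $1$-manifolds with boundary. For $n = 3$, $P_\omega$ is a connected compact orientable surface with boundary whose $H_1$ is torsion-free, so vanishing rational $H_1$ forces genus zero with exactly one boundary component, i.e.\ $P_\omega \simeq D^2$. The substantive case is $n = 4$, which we treat below.

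For $n = 4$ the plan is to realize $\partial P_\omega$ as a bicollared $2$-sphere in $\partial P \simeq S^3$ and apply the Generalized Schoenflies theorem (Theorem~\ref{th:GST}). First, by Lemma~\ref{lem:DS}, $\partial P_\omega$ is a $2$-$RHS$, hence a closed orientable surface with vanishing rational $H_1$; since $H_1$ of a compact orientable surface is torsion-free, this vanishes integrally, and the classification of surfaces yields $\partial P_\omega \simeq S^2$. Second, the collar neighborhood theorem provides an internal collar of $\partial P_\omega$ in $P_\omega$. For the external collar, $P_\omega$ is a topological $3$-manifold, hence PL by Moise's theorem, so $\partial P_\omega$ is a PL codimension-one submanifold of the PL manifold $\partial P$ and therefore admits a regular neighborhood of product form. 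The two collars combine to a bicollared embedding $S^2 \times (-1, 1) \hookrightarrow \partial P$.

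Finally, the $2$-sphere $\partial P_\omega = P_\omega \cap P_{[m]\setminus\omega}$ separates $\partial P$ into two open sets $\mathrm{int}(P_\omega)$ and $\mathrm{int}(P_{[m]\setminus\omega})$ whose disjoint union is $\partial P \setminus \partial P_\omega$; each is connected because the interior of a compact connected manifold with boundary is connected. By Theorem~\ref{th:GST} each complementary domain has closure homeomorphic to $D^3$, and in particular $P_\omega \simeq D^3$. The main technical obstacle is producing the external collar at vertices of $P$, where four facets meet and the local picture of $\partial P_\omega$ inside $\partial P$ is most intricate; the appeal to Moise's theorem finesses this, but a direct write-up would verify local flatness explicitly in the local model $F \times \partial \mathbb R^{n-\dim F}_{\geq 0}$ of $\partial P$ near each face $F$ of $P$.
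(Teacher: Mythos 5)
Your overall architecture for the main case $n=4$ coincides with the paper's: show $\partial P_\omega$ is a $2$-RHS (Lemma~\ref{lem:DS}), hence a $2$-sphere, produce a bicollar of $\partial P_\omega$ in $\partial P\simeq S^3$, and invoke the Generalized Schoenflies theorem (Theorem~\ref{th:GST}). For $n=3$ you instead use the classification of compact surfaces, which is correct, whereas the paper runs $n=3$ and $n=4$ uniformly through the same Schoenflies argument; that difference is harmless.

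The step that does not work as written is your justification of the external collar. Moise's theorem gives an abstract PL structure on the topological $3$-manifold $P_\omega$; it says nothing about how $\partial P_\omega$ sits inside $\partial P$, and in particular does not make $\partial P_\omega$ a PL, or even locally flat, submanifold of $\partial P$ with its given structure. Some control of the embedding is genuinely required: the Alexander horned sphere is a topologically embedded $S^2\subset S^3$ whose closed complementary domain on one side is a ball, yet it is not bicollared (equivalently, the closure of the other side is not a manifold with boundary), so niceness of $P_\omega$ alone cannot suffice. What rules out such wildness here is that both $P_\omega$ and $P_{[m]\setminus\omega}$ are unions of facets of $P$, and the local model of Lemma~\ref{lem:Mc} shows each is a compact (piecewise linear) manifold with the common boundary $\partial P_\omega$; collaring the boundary inside each of the two pieces then yields the bicollar. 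This is exactly the paper's argument, and it is also the content of your own closing remark about verifying local flatness in the local model --- so the fix you indicate is the right one, but it must replace, not merely supplement, the appeal to Moise. Two minor points: connectedness of the two complementary domains is not needed as an input to Theorem~\ref{th:GST} (once $\partial P_\omega\simeq S^2$ is embedded, Jordan--Brouwer gives exactly two complementary components), and your assertion that $P_{[m]\setminus\omega}$ is connected was in any case not established --- it does hold, by Alexander duality as in the proof of Lemma~\ref{lem:k=2}, but you may simply omit it.
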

\begin{proof}
For $n\leqslant 2$ this is trivial. Let $n\in\{3,4\}$. If $P_{\omega}$ is an $(n-1)$-$RHD$, 
then by Lemma~\ref{lem:DS} $\partial P_{\omega}$ is an~$(n-2)$-$RHS$. For $n\in\{3,4\}$ 
this means that $\partial P_{\omega}$ is homeomorphic to~$S^{n-2}$.
Since $P_{\omega}$ and $P_{[m]\setminus\omega}$  are compact piecewise linear manifolds (see Lemma~\ref{lem:Mc}), 
their common
boundary $\partial P_{\omega}$ has a~collar neighbourhood in both spaces and the~embedding  
$S^{n-2}\simeq \partial P_{\omega}$ to $\partial P\simeq S^{n-1}$
can~be~extended to~an~embedding of $S^{n-2}\times I$. Then by~Theorem \ref{th:GST} $P_{\omega}$
and $P_{[m]\setminus\omega}$ are topological $(n-1)$-disks. 
\end{proof}
This leads to the following result.
\begin{theorem}\label{th:rhs34}
Let $\lambda$ be an affine coloring of rank $r$ of a simple $n$-polytope $P$, where $2\leqslant n\leqslant 4$.
The space $N(P,\lambda)$ is a rational homology $n$-sphere if and only if 
one of~the~following equivalent conditions holds:
\begin{enumerate}
\item $\bigcup\limits_{i\colon \lambda_i\in \pi }F_i$ is~an $(n-1)$-disk~for any affine hyperplane $\pi\subset\mathbb Z_2^r$;
\item  $\bigcup\limits_{i\colon \lambda_i\in \pi }F_i$ is~an $(n-1)$-disk~for any affine hyperplane $\pi\subset\mathbb Z_2^r$
passing through some  pint $\boldsymbol{p}\in\mathbb Z_2^r$.
\end{enumerate}
\end{theorem}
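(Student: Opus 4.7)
The plan is to derive Theorem~\ref{th:rhs34} directly by combining Proposition~\ref{pr:rhs} with Lemma~\ref{lem:rhd4}, since the dimensional hypothesis $2\leqslant n\leqslant 4$ brings us into exactly the range where the distinction between a rational homology $(n-1)$-disk and an honest $(n-1)$-disk collapses.

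First I would observe that for any affine hyperplane $\pi\subset \mathbb Z_2^r$, the~set $\bigcup_{i:\lambda_i\in\pi}F_i$ is~of~the~form $P_\omega$ for~$\omega=\{i:\lambda_i\in\pi\}\subset[m]$, so both Proposition~\ref{pr:rhs} and~Lemma~\ref{lem:rhd4} apply to it. For the forward implication, assume $N(P,\lambda)$ is~a~rational homology $n$-sphere. By~Proposition~\ref{pr:rhs}, $P_\omega$ is~an~$(n-1)$-$RHD$ for~every such~$\pi$. Since $n-1\leqslant 3$, Lemma~\ref{lem:rhd4} upgrades this to homeomorphism with $D^{n-1}$, which gives~(1); restricting to hyperplanes through a fixed point $\boldsymbol p$ gives~(2).

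For the converse, I~note that every topological $(n-1)$-disk is obviously an~$(n-1)$-$RHD$ (both~$D^{n-1}$ and its boundary have the required homology by inspection, and $D^{n-1}$ is~a~compact orientable manifold with boundary). Hence condition~(1) implies the hypothesis of~Proposition~\ref{pr:rhs}(1), and condition~(2) implies Proposition~\ref{pr:rhs}(2), whence $N(P,\lambda)$ is a rational homology $n$-sphere. The equivalence of~(1) and~(2) then follows from the equivalence of the corresponding conditions in Proposition~\ref{pr:rhs}, or alternatively can be read off directly by the~same argument, reducing~(1) to~(2) via~Lemma~\ref{lem:rhd4}.

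There is no real obstacle: the content sits in the earlier pieces. The only thing to verify carefully is that Lemma~\ref{lem:rhd4}'s hypothesis ``$P_\omega$ is~an~$(n-1)$-$RHD$'' is indeed furnished by Proposition~\ref{pr:rhs}, which is immediate from matching the set $\omega$ to the affine hyperplane~$\pi$. So the proof reduces to~a~two-line citation of the preceding results, combined with the trivial observation that~$D^{n-1}$ is~itself an~$(n-1)$-$RHD$.
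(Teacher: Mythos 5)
Your proof is correct and follows exactly the route the paper intends: the theorem is stated immediately after Lemma~\ref{lem:rhd4} as its consequence, i.e.\ one applies Proposition~\ref{pr:rhs} and upgrades the rational homology disks to genuine $(n-1)$-disks via Lemma~\ref{lem:rhd4}, while the converse uses only that a disk is a rational homology disk.
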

\begin{definition}
We call a subset $A\subset B$ {\it proper}, if $\varnothing\ne A\ne B$. For a collection 
of~pairwise disjoint subsets  $\omega_1$, $\dots$, $\omega_k \subset [m]$ define 
$P_{\omega_1,\dots,\omega_k}=P_{\omega_1}\cap P_{\omega_2}\cap\dots\cap P_{\omega_k}$.
\end{definition}
\begin{lemma}\label{lem:or}
For any simple $n$-polytope $P$ and any collection of pairwise disjoint
proper subsets $\omega_1$, $\dots$, $\omega_k \subset [m]$,  if  
$P_{\omega_1,\dots,\omega_k}\ne\varnothing$, then it is~an orientable~$(n-k)$-manifold, perhaps with a~boundary.
\end{lemma}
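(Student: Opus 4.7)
The plan is to prove the lemma locally at each point and then globally glue orientations on top-dimensional cells. The argument splits into a local identification of the topological type of $P_{\omega_1,\dots,\omega_k}$ and a sign check at codimension-one interfaces.

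\textbf{Local model.} Let $p\in P_{\omega_1,\dots,\omega_k}$ and let $G=F_{i_1}\cap\dots\cap F_{i_\ell}$ be the unique face of $P$ in whose relative interior $p$ lies. Since $P$ is simple, a neighbourhood of $p$ in $P$ is homeomorphic to $\R^{n-\ell}\times\R_{\geqslant 0}^\ell$ with $F_{i_s}\leftrightarrow\{x_s=0\}$. Put $S_j=\{s\colon i_s\in\omega_j\}$; pairwise disjointness of the $\omega_j$ makes the $S_j$ pairwise disjoint subsets of $\{1,\dots,\ell\}$, and $p\in P_{\omega_j}$ forces $S_j\neq\varnothing$. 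Let $T=\{1,\dots,\ell\}\setminus\bigsqcup_j S_j$. The condition $p\in\bigcap_j P_{\omega_j}$ is equivalent to ``for each $j$ some $x_s$, $s\in S_j$, equals zero,'' while the coordinates $x_s$ for $s\in T$ range freely in $\R_{\geqslant 0}$, so a neighbourhood of $p$ in $P_{\omega_1,\dots,\omega_k}$ is homeomorphic to
\[
 \R^{n-\ell}\times\R_{\geqslant 0}^{|T|}\times\prod_{j=1}^{k}\partial\R_{\geqslant 0}^{S_j}.
\]

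\textbf{Reduction to a half-space.} I will then invoke the standard convex-geometric fact that a non-degenerate $d$-dimensional convex cone in $\R^d$ is homeomorphic to the closed half-space $\R^{d-1}\times\R_{\geqslant 0}$, so its boundary is homeomorphic to $\R^{d-1}$. Applying this to each $\R_{\geqslant 0}^{S_j}$ collapses the local model to $\R^{n-k-|T|}\times\R_{\geqslant 0}^{|T|}$; applying it once more (to $\R_{\geqslant 0}^{|T|}$) gives $\R^{n-k}$ if $|T|=0$ and $\R^{n-k-1}\times\R_{\geqslant 0}$ if $|T|\geqslant 1$. Hence $P_{\omega_1,\dots,\omega_k}$ is an $(n-k)$-manifold, with boundary at precisely those points whose containing face of $P$ has some facet not coloured by any $\omega_j$.

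\textbf{Orientability at codimension-one strata.} Fix an orientation of $P\subset\R^n$ and outward unit normals $\nu_i$ to the facets $F_i$. The top-dimensional cells of $P_{\omega_1,\dots,\omega_k}$ are the $(n-k)$-faces $F_{a_1}\cap\dots\cap F_{a_k}$ with $a_j\in\omega_j$; orient each by the rule that $(\nu_{a_1},\dots,\nu_{a_k})$ prepended to a positive tangent frame gives the orientation of $P$. Two such top cells meet along a codimension-one stratum precisely when they differ by swapping a single $a_j$ with some $a_j'\in\omega_j$ (pairwise disjointness of the $\omega_j$ pins down this slot uniquely). A direct sign check then shows the two adjacent cells induce opposite orientations on their common codim-one face: their induced frames differ by the single transposition $\nu_{a_j}\leftrightarrow\nu_{a_j'}$ in the ``outward normal first'' prescription, hence by a minus sign, which is exactly the required coherence.

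\textbf{Extension across higher strata.} The remaining strata of $P_{\omega_1,\dots,\omega_k}$, where three or more top cells meet (this happens only at codim-$\geqslant k+2$ faces of $P$, which correspond to codim-$\geqslant 2$ loci in $P_{\omega_1,\dots,\omega_k}$), form a closed subset of codimension at least two in the $(n-k)$-manifold, and orientability of a topological manifold is insensitive to removing such a subset. So the coherent orientation constructed on the complement extends, completing the proof. The main obstacle is the convex-geometric identification in the second step — once the homeomorphism of an orthant with a closed half-space (and of its boundary with $\R^{d-1}$) is granted, both the local manifold structure and the sign-tracking for orientability become routine.
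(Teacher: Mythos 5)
Your proof is correct, but the orientability half takes a genuinely different route from the paper's. For the manifold structure you and the paper rely on the same local picture: the paper simply cites the PL chart of Lemma~\ref{lem:Mc} (Corollary 2.4 of [E24]), while you rederive it by writing a neighbourhood as $\mathbb R^{n-\ell}\times\mathbb R_{\geqslant 0}^{|T|}\times\prod_j\partial\mathbb R_{\geqslant 0}^{S_j}$ and collapsing orthant boundaries to Euclidean factors --- same content, spelled out by hand. For orientability the paper argues by induction on $k$: the interior of $P_{\omega}$ is an open subset of $\partial P\simeq S^{n-1}$, and at each step the interior of $P_{\omega_1,\dots,\omega_k}$ is an open subset of $\partial P_{\omega_1,\dots,\omega_{k-1}}$, which is orientable because it is the boundary of an orientable manifold; no explicit orientation is ever produced. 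You instead orient each top cell $F_{a_1}\cap\dots\cap F_{a_k}$ ($a_j\in\omega_j$) by the ``normals first'' rule, observe that along an interior codimension-one wall exactly two cells meet and differ by one swap $a_j\leftrightarrow a_j'$ inside a single $\omega_j$, so the induced orientations on the wall differ by a transposition and are opposite, and then use the codimension-two principle to pass from the complement of the deeper strata to all of $P_{\omega_1,\dots,\omega_k}$; this is where disjointness of the $\omega_j$ enters for you (it fixes the slot ordering), whereas in the paper it enters through the iterated-boundary description. Your route is longer and leans on two standard facts you state somewhat loosely --- the linear independence of the $k$ facet normals at an $(n-k)$-face of a simple polytope, which is what makes the frame rule well-defined, and the insensitivity of orientability to removing a closed subpolyhedron of dimension $\leqslant n-k-2$, which should be read in the PL category --- but it buys an explicit orientation of $P_{\omega_1,\dots,\omega_k}$ canonically determined by the ordered collection $(\omega_1,\dots,\omega_k)$, which the paper's softer inductive argument does not provide; the paper's proof in turn is shorter and avoids all sign bookkeeping.
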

\begin{proof}
As it is mentioned in \cite[Corollary 2.4]{E24}, $P_{\omega_1,\dots,\omega_k}$ 
is~an~$(n-k)$-manifold, perhaps with a boundary. This is based on the following fact
which we will use later.
\begin{lemma}\label{lem:Mc}\cite[Lemma 2.2]{E24}
Let a point $p\in\partial P$ belong to exactly $l\geqslant 1$ facets $G_{i_1}$, $\dots$, $G_{i_l}$ of $\mathcal{C}(P,c)$. 
Then there is a piecewise linear homeomorphism  $\varphi$ of a neighbourhood $U\subset P$ of $p$ such that 
$U\cap G_j=\varnothing$ for $j\notin \{i_1,\dots, i_l\}$ to a neighbourhood 
$V\subset \mathbb R^l_{\geqslant}\times\mathbb R^{n-l}$ such that $\varphi(G_{j_s}\cap U)=V\cap\{y_s=0\}$, $s=1,\dots,l$.  
\end{lemma}
\begin{remark}
It is not stated explicitly in \cite[Lemma 2.2]{E24} that $U\cap G_j=\varnothing$ for $j\notin \{i_1,\dots, i_l\}$ but this
condition can be be achieved using a~smaller neighbourhood.
\end{remark}
Let us proof that the manifold $P_{\omega_1,\dots,\omega_k}$ is~orientable.
Indeed for $k=1$ the set $P_{\omega}$ is a manifold with a boundary, and $P_{\omega}\setminus\partial P_{\omega}$
is in open subset in $\partial P\simeq S^{n-1}$. Hence, $P_{\omega}$ is orientable.
Assume that our claim is valid for $1,\dots,k-1$. Then  
$P_{\omega_1,\dots,\omega_k}=P_{\omega_1,\dots,\omega_{k-1}}\cap P_{\omega_k}$ and 
$P_{\omega_1,\dots,\omega_k}\setminus\partial P_{\omega_1,\dots,\omega_k}$ is an open subset in 
$\partial P_{\omega_1,\dots,\omega_{k-1}}$, which is an orientable
manifold, since it is~a~boundary of~an~orientable manifold. Thus, $P_{\omega_1,\dots,\omega_k}$ is orientable. 
\end{proof}

\begin{lemma}\label{lem:k=2}
Let $P$ be a simple $n$-polytope, and  let $\omega_1,\omega_2\subset[m]$ be
subsets such that $\omega_1\cap\omega_2=\varnothing$, 
and $P_{\omega_1}$, $P_{\omega_2}$ are $(n-1)$-$RHD$. 
\begin{itemize}
\item If $P_{\omega_1\sqcup\omega_2}$ is  an~$(n-1)$-$RHD$,
then $P_{\omega_1}\cap P_{\omega_2}$ is an~$(n-2)$-$RHD$. 
\item If $P_{\omega_1\sqcup\omega_2}$ is  an~$(n-1)$-$RHS$,
then $\omega_1\sqcup \omega_2=[m]$ and  $P_{\omega_1}\cap P_{\omega_2}$ is an  $(n-2)$-$RHS$.
\end{itemize}
\end{lemma}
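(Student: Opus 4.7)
My plan is to apply the reduced Mayer--Vietoris sequence with $\mathbb Q$-coefficients to the decomposition $P_{\omega_1\sqcup\omega_2}=P_{\omega_1}\cup P_{\omega_2}$. Since $P_{\omega_1}$ and $P_{\omega_2}$ are $(n-1)$-$RHD$, each has trivial reduced rational homology, so the long exact sequence collapses to the isomorphism
$$
\widetilde H_k(P_{\omega_1\sqcup\omega_2};\mathbb Q)\;\cong\;\widetilde H_{k-1}(P_{\omega_1}\cap P_{\omega_2};\mathbb Q)
$$
for every $k$. Mayer--Vietoris applies because $P_{\omega_1},P_{\omega_2}$ are subcomplexes of $\partial P$ (alternatively, one may thicken them to open neighbourhoods using the local model of Lemma~\ref{lem:Mc}); and Lemma~\ref{lem:or} already tells us that $P_{\omega_1}\cap P_{\omega_2}$, when nonempty, is a compact orientable $(n-2)$-manifold, possibly with boundary. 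Hence only its rational homology and whether its manifold boundary is empty need to be pinned down.

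In the first case, $P_{\omega_1\sqcup\omega_2}$ is an $RHD$ and therefore connected, which forces $P_{\omega_1}\cap P_{\omega_2}\ne\varnothing$ (two disjoint nonempty connected pieces cannot have connected union). The displayed isomorphism then gives $\widetilde H_*(P_{\omega_1}\cap P_{\omega_2};\mathbb Q)=0$. For $n\geqslant 3$, were the manifold closed, orientability would yield $H_{n-2}(\cdot;\mathbb Q)=\mathbb Q$ from the fundamental class, contradicting vanishing of reduced homology; so the manifold has nonempty boundary and is an $(n-2)$-$RHD$. For $n=2$ the intersection is a compact $0$-manifold with trivial reduced homology, i.e.\ a single point.

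For the second case I first argue $\omega_1\sqcup\omega_2=[m]$. Otherwise pick $j\notin\omega_1\cup\omega_2$: interior points of $F_j$ in $\partial P$ lie outside $P_{\omega_1\sqcup\omega_2}$. But a compact closed $(n-1)$-manifold embedded in the connected $(n-1)$-manifold $\partial P\simeq S^{n-1}$ is both open (invariance of domain) and closed (compactness in a Hausdorff space), so for $n\geqslant 2$ it must equal $\partial P$, a contradiction; the case $n=1$ is trivial. Hence $P_{\omega_1\sqcup\omega_2}=\partial P\simeq S^{n-1}$, and the Mayer--Vietoris isomorphism identifies $P_{\omega_1}\cap P_{\omega_2}$ as having the rational homology of $S^{n-2}$. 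For $n\geqslant 3$ the intersection is connected with $H_{n-2}=\mathbb Q$; a compact connected orientable $(n-2)$-manifold with nonempty boundary has vanishing top rational homology, so the boundary must be empty and the intersection is an $(n-2)$-$RHS$. For $n=2$ a compact $0$-manifold with the rational homology of $S^0$ is $S^0$.

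The main subtlety is deciding in each case whether the manifold boundary of $P_{\omega_1}\cap P_{\omega_2}$ is empty; top-dimensional rational homology serves as the detector once the Mayer--Vietoris computation is in hand, so no local analysis near the singular loci of $P_{\omega_1}\cap P_{\omega_2}$ is required.
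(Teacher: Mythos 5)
Your proof is correct, and its core — the reduced Mayer--Vietoris sequence for $P_{\omega_1\sqcup\omega_2}=P_{\omega_1}\cup P_{\omega_2}$ combined with Lemma~\ref{lem:or} — is exactly the paper's argument for the first item. Where you genuinely diverge is the second item: to force $\omega_1\sqcup\omega_2=[m]$ the paper invokes Alexander duality in $\partial P\simeq S^{n-1}$, namely $\widetilde H^{k}(P_{\omega})\cong\widetilde H_{n-2-k}(P_{[m]\setminus\omega})$ (using that $P_{[m]\setminus\omega}$ is homotopy equivalent to $\partial P\setminus P_{\omega}$), so that $\widetilde H^{n-1}(P_\omega;\mathbb Q)\ne 0$ forces $P_{[m]\setminus\omega}=\varnothing$; you instead use the closed-manifold half of the $RHS$ hypothesis directly: a compact boundaryless $(n-1)$-manifold inside $\partial P$ is open by invariance of domain and closed by compactness, hence equals $\partial P$. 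Your route is more elementary (no duality), while the paper's route uses only the homological content of the hypothesis — it shows that a proper union of facets can never have nonvanishing top reduced cohomology, which is slightly more robust information. You are also more explicit than the paper about the points it leaves implicit: nonemptiness of the intersection in the first item, and the detection of empty versus nonempty manifold boundary of $P_{\omega_1}\cap P_{\omega_2}$ via top-degree rational homology (using orientability from Lemma~\ref{lem:or}) in both items; these details are needed to match the paper's definitions of $RHD$ and $RHS$, and your handling of them is sound.
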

\begin{proof}
The first item follows from Lemma \ref{lem:or}  and the~Mayer-Vietoris sequence in reduced homology 
$$
\dots\to\widetilde{H}_k(P_{\omega_1}\cap P_{\omega_2})\to 
\widetilde{H}_k(P_{\omega_1})\oplus \widetilde{H}_k(P_{\omega_2})\to
\widetilde{H}_k(P_{\omega_1\sqcup\omega_2})\to \widetilde{H}_{k-1}(P_{\omega_1}\cap P_{\omega_2})\to\dots
$$
The second item follows from these facts and the Alexander duality 
$\widetilde{H}^{k}(P_{\omega})=\widetilde{H}_{n-2-k}(P_{[m]\setminus\omega})$ in the sphere 
$\partial P\simeq S^{n-1}$, since $P_{[m]\setminus\omega}$ is homotopy equivalent to 
$\partial P\setminus P_{\omega}$. 
\end{proof}
\begin{corollary}
Let $P$ be a simple $n$-polytope, $n\leqslant 4$, and  let $\omega_1,\omega_2\subset[m]$ be
subsets such that $\omega_1\cap\omega_2=\varnothing$, 
and $P_{\omega_1}$, $P_{\omega_2}$  are  $(n-1)$-disks. 
\begin{itemize} 
\item If $P_{\omega_1\sqcup\omega_2}$ is an $(n-1)$-disk, then $P_{\omega_1}\cap P_{\omega_2}$ is an  $(n-2)$-disk.
\item If $P_{\omega_1\sqcup\omega_2}$ is an $(n-1)$-sphere, then $\omega_1\sqcup \omega_2=[m]$, and 
$P_{\omega_1}\cap P_{\omega_2}$ is an  $(n-2)$-sphere.
\end{itemize}
\end{corollary}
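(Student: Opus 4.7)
The plan is to combine Lemma~\ref{lem:k=2} with the classification of compact orientable manifolds in dimensions at most $2$. Every $(n-1)$-disk is automatically an $(n-1)$-$RHD$, and every $(n-1)$-sphere is automatically an $(n-1)$-$RHS$, so the hypotheses of Lemma~\ref{lem:k=2} are satisfied verbatim under the hypotheses of the corollary.

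First I would apply the first bullet of Lemma~\ref{lem:k=2} in the disk case to conclude that $P_{\omega_1}\cap P_{\omega_2}$ is an $(n-2)$-$RHD$, and the second bullet in the sphere case to conclude simultaneously that $\omega_1\sqcup\omega_2=[m]$ and that $P_{\omega_1}\cap P_{\omega_2}$ is an $(n-2)$-$RHS$. By Lemma~\ref{lem:or} applied with $k=2$, the intersection $P_{\omega_1}\cap P_{\omega_2}$ is a compact orientable $(n-2)$-manifold, possibly with boundary; note that the Mayer--Vietoris argument of Lemma~\ref{lem:k=2} already forces $P_{\omega_1}\cap P_{\omega_2}\ne\varnothing$, since otherwise $P_{\omega_1\sqcup\omega_2}=P_{\omega_1}\sqcup P_{\omega_2}$ would be disconnected and hence neither an $(n-1)$-disk nor an $(n-1)$-sphere. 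This is the setup required for the low-dimensional classification.

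Finally, since $n\leqslant 4$ forces $\dim(P_{\omega_1}\cap P_{\omega_2})\leqslant 2$, I would promote the rational-homology conclusions to honest disks and spheres as follows. In dimensions $0$ and $1$ the claim is immediate from the classification of compact $0$- and $1$-manifolds (trivial homology forces a point or an arc; the homology of $S^0$ or $S^1$ forces exactly those spaces). In dimension $2$: a compact connected orientable surface with boundary having the rational homology of $D^2$ has Euler characteristic $1=2-2g-b$, forcing $g=0$ and $b=1$, hence it is a disk; a closed connected orientable surface with the rational homology of $S^2$ has Euler characteristic $2$, hence genus $0$, hence is $S^2$. I expect no real obstacle, since the corollary is essentially an assembly of Lemma~\ref{lem:k=2} with the surface classification; unlike Lemma~\ref{lem:rhd4}, no separate Generalized Schoenflies argument is needed here because the dimension is already small enough for the bare classification to conclude.
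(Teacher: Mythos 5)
Your proof is correct and follows exactly the route the paper intends (the corollary is stated there without proof): apply Lemma~\ref{lem:k=2} after observing that disks and spheres are in particular $RHD$s and $RHS$s, then use orientability from Lemma~\ref{lem:or} and the classification of compact manifolds in dimensions $\leqslant 2$ to upgrade the $(n-2)$-$RHD$/$RHS$ conclusion to an honest disk/sphere — precisely the fact the paper invokes later in the proof of Corollary~\ref{cor:34nM}. Your remark that no Schoenflies-type argument is needed here, in contrast to Lemma~\ref{lem:rhd4}, is also accurate.
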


\section{A criterion when $N(P,\Lambda)\simeq S^4$}\label{sec:Sn4}
We will use the following result.
\begin{theorem} (Armstrong, \cite[Theorem 3]{A65}). Let $X$ be a connected
simply connected simplicial complex and let a finite group $G$ acts on $X$ 
by simplicial homeomorphisms.  Let $H$ be~a~subgroup in $G$ generated by all the elements 
$h\in G$ such that the set of fixed points $X^h$ is nonempty. 
Then $\pi_1(X/G)$ is isomorphic to $G/H$.
\end{theorem}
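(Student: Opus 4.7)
The plan is to reduce to the classical case of a free action by first quotienting $X$ by $H$. Specifically, I will establish three intermediate facts: (i) $H$ is normal in $G$; (ii) the induced action of $G/H$ on $X/H$ is free; (iii) $X/H$ is simply connected. Once these are in hand, the map $X/H\to X/G=(X/H)/(G/H)$ is a regular covering with deck group $G/H$, and since $X/H$ is simply connected it is the universal cover of $X/G$, giving $\pi_1(X/G)\simeq G/H$.

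Steps (i) and (ii) are essentially formal. For (i): if $h\in G$ fixes a point $x$, then $ghg^{-1}$ fixes $gx$, so conjugation by any $g\in G$ permutes the generating set of $H$ and hence preserves $H$. For (ii): if $[g]\in G/H$ fixes $[x]\in X/H$, then $g\cdot x=h\cdot x$ for some $h\in H$, so $h^{-1}g$ has $x$ as a fixed point and therefore belongs to the generating set of $H$; this forces $g\in H$ and $[g]=e$.

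The heart of the proof is (iii). I will take an arbitrary loop $\gamma$ based at $\pi(x_0)$ in $X/H$, where $\pi\colon X\to X/H$ is the projection, and exhibit a null-homotopy. After simplicial approximation in a sufficiently fine subdivision, $\gamma$ may be assumed to be an edge-loop, which lifts to an edge-path $\tilde\gamma$ in $X$ from $x_0$ to $h\cdot x_0$ for some $h\in H$. The key computation is the following: if $h\in G$ has a fixed point $y$ and $\alpha$ is any path in $X$ from $x_0$ to $y$, then $\alpha\cdot(h\alpha)^{-1}$ is a path in $X$ from $x_0$ to $h\cdot x_0$, and its $\pi$-image in $X/H$ is $\pi(\alpha)\cdot\pi(\alpha)^{-1}$, which is visibly null-homotopic. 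Writing $h=h_1h_2\cdots h_r$ with each $h_i$ having a fixed point $y_i$ and concatenating such local constructions through the intermediate points $h_1 x_0$, $h_1h_2 x_0$, etc., I build a reference path in $X$ from $x_0$ to $h\cdot x_0$ whose $\pi$-projection is a concatenation of null-homotopic loops, hence null-homotopic. Because $X$ is simply connected, $\tilde\gamma$ is homotopic rel endpoints to this reference path; projecting the homotopy down to $X/H$ yields a null-homotopy of $\gamma$.

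The main obstacle is the rigorous handling of path lifting, since $\pi\colon X\to X/H$ fails to be a covering map precisely at the fixed points of nontrivial elements of $H$. The standard workaround is to stay simplicial throughout: after barycentric subdivision, edge-paths in $X/H$ lift uniquely to edge-paths in $X$ once the initial vertex is specified, and a null-homotopy of a simplicial loop can be realized as a finite sequence of elementary moves across $2$-simplices, each of which is seen in $X/H$ through its $\pi$-image. One must also check that the reference path of the previous paragraph can be taken to be simplicial and that the homotopy between $\tilde\gamma$ and the reference path in $X$ admits a simplicial approximation whose projection is an actual homotopy in $X/H$; this bookkeeping, rather than any conceptual issue, is the delicate part of Armstrong's original argument.
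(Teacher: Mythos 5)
The paper does not prove this statement: it is quoted from Armstrong \cite{A65} and used as a black box, so there is no in-paper argument to compare yours against; I can only judge the proposal on its own terms, and on those terms it is essentially correct and follows the standard route to Armstrong's theorem. Your steps (i) and (ii) are fine as written, and once (iii) is established the conclusion does follow, provided you add the routine remarks that after a second barycentric subdivision the $G$-action is regular (so $X/H$ and $X/G$ carry quotient simplicial structures and $\pi$ is simplicial), that $(X/H)/(G/H)=X/G$, and that a free simplicial action of the finite group $G/H$ on the Hausdorff space $X/H$ is properly discontinuous, so $X/H\to X/G$ really is a regular covering with deck group $G/H$. Your argument for (iii) — lift an edge-loop of $X/H$ to an edge path in $X$ from $x_0$ to $h x_0$, build the reference path from the pieces $\alpha\cdot(h_i\alpha)^{-1}$ attached at the fixed points of the generators, use simple connectivity of $X$ to get a homotopy rel endpoints, and push it down — is sound. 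Two small corrections: edge-path lifts through $\pi$ are in general \emph{not} unique (uniqueness fails at vertices with nontrivial stabilizer, e.g. a reflection of a subdivided segment), but only \emph{existence} of lifts is needed, and that does hold for the quotient of a regular simplicial action; and your closing worry about simplicially approximating the homotopy is unnecessary, since $\pi$ is continuous and composing the homotopy in $X$ with $\pi$ already gives a genuine homotopy rel endpoints in $X/H$ between $\gamma$ and the null-homotopic projected reference loop.
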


For any, not necessarily right-angled, simple polytope $P$ define 
{\it the right-angled Coxeter group}\index{right-angled Coxeter group} 
$$
\mathcal{G}(P)=\langle \rho_1,\dots,\rho_m\rangle/(\rho_1^2=\dots=\rho_m^2=e, \rho_i\rho_j=\rho_j\rho_i,
\text{ if }F_i\cap F_j\ne\varnothing).
$$
\begin{definition}(see \cite{V71, D83, D08})
Define a space
\begin{gather*}
W(P)=P\times \mathcal{G}(P)/\sim,\\
\text{ where }(p,g_1)\sim(q,g_2)\text{ if and only if }p=q \text{ and }g_1^{-1}g_2\in
\langle \rho_i\colon p\in F_i\rangle
 \end{gather*}
\end{definition}
\begin{proposition}\cite[Theorems 10.1 and 13.5]{D83}
We have $\pi_1(W(P))=0$.  
\end{proposition}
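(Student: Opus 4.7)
The plan is to use a van Kampen argument based on a word-length filtration of $W(P)$ by the Coxeter group $\mathcal{G}(P)$, following Davis's original strategy. Denote by $\ell(g)$ the word length of $g\in\mathcal{G}(P)$ in the generators $\rho_1,\dots,\rho_m$, and by $g\cdot P\subset W(P)$ the image of $P\times\{g\}$ under the quotient map. Set $W_k=\bigcup_{\ell(g)\leqslant k}g\cdot P$; this exhausts $W(P)$, and $W_0=P$ is contractible, so since any loop $S^1\to W(P)$ has compact image lying in some $W_k$, it suffices to show inductively that every $W_k$ is simply connected.

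For the inductive step, enumerate the elements $g\in\mathcal{G}(P)$ with $\ell(g)=k$ and attach the chambers $g\cdot P$ to $W_{k-1}$ one at a time. The intersection of $g\cdot P$ with the previously built space is
$$A_g=\bigcup_{i\in\mathrm{In}(g)}F_i,$$
where $\mathrm{In}(g)=\{i\colon\ell(g\rho_i)<\ell(g)\}$ is the descent set of $g$ and the facets are identified with those of $g\cdot P$. The exchange condition, specialized to the right-angled group $\mathcal{G}(P)$, forces the reflections $\{\rho_i\colon i\in\mathrm{In}(g)\}$ to pairwise commute, hence by the defining relations of $\mathcal{G}(P)$ the facets $\{F_i\colon i\in\mathrm{In}(g)\}$ pairwise intersect. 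The crucial geometric input is that $A_g$ is contractible: directly in $P$ when these facets share a common vertex (the flag case), and in general after taking into account chambers already attached in $W_{k-1}$ that kill any residual loops in $A_g$. Granted this, van Kampen's theorem implies that $W_{k-1}\cup g\cdot P$ is simply connected, so inductively $\pi_1(W_k)=0$, whence $\pi_1(W(P))=0$.

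The main obstacle is precisely the contractibility of the attaching locus $A_g$. In the flag case it is immediate, but for a general simple polytope --- for instance a triangular prism, whose three lateral facets pairwise intersect but have no common vertex, so that their union deformation retracts onto a circle --- one must exploit the fact that the chambers $g'\cdot P$ with $\ell(g')<\ell(g)$ that have already been glued provide discs null-homotoping the lateral loops of $A_g$. This is the heart of Davis's argument, and the standard way to handle it is to pass first to the Davis chamber (the order complex of spherical cosets of $\mathcal{G}(P)$), where the corresponding attaching union is automatically contractible from the simplicial structure, and then transfer the conclusion back to $P$ via the natural map between the two mirror structures.
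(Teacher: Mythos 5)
Your overall strategy (word-length filtration, chamber-by-chamber van Kampen, identification of the attaching locus $g\cdot P\cap W_{k-1}$ with $g\cdot A_g$ where $A_g=\bigcup_{i\in\mathrm{In}(g)}F_i$, and the fact that the descent set $\mathrm{In}(g)$ is spherical, so the corresponding facets pairwise intersect) is exactly Davis's argument; note that the paper itself offers no proof, only the citation to \cite[Theorems 10.1 and 13.5]{D83}. The gap is at the step you yourself single out as crucial. Contractibility of $A_g$ is neither available (your prism example shows it can fail) nor needed, and the two patches you offer are not arguments: the assertion that chambers already glued into $W_{k-1}$ ``kill residual loops of $A_g$'' is beside the point, since van Kampen's hypotheses concern the intersection itself (for this purpose only its path-connectedness), not whether its loops bound in the larger pieces; and ``pass to the Davis chamber and transfer back via the natural map'' is a gesture, not a proof (for a simple polytope the Davis chamber with its mirror structure is in fact PL-homeomorphic to $P$, so nothing would be gained anyway).

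The correct and much simpler justification is this: to conclude $\pi_1(W_{k-1}\cup g\cdot P)=1$ from $\pi_1(W_{k-1})=1$ and $\pi_1(g\cdot P)=1$, van Kampen (applied after the usual slight thickening of the closed pieces) only needs the intersection to be nonempty and path-connected, since the resulting pushout is a quotient of $\pi_1(W_{k-1})\ast\pi_1(g\cdot P)=1$ regardless of $\pi_1(A_g)$; any loop of $A_g$ already bounds inside the contractible chamber $g\cdot P$. Path-connectedness of $A_g$ follows at once from what you proved: the facets indexed by $\mathrm{In}(g)$ pairwise intersect, and $\mathrm{In}(g)\ne\varnothing$ for $g\ne e$. (Contractibility of such unions is what one needs for the stronger statements, e.g.\ contractibility of the Davis complex, not for simple connectivity.) One further point you should address: two chambers $g\cdot P$ and $g'\cdot P$ with $\ell(g)=\ell(g')=k$ can intersect, so in your ``one at a time'' enumeration the previously built space is larger than $W_{k-1}$; however, if $[p,g]\in g'\cdot P$ with $\ell(g')\leqslant\ell(g)$ and $g'\ne g$, then expressing $g^{-1}g'$ in the finite stabilizer $\langle\rho_i\colon p\in F_i\rangle$ and using the minimal coset representative of $g$ modulo this subgroup yields an index $i$ with $p\in F_i$ and $\ell(g\rho_i)<\ell(g)$; hence the intersection of $g\cdot P$ with the union of all earlier chambers is still exactly $g\cdot A_g$, and the induction closes. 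With these repairs your proof is correct and is essentially the cited one of Davis.
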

The proof of the following fact is~straightforward.
\begin{proposition}
The space $W(P)$ is a connected topological $n$-manifold. Moreover, it has 
a~structure of~a~simplicial complex such that $\mathcal{G}(P)$ acts on it simplicially 
(the action comes from the  action of $\mathcal{G}(P)$ on itself $g(x)=gx$).
Moreover, $W(P)/\mathcal{G}(P)\simeq P$. 
\end{proposition}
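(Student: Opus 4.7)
The plan is to do everything locally and derive all four assertions from Lemma~\ref{lem:Mc}. The continuous surjection $\pi\colon W(P)\to P$, $[(p,g)]\mapsto p$, is invariant under the left $\mathcal{G}(P)$-action $g\cdot[(p,h)]=[(p,gh)]$ (well defined because $\sim$ depends only on the right coset of $h$ modulo the stabiliser $\langle \rho_i\colon p\in F_i\rangle$), and its fibres are exactly the $\mathcal{G}(P)$-orbits. This gives $W(P)/\mathcal{G}(P)\simeq P$ immediately. Connectedness is equally quick: $P$ is connected so each copy $[P\times\{g\}]\subset W(P)$ is connected, adjacent copies $P\times\{g\}$ and $P\times\{g\rho_i\}$ are identified along the nonempty image of~$F_i$, and the Cayley graph of $\mathcal{G}(P)$ with generators $\rho_1,\dots,\rho_m$ is connected.

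For the manifold property, I would fix a point $[(p,g)]\in W(P)$ and let $F_{i_1},\dots, F_{i_l}$ be the facets of $P$ containing $p$. By Lemma~\ref{lem:Mc}, choose a neighbourhood $U\subset P$ of $p$ meeting only these facets, with a PL homeomorphism $\varphi\colon U\to V\subset\mathbb{R}^l_{\geqslant}\times\mathbb{R}^{n-l}$ sending $F_{i_s}\cap U$ to $V\cap\{y_s=0\}$. The preimage of the corresponding neighbourhood of $[(p,g)]$ in $W(P)$ is $(U\times g\langle \rho_{i_1},\dots,\rho_{i_l}\rangle)/\sim$. Because $\rho_{i_1},\dots,\rho_{i_l}$ pairwise commute in $\mathcal{G}(P)$ (their facets meet at $p$), the parabolic subgroup $\langle \rho_{i_1},\dots,\rho_{i_l}\rangle$ is canonically $\mathbb{Z}_2^l$, so this local model is precisely the real moment-angle construction for $V$ with the $\mathbb{Z}_2^l$-coloring sending $\{y_s=0\}$ to the $s$-th basis vector. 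Unfolding each coordinate $y_s$ across $\{y_s=0\}$ identifies this local model with a neighbourhood of the origin in $\mathbb{R}^l\times\mathbb{R}^{n-l}=\mathbb{R}^n$, as required.

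For the simplicial structure, I would start with the barycentric subdivision $K$ of the face poset of $P$: a simplicial complex on $P$ whose simplices lie in closed faces and respect the face stratification. For each $g\in\mathcal{G}(P)$ transport $K$ to a triangulation $K_g$ of $P\times\{g\}$; the identifications under $\sim$ glue $K_g$ to $K_{g\rho_i}$ along the subcomplex of $K$ lying in $F_i$, so the $K_g$ assemble into a simplicial complex structure on $W(P)$. The left $\mathcal{G}(P)$-action permutes the copies $P\times\{g\}$ by relabelling and sends simplices to simplices, hence acts simplicially.

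The main obstacle is the local manifold step, specifically the identification of the stabiliser $\langle \rho_i\colon p\in F_i\rangle$ with $\mathbb{Z}_2^l$: one must verify that no extra relations hold among these pairwise commuting reflections so that exactly $2^l$ sheets are glued. This is the standard parabolic subgroup fact for right-angled Coxeter groups and is forced by the presentation, whose only relations beyond $\rho_i^2=e$ are commutation relations between generators whose facets intersect; I would cite this rather than reprove it. Everything else is a direct unwinding of the definition of $\sim$.
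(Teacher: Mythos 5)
Your argument is correct; note that the paper offers no proof of this proposition at all (it is explicitly declared straightforward), so there is nothing of the author's to compare with, and what you wrote is the standard ``basic construction'' argument, carried out at an adequate level of detail. Two small remarks. The one step you flag for citation, namely that $\langle\rho_{i_1},\dots,\rho_{i_l}\rangle\simeq\mathbb{Z}_2^l$ when $F_{i_1}\cap\dots\cap F_{i_l}\ne\varnothing$, can be closed without invoking the general theory of special (parabolic) subgroups: this subgroup is generated by $l$ commuting involutions, hence is a quotient of $\mathbb{Z}_2^l$, while the abelianization $\mathcal{G}(P)\to\mathbb{Z}_2^m$, $\rho_i\mapsto\boldsymbol{e}_i$, maps it onto $\langle\boldsymbol{e}_{i_1},\dots,\boldsymbol{e}_{i_l}\rangle\simeq\mathbb{Z}_2^l$, so it is exactly $\mathbb{Z}_2^l$; with that, your local model $(U\times\mathbb{Z}_2^l)/\sim$ unfolds to an open subset of $\mathbb{R}^n$ as you describe (the neighbourhood from Lemma~\ref{lem:Mc} meeting only the facets through $p$ guarantees the set you take is saturated, hence open in $W(P)$). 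In the simplicial step it is worth one sentence to verify that the glued object is an honest simplicial complex rather than merely a complex of $\Delta$-type: a simplex of $K_g$ has vertices the barycenters of a chain $F^{(0)}\subset\dots\subset F^{(k)}$ of faces, its relative interior lies in the relative interior of $F^{(k)}$, and $(q,g)\sim(q,g')$ holds for every point $q$ of the simplex if and only if $g^{-1}g'\in\langle\rho_i\colon F^{(k)}\subset F_i\rangle$, which is precisely the condition that the two vertex sets are identified; hence distinct simplices of the quotient have distinct vertex sets, and the left action, being the identity in the $P$-coordinate, is automatically simplicial and fixes pointwise any simplex it preserves, which is exactly the regularity needed for the later application of Armstrong's theorem.
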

Any vector-coloring $\Lambda$ of rank $r$ defines an epimorphism $\varphi_{\Lambda}\colon \mathcal{G}(P)\to \mathbb Z_2^r$.
The proof of the following fact is~straightforward.
\begin{proposition}
We have $N(P,\Lambda)=W(P)/{{\rm Ker\,} \varphi}$.
\end{proposition}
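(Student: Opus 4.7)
The plan is to factor $\varphi_\Lambda$ through the abelianization of $\mathcal{G}(P)$ and then match the two quotient constructions. Since all defining relations of $\mathcal{G}(P)$ hold in $\mathbb{Z}_2^m$ (squares are trivial there, and commutativity is imposed for all pairs, not just adjacent ones), the rule $\rho_i\mapsto \boldsymbol{e}_i$ extends to a well-defined epimorphism $\pi\colon \mathcal{G}(P)\to \mathbb{Z}_2^m$; this is the abelianization map, so the index $[\mathcal{G}(P):\Ker\pi]=2^m$ is finite. Writing $\bar\Lambda\colon\mathbb{Z}_2^m\to\mathbb{Z}_2^r$ for the map $\boldsymbol{e}_i\mapsto\Lambda_i$ with $H=\Ker\bar\Lambda$, we have $\varphi_\Lambda=\bar\Lambda\circ\pi$ and hence $\Ker\varphi_\Lambda=\pi^{-1}(H)\supset\Ker\pi$.

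Next I would define $f\colon W(P)\to\mathbb{R}\mathcal{Z}_P$ by $[p,g]\mapsto [p,\pi(g)]$. This is well defined because $\pi$ sends the stabilizer $\langle\rho_i\colon p\in F_i\rangle\subset\mathcal{G}(P)$ onto the stabilizer $\langle\boldsymbol{e}_i\colon p\in F_i\rangle\subset\mathbb{Z}_2^m$, so the relation defining $W(P)$ maps into the one defining $\mathbb{R}\mathcal{Z}_P$; continuity is clear from the quotient topologies. The map $f$ is equivariant for the left actions of $\mathcal{G}(P)$ (acting on $\mathbb{R}\mathcal{Z}_P$ through $\pi$), so $\Ker\pi$ acts trivially on the image and $f$ descends to $\bar f\colon W(P)/\Ker\pi\to\mathbb{R}\mathcal{Z}_P$.

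The crux is to see that $\bar f$ is a homeomorphism. For injectivity, two classes $[p,g_1]$ and $[p,g_2]$ coincide in $W(P)/\Ker\pi$ iff $g_2\in\Ker\pi\cdot g_1\cdot\langle\rho_i\colon p\in F_i\rangle$, and applying $\pi$ (using normality of $\Ker\pi$) this becomes $\pi(g_1)-\pi(g_2)\in\langle\boldsymbol{e}_i\colon p\in F_i\rangle$, which is exactly the relation defining $\mathbb{R}\mathcal{Z}_P$; surjectivity is immediate from surjectivity of $\pi$. Since $\Ker\pi$ has finite index in $\mathcal{G}(P)$ and $W(P)/\mathcal{G}(P)=P$ is compact, $W(P)/\Ker\pi$ is compact, so the continuous bijection $\bar f$ onto the compact Hausdorff space $\mathbb{R}\mathcal{Z}_P$ is a homeomorphism. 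Using $\Ker\pi\subset\Ker\varphi_\Lambda$ together with the isomorphism $\Ker\varphi_\Lambda/\Ker\pi\simeq H$ induced by $\pi$, one concludes
$$W(P)/\Ker\varphi_\Lambda\;\simeq\;\bigl(W(P)/\Ker\pi\bigr)\big/\bigl(\Ker\varphi_\Lambda/\Ker\pi\bigr)\;\simeq\;\mathbb{R}\mathcal{Z}_P/H=N(P,\Lambda).$$

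The only point that requires more than bookkeeping is the promotion of $\bar f$ from a continuous bijection to a homeomorphism, which reduces to the compactness of $W(P)/\Ker\pi$; this in turn follows from the finiteness of $\mathcal{G}(P)/\Ker\pi=\mathbb{Z}_2^m$ together with the compactness of $P=W(P)/\mathcal{G}(P)$. Everything else is a direct diagram-chase through the two parallel quotient constructions.
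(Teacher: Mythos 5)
Your argument is correct. The paper states this proposition without any written proof (it is treated as routine, in the spirit of Davis's construction), and your write-up supplies exactly the argument one would expect: factor $\varphi_\Lambda$ through the abelianization $\pi\colon\mathcal{G}(P)\to\mathbb Z_2^m$, identify $W(P)/\Ker\pi$ with $\mathbb R\mathcal Z_P$ via the equivariant map $[p,g]\mapsto[p,\pi(g)]$ (well-defined and bijective because $\pi$ carries $\langle\rho_i\colon p\in F_i\rangle$ onto $\langle\boldsymbol e_i\colon p\in F_i\rangle$ and $\Ker\pi$ is normal, a homeomorphism by compactness of $W(P)/\Ker\pi$ and Hausdorffness of $\mathbb R\mathcal Z_P$), and then quotient in stages using $\Ker\varphi_\Lambda/\Ker\pi\simeq H$. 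The only points left implicit — that $\bar f$ commutes with the projections to $P$ (so points over distinct base points cannot collide) and that $\bar f$ intertwines the residual action of $\Ker\varphi_\Lambda/\Ker\pi$ with that of $H$ on $\mathbb R\mathcal Z_P$ — follow immediately from the equivariance you already established, so there is no gap.
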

\begin{corollary}
We have $\pi_1(N(P,\Lambda))\simeq {{\rm Ker\,} \varphi}/K$, where the subgroup $K$ is generated by 
kernels of~the~mappings $\langle g\rho_{i_1}g^{-1},\dots g\rho_{i_n}g^{-1}\rangle\simeq\mathbb Z_2^n\to\mathbb Z_2^r$, 
$g\rho_{i_s}g^{-1}\to \Lambda_{i_s}$, for all elements $g\in G$ and all vertices $F_{i_1}\cap\dots\cap F_{i_n}$ of~$P$.
\end{corollary}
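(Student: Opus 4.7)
The plan is to apply Armstrong's theorem directly to $X = W(P)$ with the subgroup $G = \Ker \varphi_{\Lambda} \subset \mathcal{G}(P)$ acting. By the propositions immediately preceding the corollary, $W(P)$ is a connected, simply connected simplicial complex on which $\mathcal{G}(P)$ acts simplicially, so the restriction of this action to $\Ker \varphi_{\Lambda}$ is also simplicial, and $N(P,\Lambda) = W(P)/\Ker \varphi_{\Lambda}$. Armstrong's theorem will then give $\pi_1(N(P,\Lambda)) \simeq \Ker \varphi_{\Lambda}/H$, where $H$ is the subgroup of $\Ker \varphi_{\Lambda}$ generated by those elements whose fixed-point set in $W(P)$ is nonempty. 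The only remaining content of the corollary is therefore the identification $H = K$.

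To carry out that identification, I will use the definition of $W(P)$ to describe fixed points concretely: for $h \in \mathcal{G}(P)$, the action $h \cdot [p,g] = [p, hg]$ fixes $[p,g]$ iff $g^{-1}hg \in \langle \rho_i \colon p \in F_i\rangle$. Consequently $h$ has some fixed point iff $h$ is conjugate into the stabilizer $\langle \rho_i \colon p \in F_i\rangle$ of some $p \in P$. Because $P$ is a simple $n$-polytope, every such point-stabilizer is contained in the stabilizer $\langle \rho_{i_1},\dots,\rho_{i_n}\rangle \simeq \mathbb{Z}_2^n$ of some vertex $F_{i_1}\cap\dots\cap F_{i_n}$ of $P$ (choose any vertex of the face whose relative interior contains $p$). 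Hence an element of $\Ker \varphi_{\Lambda}$ has a fixed point iff it has the form $g\rho g^{-1}$ with $\rho \in \langle \rho_{i_1},\dots,\rho_{i_n}\rangle$ and $\varphi_{\Lambda}(\rho) = 0$. Conjugating by $g$ translates this condition into: $g\rho g^{-1}$ lies in the kernel of $\langle g\rho_{i_1}g^{-1},\dots,g\rho_{i_n}g^{-1}\rangle \simeq \mathbb{Z}_2^n \to \mathbb{Z}_2^r$, $g\rho_{i_s}g^{-1}\mapsto \Lambda_{i_s}$. The subgroup generated by all such elements, as $g$ ranges over $\mathcal{G}(P)$ and the vertex varies, is by definition $K$, so $H = K$.

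The one technical point that I expect to be the main obstacle is that Armstrong's theorem as quoted in the excerpt is formulated for \emph{finite} group actions, whereas $\Ker \varphi_{\Lambda}$ is typically infinite (for instance whenever $P$ is a compact right-angled polytope in a hyperbolic geometry). This is not a substantive issue: Armstrong's argument goes through verbatim for any discrete group acting simplicially and properly discontinuously on a simply connected simplicial complex, which is precisely the situation for $\mathcal{G}(P)$, and hence $\Ker \varphi_{\Lambda}$, on $W(P)$. Modulo this (presumably implicit) extension of the cited theorem, the proof is essentially formal once the fixed-point analysis of the previous paragraph is in place.
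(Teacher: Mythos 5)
Your proposal is correct and follows essentially the same route as the paper: the paper's own proof is exactly the stabilizer computation $St_{\mathcal{G}(P)}[p,g]=\langle g\rho_i g^{-1}\colon p\in F_i\rangle$, hence $St_{\Ker\varphi}[p,g]=\Ker\varphi\cap\langle g\rho_i g^{-1}\colon p\in F_i\rangle$, together with the observation that every such stabilizer lies in a subgroup associated to a vertex, followed by an (implicit) application of Armstrong's theorem to $W(P)$ with the group $\Ker\varphi$. Your additional remark that Armstrong's theorem must be extended from finite groups to properly discontinuous simplicial actions of discrete groups addresses a point the paper passes over silently; it is a legitimate observation but does not change the substance of the argument.
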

\begin{proof}
We have $St_{\mathcal{G}(P)}[p,g]=\langle g\rho_ig^{-1}\colon p\in F_i\rangle$. Therefore, 
$St_{{\rm Ker\,} \varphi}[p,g]={{\rm Ker\,} \varphi}\cap \langle g\rho_ig^{-1}\colon p\in F_i\rangle$. Any such a subgroup 
lies in a subgroup corresponding to some vertex of $P$.
\end{proof}

\begin{lemma}
Let $\pi_1(N(P,\Lambda))=0$. Then different vectors among $\Lambda_1,\dots,\Lambda_m$ are linearly independent and 
form a basis in $\mathbb Z_2^r$.
\end{lemma}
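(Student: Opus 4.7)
The plan is to factor $\Lambda$ through an auxiliary linearly independent vector-coloring and reduce the lemma to the triviality of a regular covering of the simply connected space $N(P,\Lambda)$. Let $v_1,\dots,v_k\in\mathbb{Z}_2^r$ denote the distinct vectors appearing among $\Lambda_1,\dots,\Lambda_m$, and partition the facet index set into classes $C_1,\dots,C_k$ accordingly. These values span $\mathbb{Z}_2^r$ by the definition of a vector-coloring, so $k\geq r$, and the claim amounts to showing $k=r$.

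First I would define the auxiliary coloring $c\colon F_i\mapsto e_{[\Lambda_i]}\in\mathbb{Z}_2^k$, where $e_l$ is the standard basis vector indexed by the class of $\Lambda_i$. Since different classes give different basis vectors, $c$ is a linearly independent vector-coloring of rank $k$, and $\Lambda=\pi\circ c$ for the linear map $\pi\colon\mathbb{Z}_2^k\to\mathbb{Z}_2^r$ sending $e_l\mapsto v_l$. It follows that $H_c\subseteq H_\Lambda$ with $H_\Lambda/H_c\simeq\Ker\pi\simeq\mathbb{Z}_2^{k-r}$, yielding a natural quotient map $N(P,c)\to N(P,\Lambda)$.

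The key step is to verify that the residual action of $H_\Lambda/H_c$ on the closed smooth manifold $N(P,c)$ is free. Suppose $\bar h\in H_\Lambda/H_c$ fixes a point of $N(P,c)$ lying over a face $F=F_{i_1}\cap\dots\cap F_{i_l}$ of $P$. Lifting to $\mathbb{R}\mathcal{Z}_P$, this amounts to $\bar h$ lying in the span $\langle e_{[\Lambda_{i_1}]},\dots,e_{[\Lambda_{i_l}]}\rangle\subset\mathbb{Z}_2^k$, so $\bar h=\sum_{l'} b_{l'}e_{l'}$ summed over the classes $l'$ present at $F$. The condition $\bar h\in\Ker\pi$ then reads $\sum_{l'} b_{l'}v_{l'}=0$, and since the manifold criterion of \cite[Theorem 5.1]{E24} forces the distinct values of $\Lambda$ at $F$ to be linearly independent, all $b_{l'}$ vanish and $\bar h=0$.

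Consequently $N(P,c)\to N(P,\Lambda)$ is a regular covering of degree $2^{k-r}$ between connected spaces, and the short exact sequence $1\to\pi_1(N(P,c))\to\pi_1(N(P,\Lambda))\to\mathbb{Z}_2^{k-r}\to 1$ shows that $\pi_1(N(P,\Lambda))$ surjects onto the deck group $\mathbb{Z}_2^{k-r}$. The hypothesis $\pi_1(N(P,\Lambda))=0$ therefore forces $k=r$, so the distinct values of $\Lambda$ are $r$ in number and form a basis of $\mathbb{Z}_2^r$. The main obstacle is this freeness step, which implicitly assumes the manifold condition on $\Lambda$ (natural in Section~\ref{sec:Sn4}, where $N(P,\Lambda)\simeq S^4$ is under analysis); without this assumption $N(P,c)\to N(P,\Lambda)$ would merely be a branched covering and Armstrong's theorem would have to be invoked directly.
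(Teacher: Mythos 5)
Your proof is correct, and it takes a genuinely different route from the paper. The paper first establishes, via Armstrong's theorem applied to the simply connected Davis complex $W(P)$, that $\pi_1(N(P,\Lambda))\simeq \Ker\varphi_\Lambda/K$ with $K$ generated by the vertex-stabilizer kernels; the hypothesis $\pi_1=0$ then says $\Ker\varphi_\Lambda=K$, and after abelianizing to $\mathbb Z_2^m$ the author runs a parity argument (coordinate sums over the color classes $S_i$) to show that any linear dependence among the distinct values $\Lambda_{j_1},\dots,\Lambda_{j_l}$ would contradict the fact that each vertex kernel is generated by vectors $\boldsymbol{e}_{i_a}+\boldsymbol{e}_{i_b}$ with $\Lambda_{i_a}=\Lambda_{i_b}$. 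You instead interpolate the linearly independent coloring $c$ of rank $k$, check that the residual $\Ker\pi\simeq\mathbb Z_2^{k-r}$ acts freely on the closed manifold $N(P,c)$, and conclude by covering-space theory over the simply connected base that $k=r$; this bypasses the Coxeter-group presentation, the Armstrong-based corollary, and the abelianization/parity computation entirely, at the cost of needing the intermediate manifold $N(P,c)$ and the standard exact sequence for a regular covering. Both arguments use the closed-manifold criterion of \cite[Theorem 5.1]{E24} (linear independence of the distinct values of $\Lambda$ along each face), which is not among the stated hypotheses of the lemma but is invoked in the paper's own proof as well, so your explicit flagging of this assumption matches the paper's implicit usage rather than constituting a gap. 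Your freeness verification is sound: a stabilizer element in $\Ker\pi$ at a point over $F_{i_1}\cap\dots\cap F_{i_l}$ is a combination $\sum b_{l'}\boldsymbol{e}_{l'}$ over the classes present at that face, and $\sum b_{l'}v_{l'}=0$ forces all $b_{l'}=0$ precisely by that criterion; together with connectedness of $N(P,c)$ this gives the surjection $\pi_1(N(P,\Lambda))\to\mathbb Z_2^{k-r}$ and hence $k=r$, so the distinct values form a basis.
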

\begin{proof}
If $\pi_1(N(P,\Lambda))=0$, then ${{\rm Ker\,} \varphi}$ is generated by 
kernels of~the~mappings $\langle g\rho_{i_1}g^{-1},\dots, g\rho_{i_n}g^{-1}\rangle\simeq\mathbb Z_2^n\to\mathbb Z_2^r$, 
$g\rho_{i_s}g^{-1}\to \Lambda_{i_s}$, for all elements $g\in \mathcal{G}(P)$ and all vertices $F_{i_1}\cap\dots\cap F_{i_n}$ of~$P$.

Consider the composition $\mathcal{G}(P)\to\mathbb Z_2^m\to \mathbb Z_2^r$, where the first mapping 
$\pi\colon \rho_i\to \boldsymbol{e}_i$ is~the~abelianization homomorphism, and the second mapping $\Lambda$ 
is a linear mapping defined by the condition $\boldsymbol{e}_i\to\Lambda_i$. We have 
$\pi^{-1}({\rm Ker\,}\Lambda)={\rm Ker\,}\varphi$ and 
$$
\pi\langle g\rho_{i_1}g^{-1},\dots, g\rho_{i_n}g^{-1}\rangle=\pi\langle\rho_{i_1},\dots, \rho_{i_n}\rangle=
\langle \boldsymbol{e}_{i_1},\dots, \boldsymbol{e}_{i_n}\rangle.
$$
In particular, the restriction of $\pi$ to $\langle g\rho_{i_1}g^{-1},\dots, g\rho_{i_n}g^{-1}\rangle$ 
is injective for any vertex $v$ and any $g\in\mathcal{G}(P)$. Also 
$$
\pi^{-1}({\rm Ker}\,\Lambda\left.\right|_{\langle \boldsymbol{e}_{i_1},\dots, \boldsymbol{e}_{i_n}\rangle})={\rm Ker}\,\varphi\left.\right|_{\langle g\rho_{i_1}g^{-1},\dots, g\rho_{i_n}g^{-1}\rangle}.
$$ 

Since ${{\rm Ker\,} \varphi}$ is generated by 
kernels ${\rm Ker}\,\varphi\left.\right|_{\langle g\rho_{i_1}g^{-1},\dots, g\rho_{i_n}g^{-1}\rangle}$, the group
$\pi({{\rm Ker\,} \varphi})={\rm Ker}\,\Lambda$ is~generated by~the~subgroups $\pi({\rm Ker}\,\varphi\left.\right|_{\langle g\rho_{i_1}g^{-1},\dots, g\rho_{i_n}g^{-1}\rangle})={\rm Ker}\,\Lambda\left.\right|_{\langle \boldsymbol{e}_{i_1},\dots, \boldsymbol{e}_{i_n}\rangle}$
for all vertices $F_{i_1}\cap\dots\cap F_{i_n}$ of $P$,
that is
\begin{equation}\label{eqker}
{\rm Ker}\,\Lambda=\sum\limits_{v\in P}{\rm Ker}\,\Lambda\left.\right|_{\langle \boldsymbol{e}_{i_1},\dots, \boldsymbol{e}_{i_n}\rangle}
\end{equation}
If $N(P,\Lambda)$ is a closed topological manifold, then by \cite[Theorem 5.1]{E24} at each vertex $F_{i_1}\cap\dots\cap F_{i_n}$
different vectors among $\{\Lambda_{i_1},\dots, \Lambda_{i_n}\}$ are linearly independent. In particular, 
${\rm Ker}\,\Lambda\left.\right|_{\langle \boldsymbol{e}_{i_1},\dots, \boldsymbol{e}_{i_n}\rangle}$ is generated 
by the vectors $\boldsymbol{e}_{i_a}+ \boldsymbol{e}_{i_b}$ with $\Lambda_{i_a}=\Lambda_{i_b}$.
We have  $[m]=\{1,\dots,m\}=S_1\sqcup\dots\sqcup S_t$, where 
$\Lambda_i=\Lambda_j$ (equivalently, $\boldsymbol{e}_i+ \boldsymbol{e}_j\in {\rm Ker}\,\Lambda$) 
if and only if $i$ and $j$ lie~in~the~same set~$S_k$. 
Assume that there is a linear dependence $\Lambda_{j_1}+\dots+\Lambda_{j_l}=0$, where all the vectors 
$\{\Lambda_{j_1},\dots, \Lambda_{j_l}\}$ are pairwise different, that is $j_p\in S_{i_p}$, where $i_p\ne i_q$ if $j_p\ne j_q$. 
Then $\boldsymbol{e}_{j_1}+\dots+ \boldsymbol{e}_{j_l}\in {\rm Ker}\,\Lambda$, and by (\ref{eqker})
$$
\boldsymbol{e}_{j_1}+\dots +\boldsymbol{e}_{j_l}=\sum\limits_{i=1}^t\sum\limits_{a,b\in S_i, a<b}\lambda_{a,b}(\boldsymbol{e}_a+\boldsymbol{e}_b).
$$
Since there is a direct sum $\mathbb Z_2^m=\bigoplus\limits_{i=1}^t\langle \boldsymbol{e}_j\colon j\in S_i\rangle$,
we have $\boldsymbol{e}_{j_p}=\sum\limits_{a,b\in S_{i_p}, a<b}\lambda_{a,b}(\boldsymbol{e}_a+\boldsymbol{e}_b)$. 
But on the left side the sum $x_1+\dots+x_m$ of all the coordinates of the vector is $1$, and on~the~right side it~is~$0$.
A contradiction. Thus, all different vectors among $\{\Lambda_1,\dots, \Lambda_m\}$ are linearly independent.
In particular, they form a basis in $\mathbb Z_2^r$.
\end{proof}
\begin{definition}
For a subset $\omega\subset [M]=\{1,\dots, M\}$ define  $G_{\omega}=\bigcup\limits_{i\in\omega} G_i$.
For a collection of pairwise disjoint subsets $\omega_1$, $\dots$, $\omega_k\subset[M]$ define 
$G_{\omega_1,\dots,\omega_k}=G_{\omega_1}\cap\dots\cap G_{\omega_k}$.
\end{definition}
\begin{corollary}\label{cor:k=1}
Let $N(P,\Lambda)\simeq S^n$. Then for each proper subset $\omega\subset [M]$ the set 
$G_{\omega}$ is an~$(n-1)$-$RHD$. For $\omega=[M]$ we have $G_{\omega}=\partial P\simeq S^{n-1}$.
\end{corollary}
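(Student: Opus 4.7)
The plan is to identify the facets $G_1, \dots, G_M$ of $\mathcal{C}(P,\Lambda)$ with the color classes of $\Lambda$ (so $M = r$), and then translate each $G_\omega$ into some $P_{\omega'}$ with $\omega' \in {\rm row}\,\Lambda$, to which Theorem~\ref{HNPLth} applies. Since $N(P,\Lambda) \simeq S^n$ is simply connected, the preceding lemma yields a basis $v_1, \dots, v_r$ of $\mathbb Z_2^r$ exhausting the distinct values of $\Lambda$; set $S_k = \{i : \Lambda_i = v_k\}$, so $[m] = S_1 \sqcup \cdots \sqcup S_r$. Orientability of $S^n$ lets us arrange $\Lambda_i = (1, \lambda_i)$, placing $[m] \in {\rm row}\,\Lambda$ via $c = (1, 0, \dots, 0)$. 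More generally, because $v_1, \dots, v_r$ is a basis, each $T \subset [r]$ determines a unique $c_T \in (\mathbb Z_2^r)^*$ with $c_T v_k = 1 \iff k \in T$, giving $\omega_T := \{i : c_T \Lambda_i = 1\} = \bigsqcup_{k \in T} S_k$; the $2^r$ such $\omega_T$ exhaust ${\rm row}\,\Lambda$.

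Applying Theorem~\ref{HNPLth} with $R = \mathbb Q$ gives $H^k(S^n, \mathbb Q) \cong \bigoplus_{T \subset [r]} \widetilde{H}^{k-1}(P_{\omega_T}, \mathbb Q)$. The summand $T = \emptyset$ (with $P_\emptyset = \emptyset$) already accounts for $H^0(S^n, \mathbb Q)$, and $T = [r]$ (with $P_{[m]} = \partial P \simeq S^{n-1}$) accounts for $H^n(S^n, \mathbb Q)$. Hence for every $\emptyset \ne T \subsetneq [r]$ the set $P_{\omega_T}$ is $\mathbb Q$-acyclic. Combined with Lemma~\ref{lem:or}, which identifies $P_{\omega_T}$ as an orientable $(n-1)$-manifold possibly with boundary, this shows $P_{\omega_T}$ is an $(n-1)$-RHD.

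Specializing to $T = \{k\}$, each $P_{S_k}$ is an $(n-1)$-RHD and in particular connected, so each color class is a single facet of $\mathcal{C}(P,\Lambda)$. After relabeling, $M = r$ and $G_k = P_{S_k}$. For any proper $\omega \subset [M]$, $G_\omega = \bigcup_{k \in \omega} P_{S_k} = P_{\omega'}$ where $\omega' := \bigsqcup_{k \in \omega} S_k \in {\rm row}\,\Lambda$; this is an $(n-1)$-RHD by the previous paragraph. Finally, $G_{[M]} = \bigcup_i F_i = \partial P \simeq S^{n-1}$. The main obstacle is establishing that each color class is connected, since otherwise proper subsets of $[M]$ need not correspond to hyperplane-type elements of ${\rm row}\,\Lambda$; this is resolved precisely by the Choi--Park computation applied to singletons $T = \{k\}$.
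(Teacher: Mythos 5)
Your proof is correct and takes essentially the same route as the paper, whose own proof just cites Theorem~\ref{HNPLth} and Proposition~\ref{pr:rhs} (implicitly together with the preceding lemma that the distinct colors form a basis). You simply spell out the details the paper leaves implicit, in particular that each color class $P_{S_k}$ is connected (being an $(n-1)$-$RHD$), so that the facets $G_i$ are exactly the color classes and every proper $G_\omega$ equals some $P_{\omega'}$ with $\omega'\in{\rm row}\,\Lambda$.
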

\begin{proof}
This follows directly from Theorem \ref{HNPLth} and Proposition \ref{pr:rhs}.
\end{proof}

\begin{lemma}\label{lem:RHDS}
Let $N(P,\Lambda)\simeq S^n$. Then for 
each collection of pairwise disjoint proper subsets $\omega_1$, $\dots$, $\omega_k\subset[M]$ 
the set $G_{\omega_1,\dots,\omega_k}$ is 
\begin{itemize}
\item an~$(n-k)$-$RHD$, if $\omega_1\sqcup\dots\sqcup\omega_k\ne[M]$;
\item an~$(n-k)$-$RHS$, if $\omega_1\sqcup\dots\sqcup\omega_k=[M]$;
\end{itemize} 
\end{lemma}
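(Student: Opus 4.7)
The plan is to proceed by induction on $k$. The base case $k=1$ is precisely Corollary~\ref{cor:k=1}: for each proper $\omega\subsetneq[M]$ the set $G_\omega$ is an $(n-1)$-$RHD$, while $G_{[M]}=\partial P\simeq S^{n-1}$ is an $(n-1)$-$RHS$.

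For the inductive step, suppose the claim holds for any collection of fewer than $k$ pairwise disjoint proper subsets, with $k\geqslant 2$. Given pairwise disjoint proper $\omega_1,\dots,\omega_k\subset[M]$, set
$$
A=G_{\omega_1,\omega_3,\dots,\omega_k},\qquad B=G_{\omega_2,\omega_3,\dots,\omega_k},
$$
so that $A\cup B=G_{\omega_1\sqcup\omega_2,\omega_3,\dots,\omega_k}$ and $A\cap B=G_{\omega_1,\omega_2,\omega_3,\dots,\omega_k}$. The three sets $A$, $B$, $A\cup B$ are indexed by $(k-1)$-tuples of pairwise disjoint nonempty subsets of $[M]$. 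By the inductive hypothesis $A$ and $B$ are $(n-k+1)$-$RHD$s (the relevant index-unions omit the nonempty $\omega_2$ and $\omega_1$ respectively), and $A\cup B$ is an $(n-k+1)$-$RHD$ if $\omega_1\sqcup\dots\sqcup\omega_k\ne[M]$ and an $(n-k+1)$-$RHS$ otherwise.

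I would then apply the Mayer--Vietoris sequence in reduced $\mathbb{Q}$-homology to the decomposition $A\cup B$. In the first case all three outer terms vanish, forcing $\widetilde H_*(A\cap B;\mathbb{Q})=0$; in the second case only $\widetilde H_{n-k+1}(A\cup B;\mathbb{Q})=\mathbb{Q}$ is nonzero among the outer terms, yielding $\widetilde H_q(A\cap B;\mathbb{Q})\simeq\widetilde H_{q+1}(A\cup B;\mathbb{Q})$, which equals $\mathbb{Q}$ exactly when $q=n-k$. Lemma~\ref{lem:or} shows that $A\cap B=G_{\omega_1,\dots,\omega_k}$ is a compact orientable $(n-k)$-manifold, possibly with boundary, and the top-dimensional rational homology determines which: vanishing top homology forces a nonempty boundary (giving an $(n-k)$-$RHD$), whereas a nonzero top homology class forces the manifold to be closed (giving an $(n-k)$-$RHS$). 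This yields the two alternatives in the lemma.

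The main obstacles I anticipate are technical: (i) justifying Mayer--Vietoris for the closed pair $(A,B)$ inside $\partial P$, which requires replacing $A,B$ with regular open neighborhoods that deformation retract onto them (available because $\partial P$ carries a natural PL structure by Lemma~\ref{lem:Mc}); and (ii) reconciling the degenerate cases $n-k\leqslant 0$ or empty $A\cap B$ with the paper's conventions that an $n$-$RHD$ is a point and an $n$-$RHS$ is empty for $n<0$, which should emerge automatically from the Mayer--Vietoris computation once one treats reduced homology in non-positive degrees with care.
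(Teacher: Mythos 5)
Your proof is correct and follows essentially the same route as the paper: induction on $k$, writing $G_{\omega_1,\dots,\omega_k}$ as the intersection of two sets whose union merges two of the $\omega_i$'s, then applying the Mayer--Vietoris sequence together with Lemma~\ref{lem:or}. The only cosmetic difference is that the paper starts the induction at $k\leqslant 2$ by citing Lemma~\ref{lem:k=2}, whereas you absorb the $k=2$ case into the general inductive step and spell out the RHD/RHS dichotomy via the top-dimensional rational homology of the compact orientable manifold $G_{\omega_1,\dots,\omega_k}$, a point the paper leaves implicit.
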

\begin{proof}
We will prove this by induction on $k$. For $k\leqslant 2$ this follows from Lemma \ref{lem:k=2} and~Corollary \ref{cor:k=1}.
Let the lemma hold for $1$, $\dots$, $k-1$. We have 
$G_{\omega_1,\dots,\omega_k}=
G_{\omega_1,\dots,\omega_{k-2},\omega_{k-1}}\cap G_{\omega_1,\dots,\omega_{k-2},\omega_k}$,
where $$
G_{\omega_1,\dots,\omega_{k-2},\omega_{k-1}}\cup G_{\omega_1,\dots,\omega_{k-2},\omega_k}=
G_{\omega_1,\dots,\omega_{k-2},\omega_{k-1}\sqcup \omega_k}.
$$ 
Now the proof follows from the Mayer-Vietoris sequence and Lemma \ref{lem:or}. 
\end{proof}
\begin{corollary}\label{cor:NPSn}
Let $N(P,\Lambda)\simeq S^n$. Then $M\leqslant n+1$ and each $k$-face of the complex $C(P,\Lambda)$
is either a $k$-$RHD$ or a $k$-$RHS$.
\end{corollary}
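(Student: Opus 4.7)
The plan is to apply Lemma~\ref{lem:RHDS} to singleton partitions $\omega_s=\{i_s\}$ and extract both claims. The crucial ingredient is the asymmetric convention that a $(-1)$-$RHD$ is a single point while a $(-1)$-$RHS$ is empty; this turns the lemma into a combinatorial constraint on how many ``facets'' of $\mathcal{C}(P,\Lambda)$ can have a common point.

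For the bound $M\leqslant n+1$, I would argue by contradiction. Assuming $M\geqslant n+2$, the singletons $\omega_i=\{i\}$, $i=1,\dots,n+1$, form a collection of pairwise disjoint proper subsets of $[M]$ whose union $[n+1]$ is strictly smaller than $[M]$. By Lemma~\ref{lem:RHDS} the intersection $G_1\cap\dots\cap G_{n+1}=G_{\omega_1,\dots,\omega_{n+1}}$ is a $(-1)$-$RHD$, which by the stated convention is a single point $p$. Since each facet of $P$ belongs to exactly one $G_j$, the condition $p\in G_i$ forces $p$ to lie in some facet $F_{j_i}\subset G_i$, and the facets $F_{j_1},\dots,F_{j_{n+1}}$ are pairwise distinct; this contradicts the defining property of a simple $n$-polytope, where at most $n$ facets meet at any point.

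For the second claim, every $k$-face of $\mathcal{C}(P,\Lambda)$ is by definition a connected component of an intersection $G_{i_1}\cap\dots\cap G_{i_{n-k}}$ of $n-k$ distinct facets of $\mathcal{C}(P,\Lambda)$. Setting $\omega_s=\{i_s\}$ and applying Lemma~\ref{lem:RHDS} shows that the whole intersection is a $k$-$RHD$ (when $\{i_1,\dots,i_{n-k}\}\ne[M]$) or a $k$-$RHS$ (when $\{i_1,\dots,i_{n-k}\}=[M]$). For $k\geqslant 1$ both classes satisfy $\widetilde{H}_0(\,\cdot\,,\mathbb Q)=0$ and are therefore connected, so the single connected component coincides with the full intersection and inherits its type. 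For $k=0$, the intersection is either a single point (a $0$-$RHD$) or two points (a $0$-$RHS$), and in either case each individual connected component is a single point, hence a $0$-$RHD$.

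The main obstacle is conceptual rather than technical: one has to recognize that the convention $(-1)$-$RHD=\mathrm{pt}$, as opposed to $\varnothing$, is precisely what furnishes the contradiction driving the bound on $M$; without this, Lemma~\ref{lem:RHDS} would give no useful information about $(n+1)$-fold intersections. Beyond this, only minor bookkeeping remains, such as verifying that the singletons $\{i\}$ are proper subsets of $[M]$ (which fails only in the trivial case $M=1$, where $\partial P\simeq S^{n-1}$ is itself the unique facet and already an $(n-1)$-$RHS$) and handling the mild disconnectedness issue in the $0$-$RHS$ case.
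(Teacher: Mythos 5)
Your proof is correct and follows essentially the same route as the paper: both apply Lemma~\ref{lem:RHDS} to singleton subsets, use the convention that a negative-dimensional $RHD$ is a (nonempty) point together with the simplicity of $P$ to force $M\leqslant n+1$, and identify each $k$-face with a connected component of the corresponding intersection. The paper phrases the bound directly via $G_1\cap\dots\cap G_{M-1}$ rather than by contradiction, and leaves the connectedness bookkeeping implicit, but these are only presentational differences.
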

\begin{proof}
Indeed, $G_1\cap\dots\cap G_{M-1}$ is an~$(n-M+1)$-$RHD$. In particular,
it is nonempty. But 
the~intersection of any $n+1$ facets of $P$ is empty, hence $M-1\leqslant n$.  Each $k$-face of $C(P,\Lambda)$
is~a~connected component of $G_{i_1}\cap\dots\cap G_{i_{n-k}}$, hence by Lemma~\ref{lem:RHDS}
it~is~a~$k$-$RHD$ if $n-k<M$, and a~$k$-$RHS$ if $n-k=M$. 
\end{proof}
\begin{corollary}\label{cor:34nM}
Let $n\leqslant 4$. If $N(P,\Lambda)\simeq S^n$, then $M\leqslant n+1$
and for each collection of~pairwise disjoint proper subsets $\omega_1$, $\dots$, $\omega_k\subset[M]$ 
the set $G_{\omega_1,\dots,\omega_k}$ is 
\begin{itemize}
\item an~$(n-k)$-disk, if $\omega_1\sqcup\dots\sqcup\omega_k\ne[M]$;
\item an~$(n-k)$-sphere, if $\omega_1\sqcup\dots\sqcup\omega_k=[M]$.
\end{itemize}
\end{corollary}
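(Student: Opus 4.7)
My plan is to combine Corollary \ref{cor:NPSn} with the low-dimensional input that, in dimensions at most $3$, rational homology disks and rational homology spheres coincide with honest disks and spheres. Corollary \ref{cor:NPSn} already supplies the bound $M\leqslant n+1$ and the conclusion that $G_{\omega_1,\dots,\omega_k}$ is an $(n-k)$-$RHD$ or an $(n-k)$-$RHS$ according to whether $\omega_1\sqcup\dots\sqcup\omega_k$ is a proper subset of $[M]$ or equals $[M]$. The whole issue is to upgrade ``$RHD$/$RHS$'' to ``disk/sphere''. The key numerical observation is that since $k\geqslant 1$ and $n\leqslant 4$, the dimension $d=n-k$ always satisfies $d\leqslant 3$, so standard classification in dimensions $\leqslant 2$ together with Lemma \ref{lem:rhd4} will do the job.

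I would split the argument by the value of $k$. For $k=1$ with $\omega_1=[M]$, the set $G_{[M]}=\partial P$ is homeomorphic to $S^{n-1}$ simply because $P$ is a simple $n$-polytope, giving the sphere conclusion with nothing to prove. For $k=1$ with $\omega_1\subsetneq[M]$, Lemma \ref{lem:RHDS} furnishes an $(n-1)$-$RHD$ of dimension at most $3$; Lemma \ref{lem:rhd4} then applies verbatim and shows it is an $(n-1)$-disk. This is the only case in which genuine topology (the generalized Schoenflies theorem hidden in Lemma \ref{lem:rhd4}) is required.

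For $k\geqslant 2$ we have $\dim G_{\omega_1,\dots,\omega_k}=n-k\leqslant 2$, and by Lemma \ref{lem:or} the set is an orientable compact manifold (possibly with boundary). In the sphere case, a $d$-$RHS$ with $d\leqslant 2$ is homeomorphic to $S^d$: for $d=0$ the definition directly gives two points, for $d=1$ a closed $1$-manifold with $H_1=\mathbb{Q}$ is a single circle, and for $d=2$ a closed orientable surface with the rational homology of $S^2$ has genus zero and hence is $S^2$. In the disk case, Lemma \ref{lem:DS} says the boundary of our $d$-$RHD$ is a $(d-1)$-$RHS$, which by the sphere statement just established is a genuine $(d-1)$-sphere; classification of compact orientable manifolds of dimension $d\leqslant 2$ with spherical boundary and the rational homology of a disk then forces the manifold to be $D^d$.

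The main obstacle, as is already visible in the ambient paper, is the $k=1$ step for $n=4$: there the face $G_{\omega_1}$ has dimension $3$ and one really needs generalized Schoenflies (encapsulated in Lemma \ref{lem:rhd4}) rather than just classification, because $3$-dimensional rational homology manifolds are not determined by their rational homology in general. Once that input is accepted, the remaining cases reduce to routine surface and low-dimensional manifold classification and nothing else is needed.
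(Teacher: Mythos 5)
Your proposal is correct and follows essentially the same route as the paper: Lemma \ref{lem:RHDS} gives the $RHD$/$RHS$ conclusion (and $M\leqslant n+1$ via Corollary \ref{cor:NPSn}), Lemma \ref{lem:rhd4} (generalized Schoenflies) upgrades the codimension-one case, and classification of manifolds of dimension at most $2$ handles all deeper intersections. The only cosmetic remark is that the case $k=1$, $\omega_1=[M]$ you discuss is excluded by the properness hypothesis, and the statement about $G_{\omega_1,\dots,\omega_k}$ being $RHD$/$RHS$ comes from Lemma \ref{lem:RHDS} rather than Corollary \ref{cor:NPSn}, but you cite that lemma anyway, so nothing is missing.
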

\begin{proof}
This follows from Lemmas~\ref{lem:RHDS} and~\ref{lem:rhd4}
and the fact that an~$n$-$RHS$ is a topological $n$-sphere for $n\leqslant 2$ and an~$n$-$RHD$ is a topological $2$-disk for 
$n\leqslant 2$.
\end{proof}
\begin{lemma}\label{lem:34nM}
Let $n\leqslant 4$. If for a complex $\mathcal{C}(P,\Lambda)$ we have $M\leqslant n+1$
and for each collection of pairwise disjoint proper subsets $\omega_1$, $\dots$, $\omega_k\subset[M]$ 
the set $G_{\omega_1,\dots,\omega_k}$ is 
\begin{itemize}
\item an~$(n-k)$-disk, if $\omega_1\sqcup\dots\sqcup\omega_k\ne[M]$;
\item an~$(n-k)$-sphere, if $\omega_1\sqcup\dots\sqcup\omega_k=[M]$,
\end{itemize}
then $\mathcal{C}(P,\Lambda)$ is equivalent to $\mathcal{C}(n,M)$.
\end{lemma}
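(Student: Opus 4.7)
The plan is to proceed by induction on $n$, with a subsidiary induction on $M$, using the Generalized Schoenflies Theorem \ref{th:GST} at each step to upgrade abstract disks/spheres to standard ones and to extend boundary homeomorphisms across interiors. I fix the model $B^n_{M-1,\geq}$ with facets $F_j=B^n_{M-1,\geq}\cap\{x_j=0\}$ for $j<M$ and $F_M=B^n_{M-1,\geq}\cap S^{n-1}$; a direct calculation shows that this model satisfies the same intersection hypotheses, so it suffices to construct a homeomorphism $\Phi\colon P\to B^n_{M-1,\geq}$ carrying $G_i$ to $F_i$.

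The base case $n=2$ is a direct inspection: $\partial P\simeq S^1$, and the hypotheses force that $M$ facets (arcs) meet at $M$ points with the combinatorics of $\mathcal{C}(2,M)$ for $M\leqslant 3$, so the required equivalence is immediate. For the inductive step with $n\in\{3,4\}$, I would first verify that the combinatorial hypotheses descend to each facet. Concretely, the induced complex on $G_i$ has facets $G_i\cap G_j$ ($j\ne i$), and for any collection of pairwise disjoint proper subsets $\omega'_1,\dots,\omega'_k\subset[M]\setminus\{i\}$ one has
$$
G_i\cap G_{\omega'_1,\dots,\omega'_k}=G_{\{i\},\omega'_1,\dots,\omega'_k},
$$
which by hypothesis is an $(n-1-k)$-disk when $\omega'_1\sqcup\dots\sqcup\omega'_k\ne[M]\setminus\{i\}$ and an $(n-1-k)$-sphere when equality holds. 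By the inductive hypothesis, $G_i$ with its induced complex is equivalent to $\mathcal{C}(n-1,M-1)$, hence to $F_i$ with its standard induced facets.

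Now I induct on $M$. The case $M=1$ is trivial ($G_1=\partial P\simeq S^{n-1}$, one facet). For $M\geqslant 2$, I pick the facet $G_M$ and use the decomposition $\partial P=G_M\cup G_{[M-1]}$, where both pieces are $(n-1)$-disks (hypothesis), meeting along the $(n-2)$-sphere $\partial G_M=G_{\{M\},[M-1]}$. Using the dimension-$(n-1)$ case, I select a homeomorphism $h_M\colon G_M\to F_M$ sending the induced complex to the model. Simultaneously, I check that the induced complex on the \textit{other} $(n-1)$-disk $G_{[M-1]}$ (with facets the connected components of $G_i\cap G_{[M-1]}=G_i\setminus\operatorname{int}(G_i\cap G_M)$ for $i<M$) also satisfies the hypotheses of the lemma in dimension $n-1$ with $M-1$ facets, so by induction it is equivalent to the standard model $F_1\cup\dots\cup F_{M-1}$. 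Matching these two homeomorphisms on the common boundary sphere $\partial G_M$ is done by composing with an appropriate self-homeomorphism of $S^{n-2}$ preserving the facet trace — this is possible because both decompositions of $S^{n-2}$ are, by the dimension-$(n-2)$ case already treated, equivalent to $\mathcal{C}(n-2,M-1)$. This produces a facet-preserving homeomorphism $\partial P\to\partial B^n_{M-1,\geq}$. Finally, since $P$ and $B^n_{M-1,\geq}$ are both $n$-disks with matched boundary sphere structure, the boundary homeomorphism extends over the interior via the Alexander coning trick, completing $\Phi$.

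The main obstacle is the compatibility of the facet-preserving homeomorphisms across different pieces: having identified each $(n-1)$-face and each $(n-2)$-face with its model up to homeomorphism, one must adjust the choices so they agree on common subfaces. This is precisely where $n\leqslant 4$ is essential: the Generalized Schoenflies theorem guarantees that the $(n-2)$-sphere $\partial G_M$ bounds a standard disk on both sides (Lemma \ref{lem:rhd4}), and more generally each face can be taken to be standardly embedded, so the successive extensions from codimension $1$ to $0$ never meet an obstruction. The bookkeeping of these extensions — performed face by face, starting from the highest-codimension intersections and working outward — is routine given the face lattice match, but it is where the bulk of the careful writing lies.
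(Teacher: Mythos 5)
Your overall strategy (descend to faces, standardize them, and extend homeomorphisms outward by coning) is in the same spirit as the paper's argument, which simply carries this out case by case for $n\in\{2,3,4\}$ and $M\leqslant n+1$. However, as written your induction has a genuine set-up flaw at the induction-on-$M$ step. You claim that the decomposition of the $(n-1)$-disk $G_{[M-1]}$ into the pieces $G_1,\dots,G_{M-1}$ ``satisfies the hypotheses of the lemma in dimension $n-1$ with $M-1$ facets.'' It does not, in any dimension: the lemma concerns a decomposition of the boundary sphere of a polytope in which the total intersection $G_{\{1\},\dots,\{M\}}$ is a sphere, whereas $G_{[M-1]}$ is a disk decomposed into top-dimensional pieces whose total intersection $G_1\cap\dots\cap G_{M-1}=G_{\{1\},\dots,\{M-1\}}$ is an $(n-M+1)$-\emph{disk} (all multi-intersections here are disks, never spheres). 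So the inductive hypothesis cannot be invoked for this piece; you need a separate relative (``disk'') version of the statement, asserting that such an all-disk decomposition of a ball is standard --- equivalently, that it is a cone over the standard decomposition of the sphere $\partial G_M$ one dimension lower --- and this companion statement carries the real content. It is precisely the point the paper verifies by hand in each case (for instance, for $n=4$, $M=5$, the recognition that the face structure of the $3$-disk $G_1\cup G_2\cup G_3\cup G_4$ is a cone over its boundary structure with apex the point $G_1\cap G_2\cap G_3\cap G_4$). Without formulating and proving this relative statement, your double induction does not close.

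Two smaller inaccuracies are worth flagging. First, in Lemma \ref{lem:34nM} the hypotheses already give genuine disks and spheres, so Theorem \ref{th:GST} and Lemma \ref{lem:rhd4} are not where $n\leqslant 4$ enters; the low dimension is needed (implicitly in the paper as well) for the standardness of how lower-dimensional faces sit inside the pieces --- e.g.\ that a locally flat properly embedded $2$-disk such as $G_i\cap G_j$ in the $3$-disk $G_i\cup G_j$ is boundary-parallel, and that the circle and arc configurations in the $2$-dimensional strata are standard; these are $2$- and $3$-dimensional facts supplied by the local model of Lemma \ref{lem:Mc}, and they are what fail to be automatic in higher dimensions. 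Second, in your matching step the relevant fact is not that both decompositions of the $(n-2)$-sphere are equivalent to $\mathcal{C}(n-2,M-1)$, but that a facet-preserving self-homeomorphism of $\partial F_M$ extends to a facet-preserving self-homeomorphism of the flat part $F_1\cup\dots\cup F_{M-1}$ of the model; this is true (the flat part is the cone with apex the origin over $\partial F_M$, so one cones the discrepancy), and it is this cone-extension property that makes the gluing legitimate.
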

\begin{proof}
We will show that complexes $\mathcal{C}(P,\Lambda)$ and $\mathcal{C}(n,M)$ have the same  
combinatorial structure. Then an equivalence can be constructed inductively starting from the skeleton
of minimal dimension, which is either the set of $(n+1)$ vertices for $M=n+1$, or $(n-M)$-sphere for $M<n+1$. 
Each face of greater dimension is a disk such that on its boundary a homeomorphism is already constructed. 
We extend the homeomorphism using the fact that the disk is the cone on its boundary.

For $n=2$ this is trivial. 

Let $n=3$. If $M=1$, then $\mathcal{C}(P,\Lambda)\simeq \mathcal{C}(n,1)$.  If $M=2$,
then $G_1$ and $G_2$ are $2$-disks such that $\partial G_1=\partial G_2=G_1\cap G_2$ is~a~circle. Hence,
$\mathcal{C}(P,\Lambda)\simeq \mathcal{C}(n,2)$. If $M=3$, then $G_1$, $G_2$ and $G_3$ are $2$-disks 
such that $G_i\cap G_j$ is a simple path for any $i\ne j$, and all these three paths have common ends,
which form the $0$-sphere $G_1\cap G_2\cap G_3$. Thus, the $1$-skeleton of $\mathcal{C}(P,\Lambda)$ is~a~theta-graph,
and $\mathcal{C}(P,\Lambda)\simeq \mathcal{C}(n,3)$. If $M=4$, then each $G_i$ is a disk, the~intersection 
$G_i\cap G_j$ of each two such disks is a simple path, the intersection of any three such disks is a point, and the intersection
of all the four disks is empty. Then the $1$-skeleton of $\mathcal{C}(P,\Lambda)$ is a $K_4$-graph, and  
$\mathcal{C}(P,\Lambda)\simeq \mathcal{C}(n,3)$.

Let $n=4$. If $M=1$, then $\mathcal{C}(P,\Lambda)\simeq \mathcal{C}(n,1)$. If $M=2$,
then $G_1$ and $G_2$ are $3$-disks such that $\partial G_1=\partial G_2=G_1\cap G_2$ is~a~$2$-sphere. 
Hence, $\mathcal{C}(P,\Lambda)\simeq \mathcal{C}(n,2)$. If $M=3$, then $G_1$, $G_2$ and $G_3$ are $3$-disks,
such that $G_i\cap G_j$ is a $2$-disk for $i<j$, and $G_1\cap G_2\cap G_3$ is a circle. We have $G_1\cup G_2$
is a $3$-disk with a circle $G_1\cap G_2\cap G_3=\partial (G_1\cap G_2)$ on its boundary, and the complement
to this disk is the interior of $G_3$. Hence, 
$\mathcal{C}(P,\Lambda)\simeq \mathcal{C}(n,3)$. If $M=4$, then  $G_1$, $G_2$, $G_3$, $G_4$ are $3$-disks,
$G_i\cap G_j$ is a $2$-disk for any $i< j$,  $G_i\cap G_j\cap G_k$ is a simple path for any $i<j<k$, and all these
four paths have the~common ends, which form the $0$-sphere $G_1\cap G_2\cap G_3\cap G_4$. Then $G_1\cup G_2\cup G_3$
is a $3$-disk with the theta-graph on the boundary formed by
paths $G_i\cap G_j\cap G_4$, $i<j<4$. There is the path $G_1\cap G_2\cap G_3$ inside this disk
connecting the vertices of~the~theta-graph and each $G_i\cap G_j$ is a $2$-disk bounded by the paths
$G_1\cap G_2\cap G_3$ and $G_i\cap G_j\cap G_4$. The complement to $G_1\cup G_2\cup G_3$ in $\partial P$
is the interior of $G_4$. Thus, $\mathcal{C}(P,\Lambda)\simeq \mathcal{C}(n,4)$. If $M=5$, then each $G_i$
is a $3$-disk, each intersection $G_i\cap G_j$, $i<j$, is a $2$-disk, each intersection $G_i\cap G_j\cap G_k$,
$i<j<k$, is a simple path, each intersection $G_i\cap G_j\cap G_k\cap G_l$, $i<j<k<l$, is a point,
and $G_1\cap G_2\cap G_3\cap G_4\cap G_5=\varnothing$. Then    $G_1\cup G_2\cup G_3\cup G_4$
is a~$3$-disk with a~$K_4$-graph on~its~boundary formed by vertices $G_i\cap G_j\cap G_k\cap G_5$
and edges $G_i\cap G_j\cap G_5$. There is a point $G_1\cap G_2\cap G_3\cap G_4$ inside this disk
such that the face structure in the disk $G_1\cup G_2\cup G_3\cup G_4$ is~a~cone over the~face structure on its boundary 
with this apex. Also the complement to this disk is the interior of~$G_5$. Thus, 
$\mathcal{C}(P,\Lambda)\simeq \mathcal{C}(n,5)\simeq\partial \Delta^4$.

\end{proof}
As a corollary we obtain the following theorem.
\begin{theorem}\label{th:n34}
Let $\Lambda$ be a vector-coloring of~rank~$r$ of~a~simple polytope $P$ of dimension $n\leqslant 4$. 
Then $N(P,\Lambda)\simeq S^n$ if and only if $\mathcal{C}(P,\Lambda)\simeq \mathcal{C}(n,r)$.
\end{theorem}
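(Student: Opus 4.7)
The backward implication, namely that $\mathcal{C}(P,\Lambda)\simeq \mathcal{C}(n,r)$ forces $N(P,\Lambda)\simeq S^n$, is the easy direction and is already recorded in the introduction as an immediate consequence of \cite[Construction 5.8]{E24}: once the combinatorial structure on $\partial P$ matches that of the standard coloring of $\Delta^n$ in $r$ colors, the quotient is read off from that of $\mathbb R\mathcal{Z}_{\Delta^n}\simeq S^n$. So the task is to prove the forward direction.

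Assuming $N(P,\Lambda)\simeq S^n$ with $n\leqslant 4$, the plan is to combine the two main results of Sections~\ref{sec:RHS} and~\ref{sec:Sn4}. Corollary~\ref{cor:34nM} instantly gives both $M\leqslant n+1$ and the precise disk/sphere structure on every intersection $G_{\omega_1,\dots,\omega_k}$ of unions of facets: each is an $(n-k)$-disk when $\omega_1\sqcup\dots\sqcup\omega_k\ne [M]$ and an $(n-k)$-sphere when equality holds. Lemma~\ref{lem:34nM} then mechanically translates this face-by-face data into the equivalence $\mathcal{C}(P,\Lambda)\simeq \mathcal{C}(n,M)$. What remains is to identify $M$ with $r$.

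For this identification I would use the vanishing $\pi_1(S^n)=0$ (valid since $n\geqslant 2$; the case $n=1$ is trivial) together with the lemma proved earlier in this section stating that $\pi_1(N(P,\Lambda))=0$ implies that the distinct values among $\Lambda_1,\dots,\Lambda_m$ form a basis of $\mathbb{Z}_2^r$. In particular, the number $d$ of distinct color vectors equals $r$, and since every facet of $\mathcal{C}(P,\Lambda)$ is a connected component of a single color class, one always has $M\geqslant d=r$. To rule out $M>d$, suppose two distinct facets $G_i\ne G_j$ of $\mathcal{C}(P,\Lambda)$ carried the same vector. They would be disjoint connected components of the same color class, and hence $G_{\{i,j\}}=G_i\cup G_j$ would be disconnected; but when $M\geqslant 3$ this contradicts Corollary~\ref{cor:34nM}, which requires $G_{\{i,j\}}$ to be an $(n-1)$-disk. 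The small cases $M\in\{1,2\}$ are straightforward: if $M=2$ with both facets of the same color, all $\Lambda_i$ coincide and the single color class is the connected set $\partial P$, forcing $M=1$.

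I do not expect any essential obstacle in the assembly itself, since all the difficulty has been packed into the preparatory results. The most delicate ingredient is Corollary~\ref{cor:34nM}, where Lemma~\ref{lem:rhd4} invokes the generalized Schoenflies theorem on the codimension-one sphere $\partial G_\omega\simeq S^{n-2}$ to upgrade rational homology information to genuine topological disks and spheres; this is exactly what confines the whole argument to $n\leqslant 4$.
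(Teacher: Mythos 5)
Your proposal is correct and takes essentially the same route as the paper: the ``if'' direction is quoted from \cite[Construction 5.8]{E24}, and the ``only if'' direction is exactly the combination of Corollary~\ref{cor:34nM} with Lemma~\ref{lem:34nM}. The only addition is that you make explicit the identification $M=r$ (via the lemma that $\pi_1(N(P,\Lambda))=0$ forces the distinct $\Lambda_i$ to form a basis, together with connectedness of $G_{\{i,j\}}$ from Corollary~\ref{cor:34nM}), a step the paper leaves implicit even though it proves that lemma precisely for this purpose; your handling of it is correct.
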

\begin{proof}
The if part follows from \cite[Construction 5.8]{E24}. The only if part follows from Corollary~\ref{cor:34nM} and 
Lemma~\ref{lem:34nM}.
\end{proof}

\section{Hyperelliptic manifolds $N(P,\Lambda)$}\label{sec:hyp}
Theorem \ref{th:n34} implies the following result (see \cite[Construction 8.6]{E24} and \cite[Corollary 10.8]{E24}).
\begin{corollary}
Let  $\Lambda$ be a vector-coloring of~rank $r$ of a simple $n$-polytope $P$, $n\leqslant 4$.
Then an involution $\tau\in\mathbb Z_2^r$ is hyperelliptic (that is $N(P,\lambda)\simeq S^n$)  if and only if 
$\tau$ is special (that is $\mathcal{C}(P,\Lambda_\tau)\simeq \mathcal{C}(n,r-1)$, where $\Lambda_\tau$ 
is the composition of $\Lambda$ and the projection $\mathbb Z_2^r\to\mathbb Z_2^r/\langle\tau\rangle\simeq \mathbb Z_2^{r-1}$).
\end{corollary}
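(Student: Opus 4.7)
The plan is to reduce this corollary almost immediately to the main result Theorem~\ref{th:n34}. The key observation is that the notation $\Lambda_\tau$ refers to the vector-coloring of rank $r-1$ obtained from $\Lambda$ by composing with the canonical projection $\pi_\tau\colon \mathbb Z_2^r\twoheadrightarrow \mathbb Z_2^r/\langle\tau\rangle\simeq \mathbb Z_2^{r-1}$, i.e.\ $\Lambda_\tau=\pi_\tau\circ\Lambda$. Since $\Lambda$ is surjective onto $\mathbb Z_2^r$ and $\pi_\tau$ is an epimorphism, $\Lambda_\tau$ is indeed a vector-coloring of $P$ of rank $r-1$.

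First I would verify the identification $N(P,\Lambda)/\langle\tau\rangle \simeq N(P,\Lambda_\tau)$. By definition $N(P,\Lambda)=\mathbb R\mathcal Z_P/H$, where $H\subset\mathbb Z_2^m$ is the kernel of the epimorphism $\mathbb Z_2^m\to\mathbb Z_2^r$ determined by $\Lambda$. Taking the further quotient by $\langle\tau\rangle\subset \mathbb Z_2^r\simeq \mathbb Z_2^m/H$ gives $\mathbb R\mathcal Z_P/H'$, where $H'\subset \mathbb Z_2^m$ is the preimage of $\langle\tau\rangle$. But $H'$ is exactly the kernel of the composed epimorphism $\mathbb Z_2^m\to\mathbb Z_2^r\to \mathbb Z_2^r/\langle\tau\rangle$, which is the epimorphism associated to $\Lambda_\tau$. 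Hence $N(P,\Lambda)/\langle\tau\rangle=\mathbb R\mathcal Z_P/H'=N(P,\Lambda_\tau)$.

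Second, I would apply Theorem~\ref{th:n34} to $\Lambda_\tau$: since $P$ is a simple $n$-polytope with $n\leqslant 4$ and $\Lambda_\tau$ is a vector-coloring of rank $r-1$, we have $N(P,\Lambda_\tau)\simeq S^n$ if and only if $\mathcal{C}(P,\Lambda_\tau)\simeq \mathcal{C}(n,r-1)$. Combining this with the identification above, $\tau$ is hyperelliptic (that is, the orbit space $N(P,\Lambda)/\langle\tau\rangle\simeq S^n$) if and only if $\mathcal{C}(P,\Lambda_\tau)\simeq \mathcal{C}(n,r-1)$, which is precisely the definition of $\tau$ being special.

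There is essentially no obstacle: all the difficulty was absorbed into Theorem~\ref{th:n34}, and the only thing to check is the bookkeeping translation between ``quotienting $N(P,\Lambda)$ by $\tau$'' and ``forming the vector-coloring $\Lambda_\tau$ of rank $r-1$.'' The only minor point worth a remark is that one does not need to verify any additional manifold condition on $\Lambda_\tau$: the existence of the homeomorphism to $S^n$ is precisely what Theorem~\ref{th:n34} characterizes, without presupposing that $N(P,\Lambda_\tau)$ is already known to be a closed manifold.
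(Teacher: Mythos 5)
Your proposal is correct and matches the paper's approach: the paper deduces this corollary directly from Theorem~\ref{th:n34}, with the identification $N(P,\Lambda)/\langle\tau\rangle\simeq N(P,\Lambda_\tau)$ via the projection $\Pi_\tau\colon\mathbb Z_2^r\to\mathbb Z_2^r/\langle\tau\rangle$ (the same bookkeeping you spell out, stated later in Subsection~\ref{ssec:bip} and borrowed from \cite{E24}). Your added remark that no separate manifold hypothesis on $\Lambda_\tau$ is needed is also consistent with how Theorem~\ref{th:n34} is stated.
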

\subsection{Hamiltonian subcomplexes}\label{ssec:Hams}
\begin{construction}\label{con:subcom}
For two colorings $c_1\colon\{F_1,\dots, F_m\}\to [l_1]$ and \linebreak $c_2\colon\{F_1,\dots, F_m\}\to [l_2]$
we call $\mathcal{C}(P,c_2)$ a~{\it subcomplex} in $\mathcal{C}(P,c_1)$ 
(and write $\mathcal{C}(P,c_2)\subset \mathcal{C}(P,c_1)$), if there is a~mapping 
$\pi\colon [l_1]\to[l_2]$ such that $c_2=\pi\circ c_1$. Each facet of~$\mathcal{C}(P,c_2)$ is a union of facets 
of~$\mathcal{C}(P,c_1)$. More generally, for any $q\geqslant 0$ 
each $q$-face of~$\mathcal{C}(P,c_2)$ is a~union of $q$-faces of~$\mathcal{C}(P,c_1)$.
\end{construction}
\begin{definition}\label{def:Ham}
We call a subcomplex  $\mathcal{C}(P,c_2)\subset \mathcal{C}(P,c_1)$ {\it Hamiltonian}, if each $q$-skeleton 
of~$\mathcal{C}(P,c_1)$ lies in the~$(q+1)$-skeleton of $\mathcal{C}(P,c_2)$.
\end{definition}

\begin{lemma}\label{lem:Hamsc}
A subcomplex $\mathcal{C}(P,c_2)\subset \mathcal{C}(P,c_1)$ is Hamiltonian if and only if
one of the following equivalent conditions holds:
\begin{enumerate}
\item For each nonempty intersection $G_{i_1}\cap \dots\cap G_{i_k}\ne\varnothing$ of different facets 
of $\mathcal{C}(P,c_1)$ at least $(k-1)$ of these facets lie in different facets of $\mathcal{C}(P,c_2)$.
\item For each nonempty intersection $G_{i_1}\cap \dots\cap G_{i_k}\ne\varnothing$ of $k=3$ or $k=4$ 
different facets of $\mathcal{C}(P,c_1)$ at least $(k-1)$ of these facets lie in different facets of $\mathcal{C}(P,c_2)$.
\item The following two conditions hold:
\begin{itemize}
\item $G_{i_1}\cap G_{i_2}\cap G_{i_3}=\varnothing$, if $c_2(G_{i_1})=c_2(G_{i_2})=c_2(G_{i_3})$ and 
$i_1<i_2<i_3$,  
\item $G_{i_1}\cap G_{i_2}\cap G_{i_3}\cap G_{i_4}=\varnothing$, if 
$c_2(G_{i_1})=c_2(G_{i_2})\ne c_2(G_{i_3})=c_2(G_{i_4})$ and 
$i_1\ne i_2$, $i_3\ne i_4$;
\end{itemize} 
\item Any $(n-4)$-face of $\mathcal{C}(P,c_1)$ lies in an $(n-3)$-face of $\mathcal{C}(P,c_2)$,
and any closed manifold among $(n-3)$-faces of $\mathcal{C}(P,c_1)$ lies in an $(n-2)$-face of~$\mathcal{C}(P,c_2)$. 
\end{enumerate}
\end{lemma}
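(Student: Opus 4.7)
The strategy is to translate the abstract condition ``the $q$-skeleton of $\mathcal{C}(P,c_1)$ belongs to the $(q+1)$-skeleton of $\mathcal{C}(P,c_2)$'' into an explicit combinatorial condition on the facets. For each facet $G_j$ of $\mathcal{C}(P,c_1)$ denote by $\widetilde{G}(j)$ the unique facet of $\mathcal{C}(P,c_2)$ containing $G_j$ (unique because facets of $\mathcal{C}(P,c_2)$ are unions of facets of $\mathcal{C}(P,c_1)$). A $q$-face of $\mathcal{C}(P,c_1)$ is a connected component of some $G_{i_1}\cap\dots\cap G_{i_k}$ with $k=n-q$, so it lies in a single component of $\widetilde{G}(i_1)\cap\dots\cap\widetilde{G}(i_k)$, which is a face of $\mathcal{C}(P,c_2)$ of dimension $n-s$ where $s=|\{\widetilde{G}(i_1),\dots,\widetilde{G}(i_k)\}|$. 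The Hamiltonian condition then reads $s\geqslant k-1$ for every such $k$, giving Hamiltonian $\Leftrightarrow$ (1).

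The implication (1)$\Rightarrow$(2) is tautological. For (2)$\Rightarrow$(1), (1) is vacuous for $k\leqslant 2$, and for $k\geqslant 5$ I would run a pigeonhole: among $k$ mutually intersecting facets with only $s\leqslant k-2$ distinct $\widetilde{G}$-values, either some $\widetilde{G}$-class has three members (yielding three facets with common nonempty triple intersection and a single $\widetilde{G}$, contradicting (2) at $k=3$), or every class has at most two members and a simple count forces two classes of exactly two members, producing four mutually intersecting facets sharing only two $\widetilde{G}$-values, contradicting (2) at $k=4$. Crucially, any subfamily of a mutually intersecting family still has nonempty intersection. For (2)$\Leftrightarrow$(3), the key observation is that two facets of $\mathcal{C}(P,c_1)$ which share a point and have the same $c_2$-color automatically lie in the same $\widetilde{G}$; hence on any mutually intersecting family the partitions ``by $\widetilde{G}$'' and ``by $c_2$-color'' coincide, and condition (3) directly encodes the two failure modes of (2) at $k=3$ and $k=4$.

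For (1)$\Leftrightarrow$(4), the pigeonhole reduction above leaves only $k=3$ (each $(n-3)$-face needs $s\geqslant 2$) and $k=4$ (each $(n-4)$-face needs $s\geqslant 3$); these match the two bullets of (4) except for the ``closed manifold'' qualifier in the second. The main obstacle is to verify that once the $(n-4)$-face condition holds, every $(n-3)$-face of $\mathcal{C}(P,c_1)$ with nonempty boundary automatically satisfies $s\geqslant 2$. I would fix such a face $F$, a component of $G_{i_1}\cap G_{i_2}\cap G_{i_3}$, and use Lemmas \ref{lem:or} and \ref{lem:Mc}: a boundary point of $F$ must lie on some further facet $G_{i_4}$, producing an adjacent $(n-4)$-face on which the first bullet forces three distinct values among $\widetilde{G}(i_1),\dots,\widetilde{G}(i_4)$, hence at least two distinct values among $\widetilde{G}(i_1),\widetilde{G}(i_2),\widetilde{G}(i_3)$. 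The remaining implications are routine bookkeeping; this boundary analysis invoking the manifold-with-boundary structure of the faces is the only step where the argument is non-combinatorial.
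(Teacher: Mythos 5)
Your proposal is correct and follows essentially the same route as the paper's proof: you identify, via the local coordinate structure of Lemma~\ref{lem:Mc}, the minimal face of $\mathcal{C}(P,c_2)$ containing each face of $\mathcal{C}(P,c_1)$ to obtain Hamiltonian $\Leftrightarrow$ (1), reduce $k\geqslant 5$ to $k=3,4$ by the same pigeonhole, identify the partition by facets $\widetilde{G}$ of $\mathcal{C}(P,c_2)$ with the partition by $c_2$-colour on families with a common point to get (3), and treat non-closed $(n-3)$-faces in (4) by passing to a boundary point lying on a fourth facet, exactly as the paper does. The only differences are expository (you phrase the pigeonhole in terms of $\widetilde{G}$-classes rather than $c_2$-colours), so nothing further is needed.
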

\begin{proof}
By Lemma \ref{lem:Mc} the facets of $C(P,c)$ intersect locally as coordinate hyperplanes in $\mathbb R^n$ (moreover,
other facets do not intersect this local neighbourhood). By definition 
if $\mathcal{C}(P,c_2)\subset \mathcal{C}(P,c_1)$ is Hamiltonian, then the set $G_{i_1}\cap \dots\cap G_{i_k}$ 
lies in the $(n-k-1)$-skeleton of $\mathcal{C}(P,c_2)$. Take any point $p\in G_{i_1}\cap \dots\cap G_{i_k}$
such that $p\not\in G_{j_l}$ for each facet $G_{i_l}$ of $\mathcal{C}(P,c_1)$ different from $G_{i_1},\dots, G_{i_k}$ 
(such a point exists by~the~above argument). Then 
$p\in \widetilde{G}_{j_1}\cap \dots\cap \widetilde{G}_{j_{k-1}}$, where 
$\widetilde{G}_{j_1}, \dots, \widetilde{G}_{j_{k-1}}$ are different facets of $\mathcal{C}(P,c_2)$. We have
$ \widetilde{G}_{j_p}=\bigcup\limits_{G_q\subset \widetilde{G}_{j_p}}G_q$ and 
$$
\widetilde{G}_{j_1}\cap \dots\cap \widetilde{G}_{j_{k-1}}=\left(\bigcup\limits_{G_{q_1}\subset \widetilde{G}_{j_1}}G_{q_1}\right)
\cap\dots\cap\left(\bigcup\limits_{G_{q_{k-1}}\subset \widetilde{G}_{j_{k-1}}}G_{q_{k-1}}\right)=
\bigcup\limits_{q_1,\dots,q_{k-1}}G_{q_1}\cap\dots\cap G_{q_{k-1}}
$$
Then $p\in G_{q_1}\cap\dots\cap G_{q_{k-1}} $ for some $q_1,\dots, q_{k-1}$. Therefore, by the choice 
of $p$ we have $\{q_1,\dots,q_{k-1}\}\subset \{i_1,\dots,i_k\}$, where $G_{q_p}\subset  \widetilde{G}_{j_p}$.  This implies 
item (1). On the other hand, if item (1) holds, then by definition the subcomplex $\mathcal{C}(P,c_2)\subset \mathcal{C}(P,c_1)$
is Hamiltonian.

Item (1) implies item (2). Item (3) is a reformulation of item (2), so these items are equivalent.

Let item (2) hold.  Consider a nonempty intersection  $G_{i_1}\cap \dots\cap G_{i_k}\ne\varnothing$ of different facets 
of $\mathcal{C}(P,c_1)$. For $k\leqslant 4$ item (1) holds. Assume that $k\geqslant 5$ and item (1) does not hold. 
Then $|\{c_2(G_{i_1}),\dots, c_2(G_{i_k})\}|\leqslant k-2$ and either $c_2(G_{i_{p_1}})=c_2(G_{i_{p_2}})=c_2(G_{i_{p_2}})$
for $p_1<p_2<p_3$, or~$c_2(G_{i_{p_1}})=c_2(G_{i_{p_2}})
\ne c_2(G_{i_{p_3}})=c_2(G_{i_{p_4}})$
for $p_1\ne p_2$ and $p_3\ne p_4$. This contradicts item~(3). 

Item (2) implies item (4). If item (4) holds, then the argument in the beginning of the proof
implies that item (3) hold for $k=4$ and closed manifolds for $k=3$. If $G_{i_1}\cap G_{i_2}\cap G_{i_3}$
is not a~closed manifold, then there is a facet $G_{i_4}$ of $\mathcal{C}(P,c_1)$ such that 
$G_{i_1}\cap G_{i_2}\cap G_{i_3}\cap G_{i_4}\ne\varnothing$. Then by the above argument it is 
impossible that $G_{i_1}$, $G_{i_2}$ and $G_{i_3}$ lie in the same facet of $\mathcal{C}(P,c_2)$. 
Hence, item (3) holds. 
\end{proof}

Lemma \ref{lem:Hamsc} leads to the following definition.
\begin{definition}\label{def:n-2f}
For a subcomplex $\mathcal{C}(P,c_2)\subset \mathcal{C}(P,c_1)$ let us call
each connected component of~a~nonempty intersection $G_{i_1}\cap G_{i_2}$, where $G_{i_1}, G_{i_2}$ are facets 
of~$\mathcal{C}(P,c_1)$ and $c_2(G_{i_2})=c_2(G_{i_2})$, a~{\it defining} $(n-2)$-face.
\end{definition}
\begin{proposition}\label{cor:n-2f}
A subcomplex $\mathcal{C}(P,c_2)\subset \mathcal{C}(P,c_1)$ is Hamiltonian if and only if any two 
defining $(n-2)$-faces are disjoint.
\end{proposition}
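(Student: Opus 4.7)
The plan is to establish the equivalence via item~(3) of Lemma~\ref{lem:Hamsc}, which characterizes Hamiltonianness by two combinatorial conditions on facets of $\mathcal{C}(P,c_1)$: no nonempty triple intersection of three distinct facets sharing a common $c_2$-color, and no nonempty four-fold intersection with color pattern $c_2(G_{i_1})=c_2(G_{i_2})\ne c_2(G_{i_3})=c_2(G_{i_4})$ (with $i_1\ne i_2$, $i_3\ne i_4$). The proposition will follow once I show that pairwise disjointness of the defining $(n-2)$-faces is equivalent to the conjunction of these two forbidden-intersection conditions. Note that components of a single intersection $G_{i_1}\cap G_{i_2}$ are pairwise disjoint by definition, so the non-trivial content concerns defining faces coming from different index pairs.

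For the forward direction, I would suppose two distinct defining $(n-2)$-faces $F_1\subset G_{a_1}\cap G_{a_2}$ and $F_2\subset G_{b_1}\cap G_{b_2}$ share a point $p$ and do a case analysis on $|\{a_1,a_2\}\cap\{b_1,b_2\}|$. The case $\{a_1,a_2\}=\{b_1,b_2\}$ forces $F_1=F_2$ since they would be components of the same set containing a common point, contradicting distinctness. If the pairs share exactly one index, the three facets involved are pairwise distinct and all have the same $c_2$-color, violating the first clause of item~(3). If the pairs are disjoint as index sets, then either all four colors coincide (again violating the first clause through any subtriple) or the colors split as $c_2(G_{a_1})=c_2(G_{a_2})\ne c_2(G_{b_1})=c_2(G_{b_2})$, violating the second clause.

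For the converse, assuming the defining faces are pairwise disjoint, I would verify both clauses of Lemma~\ref{lem:Hamsc}(3) by contradiction. Given a putative monochromatic triple $G_{i_1}\cap G_{i_2}\cap G_{i_3}\ne\varnothing$, I would use Lemma~\ref{lem:Mc} to select a point $p$ in this intersection that lies on no further facets of $\mathcal{C}(P,c_1)$; in the local coordinate model the components of $G_{i_1}\cap G_{i_2}$ and of $G_{i_1}\cap G_{i_3}$ through $p$ are manifestly distinct defining $(n-2)$-faces both containing $p$. An analogous argument applied to a $(aa)(bb)$ quadruple, producing two defining faces from the $a$-pair and the $b$-pair, handles the second clause.

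The main obstacle I anticipate is ensuring the constructed pair of defining faces are genuinely distinct components rather than accidentally the same set, and this is precisely what Lemma~\ref{lem:Mc} delivers at a point lying only on the specified facets: there the facets correspond to distinct coordinate hyperplanes, and different index pairs cut out different coordinate $(n-2)$-planes. Finding such a witness point $p$ is routine, since points on additional facets form a union of strictly lower-dimensional subfaces inside the nonempty intersection and therefore cannot exhaust it.
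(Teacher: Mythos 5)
Your proposal is correct and takes essentially the same route as the paper: both directions are reduced to the two clauses of Lemma~\ref{lem:Hamsc}(3), with the same case analysis on how the two index pairs overlap and how their colors compare. The only real difference is in the direction ``disjoint $\Rightarrow$ Hamiltonian,'' where you produce a witness point via Lemma~\ref{lem:Mc} and two distinct intersecting defining faces, while the paper argues more directly that $G_{i_1}\cap G_{i_2}\cap G_{i_3}=(G_{i_1}\cap G_{i_2})\cap(G_{i_1}\cap G_{i_3})$ (and similarly for the quadruple) is an intersection of disjoint unions of pairwise disjoint defining faces and hence empty.
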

\begin{proof}
Let any two defining $(n-2)$-faces be disjoint. If 
$c_2(G_{i_1})=c_2(G_{i_2})=c_2(G_{i_3})$ and $i_1<i_2<i_3$, then either
one of the intersections $G_{i_1}\cap G_{i_2}$ and $G_{i_1}\cap G_{i_3}$ is empty, or
each intersection is a disjoint union of defining faces and any two faces from different
unions are also disjoint. Then
$$
G_{i_1}\cap G_{i_2}\cap G_{i_3}=(G_{i_1}\cap G_{i_2})\cap (G_{i_1}\cap G_{i_3})=\varnothing.
$$
If $c_2(G_{i_1})=c_2(G_{i_2})\ne c_2(G_{i_3})=c_2(G_{i_4})$ and $i_1\ne i_2$, $i_3\ne i_4$,
then either 
one of the intersections $G_{i_1}\cap G_{i_2}$ and $G_{i_3}\cap G_{i_4}$ is empty, or
each intersection is a disjoint union of defining faces and any two faces from different
unions are also disjoint. Then 
$$
G_{i_1}\cap G_{i_2}\cap G_{i_3}\cap G_{i_4}=(G_{i_1}\cap G_{i_2})\cap (G_{i_3}\cap G_{i_4})=\varnothing
$$
Then the subcomplex is Hamiltonian by Lemma~\ref{lem:Hamsc}(3).

Now let the subcomplex be Hamiltonian. Consider two defining faces. If they are connected components
of~the~same intersection $G_{i_1}\cap G_{i_2}$, then they are disjoint by definition. If they lie in different
intersections $G_{i_1}\cap G_{i_2}$ and $G_{i_3}\cap G_{i_4}$, then there are two possibilities. First possibility is 
$c(G_{i_1})=c(G_{i_2})=c(G_{i_3})=c(G_{i_4})$.  Then without loss of generality we
can assume that $i_3\notin\{i_1,i_2\}$. Hence,  $G_{i_1}\cap G_{i_2}\cap G_{i_3}=\varnothing$ by Lemma~\ref{lem:Hamsc}(3)
and $(G_{i_1}\cap G_{i_2})\cap (G_{i_3}\cap G_{i_4})=\varnothing$. The second possibility is 
$c(G_{i_1})=c(G_{i_2})\ne c(G_{i_3})=c(G_{i_4})$. Then $(G_{i_1}\cap G_{i_2})\cap (G_{i_3}\cap G_{i_4})=\varnothing$
by  Lemma~\ref{lem:Hamsc}(3). This finishes the proof.
\end{proof}

\begin{proposition}\label{prop:critdef}
A set $\mathcal{S}=\{M_1^{n-2},\dots, M_q^{n-2}\}$ of pairwise disjoint $(n-2)$-faces of~a~complex $\mathcal{C}(P,c_1)$ is a set
of defining faces of some Hamiltonian subcomplex $\mathcal{C}(P,c_2)\subset \mathcal{C}(P,c_1)$ if~and only~if~for each 
two facets $G_i\ne G_j$ of~$\mathcal{C}(P,c_1)$ admitting  a~sequence 
of~facets $G_i=G_{i_1}$, $G_{i_2}$, $\dots$, $G_{i_p}=G_j$ such that for all $s$ the
intersection $G_{i_s}\cap G_{i_{s+1}}$ contains a face from $\mathcal{S}$ either $G_i\cap G_j=\varnothing$ or 
each connected component of $G_i\cap G_j$ belongs to $\mathcal{S}$. 
\end{proposition}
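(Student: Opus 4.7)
The plan is to prove both implications by using $\mathcal{S}$-chains to dictate when two facets of $\mathcal{C}(P,c_1)$ must share a $c_2$-color. The forward direction amounts to propagating this color along a chain, while the backward direction constructs $c_2$ from $\mathcal{S}$ and then verifies that the resulting defining-face set is exactly $\mathcal{S}$; the Hamiltonian property will come for free from Proposition~\ref{cor:n-2f}.

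For the forward implication, I will assume $\mathcal{S}$ is the set of defining $(n-2)$-faces of a Hamiltonian subcomplex $\mathcal{C}(P,c_2)\subset\mathcal{C}(P,c_1)$, and consider facets $G_i\ne G_j$ admitting a chain $G_i=G_{i_1},\dots,G_{i_p}=G_j$ with each $G_{i_s}\cap G_{i_{s+1}}$ containing a face of $\mathcal{S}$. Since each such face is a defining face, Definition~\ref{def:n-2f} forces $c_2(G_{i_s})=c_2(G_{i_{s+1}})$ at every step, whence $c_2(G_i)=c_2(G_j)$. If $G_i\cap G_j\ne\varnothing$, each of its connected components is then itself a defining face and therefore lies in $\mathcal{S}$.

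For the converse, I define the equivalence $\sim$ on $\{G_1,\dots,G_M\}$ by $G_i\sim G_j$ iff they admit such an $\mathcal{S}$-chain (including the trivial case $i=j$). After, if necessary, replacing $c_1$ by a refinement that assigns distinct colors to distinct $G_i$'s -- which leaves the complex $\mathcal{C}(P,c_1)$ itself unchanged but enlarges the family of admissible subcomplexes -- I set $c_2(F):=[G_i]_{\sim}$ whenever $F\subset G_i$. Then $c_2=\pi\circ c_1$ for the obvious $\pi$, so $\mathcal{C}(P,c_2)$ is a subcomplex of $\mathcal{C}(P,c_1)$, and by construction two facets $G_a,G_b$ of $\mathcal{C}(P,c_1)$ satisfy $c_2(G_a)=c_2(G_b)$ if and only if $G_a\sim G_b$.

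The remaining point is to check that the defining faces of $\mathcal{C}(P,c_2)$ equal $\mathcal{S}$. The inclusion $\mathcal{S}\subseteq(\text{defining faces})$ is immediate: each $M\in\mathcal{S}$ is a component of some $G_a\cap G_b$, and the one-step chain $G_a,G_b$ witnesses $G_a\sim G_b$, so $c_2(G_a)=c_2(G_b)$ and $M$ is a defining face. The reverse inclusion -- which I expect to be the main technical hurdle -- is where the hypothesis of the proposition does its work: any defining face $N$ is a component of a nonempty intersection $G_a\cap G_b$ with $c_2(G_a)=c_2(G_b)$, hence $G_a\sim G_b$ by construction of $c_2$, and the hypothesis then forces $N\in\mathcal{S}$. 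Finally, since the defining faces of $\mathcal{C}(P,c_2)$ coincide with the pairwise disjoint set $\mathcal{S}$, Proposition~\ref{cor:n-2f} yields that $\mathcal{C}(P,c_2)\subset\mathcal{C}(P,c_1)$ is Hamiltonian, completing the proof.
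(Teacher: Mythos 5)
Your proof is correct and takes essentially the same route as the paper: in the ``only if'' direction you propagate the $c_2$-color along an $\mathcal{S}$-chain using the definition of a defining face, and in the ``if'' direction you define $c_2$ via the chain-equivalence relation, verify that the defining faces are exactly the pairwise disjoint set $\mathcal{S}$, and conclude Hamiltonicity from Proposition~\ref{cor:n-2f}. Your explicit refinement of $c_1$ (so that the class coloring is of the form $\pi\circ c_1$) makes precise a point the paper leaves implicit, and it is harmless since it changes neither the complex $\mathcal{C}(P,c_1)$ and its faces nor the notions of defining face and Hamiltonian subcomplex.
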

\begin{proof}
If $\mathcal{S}$ satisfies the above conditions then one can define a coloring $c_2$ by the 
rule $c_2(G_i)=c_2(G_j)$ if and only if  $G_i$ and $G_j$ admit a sequence of the above type. Then
each connected component of the intersection of two facets of the same color belongs to $\mathcal{S}$ and 
this is indeed the set of defining facets. Since any two of them are disjoint, the subcomplex is Hamiltonian.

On the other hand, if $\mathcal{S}$ is the set of defining faces of a Hamiltonian subcomplex, 
then the facets $G_i$ and $G_j$ admitting the 
above sequence necessarily have the same color. Hence, if~$G_i\cap G_j\ne\varnothing$, 
then each connected component of $G_i\cap G_j$ belongs to $\mathcal{S}$.
\end{proof}

\subsection{Bipartite Hamiltonian subcomplexes and orbit spaces of involutions}\label{ssec:bip}
\begin{definition}\label{def:adjg}
For a Hamiltonian subcomplex $\mathcal{C}(P,c_2)\subset\mathcal{C}(P,c_1)$ 
and a facet $\widetilde{G}\in \mathcal{C}(P,c_2)$ define an {\it adjacency graph $\Gamma(\widetilde{G})$}. 
Its vertices are facets $G_i$ of $\mathcal{C}(P,c_1)$ lying in $\widetilde{G}$. Its edges bijectively correspond
to connected components of nonempty intersections $G_i\cap G_j\ne \varnothing$ of~such facets,
that is to defining $(n-2)$-faces lying in $\widetilde{G}$. This graph is connected and may have multiple edges
corresponding to different connected components of $G_i\cap G_j\ne \varnothing$.
Define an~{\it adjacency graph} 
$$
\Gamma(P,c_1,c_2)=\bigsqcup\limits_{\widetilde{G} \text{ -- a facet of }\mathcal{C}(P,c_2)} \Gamma(\widetilde{G}).
$$
\end{definition}
\begin{definition}\label{def:bip}
We call a Hamiltonian subcomplex $\mathcal{C}(P,c_2)\subset\mathcal{C}(P,c_1)$ {\it bipartite},
if the adjacency graph  $\Gamma(P,c_1,c_2)$ is bipartite, that is 
there is a coloring \linebreak $\chi\colon \{G_1,\dots, G_{M_1}\}\to\{0,1\}$ of~facets of~$\mathcal{C}(P,c_1)$ in two colors $0$ and $1$
such that if $G_i$ and $G_j$ lie in the same facet of $\mathcal{C}(P,c_2)$ and $G_i\cap G_j\ne\varnothing$,
then  $\chi(G_i)\ne \chi(G_j)$. We will call such colorings {\it nice}.
\end{definition}
\begin{remark}\label{rem:2^M}
If  $\mathcal{C}(P,c_2)\subset\mathcal{C}(P,c_1)$ is a proper subcomplex and $\mathcal{C}(P,c_2)$ has $M_2$ facets,
then there are $2^{M_2}$ nice colorings $\chi$.
\end{remark}

\begin{construction}[(The vector-coloring induced {\bf from} a bipartite \linebreak Hamiltonian subcomplex)]\label{con:indfrom}
Let~$\Lambda\colon \{F_1,\dots, F_m\}\to \mathbb Z_2^r$ be a vector-coloring of rank $r$ of~a~simple $n$-polytope $P$.
Let~$\mathcal{C}(P,\Lambda)\subset\mathcal{C}(P,c)$ be a proper bipartite Hamiltonian subcomplex
and $\chi\colon \{G_1,\dots, G_{M_1}\}\to\{0,1\}$ be one of~its~nice colorings. Define the vector-coloring
$\Lambda_\chi\colon \{F_1,\dots, F_m\}\to \mathbb Z_2^r\times \mathbb Z_2\simeq \mathbb Z_2^{r+1}$ 
of~rank~$r+1$ as~$\Lambda_\chi(F_i)=(\Lambda(F_i),\chi(F_i))$. We call $\Lambda_\chi$, as~well as~any vector-coloring 
obtained from it~by~a~linear change of coordinates in $\mathbb Z_2^{r+1}$, a~vector-coloring {\it induced from 
a~bipartite Hamiltonian subcomplex}. By definition $\mathcal{C}(P,c)=\mathcal{C}(P,\Lambda_\chi)$.
\end{construction}
\begin{remark}
It can be shown that in general different nice colorings $\chi_1$ and $\chi_2$ may produce vector colorings $\Lambda_{\chi_1}$
and $\Lambda_{\chi_2}$ that can not be connected by~a~linear change of coordinates in~$\mathbb Z_2^{r+1}$. 

\end{remark}
\begin{example}[(A vector-coloring induced {\bf by} a bipartite Hamiltonian subcomplex)]\label{ex:vci}
For any complex $C=\mathcal{C}(P,c)$ with $M$ facets $G_1$, $\dots$, $G_M$ 
there is a canonical vector-coloring $\Lambda_C$ of rank $M$ of the polytope $P$ 
defined as $\Lambda_C(F_i)=\boldsymbol{e}_j$, where 
$F_i\subset G_j$ and $\boldsymbol{e}_1$, $\dots$, $\boldsymbol{e}_M$ is~the~standard basis in~$\mathbb Z_2^M$.
(Note that different colorings $c$ may produce the~same complex $C=\mathcal{C}(P,c)$. Therefore, $\Lambda_C$ is 
defined by the coloring $c$ only if~for~each color the~union of~facets of~this color is~connected.) Then if  
$C=\mathcal{C}(P,c_2)\subset\mathcal{C}(P,c_1)$ is~a~proper bipartite Hamiltonian subcomplex, there 
is~a~canonical vector-coloring $\Lambda_{c_2, c_1}$ {\it induced by $C$}. It~is~induced from the canonical 
vector-coloring $\Lambda_C$ described above. Moreover, in this case for each $i=1,\dots, M$
there is a~linear isomorphism defined on basis as
$$
(\boldsymbol{e}_1,\dots, \boldsymbol{e}_i,\dots, \boldsymbol{e}_M, \boldsymbol{e}_{M+1})\to
(\boldsymbol{e}_1,\dots, \boldsymbol{e}_i+\boldsymbol{e}_{M+1},\dots, \boldsymbol{e}_M, \boldsymbol{e}_{M+1}) 
$$ 
exchanging $\boldsymbol{e}_i$ and $\boldsymbol{e}_i+\boldsymbol{e}_{M+1}$ and 
fixing $\boldsymbol{e}_j$ and $\boldsymbol{e}_j+\boldsymbol{e}_{M+1}$ for all $j\ne i$. Therefore, $\Lambda_{c_2, c_1}$
does not depend on $\chi$ up to a change of coordinates in $\mathbb Z_2^{M+1}$ (see also 
\cite[Construction 8.6]{E24}).

\end{example}

For any vector-coloring  $\Lambda$ of rank $r$ each 
nonzero involution $\tau\in \mathbb Z_2^r\simeq\mathbb Z_2^m/H(\Lambda)$
corresponds to a projection $\Pi_{\tau}\colon \mathbb Z_2^r\to\mathbb Z_2^r/\langle \tau\rangle\simeq \mathbb Z_2^{r-1}$
and a vector-coloring $\Lambda_\tau=\Lambda\circ\Pi_\tau$ such that 
$N(P,\Lambda)/\langle\tau\rangle\simeq N(P,\Lambda_\tau)$. 

\begin{proposition} \label{propo:taubpH}
Let $\Lambda$ be a~vector-coloring of rank $r$ of~a~simple $n$-polytope $P$ such that $N(P,\Lambda)$
is~a~closed topological manifold. Then nonzero involutions 
$\tau\in \mathbb Z_2^r\setminus\{0\}$ 
are in~bijection with proper bipartite Hamiltonian subcomplexes $\mathcal{C}(P,c)\subset\mathcal{C}(P,\Lambda)$
such that one of the following equivalent conditions~holds:
\begin{enumerate}
\item $\Lambda$ is induced from some vector-coloring of~$\mathcal{C}(P,c)$;
\item there is $\tau\in \mathbb Z_2^r\setminus\{0\}$ such that 
for any nonempty intersection of two different facets $G_i$ and $G_j$ of~$\mathcal{C}(P,\Lambda)$
we have $\Lambda_i+\Lambda_j=\tau$ if and only if $c(G_i)=c(G_j)$.
\end{enumerate}
\end{proposition}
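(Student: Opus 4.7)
The natural candidate for the bijection is $\tau\mapsto\mathcal{C}(P,\Lambda_\tau)$, where $\Lambda_\tau=\Pi_\tau\circ\Lambda$ for the quotient projection $\Pi_\tau\colon\mathbb Z_2^r\twoheadrightarrow\mathbb Z_2^r/\langle\tau\rangle\simeq\mathbb Z_2^{r-1}$; the inverse sends a subcomplex satisfying~(2) to the unique $\tau$ occurring there. I would first settle the equivalence (1)$\Leftrightarrow$(2). For (2)$\Rightarrow$(1), choose $\ell\in(\mathbb Z_2^r)^*$ with $\ell(\tau)=1$ and set $\chi(F_i)=\ell(\Lambda_i)$; the linear map $v\mapsto(\Pi_\tau(v),\ell(v))$ is an isomorphism $\mathbb Z_2^r\to\mathbb Z_2^{r-1}\times\mathbb Z_2$ (its kernel is $\langle\tau\rangle\cap\Ker\ell=\{0\}$), so $\Lambda=(\Lambda_\tau)_\chi$ up to a linear change of coordinates. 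Condition~(2) ensures $\mathcal{C}(P,\Lambda_\tau)=\mathcal{C}(P,c)$ and that $\chi$ is a nice coloring, exhibiting $\Lambda$ as induced from $\Lambda_\tau$ viewed as a vector-coloring of $\mathcal{C}(P,c)$. For (1)$\Rightarrow$(2), writing $\Lambda_i=(\mu_i,\chi_i)$ with $\mathcal{C}(P,\mu)=\mathcal{C}(P,c)$ and $\chi$ nice, the vector $\tau=(0,1)$ works: $\Lambda_i+\Lambda_j=\tau$ iff $\mu_i=\mu_j$ and $\chi_i\ne\chi_j$, and for adjacent $G_i\ne G_j$ the first condition is equivalent to $c(G_i)=c(G_j)$ while niceness forces the second whenever the first holds.

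Next, fix $\tau\ne 0$ and verify that $\mathcal{C}(P,\Lambda_\tau)\subset\mathcal{C}(P,\Lambda)$ is a proper bipartite Hamiltonian subcomplex satisfying~(2). The key disjointness observation, used throughout, is that two different facets $G_i\ne G_j$ of $\mathcal{C}(P,\Lambda)$ with $G_i\cap G_j\ne\varnothing$ must have $\Lambda_i\ne\Lambda_j$, because different connected components of the same $\Lambda$-color class are disjoint. Condition~(2) with this $\tau$ is then immediate, and bipartiteness follows by taking $\chi(G_i)=\ell(\Lambda_i)$: if $G_i,G_j$ lie in a common facet of $\mathcal{C}(P,\Lambda_\tau)$ and $G_i\cap G_j\ne\varnothing$, then $\Lambda_i+\Lambda_j=\tau$, and so $\chi(G_i)+\chi(G_j)=\ell(\tau)=1$. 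I would check the Hamiltonian property via Lemma~\ref{lem:Hamsc}(3). In the triple case, three $\Lambda$-facets with common $\Lambda_\tau$-color have $\Lambda$-colors drawn from the two-element set $\{\alpha,\alpha+\tau\}$, so by pigeonhole two of them share a $\Lambda$-color and are disjoint. In the quadruple case, the disjointness observation forces the four $\Lambda$-colors to be the distinct vectors $\alpha,\alpha+\tau,\beta,\beta+\tau$, whose sum is zero; but at any common point of the four facets all four values appear at facets of $P$ meeting in a face of $P$, and by the manifold criterion \cite[Theorem 5.1]{E24} distinct $\Lambda$-values at any vertex of that face are linearly independent---a contradiction. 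Properness follows because surjectivity of $\Lambda$ together with the merging of at least one adjacent $\tau$-related pair of $\Lambda$-facets into a single facet of $\mathcal{C}(P,\Lambda_\tau)$ strictly reduces the facet count.

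For the reverse direction, given a proper bipartite Hamiltonian subcomplex $\mathcal{C}(P,c)\subset\mathcal{C}(P,\Lambda)$ satisfying~(2), the vector $\tau$ is uniquely determined and nonzero: take any adjacent pair $G_i\ne G_j$ with $c(G_i)=c(G_j)$ (which exists by properness); then $\Lambda_i\ne\Lambda_j$ by the disjointness observation, so $\tau=\Lambda_i+\Lambda_j\ne 0$. The two maps are mutual inverses because the facets of both $\mathcal{C}(P,c)$ and $\mathcal{C}(P,\Lambda_\tau)$ are characterized as the equivalence classes of facets of $\mathcal{C}(P,\Lambda)$ under the relation generated by $G_i\sim G_j$ whenever $G_i\cap G_j\ne\varnothing$ and $\Lambda_i+\Lambda_j=\tau$: this description of $c$-facets is exactly condition~(2), and for $\Lambda_\tau$-facets it follows from the definition of $\Lambda_\tau$ as $\Pi_\tau\circ\Lambda$. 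The main obstacle in this whole argument is the quadruple case of the Hamiltonian check; it is the sole place where the closed-manifold hypothesis genuinely enters, through the linear-independence criterion for distinct $\Lambda$-values at vertices of~$P$.
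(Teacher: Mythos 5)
Your proposal follows essentially the same route as the paper's proof: the bijection is $\tau\mapsto\mathcal{C}(P,\Lambda_\tau)$, the equivalence of (1) and (2) is obtained from the splitting $\mathbb Z_2^r\simeq\mathbb Z_2^r/\langle\tau\rangle\oplus\langle\tau\rangle$ (your functional $\ell$ with $\ell(\tau)=1$ is exactly the paper's change of coordinates via Construction~\ref{con:indfrom}), and bipartiteness is checked by the same observation that two distinct intersecting facets of $\mathcal{C}(P,\Lambda)$ lying in one facet of $\mathcal{C}(P,\Lambda_\tau)$ must satisfy $\Lambda_i+\Lambda_j=\tau$. The one real difference is that where the paper simply quotes \cite[Proposition 5.12]{E24} for the Hamiltonian property of $\mathcal{C}(P,\Lambda_\tau)\subset\mathcal{C}(P,\Lambda)$, you verify Lemma~\ref{lem:Hamsc}(3) directly, using the fact (equivalent to the closed-manifold hypothesis, cf.\ the proof of Proposition~\ref{prop:Plman}) that distinct $\Lambda$-values on facets of $\mathcal{C}(P,\Lambda)$ having a common point are linearly independent; both the pigeonhole argument in the triple case and the dependence $\alpha+(\alpha+\tau)+\beta+(\beta+\tau)=0$ in the quadruple case are correct, so this is a legitimate self-contained substitute for the citation.

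The step that is not justified is properness of $\mathcal{C}(P,\Lambda_\tau)$. You assert that ``at least one adjacent $\tau$-related pair of $\Lambda$-facets'' merges, but nothing in the hypotheses guarantees that some intersecting pair of facets of $\mathcal{C}(P,\Lambda)$ satisfies $\Lambda_i+\Lambda_j=\tau$ for the given $\tau$. For instance, take $P=I^2$ with facets $F_1,F_2,F_3,F_4$ in cyclic order and $\Lambda_1=\Lambda_3=e_1$, $\Lambda_2=\Lambda_4=e_2$, so that $N(P,\Lambda)$ is the torus: every intersecting pair has $\Lambda_i+\Lambda_j=e_1+e_2$, hence for $\tau=e_1$ nothing merges, $\mathcal{C}(P,\Lambda_\tau)=\mathcal{C}(P,\Lambda)$, and this $\tau$ corresponds to no proper subcomplex satisfying~(2). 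Note that properness is exactly what your injectivity/uniqueness argument for the inverse map rests on (in the example, $\tau=e_1$ and $\tau=e_2$ would otherwise be indistinguishable). To be fair, the paper's own proof never addresses properness either, and the example indicates that the difficulty sits in the statement itself: the correspondence is a bijection only onto those $\tau$ that occur as $\Lambda_i+\Lambda_j$ for intersecting facets $G_i,G_j$, equivalently those $\tau$ with $\mathcal{C}(P,\Lambda_\tau)\ne\mathcal{C}(P,\Lambda)$. Still, as a proof of the statement as written, this is the gap you would need to flag or repair; the rest of your argument is sound.
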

\begin{proof}
First let us prove that for a~proper bipartite Hamiltonian subcomplex 
$\mathcal{C}(P,c)\subset\mathcal{C}(P,\Lambda)$ conditions (1) and (2) are equivalent. 
Indeed, by Construction~\ref{con:indfrom} (1) implies (2).
On~the~other hand, if (2) holds, then the vector-coloring $\Lambda_\tau$ is constant on facets of~$\mathcal{C}(P,c)$
and differs for~its~adjacent facets. Thus, $\mathcal{C}(P,c)=\mathcal{C}(P,\Lambda_\tau)$.
Using~the~change of~coordinates corresponding to an isomorphism 
$\mathbb Z_2^r/\langle\tau\rangle\oplus\langle\tau\rangle\simeq \mathbb Z_2^r$ we see that 
$\Lambda$ is induced from~$\Lambda_\tau$. Thus, (1) holds.

Now consider a~nonzero involution $\tau\in \mathbb Z_2^r$. The~subcomplex  
$\mathcal{C}(P,\Lambda_\tau)\subset \mathcal{C}(P,\Lambda)$
is Hamiltonian by \cite[Proposition 5.12]{E24}. Moreover, it is bipartite. Indeed, for any two 
facets $G_i$, $G_j$ of~$\mathcal{C}(P,\Lambda)$ lying in the same facet of $\mathcal{C}(P,\Lambda_\tau)$
we have $\Pi_\tau(\Lambda_i)=\Pi_\tau(\Lambda_j)$. Then either $\Lambda_i=\Lambda_j$, or $\Lambda_i=\Lambda_j+\tau$.
Moreover, if $G_i\cap G_j\ne\varnothing$, then $\Lambda_i=\Lambda_j+\tau$, since these vectors are different.
Also condition (2) holds by definition of~$\Lambda_\tau$.

On the other hand, for a~proper bipartite Hamiltonian subcomplex $\mathcal{C}(P,c)$ the condition (2) uniquely 
defines the involution $\tau$ such that $\mathcal{C}(P,c)=\mathcal{C}(P,\Lambda_\tau)$.
\end{proof}
\begin{proposition} \label{prop:Plman}
Let $\Lambda$ be a~vector-coloring of rank $r$ of~a~simple $n$-polytope $P$ such that $N(P,\Lambda)$
is~a~closed topological manifold. For an involution $\tau\in \mathbb Z_2^r\setminus\{0\}$ 
the space $N(P,\Lambda)/\langle \tau\rangle$ is a~closed topological manifold if and only if 
\begin{gather*}
\tau\notin 
\{\Lambda_{i_1}+\dots+\Lambda_{i_k}\colon k\ne 2, i_1<\dots<i_k, G_{i_1}\cap \dots\cap G_{i_k}\ne\varnothing\}=\\ 
\{\Lambda_{j_1}+\dots+\Lambda_{j_k}\colon k\ne 2, j_1<\dots<j_k, F_{j_1}\cap \dots\cap F_{j_k}\ne\varnothing, 
\Lambda_{j_a}\ne \Lambda_{j_b} \text{ for $a\ne b$}\}.
\end{gather*}
\end{proposition}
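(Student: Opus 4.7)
The plan is first to use the identification $N(P,\Lambda)/\langle\tau\rangle\simeq N(P,\Lambda_\tau)$, where $\Lambda_\tau=\Pi_\tau\circ\Lambda$ composes $\Lambda$ with the projection $\Pi_\tau\colon\mathbb Z_2^r\to\mathbb Z_2^r/\langle\tau\rangle$, so the question reduces to: when is $N(P,\Lambda_\tau)$ a closed topological manifold? By the criterion from \cite[Theorem 5.1]{E24} recalled in the introduction, this holds if and only if at every nonempty face $F_{j_1}\cap\dots\cap F_{j_k}$ the distinct values of $\Lambda_\tau$ on $\{F_{j_1},\dots,F_{j_k}\}$ are linearly independent in $\mathbb Z_2^r/\langle\tau\rangle$. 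I then translate the negation of this criterion into the stated membership condition on $\tau$.

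Next I analyze how the criterion fails. A failure at a face yields a subset $\{v_1,\dots,v_p\}\subset\{\Lambda_{j_1},\dots,\Lambda_{j_k}\}$ of pairwise distinct $\Lambda$-values whose $\Pi_\tau$-images are pairwise distinct but linearly dependent in $\mathbb Z_2^r/\langle\tau\rangle$, i.e., some subsum lands in $\langle\tau\rangle=\{0,\tau\}$. Since $N(P,\Lambda)$ is a manifold, the distinct $\Lambda$-values at any face are linearly independent over $\mathbb Z_2$, so the subsum cannot be $0$; it must equal $\tau$. Restricting to the indices in that subsum gives a nonempty face $F_{a_1}\cap\dots\cap F_{a_q}\supset F_{j_1}\cap\dots\cap F_{j_k}$ on which pairwise distinct $\Lambda$-values sum to $\tau$, with $q\ne 2$ (else $\Pi_\tau(v_{a_1})=\Pi_\tau(v_{a_2})$, contradicting distinctness in the quotient). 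Conversely, given $\tau=\Lambda_{j_1}+\dots+\Lambda_{j_k}$ with $k\ne 2$, pairwise distinct $\Lambda_{j_a}$ and nonempty face, one checks that no pair already sums to $\tau$ (for $k\geq 3$ the remaining $\Lambda$'s would sum to $0$, contradicting linear independence; for $k=1$ there is nothing to check), so the $\Pi_\tau(\Lambda_{j_a})$ are pairwise distinct with zero sum in the quotient, producing a failure.

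The last step is to verify that the two sets in the statement coincide. Using Lemma~\ref{lem:Mc}, a point $x$ in a nonempty intersection $G_{i_1}\cap\dots\cap G_{i_k}$ of distinct facets of $\mathcal{C}(P,\Lambda)$ has a neighbourhood where the only $P$-facets meeting $x$ are transverse coordinate slices $F_{l_1}\subset G_{i_1},\dots,F_{l_k}\subset G_{i_k}$, whose $\Lambda$-values are the pairwise distinct colors $\Lambda(G_{i_a})$; conversely, pairwise distinct $\Lambda$-values on $F_{l_1},\dots,F_{l_k}$ force them into distinct $G_{i_a}$ with $G_{i_1}\cap\dots\cap G_{i_k}\supset F_{l_1}\cap\dots\cap F_{l_k}\ne\varnothing$. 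The main obstacle I expect in writing this up is the case analysis on $k$: the exclusion $k=2$ and the reduction of an arbitrary sub-sum-$\tau$ relation to a full-sum-$\tau$ relation on a restricted face both hinge critically on the manifold hypothesis for $N(P,\Lambda)$.
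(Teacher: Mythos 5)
Your proposal is correct and follows essentially the same route as the paper: reduce to the manifold criterion of \cite[Theorem 5.1]{E24} applied to $\Lambda_\tau$ via $N(P,\Lambda)/\langle\tau\rangle\simeq N(P,\Lambda_\tau)$, observe that linear dependence of the distinct quotient classes at a face means some subsum of pairwise distinct $\Lambda$-values lies in $\{0,\tau\}$, rule out $0$ (and the case of a pair summing to $\tau$) using that $N(P,\Lambda)$ is a closed manifold, and translate between faces of $\mathcal{C}(P,\Lambda)$ and faces of $P$ with pairwise distinct colors. Your write-up just fills in, slightly more explicitly, the same steps the paper treats tersely.
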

\begin{proof}
By \cite[Theorem 5.1]{E24}  $N(P,\Lambda)$ is~a~closed topological manifold of and only if for any
vertex $F_{j_1}\cap\dots\cap F_{j_n}$ of $P$ different vectors among $\{\Lambda_{j_1},\dots,\Lambda_{j_n}\}$
are linearly independent. This is equivalent to the fact that for any collection of indices $i_1<\dots<i_k$, $k\geqslant 1$, 
such that $G_{i_1}\cap \dots\cap G_{i_k}\ne\varnothing$ the vectors $\{\Lambda_{i_1},\dots, \Lambda_{i_k}\}$
are linearly independent and to the fact that for any collection of indices $j_1<\dots<j_k$, $k\geqslant 1$, 
such that $F_{j_1}\cap \dots\cap F_{j_k}\ne\varnothing$  and~$\Lambda_{j_a}\ne \Lambda_{j_b} \text{ for $a\ne b$}$ 
the vectors $\{\Lambda_{j_1},\dots, \Lambda_{j_k}\}$
are linearly independent. 

Let us denote by $[\Lambda_j]$ the image of $\Lambda_j$ under the projection 
$\mathbb Z_2^r\to\mathbb Z_2^r/\langle\tau\rangle$. Assume that for a~collection of indices $j_1<\dots<j_k$, $k\geqslant 1$, 
such that $F_{j_1}\cap \dots\cap F_{j_k}\ne\varnothing$  we have $[\Lambda_{j_a}]\ne [\Lambda_{j_b}] \text{ for $a\ne b$}$. 
Then $\Lambda_{j_a}+\Lambda_{j_b}\notin \{0,\tau\}$. For $\{[\Lambda_{j_1}],\dots, [\Lambda_{j_k}]\}$ to be linearly 
independent it is necessary and sufficient that $\Lambda_{j_{q_1}}+\dots+\Lambda_{j_{q_s}}\notin\{0,\tau\}$
for any nonempty subset $\{q_1,\dots, q_s\}\subset\{j_1,\dots,j_k\}$. Since $N(P,\Lambda)$ is~a~closed manifold,
$\Lambda_{j_{q_1}}+\dots+\Lambda_{j_{q_s}}\ne 0$. This finishes the~proof.
\end{proof}

\begin{proposition}\label{prop:indman}
Let $\Lambda$ be a~vector-coloring of rank $r$ of~a~simple $n$-polytope $P$ such that $N(P,\Lambda)$
is~a~closed topological manifold. Let~$\mathcal{C}(P,\Lambda)\subset\mathcal{C}(P,c)$ be 
a proper bipartite Hamiltonian subcomplex. Then for any nice coloring $\chi$ the space 
$N(P,\Lambda_{\chi})$ is~a~closed topological manifold.
\end{proposition}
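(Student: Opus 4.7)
The plan is to verify the criterion of~\cite[Theorem 5.1]{E24} for $\Lambda_\chi$: the space $N(P,\Lambda_\chi)$ is a closed topological manifold if and only if at every vertex $v=F_{j_1}\cap\dots\cap F_{j_n}$ of $P$ the distinct vectors among $\Lambda_\chi(F_{j_1}),\dots,\Lambda_\chi(F_{j_n})$ are linearly independent in $\mathbb Z_2^{r+1}$. Since $N(P,\Lambda)$ is already a closed manifold, the analogous property for $\Lambda$ holds at every vertex, and my task will be to lift this linear independence from $\mathbb Z_2^r$ to $\mathbb Z_2^{r+1}$.

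The key observation to make first is that if two facets $F_{j_a}$ and $F_{j_b}$ passing through $v$ satisfy $\Lambda(F_{j_a})=\Lambda(F_{j_b})$, then they lie in the same facet of $\mathcal C(P,\Lambda)$, and consequently $\chi(F_{j_a})=\chi(F_{j_b})$. Indeed, facets of $\mathcal C(P,\Lambda)$ are by definition the connected components of unions of $\Lambda$-monochromatic facets of $P$, and $F_{j_a}$ and $F_{j_b}$ share the point $v$, so they must lie in a common connected component, on which $\chi$ takes a single value. From this I conclude that $\Lambda_\chi(F_{j_a})=\Lambda_\chi(F_{j_b})$ whenever $\Lambda(F_{j_a})=\Lambda(F_{j_b})$, so the distinct values of $\Lambda_\chi$ at $v$ are in natural bijection with the distinct values $\mu_1,\dots,\mu_l$ of $\Lambda$ at $v$ and take the form $(\mu_a,\chi_a)$ for uniquely determined $\chi_a\in\mathbb Z_2$.

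The remaining step is then purely algebraic: any relation $\sum_{a\in T}(\mu_a,\chi_a)=0$ in $\mathbb Z_2^{r+1}$ projects to $\sum_{a\in T}\mu_a=0$ in $\mathbb Z_2^r$. Applying \cite[Theorem 5.1]{E24} to the closed manifold $N(P,\Lambda)$, the vectors $\mu_1,\dots,\mu_l$ are linearly independent, so $T=\varnothing$. Thus $(\mu_1,\chi_1),\dots,(\mu_l,\chi_l)$ are linearly independent in $\mathbb Z_2^{r+1}$, which is precisely the criterion needed for $N(P,\Lambda_\chi)$ to be a closed manifold.

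The main (and rather small) obstacle is the first step: one must notice that the coincidence pattern of $\Lambda_\chi$-values at a vertex of $P$ is forced to agree with the coincidence pattern of $\Lambda$-values there, because any two same-colored facets meeting at a common point automatically fall into the same facet of $\mathcal C(P,\Lambda)$ and hence receive the same $\chi$. It is worth noting that this proof uses neither the bipartiteness of $\chi$ nor the Hamiltonian structure of the subcomplex $\mathcal C(P,\Lambda)\subset \mathcal C(P,c)$; these hypotheses only serve to guarantee that a nice coloring $\chi$ exists in the first place and to give $\Lambda_\chi$ the geometric meaning described in Construction~\ref{con:indfrom}.
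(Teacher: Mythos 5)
Your ``key observation'' is where the proof breaks. The nice coloring $\chi$ is defined on the facets $G_1,\dots,G_{M_1}$ of the \emph{finer} complex $\mathcal{C}(P,c)$ (Definition~\ref{def:bip} with $c_1=c$, $c_2=\Lambda$), and niceness requires $\chi(G_i)\ne\chi(G_j)$ whenever $G_i\ne G_j$ lie in the same facet of $\mathcal{C}(P,\Lambda)$ and $G_i\cap G_j\ne\varnothing$. So $\chi$ is emphatically \emph{not} constant on a facet of $\mathcal{C}(P,\Lambda)$: inside such a facet, adjacent facets of $\mathcal{C}(P,c)$ receive opposite colors. Concretely, take a vertex $v$ of $P$ lying on a defining $(n-2)$-face, i.e.\ on a component of $G_i\cap G_j$ with $G_i\ne G_j$ in a common facet of $\mathcal{C}(P,\Lambda)$; such vertices exist for every proper subcomplex, since defining faces are nonempty unions of faces of $P$. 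The two facets of $P$ through $v$ lying in $G_i$ and in $G_j$ have equal $\Lambda$-values but, by niceness, different $\chi$-values, hence \emph{different} $\Lambda_\chi$-values. So the distinct values of $\Lambda_\chi$ at $v$ are not in bijection with the distinct values of $\Lambda$ at $v$, your projection argument does not apply, and the real content of the proposition sits exactly at these vertices. Your closing remark that neither bipartiteness nor Hamiltonicity is used is a symptom of the error rather than a feature.

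The paper's argument uses both hypotheses. By Lemma~\ref{lem:Hamsc}(1), among the facets $G_{i_1},\dots,G_{i_k}$ of $\mathcal{C}(P,c)$ through a given point at most one pair lies in a common facet of $\mathcal{C}(P,\Lambda)$, and since $N(P,\Lambda)$ is closed, the $\Lambda$-vectors of the distinct facets of $\mathcal{C}(P,\Lambda)$ through that point are linearly independent. Hence the vectors $\Lambda_\chi(G_{i_1}),\dots,\Lambda_\chi(G_{i_k})$ are either of the form $(\mu_1,\ast),\dots,(\mu_k,\ast)$ with $\mu_1,\dots,\mu_k$ linearly independent, or of the form $(\mu,0),(\mu,1),(\mu_3,\ast),\dots,(\mu_k,\ast)$ with $\mu,\mu_3,\dots,\mu_k$ linearly independent, the separation of the coincident pair in the last coordinate being exactly the niceness of $\chi$. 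In the second case a vanishing subset sum would have to contain both $(\mu,0)$ and $(\mu,1)$ and nothing else, which contradicts the last coordinate; so in both cases the vectors are linearly independent and the criterion of \cite[Theorem 5.1]{E24} applies. Hamiltonicity cannot be dropped: if two disjoint pairs of facets of $\mathcal{C}(P,c)$, each pair inside a common facet of $\mathcal{C}(P,\Lambda)$, met at one point, the four vectors $(\mu_A,0),(\mu_A,1),(\mu_B,0),(\mu_B,1)$ would sum to zero and $N(P,\Lambda_\chi)$ would fail to be a manifold there.
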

\begin{proof}
Indeed, by Lemma \ref{lem:Hamsc}(1) if $G_{i_1}\cap \dots\cap G_{i_k}\ne\varnothing$, then at least 
$k-1$ of these facets lie in~different facets of $\mathcal{C}(P,\Lambda)$. Then either all of them
lie in different facets, or $k-1$ lie in different facets and two of them lie in the same facet. By definition
of $\Lambda_{\chi}$ the vectors $\Lambda_{\chi}(G_{i_1}), \dots, \Lambda_{\chi}(G_{i_k})$ are linearly independent.
\end{proof}

\subsection{Bipartite Hamiltonian subcomplexes and the four color theorem}\label{ssec:4c}
Orientable small covers $N(P,\Lambda)$ over $3$-polytopes correspond to colorings of $P$ in at most four colors. 
In this case the image of $\Lambda$ consists either of three linearly independent vectors 
$\boldsymbol{v}_1, \boldsymbol{v}_2$, $\boldsymbol{v}_3$, or four vectors
$\boldsymbol{v}_1$, $\boldsymbol{v}_2$, $\boldsymbol{v}_3$, $\boldsymbol{v}_4$ such that each three of them are linearly independent and $\boldsymbol{v}_1+\boldsymbol{v}_2+\boldsymbol{v}_3+\boldsymbol{v}_4=0$.
In what follows $N(P,\Lambda)$ is a small cover of this form.
\begin{lemma}
For $\tau\in \mathbb Z_2^3\setminus\{0\}$ the space $N(P,\Lambda)/\langle\tau\rangle=N(P,\Lambda_\tau)$
is a closed $3$-manifold if~and~only~if~$\tau\in\{\boldsymbol{v}_1+\boldsymbol{v}_2,\boldsymbol{v}_1+\boldsymbol{v}_3,
\boldsymbol{v}_2+\boldsymbol{v}_3\}$. Moreover, each manifold $N(P,\Lambda_\tau)$ is hyperelliptic 
with the~hyperelliptic involution $\mu=[\boldsymbol{v}_1+\boldsymbol{v}_3]$, $[\boldsymbol{v}_2+\boldsymbol{v}_3]$, 
$[\boldsymbol{v}_1+\boldsymbol{v}_2]$ respectively.
\end{lemma}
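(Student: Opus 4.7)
The plan is to apply Proposition~\ref{prop:Plman} to characterize the allowed values of $\tau$, and then Theorem~\ref{th:n34} to identify the hyperelliptic involutions. Throughout I will use that $\Lambda$ has image either $\{\boldsymbol v_1,\boldsymbol v_2,\boldsymbol v_3\}$ (three-color case) or $\{\boldsymbol v_1,\boldsymbol v_2,\boldsymbol v_3,\boldsymbol v_4\}$ with $\boldsymbol v_4=\boldsymbol v_1+\boldsymbol v_2+\boldsymbol v_3$ (four-color case).

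First I would observe that since $P$ is a simple $3$-polytope, every vertex lies in exactly three facets, so the index $k$ appearing in the forbidden set of Proposition~\ref{prop:Plman} can only be $1$ or $3$. For $k=1$ the forbidden values are the distinct entries of $\Lambda$, i.e.\ $\{\boldsymbol v_1,\boldsymbol v_2,\boldsymbol v_3\}$ or $\{\boldsymbol v_1,\boldsymbol v_2,\boldsymbol v_3,\boldsymbol v_4\}$. For $k=3$ the forbidden values are sums $\Lambda_{i_1}+\Lambda_{i_2}+\Lambda_{i_3}$ at a vertex; linear independence at each vertex forces these three values to be distinct, and the sum equals $\boldsymbol v_1+\boldsymbol v_2+\boldsymbol v_3$ (three-color case, at every vertex) or the complementary fourth vector $\boldsymbol v_1+\boldsymbol v_2+\boldsymbol v_3\in\{\boldsymbol v_1,\boldsymbol v_2,\boldsymbol v_3,\boldsymbol v_4\}$ (four-color case, using $\boldsymbol v_1+\boldsymbol v_2+\boldsymbol v_3+\boldsymbol v_4=0$). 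In both cases the union of forbidden values is exactly $\{\boldsymbol v_1,\boldsymbol v_2,\boldsymbol v_3,\boldsymbol v_1+\boldsymbol v_2+\boldsymbol v_3\}$, and its complement inside $\mathbb Z_2^3\setminus\{0\}$ is precisely $\{\boldsymbol v_1+\boldsymbol v_2,\boldsymbol v_1+\boldsymbol v_3,\boldsymbol v_2+\boldsymbol v_3\}$, which proves the first assertion.

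For the ``moreover'' part I would fix, say, $\tau=\boldsymbol v_1+\boldsymbol v_2$ and the candidate involution $\mu=[\boldsymbol v_1+\boldsymbol v_3]\in\mathbb Z_2^3/\langle\tau\rangle$. Then $N(P,\Lambda_\tau)/\langle\mu\rangle=N(P,\Lambda')$, where $\Lambda'$ is the composition of $\Lambda$ with the projection $\mathbb Z_2^3\to\mathbb Z_2^3/\langle\boldsymbol v_1+\boldsymbol v_2,\boldsymbol v_1+\boldsymbol v_3\rangle\simeq\mathbb Z_2$. In this rank-$1$ quotient all four of $\boldsymbol v_1,\boldsymbol v_2,\boldsymbol v_3,\boldsymbol v_4$ collapse to the same nonzero class, so $\Lambda'$ is constant nonzero and $\mathcal{C}(P,\Lambda')\simeq\mathcal{C}(3,1)$; Theorem~\ref{th:n34} then yields $N(P,\Lambda')\simeq S^3$, so $\mu$ is hyperelliptic. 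The two remaining pairs $(\tau,\mu)=(\boldsymbol v_1+\boldsymbol v_3,[\boldsymbol v_2+\boldsymbol v_3])$ and $(\boldsymbol v_2+\boldsymbol v_3,[\boldsymbol v_1+\boldsymbol v_2])$ are handled identically after permuting the labels $\boldsymbol v_1,\boldsymbol v_2,\boldsymbol v_3$.

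The main obstacle is really just careful bookkeeping: keeping the three-color and four-color cases of $\Lambda$ in parallel and checking that the $k=3$ forbidden sums contribute nothing beyond the $k=1$ ones. Once that is done, everything reduces to a direct linear-algebra calculation in $\mathbb Z_2^3$ together with the criterion in Theorem~\ref{th:n34} applied to a coloring whose underlying complex is $\mathcal{C}(3,1)$.
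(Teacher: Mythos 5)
Your proposal is correct and follows essentially the same route as the paper: the first assertion is exactly the application of Proposition~\ref{prop:Plman} (you merely spell out that $k\in\{1,3\}$ and that the forbidden set is $\{\boldsymbol v_1,\boldsymbol v_2,\boldsymbol v_3,\boldsymbol v_1+\boldsymbol v_2+\boldsymbol v_3\}$ in both the three- and four-color cases), and the second is the paper's observation that $(\Lambda_\tau)_\mu$ is constant, so $\mathcal{C}(P,(\Lambda_\tau)_\mu)\simeq\mathcal{C}(3,1)$ and the quotient is $S^3$.
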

\begin{proof}
The first statement is a corollary of Proposition \ref{prop:Plman}. For the manifold $N(P,\Lambda_\tau)$
we~have $N(P,\Lambda_\tau)/\langle\mu\rangle=N(P,(\Lambda_\tau)_\mu)$, where $(\Lambda_\tau)_\mu$ is constant
on all the facets of $P$. Hence, $\mathcal{C}(P,(\Lambda_\tau)_\mu)\simeq \mathcal{C}(3,1)$, and $N(P,\Lambda_\tau)/\langle\mu\rangle\simeq S^3$.
\end{proof}
\begin{lemma}
For each $\tau\in\{\boldsymbol{v}_1+\boldsymbol{v}_2,\boldsymbol{v}_1+\boldsymbol{v}_3,
\boldsymbol{v}_2+\boldsymbol{v}_3\}$ the complex $\mathcal{C}(P,\Lambda_\tau)$ is a bipartite Hamiltonian
subcomplex in $\partial P$ defined by a~set of disjoint simple edge-cycles containing all~the~vertices~of~$P$.
\end{lemma}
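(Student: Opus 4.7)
The proof rests on one combinatorial observation. At every vertex $v$ of $P$ the three facets containing $v$ carry three linearly independent $\Lambda$-colors, and $\tau=\boldsymbol{v}_i+\boldsymbol{v}_j$ is a sum of two elements of $\Im\Lambda$. When $|\Im\Lambda|=3$ there is exactly one pair in $\Im\Lambda$ summing to $\tau$; when $|\Im\Lambda|=4$, the relation $\boldsymbol{v}_1+\boldsymbol{v}_2+\boldsymbol{v}_3+\boldsymbol{v}_4=0$ produces a second (complementary) pair, but in either case the three $\Lambda$-colors at $v$ contain exactly one pair summing to $\tau$. Hence under the projection $\Pi_\tau\colon\mathbb Z_2^3\to\mathbb Z_2^3/\langle\tau\rangle$ the three colors at $v$ map to $\Lambda_\tau$-values in the distribution $(2,1)$: two facets fall in one $\Lambda_\tau$-class and the third in the other.

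Given this, the first three claims follow quickly. The inclusion $\mathcal{C}(P,\Lambda_\tau)\subset\mathcal{C}(P,\Lambda)$ is automatic since $\Lambda_\tau=\Pi_\tau\circ\Lambda$ (Construction \ref{con:subcom}). For the Hamiltonian condition I apply Lemma \ref{lem:Hamsc}(1): the cases $k\leqslant 2$ are trivial and $k\geqslant 4$ do not occur in a $3$-polytope, while for $k=3$ (a vertex of $P$) the $(2,1)$ distribution guarantees that two of the three facets of $\mathcal{C}(P,\Lambda)$ meeting at $v$ lie in distinct facets of $\mathcal{C}(P,\Lambda_\tau)$. For bipartiteness, pair up $\Im\Lambda$ into the pairs that $\Pi_\tau$ collapses, and set $\chi$ to be $0$ on one vector and $1$ on the other within each pair; this gives a well-defined nice coloring on facets of $\mathcal{C}(P,\Lambda)$, because two adjacent facets lying in the same facet of $\mathcal{C}(P,\Lambda_\tau)$ must have $\Lambda$-colors summing to $\tau$, so they lie in a common collapsed pair and receive opposite $\chi$-values.

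To exhibit the Hamiltonian cycles, I classify the three edges of $P$ at each vertex $v$ by the $\Lambda_\tau$-values of their two adjacent facets. The $(2,1)$ distribution forces exactly one edge at $v$ to join the two facets lying in the same $\Lambda_\tau$-class (a defining edge in the sense of Definition \ref{def:n-2f}), while each of the remaining two edges separates facets of different $\Lambda_\tau$-classes. Consequently the subgraph $\Gamma\subset\partial P$ consisting of all edges that separate different $\Lambda_\tau$-classes is $2$-regular and contains every vertex of $P$, hence is a disjoint union of simple edge-cycles that together contain all vertices of $P$. Since $\partial P\simeq S^2$, the complement $\partial P\setminus\Gamma$ splits into open $2$-disks, each contained in a single $\Lambda_\tau$-class and realizing the interior of a facet of $\mathcal{C}(P,\Lambda_\tau)$. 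The complex is therefore defined by $\Gamma$, as claimed.

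The only real obstacle is keeping the bookkeeping between the cases $|\Im\Lambda|=3$ and $|\Im\Lambda|=4$ clean; the four-vector relation $\boldsymbol{v}_1+\boldsymbol{v}_2+\boldsymbol{v}_3+\boldsymbol{v}_4=0$ makes the $(2,1)$ distribution uniform across all vertex-color patterns, and once this is observed the remainder of the argument is a purely local vertex-by-vertex verification.
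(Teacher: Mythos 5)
Your proof is correct in substance, but it takes a genuinely more local route than the paper. The paper obtains ``bipartite Hamiltonian'' in one stroke from Proposition~\ref{propo:taubpH} (whose proof in turn rests on a result of \cite{E24}), and then argues globally: since $\Lambda_\tau$ has rank $2$ and $N(P,\Lambda_\tau)$ is a closed manifold, the complex $\mathcal{C}(P,\Lambda_\tau)$ can have no $0$-faces (a vertex of the complex would require three linearly independent vectors), so its $1$-skeleton is a disjoint union of circles, and Hamiltonicity forces every vertex of $P$ to lie on exactly one of them. You instead verify everything vertex-by-vertex from the $(2,1)$ distribution of $\Lambda_\tau$-classes: Hamiltonicity via Lemma~\ref{lem:Hamsc}, bipartiteness by an explicit nice coloring $\chi$, and the cycles as the $2$-regular graph $\Gamma$ of class-separating edges. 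Your key counting claim --- that in both the three- and four-color cases exactly one of the three color pairs at a vertex sums to $\tau$ --- is correct, and your route has the advantage of exhibiting the cycles concretely rather than inferring circles from the absence of $0$-faces; the paper's route is shorter because it reuses the general machinery already developed. Two small blemishes, neither fatal: in the three-color case one element of $\mathrm{Im}\,\Lambda$ is left unpaired, so $\chi$ must be assigned to it arbitrarily (harmless, since facets of that color are pairwise disjoint); and the components of $\partial P\setminus\Gamma$ need not be open $2$-disks --- for the cube with opposite facets equally colored and $\tau=\boldsymbol{v}_1+\boldsymbol{v}_2$ one component is an open annulus --- but this claim is not actually used, since all you need is that each component lies in a single $\Lambda_\tau$-class and coincides with the interior of a facet of $\mathcal{C}(P,\Lambda_\tau)$.
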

\begin{proof}
The subcomplex is bipartite and Hamiltonian by Proposition \ref{propo:taubpH}. Since  $N(P,\Lambda_\tau)$ is~a~closed manifold
and $\Lambda_\tau$ has rank $2$, the complex $\mathcal{C}(P,\Lambda_\tau)$  has no vertices (at a vertex there should 
be three linearly independent vectors). Thus, $\mathcal{C}^1(P,\Lambda_\tau)$ consists of a disjoint set of 
circles, and each vertex of $P$ lies on exactly one circle since $\mathcal{C}(P,\Lambda_\tau)\subset \partial P$ is a 
Hamiltonian subcomplex.
\end{proof}
\begin{proposition}
A disjoint set of simple edge-cycles containing all the vertices of $P$ defines
a bipartite Hamiltonian subcomplex $C$ in $\partial P$ if and only if for any cycle $\gamma$ and each connected 
component of $\partial P\setminus\gamma$ the number of vertices on $\gamma$ such that the
edge of $P$ incident to this vertex and not lying on $\gamma$ lies in the corresponding component is even. 
\end{proposition}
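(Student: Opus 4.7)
Proof plan sketch.

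The plan is to interpret the given disjoint family $\Gamma=\{\gamma_1,\ldots,\gamma_s\}$ of simple edge-cycles as defining a canonical coloring $c$ of the facets of the simple $3$-polytope $P$, verify that $\mathcal{C}(P,c)$ is automatically Hamiltonian, and then reduce bipartiteness to a mod-$2$ intersection count on each region of $\partial P\setminus\Gamma$ that coincides with the stated parity condition. Since $P$ is simple, each vertex $v$ has degree $3$ and lies on a unique cycle of $\Gamma$, which uses two of its three incident edges; the remaining one, the \emph{chord at $v$}, lies in no cycle of $\Gamma$ (otherwise $v$ would lie on two cycles). The cycles cut $\partial P\simeq S^2$ into $s+1$ regions; define $c$ by the rule $c(F_i)=c(F_j)$ iff $F_i^\circ$ and $F_j^\circ$ lie in the same region. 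Then the defining edges of $\mathcal{C}(P,c)\subset\mathcal{C}(P,c_0)$ are precisely the chord edges and its $1$-skeleton is $\Gamma$. At every vertex $v$ the two $\Gamma$-edges at $v$ separate the three incident facets locally into two on one side of the cycle-arc through $v$ (sharing a region and hence the same $c$-color, being separated only by the chord, which is interior to that region) and one on the other side (in a different region). No three facets at a common vertex therefore share a color, so the first condition of Lemma~\ref{lem:Hamsc}(3) holds; the second is vacuous since no four distinct facets of a simple $3$-polytope share a point. Hence $\mathcal{C}(P,c)$ is Hamiltonian.

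For bipartiteness, the adjacency graph of the subcomplex has the facets of $P$ as vertices and the chord edges as edges, and two facets are chord-path-connected if and only if they lie in the same region of $\partial P\setminus\Gamma$; so its connected components correspond bijectively to the regions. Fix a region $S$, topologically a sphere with $k\geqslant 1$ holes whose boundary cycles form a sub-family $\{\gamma_{i_1},\ldots,\gamma_{i_k}\}\subset\Gamma$; since all vertices of $P$ lie on $\Gamma$, every chord in $\bar S$ has both endpoints on boundary cycles of $S$. Let $n_{ll'}$ count the chords in $\bar S$ between $\gamma_{i_l}$ and $\gamma_{i_{l'}}$ ($l\neq l'$) and put $N_l=\sum_{l'\neq l}n_{ll'}$. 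A closed walk in the chord subgraph is a closed curve $\alpha\subset S$ transverse to the chord edges, of length equal to the number of transverse intersections. Taking $a_l$ a small loop encircling $\gamma_{i_l}$, one checks directly that $a_l\cdot c\equiv 1\pmod 2$ for a chord $c$ with an endpoint on $\gamma_{i_l}$ and the other on some $\gamma_{i_{l'}}\neq\gamma_{i_l}$ (the chord leaves the collar of $\gamma_{i_l}$ once), $a_l\cdot c=0$ for chords disjoint from $\gamma_{i_l}$, and $a_l\cdot c\equiv 0\pmod 2$ for chords with both endpoints on $\gamma_{i_l}$ (closing such a chord off along a boundary arc yields a class in $H_1(S;\mathbb{Z}/2)$, on which the intersection form vanishes since $S$ is orientable and embedded circles have zero self-intersection). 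Therefore, for a class $[\alpha]\in H_1(S;\mathbb{Z}/2)=(\mathbb{Z}/2)^k/\langle(1,\ldots,1)\rangle$ with winding representative $(w_1,\ldots,w_k)$,
$$\mathrm{length}(\alpha)\equiv\sum_{l<l'}(w_l+w_{l'})n_{ll'}\equiv\sum_{l=1}^{k}w_l N_l\pmod 2,$$
which is well-defined under the gauge $w_l\mapsto w_l+t$ since $\sum_l N_l=2\sum_{l<l'}n_{ll'}\equiv 0$. Bipartiteness of the chord subgraph on $S$ is the vanishing of this expression for every class; choosing a representative with $w_k=0$ forces $N_l\equiv 0\pmod 2$ for every $l$.

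Finally, to match the parity condition of the proposition: at a vertex $v\in\gamma_{i_l}$ the chord at $v$ lies in the closure of one of the two components of $\partial P\setminus\gamma_{i_l}$, and it lies on the side containing $S$ if and only if its other endpoint is on a boundary cycle of $S$ (otherwise the chord would have to cross some other $\Gamma$-edge, which is impossible). A chord of $P$ with both endpoints on $\gamma_{i_l}$ lying on the $S$-side contributes $2$ to the count of vertices of $\gamma_{i_l}$ whose chord enters that side (one from each endpoint), whereas a chord between $\gamma_{i_l}$ and a distinct boundary cycle $\gamma_{i_{l'}}\subset\partial S$ contributes $1$. Hence the parity of this count equals $N_l\pmod 2$, and the proposition's ``even count on each side of every cycle $\gamma\in\Gamma$'' translates exactly to $N_l\equiv 0\pmod 2$ for every pair $(\gamma_{i_l},S)$ with $\gamma_{i_l}\subset\partial S$, which by the previous paragraph is equivalent to bipartiteness. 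The main obstacle is the mod-$2$ intersection computation on a sphere with $k$ holes; the remainder is combinatorial bookkeeping.
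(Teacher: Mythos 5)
Your argument is correct, and for the harder (sufficiency) direction it takes a genuinely different route from the paper. The paper's proof is constructive and stays at the level of curves in $S^2$: for each facet $\widetilde{G}$ of $C$ it pairs up the marked vertices on each boundary cycle (possible exactly because of the parity hypothesis), joins them by disjoint simple paths outside $\widetilde{G}$, and observes that the resulting disjoint simple closed curves cut $\partial P\simeq S^2$ into regions whose adjacency graph is a tree, so a nice $2$-colouring exists and restricts to $\widetilde{G}$; necessity is read off from the alternation of such a colouring along each boundary cycle. You instead identify the obstruction to bipartiteness of the chord graph in each region $S$ homologically: crossing parity with the chords is a homomorphism on $H_1(S;\mathbb Z/2)$, you evaluate it on the boundary-parallel generators, and you get the closed formula $\sum_l w_l N_l$, so bipartiteness is exactly $N_l\equiv 0\pmod 2$ for each boundary cycle, which you then translate into the vertex count of the statement. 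Your route buys a cleaner ``both directions at once'' equivalence and makes the mod-$2$ invariant explicit (at the cost of invoking mod-$2$ intersection theory on a planar surface, including the unstated but standard fact that crossing parity with a fixed properly embedded arc depends only on the homology class of the curve); the paper's route is more elementary and avoids homology altogether. You also verify Hamiltonicity via Lemma~\ref{lem:Hamsc}(3), which the paper leaves implicit, and that check is correct.

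One small inaccuracy, which does not affect the proof: your claim that the chord at $v\in\gamma_{i_l}$ lies on the $S$-side ``if and only if its other endpoint is on a boundary cycle of $S$'' fails in the ``only if enough'' direction when the chord has both endpoints on $\gamma_{i_l}$ but enters the region on the other side of $\gamma_{i_l}$ (its other endpoint is then on $\gamma_{i_l}\subset\partial S$ even though the chord is not in $\overline{S}$). The correct criterion is simply that the chord lies in $\overline{S}$, and your subsequent count (contribution $2$ from chords in $\overline{S}$ with both endpoints on $\gamma_{i_l}$, contribution $1$ from chords in $\overline{S}$ joining $\gamma_{i_l}$ to a different boundary cycle) uses exactly that, so the conclusion $\text{count}\equiv N_l\pmod 2$ stands.
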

\begin{proof}
Indeed, if $C$ is bipartite Hamiltonian, then the facets of $P$ lying in each facet $G$ of $C$ can be nicely colored in two
colors (nicely means that adjacent facets have different colors).
Then the~colors along each component $\gamma$ of $\partial G$ alter, therefore there is an~even number 
of vertices on $\gamma$ with the~edge going inside $G$. On the~other hand, if for each component $\gamma$ of $\partial G$
there is an even number of such vertices, we can join these vertices outside $G$ by a set of simple 
piecewise linear paths. We obtain a disjoint set of simple piecewise linear closed curves on $\partial P$. Each curve 
divides $\partial P$ into two connected components homeomorphic to disks, hence the adjacency graph of the complement
to this set of curves is a tree. Thus, the complement can be nicely colored in two colors, and this coloring 
restricts to a nice coloring of $G$.
\end{proof}
\begin{corollary}\label{cor:4cp}
For a simple $3$-polytope $P$ the following conditions are equivalent:
\begin{enumerate}
\item $P$ has a coloring of facets in at most $4$ colors such that 
adjacent facets have different colors;
\item there is a disjoint set of simple edge-cycles such that
\begin{itemize}
\item any vertex of $P$ lies in exactly one cycle;
\item for any cycle $\gamma$ from the set and any of the two connected components of $\partial P\setminus\gamma$
the number of vertices on $\gamma$ with the edge going inside this component is even.
\end{itemize}
\end{enumerate} 
\end{corollary}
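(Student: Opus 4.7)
The corollary is an equivalence between a combinatorial $4$-colorability condition on the facets of $P$ and the existence of a Hamiltonian collection of edge-cycles with a local parity property. My plan is to chain the structural results established just above --- the two preceding lemmas on orientable small covers over $3$-polytopes and their hyperelliptic quotients, together with the preceding proposition characterizing when a disjoint family of edge-cycles defines a bipartite Hamiltonian subcomplex of $\partial P$ --- and to build the $4$-coloring in the direction $(2)\Rightarrow(1)$ via a $2$-coloring of a tree of regions in $\partial P\simeq S^2$.

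For $(1)\Rightarrow(2)$, I would encode a proper $4$-coloring as a linearly independent vector-coloring $\Lambda$ of rank $3$: the colors become either three linearly independent vectors $\boldsymbol v_1,\boldsymbol v_2,\boldsymbol v_3\in\mathbb Z_2^3$, or four vectors $\boldsymbol v_1,\boldsymbol v_2,\boldsymbol v_3,\boldsymbol v_4$ with any three linearly independent and $\boldsymbol v_1+\boldsymbol v_2+\boldsymbol v_3+\boldsymbol v_4=0$. Since at each vertex of $P$ three different colors meet, the three incident vectors are linearly independent, so $\Lambda$ is linearly independent and $N(P,\Lambda)$ is an orientable small cover. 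Taking $\tau=\boldsymbol v_1+\boldsymbol v_2$ in the preceding lemma, the complex $\mathcal C(P,\Lambda_\tau)$ is a bipartite Hamiltonian subcomplex of $\partial P$ whose $1$-skeleton is a disjoint union of simple edge-cycles containing every vertex of $P$, and the preceding proposition then repackages this bipartiteness as the parity condition of~(2).

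For $(2)\Rightarrow(1)$, the preceding proposition guarantees that the given disjoint edge-cycles $\gamma_1,\dots,\gamma_s$ form the $1$-skeleton of a bipartite Hamiltonian subcomplex $C\subset\partial P$ with a nice $2$-coloring $\chi\colon\{F_1,\dots,F_m\}\to\{0,1\}$. The facets of $C$ are the connected components of $\partial P\setminus(\gamma_1\cup\dots\cup\gamma_s)$. Inductive application of the Jordan curve theorem (or Theorem~\ref{th:GST}) in $\partial P\simeq S^2$ shows that each such component is a disk and that the adjacency graph $T$ of these disks, with each cycle $\gamma_j$ as an edge joining the two disks it separates, has $s+1$ vertices, $s$ edges, and is connected, hence is a tree. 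I would properly $2$-color $T$ to attach a label $\alpha(F_i)\in\{A,B\}$ according to which facet of $C$ contains $F_i$, and then define $c(F_i)=(\alpha(F_i),\chi(F_i))\in\{A,B\}\times\{0,1\}$. For two facets of $P$ sharing an edge of $P$: if that edge lies on some cycle $\gamma_j$, then the two facets lie in the two disks joined by $\gamma_j$ in $T$ and so differ in $\alpha$; otherwise the edge lies in the interior of a single facet of $C$ containing both facets of $P$, and they differ in $\chi$ by niceness of the nice $2$-coloring. Hence $c$ is a proper $4$-coloring.

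The main obstacle is the transition from the bipartite Hamiltonian subcomplex $C$ to an honest $4$-coloring of $P$: the key insight is that the regions of $\partial P$ cut out by disjoint cycles form a tree in $S^2$, which allows refining the nice $2$-coloring $\chi$ (well-defined only locally, inside each facet of $C$) to a global $4$-coloring by combining it with a $2$-coloring of this tree. The direction $(1)\Rightarrow(2)$ is then essentially a repackaging of the two preceding lemmas via the preceding proposition, and the tree property of the partition of $\partial P$ follows by a short induction on $s$ using Jordan's theorem, so no genuinely new argument beyond what is already available in the preceding material is required.
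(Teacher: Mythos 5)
Your proof is correct and follows essentially the route the paper intends for Corollary~\ref{cor:4cp}: for (1)$\Rightarrow$(2) you chain the $4$-colouring $\to$ orientable small cover $\to$ quotient by $\tau=\boldsymbol{v}_1+\boldsymbol{v}_2$ through the two preceding lemmas and then invoke the parity criterion of the preceding proposition. For (2)$\Rightarrow$(1) your combination of the nice $2$-colouring supplied by that proposition with a proper $2$-colouring of the tree of regions cut out by the disjoint cycles is exactly the tree-of-regions argument the paper itself uses inside the proposition's proof, so there is no gap.
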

\begin{remark}
In \cite{B1913} G.~D.~Birkhoff reduced the four color problem 
to the family of simple $3$-polytopes $P$ such that $P$ is different from the simplex,
has no $3$- and $4$-belts and each $5$-belt surrounds a facet, where a~$k$-{\it belt} is a cyclic sequence of~$k$~facets 
such that two facets of this sequence are adjacent if and only if they are successive and no three facets
have a common vertex. We call such polytopes {\it strongly Pogorelov} (see~\cite{E19}). Then Corollary \ref{cor:4cp}
implies that the four color theorem is equivalent to the fact that any strongly Pogorelov polytope 
satisfies condition (2) of this corollary.
\end{remark}

\subsection{Subspaces of $N(P,\Lambda)$ corresponding to faces of $\mathcal{C}(P,\Lambda)$}\label{ssec:faces}
As it was mentioned in~Lemma \ref{lem:or} any $k$-face $G$ of $\mathcal{C}(P,c)$ is
an~orientable topological $k$-manifold, perhaps with a boundary. Moreover, Lemma \ref{lem:Mc} implies that 
$\partial G$ consists of faces of~$\mathcal{C}(P,c)$ lying in $G$. If $G$ is a connected component 
of the intersection $G_{i_1}\cap\dots\cap G_{i_{n-k}}$, then $\partial G$ consists 
of connected components of nonempty intersections $G\cap G_{j_1}\cap\dots\cap G_{j_l}$, 
$\{i_1<\dots<i_{n-k}\}\cap \{j_1<\dots< j_l\}=\varnothing$.
\begin{lemma}\label{lem:cc}
Connected components of $G\cap G_{j_1}\cap\dots\cap G_{j_l}$ are exactly connected components
of~$G_{i_1}\cap\dots\cap G_{i_{n-k}}\cap G_{j_1}\cap\dots\cap G_{j_l}$ lying in $G$.
\end{lemma}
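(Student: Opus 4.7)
The plan is to reduce the statement to the general topological fact that if a space $X$ decomposes as a disjoint union of clopen pieces, then taking intersection with another set $Y$ preserves this decomposition and hence preserves connected components.

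First I would observe that $X := G_{i_1}\cap\dots\cap G_{i_{n-k}}$ is a closed subset of $\partial P$ (a finite intersection of closed sets). By Lemma \ref{lem:or}, $X$ is a topological $k$-manifold (possibly with boundary), hence locally connected. A standard fact then gives that the connected components of $X$ are both open and closed in $X$. In particular, $G$ is clopen in $X$, so its complement $X\setminus G$ (the union of the other components) is also closed in $X$, and therefore closed in $\partial P$. Thus $G$ and $X\setminus G$ are two disjoint closed subsets of $\partial P$.

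Next I would intersect everything with $Y := G_{j_1}\cap\dots\cap G_{j_l}$. We get
\[
X\cap Y \;=\; (G\cap Y)\;\sqcup\;\bigl((X\setminus G)\cap Y\bigr),
\]
and both pieces are closed in $\partial P$ (and hence in $X\cap Y$), so this is a separation of $X\cap Y$ into two clopen subsets. Consequently every connected component of $X\cap Y$ lies entirely in one of these two pieces: either inside $G$, or disjoint from $G$. Those components contained in $G$ are precisely the connected components of $G\cap Y$, which is exactly the claimed equality.

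The only thing that requires any verification beyond set-theoretic manipulation is the clopenness of $G$ inside $X$, which rests on local connectedness; this is the only place where the manifold structure from Lemma \ref{lem:or} is used, and it is routine. There is no real obstacle -- the statement is essentially a book-keeping lemma preparing for the face-structure analysis that follows.
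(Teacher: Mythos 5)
Your proof is correct, and its skeleton --- produce a clopen decomposition of $G_{i_1}\cap\dots\cap G_{i_{n-k}}\cap G_{j_1}\cap\dots\cap G_{j_l}$ compatible with $G$ and then read off the components --- is the same as the paper's; what differs is how clopenness is obtained. The paper never uses local connectedness: it decomposes each of the three intersections into connected components, notes that every piece is a union of faces of $P$, hence closed and finite in number, so each piece is automatically open in the induced topology, and then identifies the pieces lying in $G=G_{\alpha_1}$. You instead derive clopenness of the single component $G$ in $X=G_{i_1}\cap\dots\cap G_{i_{n-k}}$ from local connectedness and use the two-piece separation $X\cap Y=(G\cap Y)\sqcup\bigl((X\setminus G)\cap Y\bigr)$. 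That works, but one citation point deserves care: Lemma~\ref{lem:or} is stated for $P_{\omega_1,\dots,\omega_k}$, i.e.\ intersections of unions of facets of $P$ over pairwise disjoint index sets, whereas your $X$ is an intersection of facets of $\mathcal{C}(P,c)$, which are connected components of such unions. To make the appeal precise, either observe that each $G_{i_s}$ is clopen in its colour class, so that $X$ is clopen in the corresponding $P_{\omega_1,\dots,\omega_{n-k}}$ and inherits its manifold (hence locally connected) structure, or quote the local model of Lemma~\ref{lem:Mc} directly, which gives local connectedness of $X$ immediately. The paper's finiteness argument buys independence from any local-connectedness input (only closedness and finiteness of the pieces are used), while your route is slightly shorter once local connectedness is in hand and makes explicit the single topological fact on which the bookkeeping rests.
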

\begin{proof}
Let $G_{i_1}\cap\dots\cap G_{i_{n-k}}=\bigsqcup\limits_{s} G_{\alpha_s}$,
$G_{j_1}\cap\dots\cap G_{j_l}=\bigsqcup\limits_{t} G_{\beta_t}$, and 
$G_{\alpha_s}\cap G_{\beta_t}=\bigsqcup\limits_{w}G_{s,t,w}$ 
be~the~decompositions into connected components. Then
$$
G_{i_1}\cap\dots\cap G_{i_{n-k}}\cap G_{j_1}\cap\dots\cap G_{j_l}=
\bigsqcup\limits_{s,t,w}G_{s,t,w}
$$
is a decomposition into a~disjoint union of closed connected sets. 
Each set in each union is~a~union of~faces of~$P$, hence
the number of sets in each union is finite. Therefore, each set   $G_{s,t,w}$ is~open 
in~the~topology induced to $G_{i_1}\cap\dots\cap G_{i_{n-k}}\cap G_{j_1}\cap\dots\cap G_{j_l}$
and it is a connected component of this set.
If $G=G_{\alpha_1}$, then by the same argument 
$G\cap G_{j_1}\cap\dots\cap G_{j_l}=\bigsqcup\limits_{t,w}G_{1,t,w}$ is~the~decomposition
into connected components. This finishes the proof. 
\end{proof}

\begin{lemma}\label{lem:intG}
For any faces $G'$ and $G''$ of $\mathcal{C}(P,c)$ if their intersection is nonempty, then it~is~a~disjoint union 
of~faces of $\mathcal{C}(P,c)$ of equal dimensions.
\end{lemma}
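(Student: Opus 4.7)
The plan is to enlarge the index sets defining $G'$ and $G''$ to a common superset and decompose $G' \cap G''$ along connected components of the resulting facet intersection. First I would write $G'$ as a connected component of $\bigcap_{i \in I_1} G_i$ for some index set $I_1 \subset [M]$ (so $\dim G' = n - |I_1|$) and $G''$ as a component of $\bigcap_{j \in I_2} G_j$; then set $I = I_1 \cup I_2$. By Lemma~\ref{lem:Mc}, at every point of $\bigcap_{i \in I} G_i$ the local picture is that of coordinate hyperplanes in $\mathbb R^n$, so each nonempty connected component of $\bigcap_{i \in I} G_i$ is a manifold-with-corners of dimension $n - |I|$ and is thus a single face of $\mathcal{C}(P,c)$. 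As noted in the proof of Lemma~\ref{lem:cc}, there are only finitely many such components.

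The core claim is that $G' \cap G''$ equals the disjoint union of those components of $\bigcap_{i \in I} G_i$ that are contained in both $G'$ and $G''$. Any such component $F$ is connected and lies in $\bigcap_{i \in I_1} G_i$, so by connectedness $F$ is either contained in $G'$ or disjoint from it, and likewise with $G''$. Conversely, any point $p \in G' \cap G''$ lies in some component $F_p$ of $\bigcap_{i \in I} G_i$, and the preceding dichotomy forces $F_p \subset G' \cap G''$. Hence the finitely many components $F$ with $F \subset G' \cap G''$ are pairwise disjoint, closed, exhaust $G' \cap G''$, and all have the same dimension $n - |I|$, so they are precisely the connected components of $G' \cap G''$.

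The only real subtlety — not so much an obstacle as a bookkeeping point — is that the index sets $I_1$ and $I_2$ may overlap, and Lemma~\ref{lem:cc} as stated requires disjointness. This is why I would work with the genuine set-theoretic union $I = I_1 \cup I_2$ and re-derive the dimension count directly from Lemma~\ref{lem:Mc}, observing that the local model $\mathbb R^{|J(p)|-|I|}_{\geqslant} \times \mathbb R^{n-|J(p)|}$ at a point $p$ with $I \subset J(p)$ has dimension $n - |I|$ regardless of multiplicities. Once this is in hand, the identification of $G' \cap G''$ with a disjoint union of equidimensional faces of $\mathcal{C}(P,c)$ is immediate.
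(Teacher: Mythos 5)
Your proof is correct and follows essentially the same route as the paper: both reduce $G'\cap G''$ to connected components of $\bigcap_{i\in I_1\cup I_2}G_i$, which are by definition faces of $\mathcal{C}(P,c)$ of one and the same dimension. Your component-dichotomy argument (a connected component of the big intersection is either contained in or disjoint from $G'$, resp.\ $G''$) is just a cleaner packaging of the paper's explicit decomposition into the pieces $G_{s,t,w}$ with the finiteness/openness argument, and your handling of overlapping index sets via Lemma~\ref{lem:Mc} matches the paper's treatment of the common facets $G_{i_1},\dots,G_{i_a}$.
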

\begin{proof}
Indeed, if $G'$ is a connected component of~$G_{i_1}\cap\dots,\cap G_{i_a}\cap G_{i_{a+1}}\cap \dots\cap G_{i_b}$, 
and $G''$ is~a~connected component of~$G_{i_1}\cap\dots\cap G_{i_a}\cap G_{i_{b+1}}\cap \dots\cap G_{i_c}$,
where $G_{i_1}$, $\dots$, $G_{i_a}$, $\dots$, $G_{i_b}$, $\dots$, $G_{i_c}$ are different facets of~$\mathcal{C}(P,c)$,
then $G'\cap G''\subset G_{i_1}\cap \dots\cap G_{i_c}$, where the latter set is a disjoint union of its connected components --
faces of $\mathcal{C}(P,c)$ of the same dimension. Again, as in the proof of~Lemma~\ref{lem:cc} let
$G_{i_1}\cap\dots,\cap G_{i_a}\cap G_{i_{a+1}}\cap \dots\cap G_{i_b}=\bigsqcup\limits_{s} G_{\alpha_s}$,
$G_{i_1}\cap\dots\cap G_{i_a}\cap G_{i_{b+1}}\cap \dots\cap G_{i_c}=\bigsqcup\limits_{t} G_{\beta_t}$, and 
$G_{\alpha_s}\cap G_{\beta_t}=\bigsqcup\limits_{w}G_{s,t,w}$ be 
the~decompositions into connected components. Then
\begin{gather*}
G_{i_1}\cap \dots\cap G_{i_c}=\\
\left(G_{i_1}\cap\dots,\cap G_{i_a}\cap G_{i_{a+1}}\cap \dots\cap G_{i_b}\right)\cap
\left(G_{i_1}\cap\dots\cap G_{i_a}\cap G_{i_{b+1}}\cap \dots\cap G_{i_c}\right)=\\
\bigsqcup\limits_{s,t,w}G_{s,t,w}
\end{gather*}
is a decomposition into a~disjoint union of closed connected sets. 
Each set in each union is~a~union of~faces of~$P$, hence
the number of sets in each union is finite. Therefore, each set   $G_{s,t,w}$ is open in the 
topology induced to $G_{i_1}\cap \dots\cap G_{i_c}$
and it is a connected component of this set.
If~$G'=G_{\alpha_1}$ and $G''=G_{\beta_1}$ then by the same argument 
$G'\cap G''=\bigsqcup\limits_{w}G_{1,1,w}$ is~the~decomposition
into connected components. This finishes the proof. 

\end{proof}
\begin{definition}
For a $k$-face $G$ of $\mathcal{C}(P,c)$ we will call by facets of $G$ {\it connected components} of~nonempty 
intersections of $G\cap G_j$, $G\not\subset G_j$, and by faces of $G$ connected components of~intersection of its facets. 
\end{definition}
\begin{corollary}\label{cor:fG}
Faces of $G$ are exactly faces of $\mathcal{C}(P,c)$ lying in $G$.
\end{corollary}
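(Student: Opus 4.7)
The plan is to reduce the statement to iterated applications of Lemma~\ref{lem:cc} and Lemma~\ref{lem:intG}, with no new geometric input required. Write $G$ as a connected component of $G_{i_1}\cap\dots\cap G_{i_{n-k}}$.

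First I would handle facets. By definition a facet of $G$ is a connected component of a nonempty intersection $G\cap G_j$ with $G\not\subset G_j$. Lemma~\ref{lem:cc} asserts that such components are exactly the connected components of $G_{i_1}\cap\dots\cap G_{i_{n-k}}\cap G_j$ that lie in $G$, i.e.\ precisely the $(k-1)$-faces of $\mathcal{C}(P,c)$ contained in $G$. This identifies the set of facets of $G$ with the set of $(k-1)$-faces of $\mathcal{C}(P,c)$ lying in $G$.

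Next I would handle arbitrary faces by induction on codimension inside $G$. Given facets $\widetilde G_{1},\dots,\widetilde G_{l}$ of $G$, by the previous step each $\widetilde G_s$ is a $(k-1)$-face of $\mathcal{C}(P,c)$, hence a connected component of $G_{i_1}\cap\dots\cap G_{i_{n-k}}\cap G_{j_s}$ for some $j_s\notin\{i_1,\dots,i_{n-k}\}$, and distinct facets $\widetilde G_s$ correspond to distinct indices $j_s$. Applying Lemma~\ref{lem:intG} (iteratively, or directly by the same decomposition argument used in its proof) to the pair $(\widetilde G_1\cap\dots\cap \widetilde G_{l-1},\widetilde G_l)$, the intersection $\widetilde G_1\cap\dots\cap \widetilde G_l$ is a disjoint union of connected components of $G_{i_1}\cap\dots\cap G_{i_{n-k}}\cap G_{j_1}\cap\dots\cap G_{j_l}$, each of which lies in $G$. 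These are precisely the $(k-l)$-faces of $\mathcal{C}(P,c)$ lying in $G$ that are contained in all of $\widetilde G_1,\dots,\widetilde G_l$. Conversely, every face of $\mathcal{C}(P,c)$ lying in $G$ is a component of some $G\cap G_{j_1}\cap\dots\cap G_{j_l}$, and by the facet identification above it coincides with a component of an intersection of facets of~$G$, hence is a face of~$G$.

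The only real bookkeeping point — and the single place where care is needed — is that when one intersects several facets of $G$, the connected components of the resulting set coincide with those of the larger intersection $G_{i_1}\cap\dots\cap G_{i_{n-k}}\cap G_{j_1}\cap\dots\cap G_{j_l}$ that happen to lie in $G$. This is exactly the finiteness-and-clopenness argument already carried out in the proofs of Lemmas~\ref{lem:cc} and~\ref{lem:intG}: each component is a finite union of faces of~$P$, so there are only finitely many of them, and each is both open and closed in the induced topology on the larger intersection. I expect no additional obstacle beyond transcribing this argument in the present notation.
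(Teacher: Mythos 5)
Your proposal is correct and takes essentially the same route as the paper, which states the corollary without a separate proof precisely because it is the immediate consequence of Lemma~\ref{lem:cc} and Lemma~\ref{lem:intG} (together with the finiteness/clopenness argument from their proofs) that you spell out. One small inaccuracy: distinct facets of $G$ need not correspond to distinct indices $j_s$ (two components of the same $G\cap G_j$ are different facets of $G$), but such facets are disjoint, so this does not affect your argument.
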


The vector-coloring $\Lambda\colon\{F_1,\dots, F_m\}\to\mathbb Z_2^r$ 
of $P$ induces the vector-coloring $\Lambda_G\colon\{\widetilde{G}_1,\dots,\widetilde{G}_q\}\to \mathbb Z_2^r\to
\mathbb Z_2^r/\langle \Lambda_{i_1},\dots,\Lambda_{i_{n-k}}\rangle$ of facets of any $k$-face 
$G\subset G_{i_1}\cap\dots\cap G_{i_{n-k}}\subset \mathcal{C}(P,\Lambda)$: a connected component of
$G\cap G_j$ is mapped to $[\Lambda_j]$. 
\begin{definition}
Define $V_G=\langle \Lambda_G(\widetilde{G}_j),j=1,\dots,q\rangle\subset \mathbb Z_2^r/\langle \Lambda_{i_1},\dots,\Lambda_{i_{n-k}}\rangle $ and
$$
N(G,\Lambda_G)=G\times V_G/\sim,\text{ where }(p,t)\sim (q,s)\Leftrightarrow p=q\text{ and }t-s\in 
\langle\Lambda_G(\widetilde{G}_j)\colon p\in \widetilde{G}_j\rangle.
$$
\end{definition}

\begin{proposition}\label{prop:Gk}
Let $\Lambda$ be a vector-coloring of rank $r$ of a simple $n$-polytope $P$ such that 
$N(P,\Lambda)$ is a closed topological manifold, and let 
$\pi_{\Lambda}\colon N(P,\Lambda)\to P$ be the projection. 
Then for~any $k$-face $G$ of $\mathcal{C}(P,\Lambda)$ the preimage $\pi_{\Lambda}^{-1}(G)$
is a closed topological $k$-dimensional submanifold (locally defined in some coordinate system
as the intersection of $(n-k)$ coordinate hyperplanes) in $N(P,\Lambda)$ homeomorphic to 
a disjoint union of $|\left(\mathbb Z_2^r/\langle \Lambda_{i_1},\dots,\Lambda_{i_{n-k}}\rangle\right)/V_G|=2^{r-(n-k)-\dim V_G}$
copies of the~manifold $N(G,\Lambda_G)$.
\end{proposition}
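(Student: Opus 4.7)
The plan is to use the description $N(P,\Lambda)=P\times\mathbb Z_2^r/\sim$ (obtained from $\mathbb R\mathcal{Z}_P$ by quotienting out $H=\Ker\Lambda$), in which $(p,t)\sim(p,s)$ iff $t-s\in\langle\Lambda_j\colon p\in F_j\rangle$. Then $\pi_\Lambda^{-1}(G)=G\times\mathbb Z_2^r/\sim$, and the whole statement reduces to analysing the local subgroups $\langle\Lambda_j\colon p\in F_j\rangle$ for $p\in G$. For such $p$, I would partition the facets $F_j\ni p$ of $P$ into those contained in some $G_{i_l}$ (each contributing one of the vectors $\Lambda_{i_l}$), and those contained in a facet $G_{j'}$ of $\mathcal{C}(P,\Lambda)$ with $G\not\subset G_{j'}$ (which correspond to the facets $\widetilde G$ of $G$ through $p$, and whose $\Lambda$-values project modulo $W:=\langle\Lambda_{i_1},\dots,\Lambda_{i_{n-k}}\rangle$ to $\Lambda_G(\widetilde G)$).

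Because $G$ is a single connected component of $G_{i_1}\cap\dots\cap G_{i_{n-k}}$ and different facets of $\mathcal{C}(P,\Lambda)$ of the same color are disjoint (being distinct components of the same single-color union), the vectors $\Lambda_{i_l}$ are pairwise distinct; applying \cite[Theorem 5.1]{E24} at any vertex of $G$ makes them linearly independent, so $\dim W=n-k$. The local subgroup at every $p\in G$ therefore contains $W$ and equals modulo $W$ exactly $\langle\Lambda_G(\widetilde G)\colon p\in\widetilde G\rangle$. Quotienting the $\mathbb Z_2^r$-factor by $W$ yields
\[
\pi_\Lambda^{-1}(G)\;\cong\;G\times(\mathbb Z_2^r/W)/\sim',
\]
with $(p,[t])\sim'(p,[s])$ iff $[t]-[s]\in\langle\Lambda_G(\widetilde G)\colon p\in\widetilde G\rangle\subset V_G$. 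Since every identification stays inside a single coset of $V_G$ in $\mathbb Z_2^r/W$, the space splits as a disjoint union indexed by these cosets, and translation by a chosen representative identifies each piece with $G\times V_G/\sim\;=\;N(G,\Lambda_G)$. The number of cosets is $|(\mathbb Z_2^r/W)/V_G|=2^{r-(n-k)-\dim V_G}$.

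For the local-submanifold statement I would invoke Lemma~\ref{lem:Mc} together with the closed-manifold structure of $N(P,\Lambda)$: near any $p\in G$ the polytope chart unfolds, using the linearly independent distinct vectors among $\{\Lambda_j\colon p\in F_j\}$, to a chart of $N(P,\Lambda)$ modelled on $\mathbb R^n$ in which the preimage of each facet of $\mathcal{C}(P,\Lambda)$ through $p$ appears as a single coordinate hyperplane (facets of $P$ sharing a $\Lambda$-value get glued together). Then $\pi_\Lambda^{-1}(G)$ appears locally as the intersection of the $n-k$ coordinate hyperplanes corresponding to $\Lambda_{i_1},\dots,\Lambda_{i_{n-k}}$, yielding the claimed closed $k$-dimensional submanifold structure. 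The main technical point will be verifying that facets of $P$ through $p$ sharing a $\Lambda$-value really do get glued to a common coordinate hyperplane, which uses that local connectedness forces them to lie in a single facet of $\mathcal{C}(P,\Lambda)$; this is precisely the content of the unfolding in \cite[Theorem 5.1]{E24}.
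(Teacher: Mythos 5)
Your proposal is correct and follows essentially the same route as the paper's proof: the description $N(P,\Lambda)=P\times\mathbb Z_2^r/\sim$ restricted over $G$, the passage to $\mathbb Z_2^r/\langle\Lambda_{i_1},\dots,\Lambda_{i_{n-k}}\rangle$ with the coset decomposition modulo $V_G$ giving $2^{r-(n-k)-\dim V_G}$ copies of $N(G,\Lambda_G)$, and the local chart from Lemma~\ref{lem:Mc} together with the linear independence supplied by the closed-manifold criterion to realize $\pi_\Lambda^{-1}(G)$ locally as the intersection of $n-k$ coordinate hyperplanes. Your explicit verification that the $\Lambda_{i_l}$ are pairwise distinct and hence independent (so $\dim\langle\Lambda_{i_1},\dots,\Lambda_{i_{n-k}}\rangle=n-k$) is a point the paper leaves implicit, but it is the same argument.
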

\begin{proof}
Indeed, $N(P,\Lambda)=P\times \mathbb Z_2^r/\sim$,  where $(p,a)\sim(q,b)$  if and only if $p=q$  and 
$a-b\in\langle \Lambda_i\colon p\in F_i\rangle=\langle \Lambda_j\colon p\in G_j\rangle$.
Also $\pi_{\Lambda}^{-1}(G)=G\times \mathbb Z_2^r/\sim$, where $(p,a)\sim(q,b)$  if and only if $p=q$  and 
$a-b\in\langle \Lambda_j\colon p\in G_j\rangle$. But each point $p\in G$ lies in $G_{i_1}\cap\dots\cap G_{i_{n-k}}$. Hence,
$$
\mathbb Z_2^r/\langle\Lambda_j\colon p\in G_j\rangle\simeq 
(\mathbb Z_2^r/\langle \Lambda_{i_1},\dots,\Lambda_{i_{n-k}}\rangle)/
(\langle \Lambda_j\colon p\in G_j\rangle/\langle \Lambda_{i_1},\dots,\Lambda_{i_{n-k}}\rangle).
$$
This space is~a~disjoint union~of cosets modulo  $V_G$. Starting from the point  $[p,a]\in \pi_{\Lambda}^{-1}(G)$
and moving inside this space we can reach by a path only the points $[q,b]$, where $a-b\in V_G$. Hence, 
$\pi_{\Lambda}^{-1}(G)$ is a disjoint union of 
$|\left(\mathbb Z_2^r/\langle \Lambda_{i_1},\dots,\Lambda_{i_{n-k}}\rangle\right)/V_G|$ connected
components, and~each component is homeomorphic to $N(G,\Lambda_G)$. Since $N(P,\Lambda)$
is a closed manifold, for~each point $p\in \partial P$, which 
belongs to exactly $l$ facets $G_{j_1}$, $\dots$, $G_{j_l}$, the vectors  $\Lambda_{j_1},\dots, \Lambda_{j_l}$ are~linearly
independent. By Lemma \ref{lem:Mc} $p$ has a neighbourhood in $P$ homeomorphic 
to~$\mathbb R^l_{\geqslant}\times\mathbb R^{n-l}$.
Then in $N(P,\Lambda)$ for the point $p\times a$ these neighbourhoods are glued to~the~neighbourhood $U$ 
homeomorphic to~$\mathbb R^l\times \mathbb R^{n-l}$. Indeed, in $p\times a$ the copies $P\times (a+\varepsilon_1\Lambda_{j_1}+
\dots+\varepsilon_l\Lambda_{j_l})$, $\varepsilon_s=\pm1$, are glued locally as the sets 
$\{\varepsilon_1y_1\geqslant 0,\dots, \varepsilon_ly_l\geqslant 0\}$,  where the addition of the vector $\Lambda_{j_s}$ corresponds to~the~operation $y_s\to -y_s$.  At each point of  $\pi_{\Lambda}^{-1}(G)$ we may take $j_1=i_1$, $\dots$, $j_{n-k}=i_{n-k}$
to~see that $\pi_{\Lambda}^{-1}(G)\cap U$ is defined by equations $y_1=\dots=y_{n-k}=0$. This finishes the proof.
\end{proof}
\begin{proposition}\label{prop:kdefk+1}
Let $C=\mathcal{C}(P,c_2)\subset\mathcal{C}(P,c_1)$ be~a~proper 
Hamiltonian subcomplex and $M_1$, $\dots$, $M_s$
be the set of its defining $(n-2)$-faces. Then facets of~each $M_q$ are connected components of~intersections 
of~$M_q$ with facets~of~$C$ not containing $M_q$. Moreover, each $k$-face $G$ of $M_q$, $0\leqslant k\leqslant n-2$, 
is~a~connected component of~intersection of~$M_q$ with a~unique $(k+1)$-face $\widetilde{G}$ of~$C$ containing $G$.
\end{proposition}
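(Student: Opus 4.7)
The plan is to combine the Hamiltonian property in the form of Lemma \ref{lem:Hamsc}(3) with the local coordinate picture of Lemma \ref{lem:Mc}. Let $\widetilde{G}_0$ denote the facet of $C=\mathcal{C}(P,c_2)$ corresponding to the common color $c_2(G_{i_1})=c_2(G_{i_2})$. I would first argue that $\widetilde{G}_0$ is the unique facet of $C$ containing $M_q$: if another facet $\widetilde{G}_1\ne \widetilde{G}_0$ contained $M_q$, then an interior point $p$ of $M_q$ would lie in some $G_j\subset \widetilde{G}_1$, but by Lemma \ref{lem:Mc} the only facets of $\mathcal{C}(P,c_1)$ through such a generic $p$ are $G_{i_1},G_{i_2}$, both contained in $\widetilde{G}_0$.

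For the first claim I would fix a facet $\widetilde{G}\ne \widetilde{G}_0$ of $C$ and decompose $M_q\cap\widetilde{G}=\bigcup_{G_j\subset\widetilde{G}}(M_q\cap G_j)$. The second bullet of Lemma \ref{lem:Hamsc}(3), applied to the color pair $(c_2(G_{i_1}),c_2(\widetilde{G}))$, gives $G_{i_1}\cap G_{i_2}\cap G_j\cap G_{j'}=\varnothing$ for distinct $G_j,G_{j'}\subset\widetilde{G}$, so the union is disjoint. Hence connected components of $M_q\cap\widetilde{G}$ agree with those of the individual pieces $M_q\cap G_j$, which by definition are facets of $M_q$. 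Conversely the first bullet of the same lemma rules out any facet of $M_q$ arising from a $G_j\subset\widetilde{G}_0$ with $G_j\ne G_{i_1},G_{i_2}$, since three facets of $c_1$ of the common color cannot share a common point.

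For the second claim, by Corollary \ref{cor:fG} a $k$-face $G$ of $M_q$ is a connected component of some intersection $G_{i_1}\cap G_{i_2}\cap G_{l_1}\cap\cdots\cap G_{l_{n-k-2}}$ with the $G_{l_s}$ distinct facets of $\mathcal{C}(P,c_1)$ different from $G_{i_1},G_{i_2}$. Two applications of Lemma \ref{lem:Hamsc}(3) show each $G_{l_s}$ lies in a facet $\widetilde{G}_{l_s}\ne\widetilde{G}_0$ of $C$ and the $\widetilde{G}_{l_s}$ are pairwise distinct. I would then take $\widetilde{G}$ to be the connected component of $\widetilde{G}_0\cap \widetilde{G}_{l_1}\cap\cdots\cap \widetilde{G}_{l_{n-k-2}}$ containing $G$. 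Distributing unions and applying the first bullet of Lemma \ref{lem:Hamsc}(3) kills every term $G_a\subset\widetilde{G}_0$ with $G_a\ne G_{i_1},G_{i_2}$ after intersecting with $M_q$; the second bullet then makes $M_q\cap\widetilde{G}_{l_1}\cap\cdots\cap\widetilde{G}_{l_{n-k-2}}$ a disjoint union of pieces $M_q\cap G_{b_1}\cap\cdots\cap G_{b_{n-k-2}}$, and by Lemma \ref{lem:cc} the piece with $b_s=l_s$ has $G$ as one of its connected components. Hence $G$ is a connected component of $M_q\cap\widetilde{G}$. Uniqueness then follows from Lemma \ref{lem:Mc}: at an interior point of $G$ only the facets $G_{i_1},G_{i_2},G_{l_1},\ldots,G_{l_{n-k-2}}$ of $\mathcal{C}(P,c_1)$ pass through, so the only facets of $C$ through that point are $\widetilde{G}_0,\widetilde{G}_{l_1},\ldots,\widetilde{G}_{l_{n-k-2}}$, and any $(k+1)$-face of $C$ containing $G$ must be cut out by this same list.

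I expect the principal obstacle to be reading off the dimension count and the uniqueness rigorously from the local picture, because $\widetilde{G}_0$ is not a smooth manifold near $M_q$: locally at an interior point of $M_q$ it is the union of two half-hyperplanes $\{y_1=0\}$ and $\{y_2=0\}$ glued along $\{y_1=y_2=0\}=M_q$. So $\widetilde{G}$ is likewise a two-sheeted object locally, and both the $(k+1)$-dimensional count and the uniqueness argument must be extracted from the explicit Lemma \ref{lem:Mc} model rather than from any abstract transversality.
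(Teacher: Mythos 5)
Your proof is correct and follows essentially the same route as the paper: decompose each face of $C$ into faces of $\mathcal{C}(P,c_1)$, use Lemma~\ref{lem:Hamsc}(3) to get that the resulting finite unions are disjoint (so components of the pieces are components of the whole), identify the relevant piece via Lemma~\ref{lem:cc}, and prove uniqueness by a generic-point argument based on Lemma~\ref{lem:Mc}, which is exactly the argument the paper invokes from the beginning of the proof of Lemma~\ref{lem:Hamsc}. The only cosmetic difference is that you phrase the disjointness via the second bullet of Lemma~\ref{lem:Hamsc}(3) where occasionally the first bullet is the one that applies (when two distinct facets of $C$ happen to share a color), a case the paper glosses over in the same way.
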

\begin{proof}
By definition facets of $M_q\subset G_i\cap G_j$ are connected components of~intersections of~$M_q$
with facets $G_k$ of~$\mathcal{C}(P,c_1)$ different from $G_i$ and $G_j$. 
By Lemma~\ref{lem:Hamsc} for $G_i\cap G_j\cap G_k\ne\varnothing$
the~facets $G_i$ and $G_j$ lie in the same facet $\widetilde{G}_a$ of $\mathcal{C}(P,c_2)$, and 
$G_k$ lies in another facet $\widetilde{G}_b$. Then 
$$
M_q\cap G_k\subset M_q\cap \widetilde{G}_b=M_q\cap\left(\bigcup\limits_{G_l\subset \widetilde{G}_b}G_l\right)=
\bigcup\limits_{G_l\subset \widetilde{G}_b\colon M_q\cap G_l\ne\varnothing}M_q\cap G_l.
$$
The latter union is disjoint since $(M_q\cap G_{l_1})\cap (M_q\cap G_{l_2})\subset G_i\cap G_j\cap G_{l_1}\cap G_{l_2}=\varnothing$
by~Lemma~\ref{lem:Hamsc}(3). Each $M_q\cap G_l$ has a~finite set of~connected components since 
it~is~a~union of~faces~of~$P$. We have a disjoint union of a finite collection of connected components of all $M\cap G_l$, 
$G_l\subset \widetilde{G}_b$. Then each connected component is also a connected component of~the~union. 
This finishes the~proof of~the~first statement. 

Each $k$-face $G$ of~$M_q\subset G_i\cap G_j$ is~a~connected component 
of~$M_q\cap G_{i_3}\cap\dots \cap G_{i_{n-k}}$ by~Lemma~\ref{lem:cc}. 
By Lemma~\ref{lem:Hamsc} $G_i, G_j\subset\widetilde{G}_{j_2}$, $G_{i_3}\subset \widetilde{G}_{j_2}$,
$\dots$, $G_{i_{n-k}}\subset \widetilde{G}_{j_{n-k}}$ for different facets $\widetilde{G}_{j_2}$, $\dots$, $\widetilde{G}_{j_{n-k}}$
of~$\mathcal{C}(P,c_2)$. 
Then $M_q\cap \widetilde{G}_{j_2}\cap \dots\cap \widetilde{G}_{j_{n-k}}$ is equal to
the intersection of $M_q$ with a connected component $\widetilde{G}$ 
of~$\widetilde{G}_{j_2}\cap \dots\cap \widetilde{G}_{j_{n-k}}$
and it is equal to
\begin{gather*}
M_q\cap \widetilde{G}_{j_3}\cap \dots\cap \widetilde{G}_{j_{n-k}}=\\
M_q\cap\left(\bigcup\limits_{G_{l_3}\subset \widetilde{G}_{j_3}}G_{l_3}\right)\cap\dots\cap
\left(\bigcup\limits_{G_{l_{n-k}}\subset \widetilde{G}_{j_{n-k}}}G_{l_{n-k}}\right) =\\
\bigcup\limits_{G_{l_3}\subset \widetilde{G}_{j_3},\dots, G_{l_{n-k}}\subset \widetilde{G}_{j_{n-k}}}M_q\cap G_{l_3}\cap\dots\cap
G_{l_{n-k}}, 
\end{gather*}
where any two nonempty sets in the union are disjoint, since at some position $j$ the indices $l_j'$ and $l_j''$ are different
and $G_i\cap G_j\cap G_{l_j'}\cap G_{l_j''}=\varnothing$. Then each connected component of each set is a connected component of the union. In particular, for $l_3=i_3$, $\dots$, $l_{n-k}=i_{n-k}$. Thus, $G$ is a connected component of
intersection of~the~$(k+1)$-face $\widetilde{G}$ of $C$ with $M_q$, and $G\subset \widetilde{G}$.
On~the~other hand, let~$G$ be~a~connected component of intersection of some $(k+1)$-face $\widetilde{G}'$ of $C$ with $M_q$,
where $G\subset \widetilde{G}'$. Then $\widetilde{G}'$ is~a~connected component of some intersection
$\widetilde{G}_{u_2}\cap \dots\cap\widetilde{G}_{u_{n-k}}$. Then by the argument in the beginning 
of the proof of Lemma~\ref{lem:Hamsc} without loss of generality we~may~assume that
$G_i$, $G_j\subset \widetilde{G}_{u_2}$, $G_{i_3}\subset  \widetilde{G}_{u_3}$, $\dots$, 
$G_{i_{n-k}}\subset  \widetilde{G}_{u_{n-k}}$. Then $u_2=j_2$, $\dots$, $u_{n-k}=j_{n-k}$, and $\widetilde{G}=\widetilde{G}'$.
\end{proof}
\begin{proposition}\label{prop:Gtilde}
Let $C=\mathcal{C}(P,c_2)\subset\mathcal{C}(P,c_1)$ be~a~proper Hamiltonian subcomplex.
Then any $k$-face $\widetilde{G}\subset \mathcal{C}(P,c_2)$ is a~union 
$\widetilde{G}=\bigcup_{\alpha}G_\alpha$ of~$k$-faces of~$\mathcal{C}(P,c_1)$, 
where the intersection of any three faces in the union is empty,
while each nonempty intersection of two faces in~the~union is~a~disjoint union of~$(q-1)$-faces of defining faces. 
In particular, the adjacency graph $\Gamma(\widetilde{G})$ is~well-defined. Its vertices are faces $G_\alpha$
and edges correspond to connected components of intersections of pairs of such faces. The $(k-1)$-faces
of~defining faces corresponding to~the~edges of~$\Gamma(\widetilde{G})$ are pairwise disjoint.
\end{proposition}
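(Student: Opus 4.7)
The plan is to encode every $k$-face of $\mathcal{C}(P,c_1)$ sitting inside $\widetilde{G}$ by a tuple of facets of $\mathcal{C}(P,c_1)$ and to use Lemma~\ref{lem:Hamsc}(3) as the combinatorial engine that rules out tuples differing in more than one coordinate. First I would write $\widetilde{G}$ as a connected component of some intersection $\widetilde{G}_{j_1}\cap\dots\cap \widetilde{G}_{j_{n-k}}$ of $n-k$ distinct facets of $\mathcal{C}(P,c_2)$. Construction~\ref{con:subcom} gives the decomposition $\widetilde{G}=\bigcup_\alpha G_\alpha$ into $k$-faces of $\mathcal{C}(P,c_1)$, and, repeating the finite-union argument used in Lemmas~\ref{lem:cc} and~\ref{lem:intG}, each $G_\alpha$ is a connected component of a unique intersection $G_{i_1(\alpha)}\cap\dots\cap G_{i_{n-k}(\alpha)}$ with $G_{i_s(\alpha)}\subset \widetilde{G}_{j_s}$, producing a tuple $T_\alpha=(i_1(\alpha),\dots,i_{n-k}(\alpha))$.

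The key use of the Hamiltonian hypothesis is in analyzing pairwise intersections. If $T_\alpha$ and $T_\beta$ differ in two distinct positions $a,b$, then the four facets $G_{i_a(\alpha)},G_{i_a(\beta)},G_{i_b(\alpha)},G_{i_b(\beta)}$ have colors $c_2(\widetilde{G}_{j_a}),c_2(\widetilde{G}_{j_a}),c_2(\widetilde{G}_{j_b}),c_2(\widetilde{G}_{j_b})$, and Lemma~\ref{lem:Hamsc}(3) forces their common intersection to be empty, whence $G_\alpha\cap G_\beta=\varnothing$. So a nonempty $G_\alpha\cap G_\beta$ with $\alpha\ne\beta$ forces the tuples to differ in a single position $a$. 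Then $G_{i_a(\alpha)}\cap G_{i_a(\beta)}$ is a disjoint union of defining $(n-2)$-faces; intersecting with the remaining $G_{i_s(\alpha)}$, $s\ne a$, and applying Proposition~\ref{prop:kdefk+1} to each defining face $M$ in that union identifies the connected components of $G_\alpha\cap G_\beta\cap M$ with $(k-1)$-faces of $M$, yielding the asserted decomposition.

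For the triple intersections I would argue by pigeonhole on the coordinates in which the three tuples of $G_\alpha,G_\beta,G_\gamma$ pairwise differ. Either all three tuples agree outside a single position $a$, in which case three pairwise distinct facets of $\widetilde{G}_{j_a}$ (hence with the common color $c_2(\widetilde{G}_{j_a})$) would have nonempty common intersection, again contradicting Lemma~\ref{lem:Hamsc}(3); or two pairwise-differing coordinates are distinct, forcing some pair of tuples to differ in at least two positions and hence giving an empty pairwise intersection by the previous paragraph. Either way $G_\alpha\cap G_\beta\cap G_\gamma=\varnothing$.

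Well-definedness of $\Gamma(\widetilde{G})$ is then immediate: vertices are the $G_\alpha$, edges are the connected components of nonempty $G_\alpha\cap G_\beta$, each such component is a $(k-1)$-face of a unique defining $(n-2)$-face. Pairwise disjointness splits into two cases: two edges sitting in the same defining face $M$ come from intersections of $M$ with distinct collections of facets of $\mathcal{C}(P,c_1)$ (or distinct connected components of the same collection), hence are disjoint $(k-1)$-faces of $M$ by Proposition~\ref{prop:kdefk+1}; two edges sitting in distinct defining faces are disjoint because any two defining $(n-2)$-faces are disjoint by Proposition~\ref{cor:n-2f}. The main technical obstacle is the bookkeeping in the pairwise step—keeping straight which defining face and which ambient $k$-face of $C$ are being used to invoke Proposition~\ref{prop:kdefk+1}—but once the tuple formalism is in place the argument reduces to the same finite-component decomposition used throughout Subsection~\ref{ssec:faces}.
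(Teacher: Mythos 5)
Your proposal follows essentially the same route as the paper: encode each $k$-face $G_\alpha\subset\widetilde{G}$ of $\mathcal{C}(P,c_1)$ by the tuple of facets $G_{i_s(\alpha)}\subset\widetilde{G}_{j_s}$, use Lemma~\ref{lem:Hamsc}(3) to kill intersections of faces whose tuples differ in two positions, identify a nonempty pairwise intersection (tuples differing in one position) with a disjoint union of $(k-1)$-faces inside defining faces, and run the same two-case pigeonhole for triple intersections. All of that matches the paper's argument and is correct; one small shared gloss is that when the two facets $\widetilde{G}_{j_a}\ne\widetilde{G}_{j_b}$ happen to carry the same color $c_2$, the second bullet of Lemma~\ref{lem:Hamsc}(3) does not apply literally and you must fall back on the first bullet (three distinct facets of one color meeting), exactly as the paper implicitly does.

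The one step whose justification does not work as written is the final disjointness claim in the case of two edges lying in the same defining face $M$: Proposition~\ref{prop:kdefk+1} does not assert that distinct $(k-1)$-faces of $M$, or components of intersections of $M$ with distinct collections of facets, are disjoint — distinct faces of a complex can perfectly well share lower-dimensional faces, so ``distinct collections $\Rightarrow$ disjoint'' is not available. However, the conclusion follows at once from what you have already proved: if two edge components coming from different pairs $\{G_\alpha,G_\beta\}\ne\{G_\gamma,G_\delta\}$ had a common point, that point would lie in at least three distinct faces among $G_\alpha,G_\beta,G_\gamma,G_\delta$, contradicting the emptiness of triple intersections; and two components of the same $G_\alpha\cap G_\beta$ are disjoint by definition of connected components (the distinct-defining-face case is fine via Proposition~\ref{cor:n-2f}). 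With that local repair your argument is complete and coincides with the paper's proof.
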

\begin{proof}
By definition  $\widetilde{G}$ is a connected component of~the intersection 
$\widetilde{G}_{j_1}\cap\dots\cap \widetilde{G}_{j_{n-k}}$ of  pairwise different facets.
We have 
\begin{gather*}
\widetilde{G}_{j_1}\cap \dots\cap \widetilde{G}_{j_{n-k}}=\left(\bigcup\limits_{G_{l_1}\subset \widetilde{G}_{j_1}}G_{l_1}\right)\cap
\dots\cap \left(\bigcup\limits_{G_{l_{n-k}}\subset \widetilde{G}_{j_{n-k}}}G_{l_{n-k}}\right)=\\
\bigcup\limits_{G_{l_1}\subset \widetilde{G}_{j_1},\dots, G_{l_{n-k}}\subset \widetilde{G}_{j_{n-k}}}G_{l_1}\cap \dots\cap G_{l_{n-k}}.
\end{gather*}
The union on the right is a union of $k$-faces of $\mathcal{C}(P,c_1)$, which are connected components 
of~nonempty intersections. Moreover, if two such faces intersect, then they are connected components
of~two different intersections $G_{l_1}\cap \dots\cap G_{l_{n-k}}$ and $G_{l_1'}\cap \dots\cap G_{l_{n-k}'}$.
If $l_a\ne l_a'$ and $l_b\ne l_b'$, then 
$$
\left(G_{l_1}\cap \dots\cap G_{l_{n-k}}\right)\cap \left(G_{l_1'}\cap \dots\cap G_{l_{n-k}'}\right)\subset 
\left(G_{l_a}\cap G_{l_a'}\right)\cap \left(G_{l_b}\cap G_{l_b'}\right)=\varnothing.
$$ 
Thus, if faces intersect, then $l_a=l_a'$ for all indices $a$ but one, say $b$. Then
the intersection is a disjoint union of $(k-1)$-faces of defining faces lying in $G_{l_b}\cap G_{l_b'}$. 
If three faces intersect, then either for all $a$ but one $l_a=l_a'=l_a''$, and 
$l_b\ne l_b'\ne l_b''\ne l_b$, or there are two indices $b_1\ne b_2$ such that $l_{b_1}\ne l_{b_1}'$
and $l_{b_2}\ne l_{b_2}''$. In the first case the intersection lies 
in~$(G_{l_{b_1}}\cap G_{l_{b_1}'})\cap (G_{l_{b_1}}\cap G_{l_{b_1}''})=\varnothing$,
and in~the~second -- in~$(G_{l_{b_1}}\cap G_{l_{b_1}'})\cap (G_{l_{b_2}}\cap G_{l_{b_2}''})=\varnothing$. 
\end{proof}

\begin{corollary}\label{cor:GneMq}
Let $C=\mathcal{C}(P,c_2)\subset\mathcal{C}(P,c_1)$ be~a~proper Hamiltonian subcomplex.
If a $k$-face $G\subset \mathcal{C}(P,c_1)$ does not intersect defining faces, then
$G$ is also a~$k$-face of $\mathcal{C}(P,c_2)$ of the same combinatorial type. 
\end{corollary}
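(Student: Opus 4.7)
The plan is to combine Proposition \ref{prop:Gtilde} with a simple open/closed argument, then verify the face structures agree. Write $G$ as a connected component of $G_{i_1}\cap\dots\cap G_{i_{n-k}}$ for distinct facets of $\mathcal{C}(P,c_1)$, and for each $s$ let $\widetilde{G}_{j_s}$ denote the facet of $\mathcal{C}(P,c_2)$ containing $G_{i_s}$.

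First I would show the facets $\widetilde{G}_{j_1},\dots,\widetilde{G}_{j_{n-k}}$ are pairwise distinct. If $\widetilde{G}_{j_a}=\widetilde{G}_{j_b}$ with $a\ne b$, then $c_2(G_{i_a})=c_2(G_{i_b})$, so every connected component of $G_{i_a}\cap G_{i_b}$ is a defining face; the connected set $G\subset G_{i_a}\cap G_{i_b}$ would then lie in a single defining face, contradicting $G\cap M_q=\varnothing$ for all defining $M_q$ (the case $k=n-1$ is vacuous). Hence $G$ is contained in a unique connected component $\widetilde{G}$ of $\widetilde{G}_{j_1}\cap\dots\cap\widetilde{G}_{j_{n-k}}$, and $\widetilde{G}$ is a $k$-face of $\mathcal{C}(P,c_2)$.

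Next, applying Proposition \ref{prop:Gtilde} to $\widetilde{G}$, I would write $\widetilde{G}=\bigcup_\alpha G_\alpha$ with the $G_\alpha$ being the $k$-faces of $\mathcal{C}(P,c_1)$ it contains; one of them, say $G_{\alpha_0}$, equals $G$. For $\alpha\ne\alpha_0$, Proposition \ref{prop:Gtilde} gives that $G\cap G_\alpha$ lies in the union of $(k-1)$-faces of defining faces, whence the hypothesis forces $G\cap G_\alpha=\varnothing$. Thus $G$ is both closed and open in $\widetilde{G}$ (its complement being the finite closed union $\bigcup_{\alpha\ne\alpha_0}G_\alpha$), and the connectedness of $\widetilde{G}$ yields $G=\widetilde{G}$.

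Finally I would check that $G$ has the same combinatorial type in both complexes. For any facet $\widetilde{G}_j$ of $\mathcal{C}(P,c_2)$ not containing $G$, the decomposition $G\cap\widetilde{G}_j=\bigsqcup_{G_l\subset\widetilde{G}_j}G\cap G_l$ is disjoint: for distinct $G_{l_1},G_{l_2}\subset\widetilde{G}_j$, the intersection $G_{l_1}\cap G_{l_2}$ is a union of defining faces, hence $G\cap G_{l_1}\cap G_{l_2}=\varnothing$ by hypothesis. Therefore the facets of $G$ in $\mathcal{C}(P,c_2)$ are exactly the facets of $G$ in $\mathcal{C}(P,c_1)$; the same disjointness argument applied to intersections of several facets of $\mathcal{C}(P,c_2)$ yields agreement of faces in all codimensions. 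The main obstacle is this last combinatorial-type step, where a priori a single facet of $G$ in $\mathcal{C}(P,c_2)$ could fuse several facets of $G$ in $\mathcal{C}(P,c_1)$; the disjointness argument, hinging on $G$ meeting no defining face (and hence no face of one either), is what rules this out.
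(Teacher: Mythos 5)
Your proposal is correct and follows essentially the same route as the paper: both identify the $k$-face $\widetilde{G}$ of $\mathcal{C}(P,c_2)$ containing $G$, use Proposition~\ref{prop:Gtilde} together with the hypothesis that $G$ misses the defining faces (plus connectedness) to conclude $G=\widetilde{G}$, and then repeat the same disjointness argument to match the face structures. Your version merely spells out details the paper leaves implicit (distinctness of the facets $\widetilde{G}_{j_s}$ and the open/closed step).
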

\begin{proof}
Indeed, $G$ lies in some $k$-face $\widetilde{G}$ of~$\mathcal{C}(P,c_2)$. 
Since $G$ does not intersect defining faces, we~have $G=\widetilde{G}$.
By the same argument each face of $G$ is a face of $\widetilde{G}$. On the other hand
each face of $\widetilde{G}$ is a subset of $G$ not intersecting defining faces. Hence, it
is a face of $G$. 
\end{proof}

\subsection{Bipartite Hamiltonian subcomplexes and two-sheeted branched coverings}\label{ssec:bc}
Remind that by $\pi_{\Lambda}$ we denote the projection $N(P,\Lambda)\to P$.
\begin{proposition}\label{prop:Ham2sh}
Let $\Lambda$ be a vector-coloring of rank $r$ of a simple $n$-polytope $P$ such
that $N(P,\Lambda)$ is a closed topological manifold and let 
$\tau\in\mathbb Z_2^r\setminus\{0\}$ be an involution such that 
$N(P,\Lambda)/\langle\tau\rangle=N(P,\Lambda_\tau)$ is also a closed topological manifold.
Let $M_1^{n-2}$, $\dots$, $M_s^{n-2}$ be defining faces of the bipartite Hamiltonian subcomplex 
$\mathcal{C}(P,\Lambda_\tau)\subset\mathcal{C}(P,\Lambda)$ and $M=M_1^{n-2}\sqcup\dots\sqcup M_s^{n-2}$. 
Then for the projection \linebreak $\pi\colon N(P,\Lambda)\to N(P,\Lambda_\tau)$ 
\begin{itemize}
\item the restriction $N(P,\Lambda)\setminus \pi_{\Lambda}^{-1}(M)\to 
N(P,\Lambda_\tau)\setminus\pi_{\Lambda_\tau}^{-1}(M)$ is~a~$2$-sheeted covering;
\item the restriction $\pi_{\Lambda}^{-1}(M)\to\pi_{\Lambda_\tau}^{-1}(M)$ it one-to-one
and for each point $x\in \pi_{\Lambda}^{-1}(M)$ and its image $y$
there are neighbourhoods such that
\begin{itemize}
\item each neighbourhood is homeomorphic to $\mathbb C\times\mathbb R^{n-2}$;
\item the preimage of $M$ corresponds to the set $\{0\}\times \mathbb R^{n-2}$;
\item the mapping $N(P,\Lambda)\to N(P,\Lambda_\tau)$ has the form 
$(\boldsymbol{z},\boldsymbol{x})\to (\boldsymbol{z}^2,\boldsymbol{x})$.
\end{itemize}
\end{itemize}
In particular, $\pi$ is~a~$2$-sheeted branched  covering with the branch set 
$\pi_{\Lambda_\tau}^{-1}(M)$ (see the definition in \cite{G19}). 
Moreover, $\pi_{\Lambda_\tau}^{-1}(M)\simeq \pi_{\Lambda}^{-1}(M)=
\bigsqcup\limits_{q=1}^s\pi_{\Lambda}^{-1}(M_q)$, where 
$\pi_{\Lambda}^{-1}(M_q)$ is a disjoint union of~$2^{r-2-\dim V_{M_q}}$ copies of the closed $(n-2)$-manifold 
$N(M_q,\Lambda_{M_q})$.
\end{proposition}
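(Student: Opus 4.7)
The strategy is to split $N(P,\Lambda)$ into the complement of the fixed set of $\tau$, where $\pi\colon N(P,\Lambda)\to N(P,\Lambda_\tau)$ is an honest $2$-sheeted covering, and a tubular neighborhood of the fixed set, where $\pi$ is locally modeled on $z\mapsto z^2$. First I would identify the fixed set. A point $[p,a]$ is fixed by $\tau$ iff $\tau\in\langle\Lambda_{j_s}\colon p\in G_{j_s}\rangle$, where the $G_{j_s}$ are the facets of $\mathcal{C}(P,\Lambda)$ through $p$. These vectors are linearly independent by the manifold condition (\cite[Theorem~5.1]{E24}), and since $\tau\neq 0$ the element $\tau$ must be a nontrivial sum of them. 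Proposition~\ref{prop:Plman}, applicable because $N(P,\Lambda_\tau)$ is a manifold, rules out sums of length $\neq 2$; hence $\tau=\Lambda_i+\Lambda_j$ with $p\in G_i\cap G_j$. The pair $(i,j)$ is unique at $p$: a second representation would produce a nontrivial linear dependence of length $\leq 4$ among the linearly independent $\Lambda_{j_s}$. By Proposition~\ref{propo:taubpH}(2) the equation $\Lambda_i+\Lambda_j=\tau$ means $G_i$ and $G_j$ lie in the same facet of $\mathcal{C}(P,\Lambda_\tau)$, so $p$ lies in some defining $(n-2)$-face $M_q$. Consequently the fixed locus is $\pi_\Lambda^{-1}(M)=\bigsqcup_q \pi_\Lambda^{-1}(M_q)$; on its complement $\tau$ acts freely, yielding the $2$-sheeted covering claim.

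\textbf{Local model at a fixed point.} Fix $x=[p,a]$ with $\tau=\Lambda_{j_1}+\Lambda_{j_2}$ and let $G_{j_1}=G_i,\ G_{j_2}=G_j,\ G_{j_3},\dots,G_{j_k}$ be all facets of $\mathcal{C}(P,\Lambda)$ through $p$. Lemma~\ref{lem:Mc} together with the chart construction in the proof of Proposition~\ref{prop:Gk} furnishes local coordinates $(y_1,\dots,y_k,x_1,\dots,x_{n-k})\in\mathbb{R}^n$ on a neighborhood of $x$ in $N(P,\Lambda)$ in which each stabilizer generator $\Lambda_{j_s}$ acts by $y_s\mapsto -y_s$ and the preimage of $G_{j_s}$ is $\{y_s=0\}$. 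Therefore $\tau$ acts by $(y_1,y_2,y_3,\dots,y_k,x)\mapsto(-y_1,-y_2,y_3,\dots,y_k,x)$. Putting $z=y_1+iy_2$ and $x'=(y_3,\dots,y_k,x_1,\dots,x_{n-k})\in\mathbb{R}^{n-2}$, the chart becomes $\mathbb{C}\times\mathbb{R}^{n-2}$ with $\tau\colon (z,x')\mapsto(-z,x')$ and fixed locus $\{0\}\times\mathbb{R}^{n-2}$, which is precisely the local picture of $\pi_\Lambda^{-1}(M)$. The uniqueness of the pair $(i,j)$ at $p$ forces the projected colors $[\Lambda_{j_t}]$ with $t\geq 3$ to be pairwise distinct and distinct from $[\Lambda_i]=[\Lambda_j]$, so $p$ lies on exactly $k-1$ facets of $\mathcal{C}(P,\Lambda_\tau)$; Proposition~\ref{prop:Gk} applied to $\Lambda_\tau$ produces an $\mathbb{R}^n$-chart of $N(P,\Lambda_\tau)$ at $[p,0]$, and the classical identification $\mathbb{C}/\{\pm 1\}\cong\mathbb{C}$ via $z\mapsto z^2$ matches the quotient of the $N(P,\Lambda)$-chart with this chart of $N(P,\Lambda_\tau)$, giving $\pi$ the claimed local form.

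\textbf{Preimages, and the main obstacle.} For any $p\in M_q$, both $\pi_\Lambda^{-1}(p)$ and $\pi_{\Lambda_\tau}^{-1}(p)$ have cardinality $2^{r-k}$ (stabilizers of sizes $2^k$ and $2^{k-1}$ inside groups of sizes $2^r$ and $2^{r-1}$ respectively), so $\pi$ restricts to a continuous bijection between the compact Hausdorff spaces $\pi_\Lambda^{-1}(M)$ and $\pi_{\Lambda_\tau}^{-1}(M)$, hence to a homeomorphism. Proposition~\ref{prop:Gk} with $G=M_q$ identifies $\pi_\Lambda^{-1}(M_q)$ with a disjoint union of $2^{r-2-\dim V_{M_q}}$ copies of $N(M_q,\Lambda_{M_q})$, and summing over $q$ completes the preimage description; combined with the local model this yields the branched-covering assertion in the sense of \cite{G19}. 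I expect the main obstacle to be the local-model step: verifying that in the Proposition~\ref{prop:Gk} chart at a fixed point the action of $\tau$ really is the coordinate involution $(y_1,y_2)\to(-y_1,-y_2)$ (fixing the other $y_s$), and that the quotient chart glues, under $z\mapsto z^2$, to the chart of $N(P,\Lambda_\tau)$ produced by the same proposition. This requires careful bookkeeping of the orthant-gluing descriptions on both sides and full use of the uniqueness of the pair $(i,j)$ at $p$ to align the counts of facets of $\mathcal{C}(P,\Lambda)$ and $\mathcal{C}(P,\Lambda_\tau)$ through $p$.
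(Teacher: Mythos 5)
Your proposal is correct and takes essentially the same route as the paper: local orthant charts from Lemma~\ref{lem:Mc} glued along cosets as in Proposition~\ref{prop:Gk}, the coordinate sign action of $\tau$, the squaring map $\boldsymbol{z}\mapsto\boldsymbol{z}^2$, and Proposition~\ref{prop:Gk} applied to the defining $(n-2)$-faces for the structure of the branch set. The only cosmetic differences are that you first identify $\pi_{\Lambda}^{-1}(M)$ as the fixed set of $\tau$ via Propositions~\ref{prop:Plman} and~\ref{propo:taubpH} (the paper instead gets the two-sheeted covering off $M$ directly from Lemma~\ref{lem:Hamsc} and the coset count), and the chart-matching you flag as the main obstacle is exactly what the paper carries out, by applying $(\boldsymbol{z},\boldsymbol{x})\mapsto(\boldsymbol{z}^2,\boldsymbol{x})$ at the level of the polytope orthant chart (turning $l$ walls into $l-1$ walls) before gluing the $2^{r-l}$ cosets on both sides.
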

\begin{proof}
Indeed, for each point $p\notin M$ all the different facets $G_i$  of $\mathcal{C}(P,\Lambda)$ containing $p$
lie in~different facets $\widetilde{G}_j$ of $\mathcal{C}(P,\Lambda_\tau)$. As it was mentioned 
in the proof of Proposition~\ref{prop:Gk} each point $[p\times a]\in N(P,\Lambda)$ 
such that $p$ belongs to exactly $l$ facets $G_{j_1}$, $\dots$, $G_{j_l}$
has a neighbourhood $U([p\times a])$ homeomorphic to $\mathbb R^n$. Moreover, there are
$2^{r-l}$ such neighbourhoods corresponding to cosets in 
$\mathbb Z_2^r/\langle \Lambda_{j_1},\dots,\Lambda_{j_l}\rangle$ and they can be chosen to be disjoint.
For $[p\times [a]]\in N(P,\Lambda_\tau)$ we~have similar $2^{r-1-l}$ neighbourhoods corresponding
to cosets in $(\mathbb Z_2^r/\langle\tau\rangle)/\langle [\Lambda_{j_1}],\dots,[\Lambda_{j_l}]\rangle$.
Each neighbourhood of $N(P,\Lambda)$ corresponding to a coset $b+\langle \Lambda_{j_1},\dots,\Lambda_{j_l}\rangle$ 
is mapped to~the~neighbourhood corresponding to the coset 
$[b]+\langle [\Lambda_{j_1}],\dots,[\Lambda_{j_l}]\rangle$ and this mapping is a homeomorphism.
Moreover,  the coset $[b]+\langle [\Lambda_{j_1}],\dots,[\Lambda_{j_l}]\rangle$
corresponds to exactly two cosets $b+\langle \Lambda_{j_1},\dots,\Lambda_{j_l}\rangle$ and 
$(b+\tau)+\langle \Lambda_{j_1},\dots,\Lambda_{j_l}\rangle$.  

Now consider a~point  $p\in M_q^{n-2}\subset G_{j_1}\cap G_{j_2}$ such that $G_{j_1}$ and $G_{j_2}$
lie in the same facet $\widetilde{G}_{i_2}$  of $\mathcal{C}(P,\Lambda_\tau)$ and $p$ lies in exactly 
$l$ facets $G_{j_1}$, $G_{j_2}$, $\dots$, $G_{j_l}$. By Lemma~\ref{lem:Hamsc} the sets $G_{j_1}\cup G_{j_2}$,
$G_{j_3}$, $\dots$, $G_{j_l}$ lie in $l-1$ different facets $\widetilde{G}_{i_2}$, $\widetilde{G}_{i_3}$, 
$\dots$, $\widetilde{G}_{i_l}$. By Lemma~\ref{lem:Mc} the point $p$ in $P$ has a~neighbourhood 
$$
U\simeq\mathbb R^l_{\geqslant }\times\mathbb R^{n-l}=\{(y_1,\dots,y_n)\in\mathbb R^n\colon y_1\geqslant 0,\dots,y_l\geqslant 0\},
$$ 
corresponding to $\mathcal{C}(P,\Lambda)$, where the facet $G_{j_p}$
corresponds to~the~hyperplane $y_p=0$. Set $\boldsymbol{z}=y_1+i y_2$ and
$\boldsymbol{x}=(y_3,\dots, y_n)$. The mapping $(\boldsymbol{z}, \boldsymbol{x})\to (\boldsymbol{z}^2, \boldsymbol{x})$
defines a homeomorphism 
\begin{gather*}
\mathbb R^l_{\geqslant }\times\mathbb R^{n-l}\to\mathbb R\times \mathbb R_{\geqslant}\times 
\mathbb R^{l-2}_{\geqslant}\times\mathbb R^{n-l}=\\
\{(\widetilde{y}_1,\dots,\widetilde{y}_n)
\in\mathbb R^n\colon \widetilde{y}_2\geqslant 0,\dots,\widetilde{y}_l\geqslant 0\}
\simeq \mathbb R^{l-1}_{\geqslant}\times\mathbb R^{n-l+1},
\end{gather*}
and under the composition of homeomorphisms the facets $\widetilde{G}_{i_p}$ are mapped \linebreak to~the~hyperplanes
$\widetilde{y}_p=0$, $p=2$, $\dots$, $l$. In both coordinate systems $M_q^{n-2}$ is defined by the condition  
$y_1=y_2=0$ and  $\widetilde{y}_1=\widetilde{y}_2=0$.
In $N(P,\Lambda)$ the neighbourhoods $U$ are glued to~$2^{r-l}$ neighbourhoods $U([p,a])$ corresponding 
to~cosets in~$\mathbb Z_2^r/\langle \Lambda_{j_1},\dots,\Lambda_{j_l}\rangle$. 
In $N(P,\Lambda_\tau)$ the neighbourhoods $U$  are glued to~$2^{r-l}=2^{(r-1)-(l-1)}$ neighbourhoods $U([p,[a]])$ 
corresponding to~cosets in~$(\mathbb Z_2^r/\langle\tau\rangle)/\langle [\Lambda_{j_2}],\dots,[\Lambda_{j_l}]\rangle$.
We have 
$$
\mathbb Z_2^r/\langle \Lambda_{j_1},\dots,\Lambda_{j_l}\rangle\simeq 
(\mathbb Z_2^r/\langle\tau\rangle)/\langle [\Lambda_{j_2}],\dots,[\Lambda_{j_l}]\rangle,
$$
since $\tau=\Lambda_{j_1}+\Lambda_{j_2}$.
Thus, a neighbourhood corresponding to~a coset $b+\langle \Lambda_{j_1},\dots,\Lambda_{j_l}\rangle$ 
is mapped to~the~neighbourhood corresponding to the coset 
$[b]+\langle [\Lambda_{j_2}],\dots,[\Lambda_{j_l}]\rangle$ and in the above coordinates the~sets
$\pi_{\Lambda}^{-1}(M)$ and $\pi_{\Lambda_\tau}^{-1}(M)$ are defined by the equations
$\boldsymbol{z}=0$ and $\widetilde{\boldsymbol{z}}=0$, and the~mapping has the~form 
$(\widetilde{\boldsymbol{z}}, \widetilde{\boldsymbol{x}})=(\boldsymbol{z}^2, \boldsymbol{x})$.  

The last statement follows from Proposition~\ref{prop:Gk}. 
\end{proof}

\subsection{$\mathcal{C}(n,k)$-subcomplexes and hyperelliptic involutions}\label{ssec:cnk}
\begin{definition}\label{def:CnkH}
We call by a~{\it $\mathcal{C}(n,k)$-subcomplex} $C$ of $\mathcal{C}(P,c)$ a~subcomplex 
$C=\mathcal{C}(P,c_2)\subset \mathcal{C}(P,c)$ such that $\mathcal{C}(P,c_2)\simeq \mathcal{C}(n,k)$.
A~{\it  $\mathcal{C}(n,k)$-subcomplex} is {\it Hamiltonian}, if $\mathcal{C}(P,c_2)$ is Hamiltonian. 
\end{definition}
\begin{corollary}\label{cor:HCnk}
Any $\mathcal{C}(n,k)$-subcomplex $C\subset\partial P$ corresponds to~a~subgroup $H(C)\subset \mathbb Z_2^m$
of~rank $m-k$ such that $\mathbb{R}\mathcal{Z}_P/H(C)\simeq S^n$. The subgroup is defined in $\mathbb Z_2^m$ by equations 
$\sum\limits_{F_j\subset G_i}x_j=0$, where $G_i$,
$i=1$, $\dots$, $k$, are facets of the subcomplex. 
\end{corollary}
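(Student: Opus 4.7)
The plan is to extract the subgroup $H(C)$ directly from the combinatorial data of~the~subcomplex via the canonical vector-coloring construction already available in the paper, and then invoke the already-cited Construction 5.8 from~\cite{E24} to identify the quotient with~$S^n$.

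First I would associate to $C$ the coloring $c_C\colon\{F_1,\dots,F_m\}\to[k]$ sending $F_j$ to the index $i$ of the unique facet $G_i$ of $C$ containing $F_j$ (well-defined because the facets of $C$ partition the facets of $P$, by the definition of a complex $\mathcal{C}(P,c)$). Following Example~\ref{ex:vci} this produces the canonical vector-coloring $\Lambda_C\colon F_j\mapsto \boldsymbol{e}_{c_C(j)}\in\mathbb Z_2^k$. Since $C$ has exactly $k$ facets, the vectors $\boldsymbol{e}_1,\dots,\boldsymbol{e}_k$ are all attained, so $\Lambda_C$ has rank $k$. By the definition at the start of Section~1, $H(C)$ is the kernel of the epimorphism $\varphi_{\Lambda_C}\colon\mathbb Z_2^m\to\mathbb Z_2^k$, $\boldsymbol{e}_j\mapsto\boldsymbol{e}_{c_C(j)}$, hence has rank $m-k$.

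Next I would compute the kernel explicitly. A~vector $(x_1,\dots,x_m)\in\mathbb Z_2^m$ lies in $\ker\varphi_{\Lambda_C}$ iff $\sum_{j=1}^m x_j\boldsymbol{e}_{c_C(j)}=0$ in~$\mathbb Z_2^k$. Grouping the sum by the value of $c_C(j)$ and using that $\boldsymbol{e}_1,\dots,\boldsymbol{e}_k$ are linearly independent, this is equivalent to the system
\[
\sum_{j\colon c_C(j)=i}x_j=\sum_{F_j\subset G_i}x_j=0,\qquad i=1,\dots,k,
\]
which is exactly the system in the statement. The $k$ linear forms are linearly independent over $\mathbb Z_2$ (since the sets $\{F_j\subset G_i\}$, $i=1,\dots,k$, are disjoint and nonempty), so the rank count is consistent.

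Finally, by the very definition of $N(P,\Lambda_C)$ recalled in the introduction, $N(P,\Lambda_C)=\mathbb R\mathcal Z_P/H(C)$. By construction of $c_C$, the facets of the complex $\mathcal{C}(P,\Lambda_C)$ are precisely the connected components of unions of facets of $P$ of the same color under $c_C$, i.e.\ the facets $G_1,\dots,G_k$ of~$C$; inductively the $q$-faces agree as well, so $\mathcal{C}(P,\Lambda_C)=C\simeq\mathcal{C}(n,k)$. Applying \cite[Construction~5.8]{E24} (already quoted in the paragraph preceding Theorem~\ref{th:n34} as the ``if'' direction: whenever $\mathcal{C}(P,\Lambda)\simeq\mathcal{C}(n,r)$, one has $N(P,\Lambda)\simeq S^n$) yields $\mathbb R\mathcal Z_P/H(C)\simeq S^n$. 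There is no genuine obstacle here: the statement is essentially a repackaging of Example~\ref{ex:vci} together with the easy direction of Theorem~\ref{th:n34}, and the only point requiring a little care is checking that $c_C$ is well-defined and that the $k$ equations are linearly independent so that $H(C)$ really has rank $m-k$.
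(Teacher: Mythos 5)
Your proposal is correct and follows exactly the route the paper intends (the corollary is stated without proof as an immediate consequence of the canonical vector-coloring of Example~\ref{ex:vci}, the kernel description of $H(\Lambda_C)$, and the quoted \cite[Construction~5.8]{E24}). Your added checks — that $c_C$ is well-defined because the facets of $C$ partition the facets of $P$, and that the $k$ defining equations have disjoint supports and hence are independent, giving rank $m-k$ — are precisely the small verifications the paper leaves implicit.
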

\begin{proof} 
This follows from \cite[Construction 5.8]{E24}.
\end{proof}

Theorem \ref{th:n34} implies the following result.
\begin{corollary}\label{cor:cnk4}
For $n\leqslant 4$ the correspondence $C\to H(C)$  is a bijection between  $\mathcal{C}(n,k)$-subcomplexes $C\subset\partial P$
and subgroups $H\subset \mathbb Z_2^m$ of rank $m-k$ such that $\mathbb{R}\mathcal{Z}_P/H\simeq S^n$.
\end{corollary}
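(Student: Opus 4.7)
The plan is to derive the bijection directly from Theorem~\ref{th:n34} combined with Corollary~\ref{cor:HCnk}, using the canonical vector-coloring $\Lambda_C$ of Example~\ref{ex:vci} as the inverse map $C\mapsto H(C)$.

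For surjectivity, given $H\subset\mathbb Z_2^m$ of rank $m-k$ with $\mathbb R\mathcal Z_P/H\simeq S^n$, I pick a vector-coloring $\Lambda$ of rank $k$ with $\Ker \Lambda=H$, so that $N(P,\Lambda)\simeq S^n$. By Theorem~\ref{th:n34} the subcomplex $C:=\mathcal C(P,\Lambda)\subset \partial P$ is then equivalent to $\mathcal C(n,k)$, and so it is a $\mathcal C(n,k)$-subcomplex. To verify $H(C)=H$, I will argue by counting that the image of $\Lambda$ is a basis of $\mathbb Z_2^k$: since $\rk\Lambda=k$, the image contains at least $k$ distinct vectors; on the other hand the number of facets of $C$, which equals $k$, is bounded below by the number of distinct values of $\Lambda$ (each contributes at least one connected component to $C$). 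Thus $\Lambda$ takes exactly $k$ values, each color class is connected, and the $k$ image vectors form a basis. Consequently $\Lambda$ factors as the canonical map $\Lambda_C\colon e_j\mapsto e_{i(j)}$ (where $F_j\subset G_{i(j)}$) composed with an invertible linear automorphism of $\mathbb Z_2^k$, so $\Ker\Lambda=\Ker\Lambda_C$. Comparing with the defining equations $\sum_{F_j\subset G_i}x_j=0$ of Corollary~\ref{cor:HCnk} shows that this common kernel is exactly $H(C)$, yielding $H=H(C)$.

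For injectivity, suppose $H(C_1)=H(C_2)$ for two $\mathcal C(n,k)$-subcomplexes. The canonical rank-$k$ vector-colorings $\Lambda_{C_1}$ and $\Lambda_{C_2}$ have the same kernel, so they differ by a linear automorphism $A$ of $\mathbb Z_2^k$. Both take values in the standard basis, which forces $A$ to permute basis vectors; hence the $\Lambda_{C_1}$-preimages of basis vectors (the facets of $C_1$) coincide, after relabeling, with the $\Lambda_{C_2}$-preimages (the facets of $C_2$), so $C_1=C_2$ as subcomplexes of $\partial P$.

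The main obstacle will be the counting argument that identifies the image of $\Lambda$ as a basis of $\mathbb Z_2^k$, since it is precisely there that the hypothesis $C\simeq\mathcal C(n,k)$ (and hence Theorem~\ref{th:n34}, which is where the restriction $n\leqslant 4$ enters) does its work. Once the image is known to be a basis, the identification of kernels is purely formal, and the rest of the proof is bookkeeping using that the combinatorial structure of a subcomplex is determined by its facets as subsets of $\partial P$.
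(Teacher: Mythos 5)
Your proposal is correct and follows the paper's intended route: the paper derives this corollary directly from Theorem~\ref{th:n34} together with Corollary~\ref{cor:HCnk}, and your argument is exactly that derivation with the bookkeeping (the counting showing the image of $\Lambda$ is a basis, the identification $\Ker\Lambda_C=H(C)$, and the recovery of $C$ from its facets) written out explicitly. No gaps.
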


\begin{proposition}\label{prop:cnkbh}
Let $C=\mathcal{C}(P,c_2)\subset \mathcal{C}(P,c_1)$ be~a~proper Hamiltonian \linebreak $\mathcal{C}(n,k)$-subcomplex
and $(n,k)\ne (2,1)$. 
Then for any facet $\widetilde{G}$ of $C$ the adjacency graph $\Gamma(\widetilde{G})$ is a tree. In particular, the subcomplex $C$ is bipartite.
\end{proposition}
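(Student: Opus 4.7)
The plan is to prove each $\Gamma(\widetilde{G})$ is a tree, from which the bipartiteness of $C$ follows at once: by Definition~\ref{def:adjg} the adjacency graph $\Gamma(P,c_1,c_2)$ is the disjoint union of the graphs $\Gamma(\widetilde{G})$ over the facets $\widetilde{G}$ of $C$, and a disjoint union of trees is bipartite. Because $C\simeq \mathcal{C}(n,k)$, each facet $\widetilde{G}$ is homeomorphic to $D^{n-1}$, so $H_1(\widetilde{G};\mathbb{Z}_2)=0$; and $\Gamma(\widetilde{G})$ is already connected by its definition, so only acyclicity needs to be proved.

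I would argue by a $\mathbb{Z}_2$-intersection argument. By Lemma~\ref{lem:or} and Lemma~\ref{lem:Mc}, each defining $(n-2)$-face $M\subset \widetilde{G}$ is a properly embedded, two-sided, codimension-one submanifold of $\widetilde{G}$: the two adjacent facets $G_i,G_j\subset \widetilde{G}$ realize the two local sides of $M$ as the half-spaces $\{\pm x_1\geq 0\}$ of the chart of Lemma~\ref{lem:Mc}. Hence $M$ carries a class $[M]\in H_{n-2}(\widetilde{G},\partial\widetilde{G};\mathbb{Z}_2)$, and by Poincar\'e-Lefschetz duality the transverse mod-$2$ intersection number $\gamma\cdot M$ of any PL loop $\gamma$ in $\widetilde{G}\setminus\partial\widetilde{G}$ with $M$ equals the pairing $\langle[\gamma],[M]\rangle$, which vanishes since $H_1(\widetilde{G};\mathbb{Z}_2)=0$. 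Assume for contradiction that $\Gamma(\widetilde{G})$ contains a simple cycle $v_{\beta_1}v_{\beta_2}\cdots v_{\beta_s}v_{\beta_1}$ whose edges are distinct defining faces $M_{q_1},\dots,M_{q_s}$, pairwise disjoint by Proposition~\ref{cor:n-2f}. Inside each $G_{\beta_i}$ I pick a PL arc $\gamma_i$ joining an interior point of $M_{q_{i-1}}$ to an interior point of $M_{q_i}$ (indices modulo $s$), transverse to these two defining faces at its endpoints and otherwise lying in the interior of $G_{\beta_i}$ in $\widetilde{G}$; concatenating the $\gamma_i$ produces a PL loop $\gamma$ meeting each $M_{q_i}$ transversely in exactly one point and disjoint from every defining face outside $\{M_{q_1},\dots,M_{q_s}\}$. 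Then $\gamma\cdot M_{q_1}=1\pmod 2$, contradicting the vanishing above.

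The main delicate point is the routing of each $\gamma_i$: one must verify that every defining face meeting $G_{\beta_i}$ lies on $\partial G_{\beta_i}$ in $\widetilde{G}$ and that the interior of $G_{\beta_i}$ in $\widetilde{G}$ is an open connected $(n-1)$-manifold, so that the arc can freely connect interior points of two boundary defining faces without touching any other defining face or any codimension-two stratum of $\mathcal{C}(P,c_1)$. Both facts follow from Proposition~\ref{prop:Gtilde}, which ensures that no three facets of $\mathcal{C}(P,c_1)$ contained in $\widetilde{G}$ share a point, combined with the local chart of Lemma~\ref{lem:Mc}.
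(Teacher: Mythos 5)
Your argument is correct, but it takes a route dual to the paper's. The paper fixes a defining face $M\subset\widetilde G$ and proves (its Lemma~\ref{lem:2cc}, via Alexander duality applied to the double $\widetilde G\cup_{\partial\widetilde G}\widetilde G\simeq S^{n-1}$, or to $\widetilde G/\partial\widetilde G$ when $\partial M=\varnothing$) that $\widetilde G\setminus M$ has exactly two components; this makes every edge of $\Gamma(\widetilde G)$ a bridge, and it moreover yields Corollary~\ref{cor:1cc}, $M=G_i\cap G_j$, which excludes multiple edges, so the connected graph is a tree. You instead use $H_1(\widetilde G;\mathbb Z_2)=0$ together with Poincar\'e--Lefschetz duality to force every loop in the interior of $\widetilde G$ to have even mod-$2$ intersection with each defining face, and then defeat a hypothetical cycle of $\Gamma(\widetilde G)$ by exhibiting a loop crossing one of its defining faces exactly once. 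Both proofs rest on the same two inputs: the facets of $\mathcal C(n,k)$ are $\mathbb Z_2$-acyclic in degree one, and the Hamiltonian condition makes each defining face a properly embedded, locally flat codimension-one submanifold of $\widetilde G$ with $\partial M=M\cap\partial\widetilde G$ (this is where Lemma~\ref{lem:Mc} and the chart manipulation as in Proposition~\ref{prop:Ham2sh} do the real work; the identification of crossing parity with the dual pairing needs exactly this local flatness, including at $\partial M$, so your appeal to Lemma~\ref{lem:Mc} should be understood in that stronger form). Three small points to watch: for $k=1$ the facet is $S^{n-1}$ rather than $D^{n-1}$, but $H_1(\widetilde G;\mathbb Z_2)=0$ still holds for $n\geqslant 3$, so nothing breaks (the paper's own proof has the same slip); since $\Gamma(\widetilde G)$ is a multigraph, your ``simple cycle'' must include $2$-cycles coming from parallel edges --- your construction does cover $s=2$, and this is precisely what replaces the paper's Corollary~\ref{cor:1cc}; finally, the paper's separation statement is reused later (e.g.\ in the proof of Proposition~\ref{prop:treek}), so its route buys a bit more than the bare tree statement, whereas yours is shorter and avoids the doubling construction.
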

\begin{remark} For $(n,k)=(2,1)$ the adjacency graph of a single facet is a cycle. 
In this case the subcomplex $C$ is bipartite if and only if the cycle has even length.
\end{remark}
\begin{proof}[of Proposition~\ref{prop:cnkbh}]
For $n\leqslant 2$ the proof is straightforward. Assume that $n\geqslant 3$.
Any facet $\widetilde{G}$ of $C$ is either a topological disk for $k>1$, or a topological sphere for $k=1$. 
Consider a defining $(n-2)$-face $M\subset G_i\cap G_j$,
where $G_i,G_j\subset \widetilde{G}$ are facets of $\mathcal{C}(P,c_1)$. $M$ is~a~connected orientable $(n-2)$-manifold,
perhaps with a boundary. If $M$ has a boundary, then $\partial M$ is a union of $(n-3)$-faces. 
Since $C$ is Hamiltonian, any such a face lies in~the~$(n-2)$-skeleton of $\mathcal{C}(P,c_2)$. 
In particular, $\partial M\subset \partial \widetilde{G}$, which is a topological sphere for $k>1$, and 
$\partial \widetilde{G}=\varnothing$ for $k=1$. Consider a~point $p\in M$. Let $p$ belong
to~exactly $l$ facets $G_{j_1}$, $\dots$, $G_{j_l}$, where $G_{j_1}=G_i$ and $G_{j_2}=G_j$. 
By~Lemma~\ref{lem:Hamsc} the sets $G_{j_1}\cup G_{j_2}$,
$G_{j_3}$, $\dots$, $G_{j_l}$ lie in $l-1$ different facets $\widetilde{G}_{i_2}=\widetilde{G}$, $\widetilde{G}_{i_3}$, 
$\dots$, $\widetilde{G}_{i_l}$. As~in~the~proof of~Proposition~\ref{prop:Ham2sh} the point $p$ in $P$ has a~neighbourhood 
$$
U\simeq\{(\widetilde{y}_1,\dots,\widetilde{y}_n)
\in\mathbb R^n\colon \widetilde{y}_2\geqslant 0,\dots,\widetilde{y}_l\geqslant 0\}
\simeq \mathbb R^{l-1}_{\geqslant}\times\mathbb R^{n-l-1},
$$
where the facets $\widetilde{G}_{i_p}$ correspond to~the~hyperplanes
$\widetilde{y}_p=0$, $p=2$, $\dots$, $l$, and $M$ is defined by~the~equations~$\widetilde{y}_1=\widetilde{y}_2=0$.
Thus, $M$ is an~orientable polyhedral $(n-2)$-manifold locally flat embedded to~the~polyhedral $(n-1)$-disk $\widetilde{G}$ 
for $k>1$ or a polyhedral sphere $\widetilde{G}$ for $k=1$, and $\partial M=M\cap \partial \widetilde{G}$.

\begin{lemma}\label{lem:2cc}
The complement $\widetilde{G}\setminus M$ has exactly two connected components.
\end{lemma}
\begin{proof}
It is easy to see that the complement $\widetilde{G}\setminus M$ has at most two connected components, 
since it is valid locally at each point of $M$, and from each point of the complement we can reach such 
a~point by a simple piecewise linear path intersecting $M$ only at this point.

On the other hand, assume that there is only one connected component. If $\partial M\ne\varnothing$, then this is valid also 
for~the~double manifolds $DM=M\cup_{\partial M}M\subset D\widetilde{G}=
\widetilde{G}\cup_{\partial \widetilde{G}} \widetilde{G}\simeq S^{n-1}$.
But~$DM$ is an~orientable $(n-2)$-manifold, hence $H_{n-2}(DM)=\mathbb Z$. By the Alexander duality
$\widetilde{H}^0(\widetilde{G}\cup_{\partial \widetilde{G}} \widetilde{G}\setminus M\cup_{\partial M}M)\simeq \mathbb Z$. 
Hence, there are two connected components. A contradiction. 

If $\partial M=\varnothing$, then $M\cap \partial\widetilde{G}=\varnothing$ and for the space 
$\widetilde{G}'=\widetilde{G}/\partial \widetilde{G}\simeq S^n$ the complement $\widetilde{G}'\setminus M$
also has one connected component. But this contradicts the~Alexander duality. 
\end{proof}

\begin{corollary}\label{cor:1cc}
We have $M=G_i\cap G_j$.
\end{corollary}
\begin{proof}
Indeed, by Lemma~\ref{lem:2cc} $\widetilde{G}\setminus M$ has two connected components $C_1$ and $C_2$,
where ${\rm int}\, G_i\subset C_1$ and ${\rm int}\, G_j\subset C_2$. 
Moreover, $\overline{C_1}\cap \overline {C_2}=M$ since $M$ is locally flat embedded to $\widetilde{G}$. 
Then $G_i$ and $G_j$ have no common points lying outside $M$.   
\end{proof}
Lemma~\ref{lem:2cc} and  Corollary~\ref{cor:1cc} imply that the adjacency graph $\Gamma(\widetilde{G})$ is a tree.
Since it~is~valid for each $\widetilde{G}$, the subcomplex $C$ is bipartite.
\end{proof}

\begin{construction}[(A vector-coloring induced by a Hamiltonian\linebreak  $\mathcal{C}(n,k)$-subcomplex)]\label{con:LHC}
Proposition \ref{prop:cnkbh} and Example \ref{ex:vci} imply that any proper Hamiltonian  $\mathcal{C}(n,k)$-subcomplex
$C=\mathcal{C}(P,c_2)\subset \mathcal{C}(P,c_1)$ for $(n,k)\ne (2,1)$ induces a vector-coloring $\widetilde{\Lambda}_C$ of $P$ of rank $k+1$
defined up to a linear change of coordinates in $\mathbb Z_2^{k+1}$. We have 
$\mathcal{C}(P,\widetilde{\Lambda}_C)=\mathcal{C}(P,c_1)$ and $N(P,\widetilde{\Lambda}_C)$ is~a~hyperelliptic manifold 
with the~hyperelliptic involution $(0,0,\dots,1)$ (in terms of Construction \ref{con:indfrom}).  
A proper Hamiltonian  $\mathcal{C}(2,1)$-subcomplex 
induces such a vector coloring if and only if  $\mathcal{C}(P,c_1)$ has an even number of facets.
\end{construction}

Theorem \ref{th:n34} implies the following result.
\begin{theorem}\label{th:Hamsc}
Let $n\leqslant 4$ and  $\Lambda$ be a~vector-coloring of rank $r$ of~a~simple $n$-polytope $P$ such that 
$N(P,\Lambda)$ is~a~closed topological manifold. Then nonzero hyperelliptic involutions 
$\tau\in \mathbb Z_2^r\setminus\{0\}$ 
are in~bijection with proper Hamiltonian $\mathcal{C}(n,r-1)$-subcomplexes $C\subset\mathcal{C}(P,\Lambda)$
inducing $\Lambda$ (that is $\Lambda=\widetilde{\Lambda}_C$ up to a change of coordinates).
\end{theorem}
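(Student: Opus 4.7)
The plan is to bootstrap from Proposition~\ref{propo:taubpH}, which already gives a bijection between nonzero involutions $\tau\in\mathbb Z_2^r\setminus\{0\}$ and proper bipartite Hamiltonian subcomplexes $\mathcal{C}(P,c)\subset\mathcal{C}(P,\Lambda)$ from which $\Lambda$ is induced, and then cut out the hyperelliptic involutions on one side and the $\mathcal{C}(n,r-1)$-subcomplexes on the other side.

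First I would recall that for any nonzero $\tau\in\mathbb Z_2^r$ the orbit space is $N(P,\Lambda)/\langle\tau\rangle\simeq N(P,\Lambda_\tau)$, where $\Lambda_\tau=\Pi_\tau\circ\Lambda$ has rank $r-1$, and that by Proposition~\ref{propo:taubpH} the subcomplex $\mathcal{C}(P,\Lambda_\tau)\subset\mathcal{C}(P,\Lambda)$ is a proper bipartite Hamiltonian subcomplex from which $\Lambda$ is induced. By definition, $\tau$ is hyperelliptic exactly when $N(P,\Lambda_\tau)\simeq S^n$, and by the first main result (Theorem~\ref{th:n34}) applied in the dimension range $n\leqslant 4$ this happens if and only if $\mathcal{C}(P,\Lambda_\tau)\simeq\mathcal{C}(n,r-1)$, i.e.\ if and only if $\mathcal{C}(P,\Lambda_\tau)$ is a proper Hamiltonian $\mathcal{C}(n,r-1)$-subcomplex of $\mathcal{C}(P,\Lambda)$. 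This builds the forward map $\tau\mapsto C_\tau=\mathcal{C}(P,\Lambda_\tau)$.

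Conversely, given a proper Hamiltonian $\mathcal{C}(n,r-1)$-subcomplex $C\subset\mathcal{C}(P,\Lambda)$ that induces $\Lambda$, Proposition~\ref{prop:cnkbh} guarantees that $C$ is automatically bipartite, so Construction~\ref{con:LHC} (together with Construction~\ref{con:indfrom} and Example~\ref{ex:vci}) produces the induced vector-coloring $\widetilde{\Lambda}_C$ of rank $r$ which, by the hypothesis ``inducing $\Lambda$,'' coincides with $\Lambda$ up to a linear change of coordinates in $\mathbb Z_2^r$. Under this identification the canonical hyperelliptic involution $(0,\dots,0,1)\in\mathbb Z_2^r$ of $N(P,\widetilde{\Lambda}_C)$ becomes a specific involution $\tau_C\in\mathbb Z_2^r$ with $C=\mathcal{C}(P,\Lambda_{\tau_C})$, and $N(P,\Lambda)/\langle\tau_C\rangle\simeq S^n$ precisely because $\mathcal{C}(P,\Lambda_{\tau_C})\simeq\mathcal{C}(n,r-1)$ via Theorem~\ref{th:n34} again. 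This yields the inverse map $C\mapsto\tau_C$.

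The only thing left is bijectivity, and it comes for free from Proposition~\ref{propo:taubpH}: the assignments $\tau\mapsto\mathcal{C}(P,\Lambda_\tau)$ and $C\mapsto\tau_C$ are mutually inverse on the larger sets of all nonzero involutions and all proper bipartite Hamiltonian subcomplexes inducing $\Lambda$, and the two equivalences established above show that they restrict to a bijection between the hyperelliptic involutions and the Hamiltonian $\mathcal{C}(n,r-1)$-subcomplexes inducing $\Lambda$. The main load-bearing step, and the one I expect to require the most care to cite correctly, is the use of Theorem~\ref{th:n34} in the ``only if'' direction: it is precisely this dimension-dependent input that forces $\mathcal{C}(P,\Lambda_\tau)\simeq\mathcal{C}(n,r-1)$ from the mere topological condition $N(P,\Lambda_\tau)\simeq S^n$, and without it the theorem would fail for $n\geqslant 5$ in general.
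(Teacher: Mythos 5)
Your argument is correct and takes essentially the same route as the paper, which derives the theorem directly from Theorem~\ref{th:n34} combined with Proposition~\ref{propo:taubpH}, Proposition~\ref{prop:cnkbh} and Construction~\ref{con:LHC}. The only point worth making explicit in your forward direction is that for a hyperelliptic $\tau$ the complex $\mathcal{C}(P,\Lambda_\tau)\simeq\mathcal{C}(n,r-1)$ has exactly $r-1$ facets, so the rank-$(r-1)$ coloring $\Lambda_\tau$ assigns to them linearly independent (hence pairwise distinct) vectors and is therefore the canonical coloring of $C$ up to a linear automorphism; this upgrades the condition of Proposition~\ref{propo:taubpH} (``$\Lambda$ is induced from some vector-coloring of $C$'') to the theorem's condition $\Lambda=\widetilde{\Lambda}_C$ up to a change of coordinates.
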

\begin{proof}
Indeed, if there is a proper Hamiltonian $\mathcal{C}(n,r-1)$-subcomplex $C\subset\mathcal{C}(P,\Lambda)$ 
such that $\Lambda=\widetilde{\Lambda}_C$ up to a change of coordinates, then in these coordinates 
the~involution $\tau_C$ given by the vector $e_r$ is hyperelliptic, since 
$$
C(P,\Lambda_C)\simeq C\simeq C(n,r-1)
$$ 
and 
$$
N(P,\Lambda)/\langle\tau\rangle\simeq N(P,\Lambda_\tau)\simeq N(P,\Lambda_C)\simeq S^n
$$ 
by Theorem~\ref{th:n34}.

On the other hand, if an involution $\tau\in \mathbb Z_2^r\setminus\{0\}$ is hyperelliptic, then 
$$
N(P,\Lambda_\tau)\simeq N(P,\Lambda)/\langle\tau\rangle\simeq S^n.
$$
By Theorem~\ref{th:n34} the~complex $C=C(P,\Lambda_\tau)$ is equivalent to~$C(n,r-1)$. Then 
different vectors  $\Lambda_\tau(F_j)$ are linearly independent and up to a change of coordinates 
$\Lambda_\tau=\Lambda_C$. It follows from \cite[Proposition 5.12]{E24} (see also Proposition~\ref{propo:taubpH}) that
$C\subset\mathcal{C}(P,\Lambda)$ is a~Hamiltonian subcomplex. Moreover,  $\Lambda(G_i)=\Lambda(G_j)+\tau$ 
for adjacent facets $G_i$ and $G_j$ of $\mathcal{C}(P,\Lambda)$ lying in the same facet of $C$. Thus, 
$\Lambda=\widetilde{\Lambda}_C$ up to a~change of coordinates, and  $\tau=\tau_C$. This finishes the~proof.
\end{proof}

\begin{definition}
For a~complex $C=\mathcal{C}(P,c)$ set $f_i(C)$ to~be~the~number of~its~ $i$-faces. By definition $f_n(C)=1$.
We have $f_{n-1}(C)=M$.
\end{definition}

\begin{proposition}\label{prop:treek}
Let $C=\mathcal{C}(P,c_2)\subset \mathcal{C}(P,c_1)$ be a~proper Hamiltonian \linebreak 
$\mathcal{C}(n,r)$-subcomplex. If~a~$k$-face 
$\widetilde{G}$ of $C$ is not a circle, then the~adjacency graph $\Gamma(\widetilde{G})$ of~$k$-faces 
$G$ of~$\mathcal{C}(P,c_1)$ lying in~$\widetilde{G}$ is~a~tree, where we call the faces adjacent if they have 
a~nonempty intersection.
\end{proposition}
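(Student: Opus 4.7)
The strategy is to generalize the argument of Proposition~\ref{prop:cnkbh} from facets to arbitrary $k$-faces: I would show that each edge of $\Gamma(\widetilde{G})$ is a separating codimension-one submanifold in $\widetilde{G}$, hence a bridge in the adjacency graph. First I would invoke Proposition~\ref{prop:Gtilde} to decompose $\widetilde{G}=\bigcup_{\alpha}G_\alpha$ into $k$-faces of $\mathcal{C}(P,c_1)$ glued along disjoint unions of $(k-1)$-faces of the defining $(n-2)$-faces $M_q$; these $(k-1)$-faces are precisely the edges of $\Gamma(\widetilde{G})$, and by Proposition~\ref{prop:kdefk+1} each one is a connected component of $M_q\cap\widetilde{G}$ for which $\widetilde{G}$ is the unique $k$-face of $C$ containing it. Since $C\simeq\mathcal{C}(n,r)$ is proper ($r\geqslant 2$), inspection of the models $S^n_{r,\geqslant 0}$ or $B^n_{r-1,\geqslant}$ shows that every $k$-face of $C$ is either a topological $k$-disk or, only at the unique $(n-r)$-face, a topological $k$-sphere; the excluded case $\widetilde{G}\simeq S^1$ corresponds to the latter when $n-r=1$.

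The central step is the analogue of Lemma~\ref{lem:2cc}: for each edge $N\subset\widetilde{G}$, the complement $\widetilde{G}\setminus N$ has exactly two connected components. That there are at most two is local: Lemma~\ref{lem:Mc} gives coordinates near each point of $N$ in which $\widetilde{G}$ is a linear piece of $\mathbb{R}^k$ and $N$ the hyperplane defined by two equations $y_1=y_2=0$ (corresponding to the two facets $G_i,G_j$ whose intersection contains $M_q$), so locally $\widetilde{G}\setminus N$ has two components; any point of $\widetilde{G}\setminus N$ is joined to such a chart by a simple piecewise linear arc meeting $N$ at one point. That there are at least two uses Alexander duality. By Lemma~\ref{lem:or}, $N$ is a connected orientable $(k-1)$-manifold, and the local coordinates above also show it is locally flat in $\widetilde{G}$ with $\partial N\subset\partial\widetilde{G}$ (its boundary consists of $(k-2)$-faces of $M_q$, each of which lies in a $(k-1)$-face of $C$ belonging to $\partial\widetilde{G}$). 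If $\widetilde{G}\simeq D^k$, I would pass to the double $D\widetilde{G}\simeq S^k$ and form $DN\subset D\widetilde{G}$, a closed connected orientable $(k-1)$-manifold; then Alexander duality gives $\widetilde{H}^0(S^k\setminus DN)\cong\widetilde{H}_{k-1}(DN)=\mathbb{Z}$, and the two components are symmetric under the doubling involution and descend to two components of $\widetilde{G}\setminus N$. If $\widetilde{G}\simeq S^k$ with $k\geqslant 2$, then $N$ is already closed and Alexander duality applies directly in $S^k$. The case $\widetilde{G}\simeq S^1$ is excluded precisely because $N$ would be a single point and $S^1$ minus a point is connected, so the separation fails.

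Once this separation is proved, tree-ness follows easily: $\Gamma(\widetilde{G})$ is connected since $\widetilde{G}$ is connected and each $G_\alpha$ is a closed subset of $\widetilde{G}$ meeting adjacent pieces along the defining edges; and it has no cycles, for otherwise a cyclic chain $G_{\alpha_1},\dots,G_{\alpha_p},G_{\alpha_1}$ of adjacent pieces would yield a closed piecewise linear loop in $\widetilde{G}$ crossing some edge $N_i$ transversally exactly once, contradicting the $\mathbb{Z}_2$-separation of $N_i$ established above. The main obstacle I anticipate is the doubling/Alexander duality step: one needs to verify carefully that $N$ is locally flat in $\widetilde{G}$ with $\partial N\subset\partial\widetilde{G}$, that $DN$ is genuinely a closed connected orientable submanifold of $S^k$, and that connected components of $S^k\setminus DN$ correspond correctly to connected components of $\widetilde{G}\setminus N$; this is exactly the machinery that degenerates in the excluded circle case.
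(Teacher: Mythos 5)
Your proposal is correct and follows essentially the same route as the paper: reduce to showing that each $(k-1)$-face of a defining face separates $\widetilde{G}$ into exactly two components (local flatness via Lemma~\ref{lem:Mc}, then doubling and Alexander duality, exactly as in Lemma~\ref{lem:2cc} inside Proposition~\ref{prop:cnkbh}), and deduce tree-ness of $\Gamma(\widetilde{G})$. The only cosmetic difference is that the paper additionally concludes, as in Corollary~\ref{cor:1cc}, that $G'\cap G''$ is a single $(k-1)$-face, whereas you exclude multiple edges together with longer cycles by the parity argument for loops crossing a separating edge once.
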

\begin{proof}
Indeed, for $k=0$ this is trivial. For $k=1$ if $\widetilde{G}$ is not a circle, then it is an~edge subdivided 
by vertices of~$\mathcal{C}(P,c_1)$ into edges, and  $\Gamma(\widetilde{G})$ is~a~path.
Assume that $k>1$. Each face of~$C$ is~either a~topological disk or~a~sphere.
Let $\widetilde{G}$ be a~connected component of~$\widetilde{G}_{j_1}\cap\dots\cap \widetilde{G}_{j_{n-k}}$.
\begin{lemma}
If the intersection of two $k$-faces $G'$ and $G''$ of $\mathcal{C}(P,c_1)$ lying in $\widetilde{G}$
is nonempty, then $G'\cap G''$ is a $(k-1)$-face of $\mathcal{C}(P,c_1)$ lying in a defining $(n-2)$-face.
Moreover, $\widetilde{G}\setminus(G'\cap G'')$ consists of two connected components $C_1$ and $C_2$
such that $G'\cap G''=\overline{C_1}\cap \overline{C_2}$, and $G'\subset \overline{C_1}$, $G''\subset \overline{C_2}$.
\end{lemma}
Let $G'$ and $G''$ be connected components of~$G_{a_1}\cap\dots\cap G_{a_{n-k}}$ and $G_{b_1}\cap\dots\cap G_{b_{n-k}}$.
Then up~to~a~renumbering of~indices $G_{a_i},G_{b_i}\subset \widetilde{G}_{j_i}$ for all $i$.
By Lemma~\ref{lem:Hamsc} $a_i\ne b_i$ only for one $i$, say for $i=1$. Then 
$G'\cap G''\subset G_{a_1}\cap G_{b_1}\cap G_{b_2}\cap \dots\cap G_{b_{n-k}}$. In particular, each connected 
component $G$ of $G'\cap G''$ lies in a defining $(n-2)$-face.

The argument in the proof of~Proposition~\ref{prop:cnkbh} shows 
that $\widetilde{G}\setminus G$ has two connected components $C_1$ and $C_2$ with
$G=\overline{C_1}\cap \overline{C_2}$ and $G'\subset \overline{C_1}$, $G''\subset \overline{C_2}$.
Then $G'\cap G''=G$ and the adjacency graph $\Gamma(\widetilde{G})$ is~a~tree.
\end{proof}

\begin{proposition}\label{prop:fkform}
Let $C_2=\mathcal{C}(P,c_2)\subset \mathcal{C}(P,c_1)=C_1$ be a~proper Hamiltonian $\mathcal{C}(n,r)$-subcomplex
with the~set of~defining faces $M_1$, $\dots$, $M_s$. 
Then for $0\leqslant k\leqslant n-1$ 
\begin{gather}\label{eq:kk-1}
f_k(C_1)=f_k(C_2)-\delta_{r,n-1}\delta_{k,1}+\sum\limits_{q=1}^s\left(f_k(M_q)+f_{k-1}(M_q)\right)=\\
{r\choose n-k}+\delta_{r,n}\delta_{k,0}-\delta_{r,n-1}\delta_{k,1} +\sum\limits_{q=1}^s\left(f_k(M_q)+f_{k-1}(M_q)\right).
\end{gather}
where $\delta_{i,j}=1$ if $i=j$,  and $\delta_{i,j}=0$ if $i\ne j$.
\end{proposition}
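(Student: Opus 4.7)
The plan is to count $k$-faces of~$C_1$ by classifying them via Lemma~\ref{lem:Hamsc}(3). Each $k$-face $G$ of~$C_1$ is a connected component of an intersection $G_{i_1}\cap\dots\cap G_{i_{n-k}}$, and the Hamiltonian property forces one of two alternatives: either all $n-k$ facets lie in pairwise distinct facets of~$C_2$ (Type~I), in which case $G$ lies in a unique $k$-face $\widetilde{G}$ of $C_2$ by Proposition~\ref{prop:Gtilde}; or exactly two of them, $G_{i_a}$ and $G_{i_b}$, share a facet of $C_2$ (Type~II), in which case $G\subset G_{i_a}\cap G_{i_b}$ lies in a defining face $M_q$. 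I will show that in the Type~II case $G$ is a~$k$-face of the complex $M_q$, by rewriting $G$ as a connected component of $M_q\cap\bigcap_{s\ne a,b}G_{i_s}$ and invoking Proposition~\ref{prop:kdefk+1}. The same proposition then provides a bijection between Type~II $k$-faces of~$C_1$ and the disjoint union $\bigsqcup_q\{k\text{-faces of }M_q\}$, giving a Type~II contribution of $\sum_q f_k(M_q)$ (with the convention $f_{n-2}(M_q)=1$ and $f_k(M_q)=0$ for $k\geqslant n-1$).

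For Type~I, I will partition further by the containing $k$-face $\widetilde{G}$ of $C_2$: the Type~I $k$-faces of $C_1$ inside $\widetilde{G}$ are the vertices of the adjacency graph $\Gamma(\widetilde{G})$, whose edges biject, again by Proposition~\ref{prop:kdefk+1}, with the $(k-1)$-faces of defining faces $M_q$ lying in~$\widetilde{G}$. By Proposition~\ref{prop:treek}, $\Gamma(\widetilde{G})$ is a tree when $\widetilde{G}$ is not a circle; a short direct argument handles the remaining case, where the $k$-faces of $C_1$ partition $\widetilde{G}\simeq S^1$ into arcs meeting cyclically at $0$-faces of defining faces, forcing $\Gamma(\widetilde{G})$ to be a cycle. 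Summing $V=E+1$ (tree) or $V=E$ (cycle) over all $k$-faces of~$C_2$, and using that each $(k-1)$-face of each $M_q$ lies in a unique $k$-face of~$C_2$ and is thus counted exactly once, gives the Type~I contribution $f_k(C_2)-\nu+\sum_q f_{k-1}(M_q)$, where $\nu$ is the number of circular $k$-faces of~$C_2$. Adding the two contributions yields the first equality of the proposition.

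For the second equality I need $\nu=\delta_{r,n-1}\delta_{k,1}$ and $f_k(C_2)=\binom{r}{n-k}+\delta_{r,n}\delta_{k,0}$. I~will work in the ball model $B^n_{r-1,\geqslant}=\{x_1\geqslant 0,\dots,x_{r-1}\geqslant 0,\sum x_j^2\leqslant 1\}$ of $\mathcal{C}(n,r)\simeq C_2$, whose facets are $\{x_s=0\}$ for $s=1,\dots,r-1$ and $S_0=\{\sum x_j^2=1\}$, and run a direct case analysis: a $k$-face is a connected component of an intersection of $m=n-k$ facets, and splitting by whether $S_0$ is picked or not and inspecting which coordinates remain unrestricted on the sphere shows that every such intersection is connected except in the single case $r=n$, $k=0$, $m=n$, where $\{x_1=\dots=x_{n-1}=0\}\cap S_0$ consists of two points; combined with Vandermonde's identity $\binom{r-1}{m}+\binom{r-1}{m-1}=\binom{r}{m}$ this produces both $\binom{r}{n-k}$ and the correction $\delta_{r,n}\delta_{k,0}$. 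The same analysis identifies the unique circular $k$-face as $\{x_1=\dots=x_{n-2}=0\}\cap S_0=\{x_{n-1}^2+x_n^2=1\}$, occurring precisely when $r=n-1$, $k=1$, giving $\nu=\delta_{r,n-1}\delta_{k,1}$. The main obstacle is the uniform treatment of the circular case: Proposition~\ref{prop:treek} covers only the non-circle situation, and the single $-\delta_{r,n-1}\delta_{k,1}$ correction emerges only after pinning down the uniqueness of the exceptional circular $1$-face through the explicit ball-model enumeration.
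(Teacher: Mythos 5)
Your proposal is correct and follows essentially the same route as the paper's proof: the same dichotomy between $k$-faces of $C_1$ lying in defining faces (counted by $\sum_q f_k(M_q)$) and those inside a unique $k$-face $\widetilde{G}$ of $C_2$, counted via the tree structure of $\Gamma(\widetilde{G})$ from Proposition~\ref{prop:treek} together with Proposition~\ref{prop:kdefk+1}, with the single circular $1$-face in the case $(r,k)=(n-1,1)$ contributing $V=E$ instead of $V=E+1$. The only difference is that you verify $f_k(C_2)=\binom{r}{n-k}+\delta_{r,n}\delta_{k,0}$ and the uniqueness of the circular face explicitly in the ball model $B^n_{r-1,\geqslant}$, a detail the paper simply asserts.
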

\begin{proof}
Indeed, each $k$-face $G$ of $C_1$ either lies in some $M_q$ or it does not lie in defining faces but lies 
in~a~unique $k$-face $\widetilde{G}$ of~$C_2$. By Proposition~\ref{prop:treek} if $(r,k)\ne (n-1,1)$ 
the~$k$-faces $G$ of~the~latter type lying in $\widetilde{G}$ form a tree with edges corresponding to $(k-1)$-faces 
of defining faces lying in $\widetilde{G}$. 
By Proposition~\ref{prop:kdefk+1} each $(k-1)$-face of a defining face lies in a unique $k$-face $\widetilde{G}$ of $C_2$.
Thus, for $(r,k)\ne (n-1,1)$ the~number of $k$-faces $G\subset \widetilde{G}$ is one greater then the~number 
of $(k-1)$-faces inside $\widetilde{G}$
lying in defining faces. For $(r,k)=(n-1,1)$ the number of $1$-faces of~$C_1$ lying in~a~unique $1$-face
of~$C_2$ (which is a circle) is equal to the number of vertices of~defining faces. Also we have 
$f_k(C_2)={r\choose n-k}+\delta_{r,n}\delta_{k,0}$. This finishes the proof. 
\end{proof}

\begin{proposition}\label{prop:Mq}
Let $C_2=\mathcal{C}(P,c_2)\subset \mathcal{C}(P,c_1)=C_1$ be a~proper Hamiltonian $\mathcal{C}(n,r)$-subcomplex
with the~set of~defining faces $M_1$, $\dots$, $M_s$. Then for any defining face $M_q$ its facets can be colored 
in~$(r-1)$ colors in~such a~way that if~the~intersection of~a~set of facets is~nonempty, then their colors are pairwise different. 
\end{proposition}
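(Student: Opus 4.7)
The plan is to construct an explicit coloring of the facets of $M_q$ valued in the set of facets of $C_2$ other than the one containing $M_q$, and then verify properness by a direct application of the Hamiltonian criterion. First I would use Proposition~\ref{prop:kdefk+1} to set up the combinatorics: the defining face $M_q$ is a connected component of $G_i\cap G_j$ for some facets $G_i, G_j$ of $C_1$ that both lie in one facet $\widetilde{G}_a$ of $C_2$, and every facet of $M_q$ is a connected component of $M_q\cap G_k$ for some facet $G_k$ of $C_1$ different from $G_i, G_j$.

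The key preliminary observation is that for every such $G_k$ the facet $\widetilde{G}_b$ of $C_2$ containing it satisfies $b\neq a$. Indeed, if $G_k\subset \widetilde{G}_a$, then a connected component $M_q'$ of $G_i\cap G_k$ would meet $M_q$ at any point of $M_q\cap G_k\neq\varnothing$; since $M_q'\neq M_q$ (because $G_k\neq G_j$), this contradicts the disjointness of defining faces guaranteed by Proposition~\ref{cor:n-2f}. I then define the coloring by assigning to the facet $M_q\cap G_k$ of $M_q$ the label $\widetilde{G}_b$, where $G_k\subset\widetilde{G}_b$. By the previous step the values lie in $\{\widetilde{G}_b : b\neq a\}$, which has exactly $r-1$ elements since $C_2\simeq \mathcal{C}(n,r)$ has $r$ facets.

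It remains to verify the coloring is proper. Suppose facets $M_q\cap G_{k_1},\ldots, M_q\cap G_{k_l}$ of $M_q$ share a common point $p$. Then $p\in G_i\cap G_j\cap G_{k_1}\cap\dots\cap G_{k_l}$, so $p$ lies in $l+2$ facets of $C_1$. By Lemma~\ref{lem:Hamsc}(1), at least $l+1$ of these have pairwise distinct images in $C_2$. Since $G_i$ and $G_j$ share the common image $\widetilde{G}_a$, the total number of distinct images is at most $l+1$; hence equality must hold, forcing the images of $G_{k_1},\ldots, G_{k_l}$ to be pairwise distinct and all different from $\widetilde{G}_a$. This is precisely the statement that the colors of the facets $M_q\cap G_{k_s}$ are pairwise different, completing the argument.

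I do not expect a substantive obstacle: the construction is forced once one identifies the correct color set, and the pairwise-distinctness is essentially a repackaging of the Hamiltonian criterion. The only minor subtlety is the case exclusion showing $G_k\not\subset \widetilde{G}_a$, which relies crucially on Proposition~\ref{cor:n-2f} rather than on the $\mathcal{C}(n,r)$ structure itself; the latter is used only to guarantee that the set of ``available'' facets of $C_2$ has cardinality $r-1$.
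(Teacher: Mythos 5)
Your proof is correct and takes essentially the same route as the paper: the coloring you build --- labelling each facet of $M_q$ by the facet of $C_2$ (other than the one containing $M_q$) in which the corresponding $G_k$ lies --- is exactly the paper's coloring, which assigns to each connected component of $M_q\cap\widetilde{G}_j$ the color $j$, using Proposition~\ref{prop:kdefk+1}. The only difference is cosmetic: you verify properness via the counting form of Lemma~\ref{lem:Hamsc}(1) (and check $G_k\not\subset\widetilde{G}_a$ via disjointness of defining faces), whereas the paper simply notes that intersecting facets cannot be distinct components of the same $M_q\cap\widetilde{G}_j$, since such components are disjoint.
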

\begin{proof}
Indeed, any defining face $M_q$ lies in some facet $\widetilde{G}_i$ of $C_2$ and its facets are connected 
components of intersections of $M_q$ with facets $\widetilde{G}_j\ne \widetilde{G}_i$. Then we can assign 
to~each connected component of~$M_q\cap \widetilde{G}_j$ the color $j$. If the intersection of facets of $M_q$
is nonempty, then by~construction they have different colors. 
\end{proof}

\subsection{Geometric hyperelliptic manifolds $N(P,\Lambda)$}\label{ssec:geom}
If $P$ is a right-angled $n$-polytope in some geometry $\mathbb X$ and $\Lambda$ is a linearly independent
vector-coloring of rank $r$, then $N(P,\Lambda)$ is a manifold with a geometric structure modelled on $\mathbb X$.

\begin{construction}\label{con:hypgeom}
Let $P$ be~a~right-angled $n$-polytope in some geometry $\mathbb X$. 
If $C\subset P$ is a~proper Hamiltonian $C(n,k)$-subcomplex, then there is a~canonical linearly independent
vector-coring $\widetilde{\Lambda}_C$ of rank $k+1$ induced by $C$. It is 
defined up to a linear change of coordinates in $\mathbb Z_2^{k+1}$. 
Then $N(P,\widetilde{\Lambda}_C)$ is a geometric hyperelliptic 
$n$-manifold with a~hyperelliptic involution in~$\mathbb Z_2^{k+1}$
corresponding to $C$.
\end{construction}
\begin{remark}
Construction \ref{con:hypgeom} is a~direct generalization 
to dimensions $n>3$ of the construction of geometric 
hyperelliptic $3$-manifolds in \cite{M90, VM99M, VM99S2} based on 
Hamiltonian cycles, theta- and $K_4$-subgraphs in $1$-skeletons of right-angled $3$-polytopes.
\end{remark}
\begin{remark}\label{rem:4hi}
Theorem~\ref{th:Hamsc} implies that for $n\leqslant 4$ and any linearly independent vector-coloring $\Lambda$
of~a right-angled $n$-polytope $P$ if $N(P,\Lambda)$ admits a~hyperelliptic involution in $\mathbb Z_2^r$,
then this involution, the coloring~$\Lambda$ and $N(P,\Lambda)$ can 
be~obtained by Construction \ref{con:hypgeom}. 
\end{remark}
In this section we will discuss which geometries $\mathbb X$ admit hyperelliptic manifolds obtained 
by~Construction~\ref{con:hypgeom}.

\begin{corollary}
Let $n>2$ and $C$ be a Hamiltonian $\mathcal{C}(n,r)$-subcomplex in $\partial P$. Then $n-1\leqslant r\leqslant n+1$ and 
the vertex set of $P$ is a disjoint union of vertices of $C$ (their number $V_{n,r}$ is: $V_{n,n-1}=0$, $V_{n,n}=2$, 
$V_{n,n+1}=n+1$) 
and vertices of~defining $(n-2)$-faces $M_1$, $\dots$, $M_s$. The~number of defining faces is~equal to $m-r$.
Moreover, for each $0\leqslant k\leqslant n-1$
$$
f_k(P)= {r\choose n-k}+\delta_{r,n}\delta_{k,0}-\delta_{r,n-1}\delta_{k,1} +\sum\limits_{q=1}^s\left(f_k(M_q)+f_{k-1}(M_q)\right).
$$
\end{corollary}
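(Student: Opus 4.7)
The plan is to harvest the three ingredients developed above: Corollary~\ref{cor:NPSn} together with Corollary~\ref{cor:HCnk}, the local Hamiltonian description of Lemma~\ref{lem:Hamsc}(1), and the tree-structure and face-counting results of Propositions~\ref{prop:cnkbh} and~\ref{prop:fkform}.

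First I would establish the bounds $n-1\leqslant r\leqslant n+1$. For the upper bound, Corollary~\ref{cor:HCnk} produces the subgroup $H(C)$ and the corresponding rank-$r$ vector-coloring $\Lambda$ with $N(P,\Lambda)\simeq S^n$ and $\mathcal{C}(P,\Lambda)=C$ (the equality because, by Proposition~\ref{prop:cnkbh}, each facet of $C$ is connected). Corollary~\ref{cor:NPSn} then forces $r\leqslant n+1$. For the lower bound, since $n>2$ every vertex of $P$ is a $0$-face of $\partial P$, and the Hamiltonian condition requires each such vertex to lie on a $1$-face of $C$; inspection of the model $B^n_{r-1,\geqslant}$ (equivalently $S^n_{r,\geqslant 0}$) shows $\mathcal{C}(n,r)$ has $1$-faces only when $r\geqslant n-1$.

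Next I would establish the vertex partition. Fix a vertex $v\in P$, write $v=F_{i_1}\cap\dots\cap F_{i_n}$, and let $\widetilde G_{a_j}$ be the facet of $C$ containing $F_{i_j}$. By Lemma~\ref{lem:Hamsc}(1) at least $n-1$ of the $\widetilde G_{a_j}$ are distinct, so either (i) all $n$ are distinct, in which case $v\in\widetilde G_{a_1}\cap\dots\cap\widetilde G_{a_n}$ is a vertex of $C$, or (ii) exactly two coincide, say $F_{i_1},F_{i_2}\subset \widetilde G_a$, and $v$ lies in the connected component of $F_{i_1}\cap F_{i_2}$, which by definition is a defining $(n-2)$-face $M_q$, making $v$ a vertex of $M_q$. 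These two cases are mutually exclusive and exhaustive, yielding the claimed disjoint union. The numbers $V_{n,r}$ are then read off from the explicit models: $V_{n,n-1}=0$ (no $n$-fold intersection exists in $B^n_{n-2,\geqslant}$), $V_{n,n}=2$ (the two antipodal points $x_{n+1}=\pm 1$ on $S^n_{n,\geqslant 0}$), and $V_{n,n+1}=n+1$ (vertices of $\partial\Delta^n$).

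The defining-face count will follow from Proposition~\ref{prop:cnkbh}: for each facet $\widetilde G$ of $C$ the adjacency graph $\Gamma(\widetilde G)$ is a tree whose vertices are the facets of $P$ contained in $\widetilde G$ and whose edges correspond bijectively to the defining $(n-2)$-faces inside $\widetilde G$. Hence if $\widetilde G$ contains $q_{\widetilde G}$ facets of $P$, it contains $q_{\widetilde G}-1$ defining faces; summing over the $r$ facets of $C$ and using $\sum q_{\widetilde G}=m$ gives $s=m-r$. Finally, the formula for $f_k(P)$ is obtained by applying Proposition~\ref{prop:fkform} with $C_1=\partial P$ (so that $f_k(C_1)=f_k(P)$ for $0\leqslant k\leqslant n-1$) and $C_2=C$, combined with the face counts $f_k(\mathcal{C}(n,r))={r\choose n-k}+\delta_{r,n}\delta_{k,0}$ extracted from the models above. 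I do not anticipate any genuine obstacle here; the only cosmetic subtlety is the correction term $-\delta_{r,n-1}\delta_{k,1}$, already built into Proposition~\ref{prop:fkform} and reflecting the circle-versus-tree distinction explained in Proposition~\ref{prop:treek}.
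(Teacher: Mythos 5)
Your proposal is correct and takes essentially the same route as the paper: the vertex dichotomy at each vertex of $P$ via Lemma~\ref{lem:Hamsc}(1), the values $V_{n,r}$ read off the models of $\mathcal{C}(n,r)$, and the $f_k$ formula obtained by applying Proposition~\ref{prop:fkform} with $C_1=\partial P$. The only minor deviations — deriving $r\leqslant n+1$ through Corollaries~\ref{cor:HCnk} and~\ref{cor:NPSn} instead of from the existence of $1$-faces alone, and obtaining $s=m-r$ from the tree structure of Proposition~\ref{prop:cnkbh} rather than from the $k=n-1$ instance of the face formula — are harmless and equally valid.
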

\begin{proof}
This follows directly from Proposition~\ref{prop:fkform}.

In particular, each vertex of $P$  either is~a~vertex of $C$ or lies on its $1$-face. Thus, $C$ should have $1$-faces. This
is possible only for $k\in\{n-1,n,n+1\}$. Moreover, $\mathcal{C}^1(n,n-1)$ is a circle, $\mathcal{C}^1(n,n-1)$ consists
of two vertices and $n$ multiple edges, and $\mathcal{C}^1(n,n+1)$ is the complete graph on $n$ vertices 
(see \cite[Example 8.17]{E24}). If a vertex $v$ of $P$ is a vertex of $C$, then it~is~the~intersection of $n$
different facets of $C$, and it does not lie in defining faces. If $v$ lies inside a $1$-face of $C$, then  
two facets of $P$ containing $v$ lie in the same facet of $C$. Then their intersection is a defining face.

For $k=n-1>1$ we have $m=f_{n-1}(P)={r\choose 1}+\sum\limits_{q=1}^sf_{n-2}(M_q)=r+s$. Thus, $s=m-r$
is~the~total number of defining faces.
\end{proof}
\begin{corollary}\label{cor:In}
The $n$-cube $I^n$ does not admit Hamiltonian $\mathcal{C}(n,k)$-subcomplexes for $n>3$. In particular,
there are no locally Euclidean hyperelliptic $n$-manifolds $N(P,\Lambda)$, $n>3$, obtained by~Construction~\ref{con:hypgeom}.
\end{corollary}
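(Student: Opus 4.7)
The plan is to apply the face-count identity
\[
f_j(P) \;=\; \binom{k}{n-j} + \delta_{k,n}\delta_{j,0} - \delta_{k,n-1}\delta_{j,1} + \sum_{q=1}^{s}\bigl(f_j(M_q) + f_{j-1}(M_q)\bigr)
\]
from the preceding corollary to $P = I^n$. Every $(n-2)$-face of the $n$-cube is itself an $(n-2)$-cube, and the defining $(n-2)$-faces of a Hamiltonian $\mathcal{C}(n,k)$-subcomplex $C \subset \partial P$ are $(n-2)$-faces of $P$, so each $M_q \simeq I^{n-2}$ and hence $f_j(M_q) = \binom{n-2}{j}\,2^{n-2-j}$. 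From the same preceding corollary we also have $k \in \{n-1,\,n,\,n+1\}$ and $s = m - k = 2n - k$.

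Next, I will specialize the identity at $j = 1$ (which avoids any convention on $f_{-1}$). Each summand becomes $f_1(M_q) + f_0(M_q) = (n-2)\,2^{n-3} + 2^{n-2} = n\,2^{n-3}$, and the identity reads
\[
n \cdot 2^{n-1} \;=\; \binom{k}{n-1} - \delta_{k,n-1} + (2n - k)\,n\,2^{n-3}.
\]
Dividing by $n\,2^{n-3}$ and checking the three admissible values of $k$ separately, this reduces to $4 = n+1$ for $k = n-1$, to $(4-n)\,2^{n-3} = 1$ for $k = n$, and to $(5-n)\,2^{n-2} = n+1$ for $k = n+1$. Each of these has $n = 3$ as its only solution and fails for every $n \geqslant 4$: the left-hand sides are nonpositive for $n \geqslant 5$, and the remaining case $n = 4$, $k = n+1$ is ruled out by the near-miss $4 \neq 5$. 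Thus $I^n$ will admit no Hamiltonian $\mathcal{C}(n,k)$-subcomplex once $n > 3$.

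For the second assertion, Remark~\ref{rem:4hi} together with Construction~\ref{con:hypgeom} force any locally Euclidean hyperelliptic $N(P,\Lambda)$ obtained by the construction to arise from a compact right-angled polytope $P$ in a Euclidean geometry $\mathbb R^{n_1} \times \cdots \times \mathbb R^{n_t}$. The classification of compact right-angled Euclidean reflection groups forces $P \simeq I^{n_1} \times \cdots \times I^{n_t} \simeq I^n$ up to scaling, and the first part then excludes the required Hamiltonian subcomplex when $n > 3$. No serious obstacle is expected: the whole argument is a rigid arithmetic check whose main substantive input is that every $(n-2)$-face of $I^n$ is itself a cube, which freezes the face-vector contribution of each $M_q$ and leaves no slack in the counting equation.
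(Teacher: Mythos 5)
Your proposal is correct and follows essentially the same route as the paper: the paper also feeds the face-count relation of the preceding corollary (with $s=m-k=2n-k$ and each $M_q\simeq I^{n-2}$) into the cube, only at the level of vertices, $2^n=V_k+(2n-k)2^{n-2}$, rather than edges, and rules out $k=n-1,n,n+1$ for $n\geqslant 4$ by the same kind of arithmetic comparison. The deduction for locally Euclidean manifolds is likewise the same, resting on the fact that the only compact right-angled polytope in Euclidean geometry is (combinatorially) the cube.
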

\begin{proof}
The cube has $2^n$~vertices and $2n$ facets. Any $(n-2)$-face is $I^{n-2}$ and has $2^{n-2}$ vertices.
If~$I^n$ has a~Hamiltonian $\mathcal{C}(n,k)$-subcomplex, then $2^n=V_{n,k}+(2n-k)2^{n-2}$,
where $n-1\leqslant k\leqslant n+1$. We have $2n-k\geqslant n-1$. Thus, the right part
is at least $V_{n,k}+(n-1)2^{n-2}$. If $n\geqslant 5$, then it~is~at~least $V_{n,k}+4\cdot 2^{n-2}=V_{n,k}+2^n\geqslant 2^n$,
where the equality holds only if $n=5$ and $k=n+1$. But in~this case $V_{n,k}=n+1=6$ and we obtain a contradiction.
If $n=4$, then we have $16=V_{n,k}+(8-k)\cdot 4$. For $k=3$ we have $16=0+5\cdot 4=20$, 
for $k=4$ we have $16=2+4\cdot 4=18$, and for $k=5$ we have $16=5+3\cdot 4=17$. A contradiction. 
\end{proof}
\begin{remark}
For $n\leqslant 3$ the cube $I^n$ admits Hamiltonian $\mathcal{C}(n,k)$-subcomplexes.
\end{remark}
\begin{corollary}\label{cor:n-2n-2}
Let and $C$ be a Hamiltonian $\mathcal{C}(n,n-1)$-subcomplex in $\partial P$.
Then any its~defining face is an $(n-2)$-polytope admitting a coloring of facets in $(n-2)$ colors
such that adjacent facets have different colors.  
\end{corollary}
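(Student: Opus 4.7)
The plan is to derive the corollary directly from Proposition~\ref{prop:Mq} specialized to $r = n-1$, together with the simplicity of $P$ to identify defining faces with genuine $(n-2)$-faces of $P$.

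First I would unfold the definitions in the setting of the corollary: the ambient complex is $\partial P$ itself, whose facets $G_i$ are precisely the facets $F_i$ of the simple polytope $P$. By the definition of a defining face, each such face is a connected component of a nonempty intersection $F_i \cap F_j$ of facets of $P$ receiving the same color in the subcomplex coloring $c_2$. Since $P$ is simple, any nonempty intersection $F_i \cap F_j$ of two distinct facets is a single, connected $(n-2)$-face of $P$; hence each defining face is literally an $(n-2)$-face of $P$. Moreover, since any simple $n$-polytope has at least $n+1 > n-1$ facets while $\mathcal{C}(n,n-1)$ has $n-1$ facets, the subcomplex $C$ is automatically proper in $\partial P$, so Proposition~\ref{prop:Mq} is applicable.

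Next I would apply Proposition~\ref{prop:Mq} with $r = n-1$ to the proper Hamiltonian $\mathcal{C}(n,n-1)$-subcomplex $C$. The proposition yields, for each defining face $M_q$, a coloring of its facets in $r-1 = n-2$ colors such that whenever a collection of facets of $M_q$ has nonempty intersection, the assigned colors are pairwise distinct. In particular, any two adjacent facets of the $(n-2)$-polytope $M_q$ meet nonempty (they share a codimension-two face of $M_q$), so this coloring assigns them different colors, giving the desired $(n-2)$-coloring of the facets of $M_q$.

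No substantive obstacle arises; the argument is a direct specialization of Proposition~\ref{prop:Mq}, the only extra ingredient being the identification of each defining face with an $(n-2)$-face of $P$, which is immediate from the simplicity of~$P$.
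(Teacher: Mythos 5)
Your proposal is correct and matches the paper's intended (unwritten) derivation: the corollary is exactly Proposition~\ref{prop:Mq} specialized to $r=n-1$, combined with the observation that in $\partial P$ each defining face is a genuine $(n-2)$-face of the simple polytope $P$. Your extra checks (properness via facet counts, and that intersecting facets in particular include adjacent ones) are the right details to make this specialization explicit.
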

\begin{remark}\label{rem:nn}
As is was proved in \cite{J01} a simple $n$-polytope $P$ admits a coloring of its facets in $n$
colors such that adjacent facets have different colors if and only if any $2$-face of $P$ has 
an~even number of~edges.
\end{remark}
\begin{corollary}\label{cor:nn-1}
Let and $C$ be a Hamiltonian $\mathcal{C}(n,n-1)$-subcomplex in $\partial P$. Then 
the vertices of $P$ lie on a disjoint set of even-gonal $2$-faces of defining faces.   
\end{corollary}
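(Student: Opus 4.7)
The plan is to combine the vanishing $V_{n,n-1}=0$ furnished by the preceding numerical corollary with the facet-colorability of each $M_q$ from Corollary~\ref{cor:n-2n-2}, and then to exhibit inside each defining face an explicit disjoint family of even $2$-faces covering its vertex set.

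First I~would observe that since $C$ is~a~Hamiltonian $\mathcal{C}(n,n-1)$-subcomplex the preceding corollary gives $V_{n,n-1}=0$, so $C$ contains no vertices of $P$ and every vertex of $P$ is a vertex of some~$M_q$. Because $C$ is Hamiltonian, Proposition~\ref{cor:n-2f} ensures the defining faces are pairwise disjoint, so the vertex set of $P$ splits into the disjoint union of the vertex sets of $M_1,\dots,M_s$. Corollary~\ref{cor:n-2n-2} then tells us each $M_q$ is an $(n-2)$-polytope whose facets admit a proper coloring in $(n-2)$ colors, and by Remark~\ref{rem:nn} every $2$-face of every $M_q$ is even-gonal.

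Now the core step is to construct, inside each $M_q$, a disjoint family of $2$-faces covering all of its vertices. I~would fix any $n-4$ of the $n-2$ available colors and call them $c_1,\dots,c_{n-4}$ (the empty selection when $n=4$). Since $M_q$ is a simple $(n-2)$-polytope with a proper $(n-2)$-coloring, every vertex $v$ of $M_q$ lies in a unique facet $F_i^v$ of color $c_i$ for each $i$, and the intersection $\Pi(v):=F_1^v\cap\dots\cap F_{n-4}^v$ is then a $2$-face of $M_q$ containing $v$ (for $n=4$ this is all of $M_q$), which is even-gonal by the previous paragraph.

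The only substantive point is disjointness: if $\Pi(v)\ne\Pi(w)$, then $F_i^v\ne F_i^w$ for some $i$, and any common vertex $u$ of $\Pi(v)\cap\Pi(w)$ would lie in two distinct facets of color $c_i$, contradicting properness of the coloring. Thus distinct $\Pi$'s are disjoint, and taking the family $\{\Pi(v)\}$ over all $v\in\bigcup_q M_q$ together with the fact that the $M_q$ themselves are pairwise disjoint produces the required disjoint collection of even-gonal $2$-faces of defining faces containing every vertex of $P$. I~expect this uniqueness-of-color-facet argument to be the only nontrivial step; everything else is a direct bookkeeping consequence of Corollary~\ref{cor:n-2n-2}, Remark~\ref{rem:nn}, and the preceding corollary.
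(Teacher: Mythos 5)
Your proposal is correct and takes essentially the same route as the paper: reduce to the vertices of the defining faces (via $V_{n,n-1}=0$ and the disjointness of the $M_q$), invoke Corollary~\ref{cor:n-2n-2} together with Remark~\ref{rem:nn}, and for a fixed choice of $n-4$ of the $n-2$ colors take intersections of facets of those colors to obtain the even-gonal $2$-faces. Your uniqueness-of-the-color-facet argument for disjointness is exactly the paper's observation that every vertex of $M_q$ lies on exactly one such $2$-face.
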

\begin{proof}
Indeed, by Corollary~\ref{cor:n-2n-2} and Remark~\ref{rem:nn} each defining face $M_q$ 
has a~coloring in~$(n-2)$ colors such that adjacent facets have different colors 
and each its $2$-gonal face has an even number of edges.
Moreover, for each choice of $(n-4)$ colors any nonempty intersection of facets of these colors
is an even-gonal $2$-face of $M_q$, and any vertex of $M_q$ lies on exactly one such a~$2$-face. 
\end{proof}
\begin{corollary}
If a small cover $N(P,\Lambda)$ over a $4$-polytope $P$ has a hyperelliptic involution in~$\mathbb Z_2^4$,
then this involution corresponds to a~Hamiltonian $\mathcal{C}(4,3)$-subcomplex inducing $\Lambda$
and the~vertices of~$P$ lie on a disjoint set of $(m-3)$ defining even-gons.
\end{corollary}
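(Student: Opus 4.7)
The plan is that this corollary is essentially a bookkeeping combination of three results already proved in the excerpt, specialized to $n=4$ and to a linearly independent coloring of rank $r=n=4$ (which is precisely what ``small cover'' means). The only thing I need to stay careful about is the notational clash between the rank $r=4$ of $\Lambda$ and the parameter $r'$ that appears in the notation $\mathcal{C}(n,r')$-subcomplex: for a small cover over a $4$-polytope the relevant subcomplex will be $\mathcal{C}(4,3)$, so $r'=3$.

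First I would apply Theorem~\ref{th:Hamsc} with $n=4$ and coloring rank $r=4$. The hypothesis of that theorem is satisfied, because a linearly independent vector-coloring automatically makes $N(P,\Lambda)$ a closed topological manifold (indeed, a smooth one). The theorem then yields a bijection between nonzero hyperelliptic involutions $\tau\in\mathbb{Z}_2^4\setminus\{0\}$ on $N(P,\Lambda)$ and proper Hamiltonian $\mathcal{C}(4,r-1)=\mathcal{C}(4,3)$-subcomplexes $C\subset\mathcal{C}(P,\Lambda)$ inducing $\Lambda$. So $\tau$ corresponds to such a $C$, and $\Lambda=\widetilde{\Lambda}_C$ up to a linear change of coordinates in $\mathbb{Z}_2^4$. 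Moreover, since $\Lambda$ is linearly independent, every facet of $P$ is already a facet of $\mathcal{C}(P,\Lambda)$, so $M=m$.

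Second, I would invoke the unnamed corollary preceding Corollary~\ref{cor:In} (the one derived from Proposition~\ref{prop:fkform}) with the subcomplex parameter $r'=3$ and $n=4$. It states that the number of defining $(n-2)$-faces of a proper Hamiltonian $\mathcal{C}(n,r')$-subcomplex equals $m-r'$; applying it to our $C$ gives exactly $m-3$ defining faces $M_1,\dots,M_{m-3}$, each of dimension $n-2=2$, i.e.\ each one a polygon. Third, I would apply Corollary~\ref{cor:nn-1} to $C$ viewed as a Hamiltonian $\mathcal{C}(n,n-1)=\mathcal{C}(4,3)$-subcomplex: the vertex set of $P$ is the disjoint union of the vertex sets of the even-gonal $2$-faces of the defining faces. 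In dimension $n=4$ each defining face is itself $2$-dimensional, so its only $2$-face is itself; thus the even-gonal $2$-faces of the defining faces are precisely the defining polygons themselves, which are therefore even-gons and whose vertex sets form a disjoint partition of the vertex set of $P$. (One may also double-check consistency using the general vertex count from the same preceding corollary: for $(n,r')=(4,3)$ one has $V_{n,n-1}=0$, confirming that all vertices of $P$ lie on defining faces.)

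There is essentially no mathematical obstacle here, since each ingredient has already been proved in an earlier subsection; the only hazard is the book-keeping pitfall of conflating the coloring rank $r=4$ with the $\mathcal{C}(n,r')$-parameter $r'=3$, which would give the wrong count $m-4$ instead of $m-3$. Once the correct identification $r'=r-1$ is made, the conclusion falls out immediately from the three cited results.
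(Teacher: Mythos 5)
Your argument is correct and follows exactly the route the paper intends: the corollary is stated without a separate proof precisely because it is the combination of Theorem~\ref{th:Hamsc} (with $n=4$, coloring rank $r=4$, giving a proper Hamiltonian $\mathcal{C}(4,3)$-subcomplex inducing $\Lambda$), the unnamed corollary of Proposition~\ref{prop:fkform} (with subcomplex parameter $3$, giving $m-3$ pairwise disjoint defining $2$-faces and $V_{4,3}=0$, so all vertices lie on them), and Corollary~\ref{cor:nn-1}/Corollary~\ref{cor:n-2n-2} (forcing each defining $2$-face to be an even-gon). Your care with the identification $\mathcal{C}(P,\Lambda)=\partial P$ for a linearly independent coloring and with the distinction between the rank $4$ of $\Lambda$ and the parameter $3$ of the subcomplex is exactly the bookkeeping the paper relies on.
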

\begin{remark}
The first example of~a small cover $N(P,\Lambda)$ over a $4$-polytope $P$  with a~hyperelliptic involution in~$\mathbb Z_2^4$
was build by Alexei Koretskii, see \cite{K24B} and \cite{K24}.
\end{remark}
\begin{proposition}\label{prop:Ln}
If a $4$-polytope $P$ admits a Hamiltonian $\mathcal{C}(4,r)$-subcomplex, then $P$
has~at~least one triangular or quadrangular $2$-face. In particular, there are 
no~hyperbolic ($\mathbb X=\mathbb L^4$) hyperelliptic 
$4$-manifolds $N(P,\Lambda)$  obtained by~Construction~\ref{con:hypgeom}. Moreover,
for all~$n\geqslant 4$ there are no  hyperbolic ($\mathbb X=\mathbb L^n$) $n$-manifolds $N(P,\Lambda)$ obtained by~Construction~\ref{con:hypgeom}.
\end{proposition}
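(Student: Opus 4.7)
The plan is to argue by contrapositive: assuming every $2$-face of $P$ has at least $5$ edges, I will derive a contradiction with the existence of a Hamiltonian $\mathcal{C}(4,r)$-subcomplex. By the corollary preceding Corollary~\ref{cor:In} the admissible ranks are $r\in\{3,4,5\}$. The non-proper case $C=\partial P$ forces $m=r=5$ and $P=\Delta^4$, which trivially has triangular $2$-faces, so I may restrict to proper Hamiltonian subcomplexes and apply the combinatorial identities already established in the paper.

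I will collect two ingredients. First, since $P$ is a simple $4$-polytope with $\partial P\simeq S^3$, each vertex lies on $4$ edges and $\binom{4}{2}=6$ two-faces while each edge lies on $3$ two-faces, so double counting together with Euler's formula gives $f_1(P)=2f_0(P)$ and $f_2(P)=m+f_0(P)$, where $m=f_3(P)$. Second, the corollary following Construction~\ref{con:hypgeom} applied at $n=4$ yields $f_0(P)=V_{4,r}+N$ and $f_2(P)=\binom{r}{2}+s+N$, where $s=m-r$ counts the defining $2$-faces $M_1,\dots,M_s$ (each being itself a polygonal $2$-face of $P$, hence $f_1(M_q)=f_0(M_q)$), $N=\sum_q f_0(M_q)$, and $V_{4,r}\in\{0,2,5\}$ for $r\in\{3,4,5\}$.

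For the contradiction I count incidences (vertex, $2$-face through it) in two ways to get $\sum_{F}f_0(F)=6f_0(P)$, where $F$ ranges over the $2$-faces of $P$. The defining $2$-faces contribute exactly $N$ to this sum, so the remaining $f_2(P)-s=f_0(P)+r$ non-defining $2$-faces contribute $6f_0(P)-N$. Since by hypothesis every $2$-face has at least $5$ sides, $6f_0(P)-N\geqslant 5(f_0(P)+r)$, and substituting $f_0(P)=V_{4,r}+N$ collapses this to $V_{4,r}\geqslant 5r$, which fails in all three cases ($0<15$, $2<20$, $5<25$). Therefore $P$ must have a $2$-face with $3$ or $4$ edges.

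The geometric consequence is then immediate: every $2$-face of a compact right-angled polytope in $\mathbb{L}^n$ is itself a compact right-angled hyperbolic polygon, whose angle sum $k\pi/2$ must be strictly less than $(k-2)\pi$, forcing $k\geqslant 5$; so such a $4$-polytope cannot satisfy the necessary condition just established, ruling out $\mathbb{X}=\mathbb{L}^4$. For $n\geqslant 5$ Nikulin's theorem~\cite{N82} already forbids compact right-angled polytopes in $\mathbb{L}^n$, so Construction~\ref{con:hypgeom} is vacuous there. The only real step is the double count; the main (minor) obstacle is simply keeping defining versus non-defining $2$-faces separate in the incidence sum and verifying $V_{4,r}<5r$ in each admissible case.
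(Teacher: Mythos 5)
Your proof is correct and follows essentially the same route as the paper: Euler's relation with simplicity ($f_1=2f_0$, $f_2=f_0+m$), the incidence count $\sum_F f_0(F)=6f_0(P)$, the vertex decomposition $f_0(P)=V_{4,r}+\sum_q f_0(M_q)$ with $s=m-r$ defining $2$-faces, all collapsing to the same contradiction $V_{4,r}\geqslant 5r$ for $r\in\{3,4,5\}$, and then Nikulin's theorem for $n\geqslant 5$. The only differences are cosmetic: you split the incidence sum into defining versus non-defining $2$-faces instead of manipulating the $p_k$'s, and you treat the non-proper case $C=\partial P$ (forcing $P\simeq\Delta^4$) explicitly, which is a slightly more careful bookkeeping of the same argument.
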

\begin{proof}
Let $p_k$ be the number of $k$-gonal $2$-faces of $P$, and $f_i$ be the total number of its $i$-faces 
(in particular, $f_3=m$). We have the Euler-Poincare formula $f_0-f_1+f_2-f_3=0$. Also $4f_0=2f_1$ since $P$ is simple.
Also $\sum\limits_k kp_k={4\choose 2} f_0=6f_0$.
Then $f_2=f_1-f_0+f_3=f_0+f_3=f_0+m$, and 
$$
\sum\limits_kp_k=f_2=f_0+m=\frac{1}{6}\sum\limits_k kp_k+m.
$$
Thus, $\sum\limits_k6p_k=\sum\limits_k kp_k+6m$, $\sum\limits_k(k-6)p_k+6m=0$, and 
\begin{equation}
3p_3+2p_4+p_5=6m+\sum\limits_{k\geqslant 7}(k-6)p_k
\end{equation}
Assume that  $P$ has no triangles and quadrangles. Then $p_5=6m+\sum_{k\geqslant 7}(k-6)p_k$
and $f_2=\sum\limits_{k\geqslant 5} p_k=6m+\sum_{k\geqslant 6}(k-5)p_k$.
Let $k_i$ be~the~number of edges of $i$-th defining $2$-face. Then 
$$
f_0=V_{4,r}+\sum_{i=1}^{m-r} k_i=f_2-f_3=5m+\sum_{k\geqslant 6}(k-5)p_k.
$$
Thus, 
\begin{gather*}
\sum_{i=1}^{m-r} (k_i-5)=5m+\sum_{k\geqslant 6}(k-5)p_k-5(m-r)-V_{4,r}=\\
5r-V_{4,r}+\sum_{k\geqslant 6}(k-5)p_k\geqslant
5r-V_{4,r}+\sum_{i=1}^{m-r} (k_i-5).
\end{gather*}
Then $V_{4,r}\geqslant 5r$. For $r=3$ we have $0\geqslant 15$, for $r=4$ we have $2\geqslant 20$, and for $r=5$ we have $5\geqslant 25$. In all cases this is a contradiction.

Since any right-angled hyperbolic $4$-polytope has no triangular and quadrangular $2$-faces,
it~does not admit Hamiltonian $\mathcal{C}(4,r)$-subcomplexes. Moreover, it follows from
the paper \cite{N82} by V.V.~Nikulin that for $n>4$ in the~hyperbolic $n$-space $\mathbb L^n$
there are no compact right-angled polytopes. This finishes the proof.
\end{proof}

\begin{proposition}\label{prop:IX}
If a $4$-polytope $P=Q\times I$ admits a Hamiltonian \linebreak $\mathcal{C}(4,r)$-subcomplex, then 
at least one of~the~defining $2$-faces of $P$ is a triangle. In particular, there  are~no~hyperelliptic 
$4$-manifolds $N(P,\Lambda)$ with geometries $\mathbb L^3\times\mathbb R$, $\mathbb L^2\times\mathbb R^2$,
and $\mathbb R ^4$ obtained by~Construction~\ref{con:hypgeom}.
\end{proposition}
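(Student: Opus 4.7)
The plan is to count vertices of $P=Q\times I$ in two ways. First I would record that $Q$ is a simple $3$-polytope with $F_Q=m-2$ facets, so the standard relations $V_Q-E_Q+F_Q=2$ and $2E_Q=3V_Q$ give $V_Q=2F_Q-4$, hence $f_0(P)=2V_Q=4m-16$.

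Second, Proposition~\ref{prop:fkform} applied at $n=4$, $k=0$ (equivalently, the corollary stated just before Corollary~\ref{cor:In}) yields
\[
f_0(P)=V_{4,r}+\sum_{q=1}^{m-r}k_q,
\]
where $k_q$ is the number of vertices (equivalently, edges) of the polygonal defining $2$-face $M_q$, and $V_{4,3}=0$, $V_{4,4}=2$, $V_{4,5}=5$.

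The heart of the argument is a numerical contradiction. Assume no defining $2$-face is a triangle, so $k_q\geq 4$ for every $q$. Combining the two expressions gives $4m-16\geq V_{4,r}+4(m-r)$, i.e., $4r-V_{4,r}\geq 16$; but for $r\in\{3,4,5\}$ the left-hand side equals $12,14,15$ respectively, each strictly less than $16$. This contradiction forces at least one $k_q$ to equal $3$.

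For the geometric conclusion I would verify that a compact right-angled polytope in each listed geometry has the form $P=Q\times I$ with $Q$ compact right-angled in $\mathbb L^3$, $\mathbb L^2\times\mathbb R$, or $\mathbb R^3$ respectively, and that $Q$ has no triangular $2$-faces in any case. In $\mathbb L^3$, the three facets at each vertex meet orthogonally, so every $2$-face is a right-angled hyperbolic polygon, which requires $\geq 5$ sides (its angle sum $\pi k/2$ must be less than $(k-2)\pi$). For $\mathbb L^2\times\mathbb R$ one has $Q=P_k\times I$ with $P_k$ a right-angled hyperbolic $k$-gon ($k\geq 5$), whose $2$-faces are either $P_k$ itself or quadrilaterals $e\times I$. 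For $\mathbb R^3$ we have $Q=I^3$, all of whose $2$-faces are squares. Since the $2$-faces of $P=Q\times I$ are copies of $2$-faces of $Q$ together with quadrilaterals $e\times I$, none is a triangle, so by the first part no Hamiltonian $\mathcal{C}(4,r)$-subcomplex exists and Construction~\ref{con:hypgeom} is unavailable. There is no serious obstacle; the only point worth double-checking is that the bound $4r-V_{4,r}<16$ is tight enough in all three cases $r\in\{3,4,5\}$, which it just barely is (the gap is $4,2,1$).
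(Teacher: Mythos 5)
Your argument is correct and follows essentially the same route as the paper: both count vertices of $P=Q\times I$ to get $f_0(P)=4m-16$, equate this with $V_{4,r}+\sum_q k_q$ over the defining $2$-faces, and observe that $16+V_{4,r}-4r>0$ for $r\in\{3,4,5\}$ forces some $k_q\leqslant 3$ (the paper phrases this as the average edge number being strictly less than $4$, you as a direct contradiction with $k_q\geqslant 4$). Your explicit verification that compact right-angled polytopes in $\mathbb L^3\times\mathbb R$, $\mathbb L^2\times\mathbb R^2$ and $\mathbb R^4$ are products $Q\times I$ without triangular $2$-faces is a welcome spelling-out of what the paper leaves implicit, but it does not change the substance of the proof.
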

\begin{proof}
Let $q$ be the number of facets of $Q$. Then $f_0(Q)=2(q-2)$, $m=f_3(P)=q+2$, and~$f_0(P)=2f_0(Q)=4(m-4)$.
Ley $k_i$ be the number of edges of the $i$-th defining $2$-face. 
We~have $f_0(P)=V_{4,r}+\sum_{i=1}^{m-r}k_i=4(m-4)=4m-16$.
Then the average number of edges in defining $2$-faces is
\begin{gather*}
\frac{\sum\limits_{i=1}^{m-r}k_i}{m-r}=\frac{4m-16-V_{4,r}}{m-r}=\frac{4(m-r)+4r-16-V_{4,r}}{m-r}=\\
=4-\frac{16+V_{4,r}-4r}{m-r}=
\begin{cases}
4-\frac{4}{m-3}& \text{if } r=3;\\
4-\frac{2}{m-4}& \text{if } r=4;\\
4-\frac{1}{m-5}& \text{if } r=5;
\end{cases}
\end{gather*}
This number is strictly less then $4$. Therefore, at least one defining face is a triangle.
\end{proof}
\begin{proposition}\label{prop:Sn}
For $n\geqslant 4$ the simplex $\Delta^n$ up to symmetries has a~unique proper Hamiltonian $\mathcal{C}(n,r)$-subcomplex
defined by a single $(n-2)$-face $\Delta^{n-2}$. For this subcomplex $r=n$. It~corresponds
to~a~hyperelliptic involution on the manifold $\mathbb{R}\mathcal{Z}_{\Delta^n}\simeq S^n$.  
In particular, the~geometry $\mathbb S^n$ arises in~Construction~\ref{con:hypgeom}. The branch set of~the covering $S^n\to S^n$ is the sphere~$S^{n-2}$. 
\end{proposition}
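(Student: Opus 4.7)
The plan is to classify all proper Hamiltonian $\mathcal{C}(n,r)$-subcomplexes of $\partial\Delta^n$, identify the unique surviving family with $\mathcal{C}(n,n)$, and then compute the induced manifold, hyperelliptic involution, and branch set. By the corollary immediately following Construction~\ref{con:hypgeom} one has $n-1\leqslant r\leqslant n+1$ and exactly $(n+1)-r$ defining $(n-2)$-faces. The case $r=n+1$ gives no defining faces, so the subcomplex coincides with $\partial\Delta^n$ and fails to be proper. The case $r=n-1$ would demand two disjoint $(n-2)$-faces of $\Delta^n$, but for $n\geqslant 4$ any two $(n-2)$-faces of the simplex share at least one common vertex, so this is impossible. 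Therefore $r=n$, with a single defining face that must be of the form $F_i\cap F_j\simeq\Delta^{n-2}$; since $S_{n+1}$ acts transitively on unordered pairs of facets of $\Delta^n$, the subcomplex is unique up to symmetries, and the Hamiltonian condition of Proposition~\ref{prop:critdef} is trivially satisfied because only the two facets $F_i,F_j$ share a color and their intersection equals the defining face itself.

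To recognize the resulting $n$-facet complex as $\mathcal{C}(n,n)$ I describe its facets explicitly: after relabelling, $G_1=F_0\cup F_1$ is the $(n-1)$-disk obtained by gluing two copies of $\Delta^{n-1}$ along $M_1=F_0\cap F_1$, and $G_i=F_i$ for $i\geqslant 2$ are left untouched. Direct inspection shows that every nonempty intersection of $k<n$ of these facets is an $(n-k)$-disk and that $G_1\cap G_2\cap\dots\cap G_n=\{v_0,v_1\}\simeq S^0$; this is precisely the incidence pattern of $\mathcal{C}(n,n)$ realized on $B^n_{n-1,\geqslant}$ with $G_1$ playing the role of the spherical cap, and a disk-by-disk homeomorphism $\partial\Delta^n\to\partial B^n_{n-1,\geqslant}$ realizing the equivalence is straightforward to write down. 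Construction~\ref{con:LHC} now produces an induced vector-coloring $\widetilde{\Lambda}_C$ of rank $n+1$: choosing a nice coloring $\chi$ with $\chi(F_0)\ne\chi(F_1)$, a short linear-algebra check shows that the $n+1$ vectors $\widetilde{\Lambda}_C(F_i)\in\mathbb{Z}_2^{n+1}$ form a basis, so $H(\widetilde{\Lambda}_C)=0$ and $N(\Delta^n,\widetilde{\Lambda}_C)=\mathbb{R}\mathcal{Z}_{\Delta^n}\simeq S^n$. Under the standard identification of $\mathbb{R}\mathcal{Z}_{\Delta^n}$ with the unit sphere $S^n\subset\mathbb{R}^{n+1}$ carrying the $\mathbb{Z}_2^{n+1}$-action by coordinate sign flips, the canonical hyperelliptic involution $\tau$ from Construction~\ref{con:LHC} corresponds to the simultaneous sign flip of two coordinates; its fixed set is an equatorial $S^{n-2}$, and the join decomposition $S^n=S^1*S^{n-2}$ immediately gives $S^n/\langle\tau\rangle\simeq S^n$, realizing the geometry $\mathbb{S}^n$ through Construction~\ref{con:hypgeom}.

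For the branch set I apply Proposition~\ref{prop:Ham2sh}. The facets of the defining face $M_1\simeq\Delta^{n-2}$ are the connected components of $M_1\cap F_i$ for $i\geqslant 2$, and their induced colors form a basis of $\mathbb{Z}_2^{n+1}/\langle\widetilde{\Lambda}_C(F_0),\widetilde{\Lambda}_C(F_1)\rangle\cong\mathbb{Z}_2^{n-1}$. Thus $\dim V_{M_1}=n-1$, the number of sheets of $\pi_{\Lambda}^{-1}(M_1)$ is $2^{(n+1)-2-(n-1)}=1$, and the branch set is a single copy of $N(M_1,\Lambda_{M_1})=\mathbb{R}\mathcal{Z}_{\Delta^{n-2}}\simeq S^{n-2}$. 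The main technical obstacle in this plan is the combinatorial recognition of the subcomplex as $\mathcal{C}(n,n)$ for $n\geqslant 5$, where Lemma~\ref{lem:34nM} is unavailable; this step has to be settled by the explicit disk-by-disk homeomorphism indicated in the second paragraph.
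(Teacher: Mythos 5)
Your argument is correct and follows essentially the same route as the paper: uniqueness is obtained from the count of defining faces together with the fact that for $n\geqslant 4$ any two $(n-2)$-faces of $\Delta^n$ intersect, and the branch set is computed from Proposition~\ref{prop:Ham2sh} (which the paper packages as Lemma~\ref{lem:RZF}), your sheet count $2^{(n+1)-2-(n-1)}=1$ agreeing with the paper's $2^{k-1-m_F}$ for $k=n$, $m_F=n-1$. The only place you overwork is the step you flag as the main technical obstacle, namely recognizing the merged complex as $\mathcal{C}(n,n)$ for $n\geqslant 5$: no disk-by-disk homeomorphism is needed, since the fact quoted in the introduction (\cite[Proposition 2.6]{E24}) that every coloring of $\Delta^n$ in $k$ colors produces a complex equivalent to $\mathcal{C}(n,k)$ gives this immediately, which is why the paper can simply say that any $(n-2)$-face of the simplex is by definition the defining face of a Hamiltonian $\mathcal{C}(n,n)$-subcomplex.
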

\begin{remark}
For $n=3$ the simplex $\Delta^3$ admits also a~Hamiltonian $\mathcal{C}(n,n-1)$-subcomplex corresponding
to a Hamiltonian cycle. 
\end{remark}
\begin{proof}[of Proposition~\ref{prop:Sn}]
Indeed, for $n\geqslant 4$ any two $(n-2)$-faces of $\Delta^n$ intersect. Also by~definition
any $(n-2)$-face of~$\Delta^n$ is~a~defining face a~Hamiltonian $\mathcal{C}(n,n)$-subcomplex.  The structure of the branch set follows from the~following fact.
\begin{lemma}\label{lem:RZF}
Let $F$ be one of the defining $(n-2)$-faces of~a~Hamiltonian \linebreak $\mathcal{C}(n,k)$-subcomplex $C\subset \partial P$. 
If the facets of $F$ correspond to different facets of $C$ (equivalently, if $F\cap F_i$ and $F\cap F_j$
are facets of $F$, then $F_i$ and $F_j$ lie in different facet of $C$), then in~the~branch set
of~the~covering $N(P,\widetilde{\Lambda}_C)\to S^n$ the~preimage of~$F$ 
is~a~disjoint union of $2^{k-1-m_F}$ copies of~$\mathbb R\mathcal{Z}_F$, where $m_F$ is~the~number 
of facets of $F$.
\end{lemma}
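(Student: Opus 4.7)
The plan is to apply Proposition~\ref{prop:Gk} to the defining $(n-2)$-face $F$ with respect to the vector-coloring $\widetilde{\Lambda}_C$ of rank $k+1$, and then identify the resulting pieces. Writing $\Lambda_F$ for the induced coloring on $F$ (in the sense preceding Proposition~\ref{prop:Gk}), that proposition gives $\pi_{\widetilde{\Lambda}_C}^{-1}(F)$ as a disjoint union of
\[
2^{(k+1)-2-\dim V_F}=2^{k-1-\dim V_F}
\]
copies of $N(F,\Lambda_F)$, so the lemma reduces to the two assertions $\dim V_F=m_F$ and $N(F,\Lambda_F)\simeq \mathbb{R}\mathcal{Z}_F$.

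To make $\widetilde{\Lambda}_C$ explicit, I would fix a nice coloring $\chi$ of $C$ (which exists by Proposition~\ref{prop:cnkbh}) and use the form from Construction~\ref{con:indfrom} and Example~\ref{ex:vci}: in a suitable basis $\boldsymbol{e}_1,\dots,\boldsymbol{e}_{k+1}$ of $\mathbb{Z}_2^{k+1}$, if $F_l\subset\widetilde{G}_a$ then $\widetilde{\Lambda}_C(F_l)=\boldsymbol{e}_a+\chi(F_l)\boldsymbol{e}_{k+1}$. The defining face $F$ is a connected component of $F_i\cap F_j$ with $F_i,F_j$ adjacent and lying in a single facet $\widetilde{G}_1$ of $C$, so $\chi(F_i)\neq\chi(F_j)$ and after relabeling $\widetilde{\Lambda}_C(F_i)=\boldsymbol{e}_1$ and $\widetilde{\Lambda}_C(F_j)=\boldsymbol{e}_1+\boldsymbol{e}_{k+1}$. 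Hence $\langle\widetilde{\Lambda}_C(F_i),\widetilde{\Lambda}_C(F_j)\rangle=\langle\boldsymbol{e}_1,\boldsymbol{e}_{k+1}\rangle$, and the quotient $\mathbb{Z}_2^{k+1}/\langle\widetilde{\Lambda}_C(F_i),\widetilde{\Lambda}_C(F_j)\rangle\simeq\mathbb{Z}_2^{k-1}$ has the classes $[\boldsymbol{e}_2],\dots,[\boldsymbol{e}_k]$ as a basis.

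By Proposition~\ref{prop:kdefk+1} the facets of $F$ are connected components of $F\cap F_l$ for facets $F_l\neq F_i,F_j$ meeting $F$, and by hypothesis the corresponding $F_l$'s lie in pairwise distinct facets $\widetilde{G}_{a_1},\dots,\widetilde{G}_{a_{m_F}}$ of $C$ with $a_\alpha\in\{2,\dots,k\}$ all different. Each facet of $F$ is then mapped by $\Lambda_F$ to a distinct basis class $[\boldsymbol{e}_{a_\alpha}]$, so $V_F$ is spanned by $m_F$ linearly independent vectors, giving $\dim V_F=m_F$. Under the basis identification $[\boldsymbol{e}_{a_\alpha}]\leftrightarrow \boldsymbol{e}_\alpha\in\mathbb{Z}_2^{m_F}$, the definition of $N(F,\Lambda_F)$ becomes literally the definition of $\mathbb{R}\mathcal{Z}_F$, yielding $N(F,\Lambda_F)\simeq \mathbb{R}\mathcal{Z}_F$.

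The main obstacle is the bookkeeping in the last step: one must ensure that distinct facets of $F$ through any point $p$ really do correspond to distinct facets of $C$, so that the local isotropy subgroup $\langle\Lambda_F(\widetilde{G}_j):p\in\widetilde{G}_j\rangle$ used in the definition of $N(F,\Lambda_F)$ matches the coordinate subspace $\langle\boldsymbol{e}_\alpha:p\in\mathrm{facet}_\alpha(F)\rangle$ used in the definition of $\mathbb{R}\mathcal{Z}_F$. The hypothesis of the lemma is exactly what delivers this equality at every point, so once the global basis identification above is in place the homeomorphism follows.
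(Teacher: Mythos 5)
Your proposal is correct and follows essentially the same route as the paper: the paper simply cites Proposition~\ref{prop:Ham2sh}, whose final statement is exactly the count of $2^{r-2-\dim V_{M_q}}$ copies of $N(M_q,\Lambda_{M_q})$ coming from Proposition~\ref{prop:Gk}, which is the computation you carry out. You merely make explicit the bookkeeping the paper leaves implicit, namely that the hypothesis on facets of $F$ forces $\dim V_F=m_F$ and identifies $N(F,\Lambda_F)$ with $\mathbb{R}\mathcal{Z}_F$.
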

\begin{proof}
This follows from Proposition~\ref{prop:Ham2sh}.
\end{proof}
\end{proof}
\begin{proposition}\label{prop:Sn-1R}
For $n\geqslant 4$ the prism $\Delta^{n-1}\times I$ up to symmetries admits exactly 
$3$~Hamiltonian $\mathcal{C}(n,r)$-subcomplexes:
a~unique $\mathcal{C}(n,n+1)$-subcomplex and two $\mathcal{C}(n,n)$-subcomplexes. 
In~particular, the~geometry $\mathbb S^{n-1}\times \mathbb R$ arises in~Construction~\ref{con:hypgeom}.
For~the~$\mathcal{C}(n,n+1)$-subcomplex and the~$\mathcal{C}(n,n)$-subcomplexes 
the branch sets of the coverings $S^{n-1}\times S^1\to S^n$ and $N(P,\widetilde{\Lambda}_C)\to S^n$
are disjoint unions of two spheres $S^{n-2}$. 
\end{proposition}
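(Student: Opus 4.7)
The plan is to enumerate the $(n-2)$-faces of $P = \Delta^{n-1} \times I$ by vertex count and then use the Corollary stated just before Corollary~\ref{cor:In} to constrain the possible defining collections. First, I set up the combinatorics: $P$ has $m = n+2$ facets, namely the two caps $F_0 = \Delta^{n-1} \times \{0\}$, $F_1 = \Delta^{n-1} \times \{1\}$ and the $n$ side facets $S_j = F_j \times I$, the total number of vertices is $2n$, and the $(n-2)$-faces come in two types: the $2n$ simplex-type faces $F_j \times \{\epsilon\}$ with $n-1$ vertices and the $\binom{n}{2}$ prism-type faces $(F_i \cap F_j) \times I$ with $2n-4$ vertices.

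Next, I apply the vertex-count formula
$$2n = V_{n,r} + \sum_{q=1}^{n+2-r} f_0(M_q), \qquad V_{n,n-1} = 0,\; V_{n,n} = 2,\; V_{n,n+1} = n+1,$$
where each $f_0(M_q) \in \{n-1,\, 2n-4\}$. For $r = n-1$ the minimum right-hand side is $3(n-1) > 2n$ when $n \geq 4$, so this case is excluded. For $r = n+1$ one finds $f_0(M_1) = n-1$, forcing $M_1$ to be simplex-type; up to the symmetry group $S_n \times \mathbb{Z}_2$ of $P$ there is a unique choice, say $M_1 = F_1 \times \{0\}$. For $r = n$ one needs $f_0(M_1) + f_0(M_2) = 2n - 2$, and for $n \geq 4$ any other combination of types gives the wrong sum, so both defining faces are simplex-type; since two simplex-type faces in the same cap meet in $(F_i \cap F_j) \times \{\epsilon\} \ne \varnothing$, the two must lie in opposite caps, and up to symmetry the disjoint pair $\{F_i \times \{0\},\, F_j \times \{1\}\}$ falls into exactly two orbits of $S_n \times \mathbb{Z}_2$: configuration~A with $i = j$ and configuration~B with $i \ne j$.

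Then I verify that each of the three configurations produces a proper Hamiltonian subcomplex of the claimed $\mathcal{C}(n, r)$-type. The color classes determined by Proposition~\ref{prop:critdef} are, respectively, $\{F_0, S_i\}$, $\{F_1\}$ and the singletons $\{S_k\}_{k \ne i}$ for $r = n+1$; $\{F_0, S_i, F_1\}$ and singletons for configuration~A; and $\{F_0, S_i\}$, $\{F_1, S_j\}$ and singletons for configuration~B. In each case every intersection of facets in a common class is either empty or one of the designated defining faces, so Proposition~\ref{prop:critdef} applies. Configurations A and B are distinguished by the sizes of their color classes (one class of $3$ versus two classes of $2$), so they are inequivalent under symmetries of $P$. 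To identify the subcomplex type, one invokes Theorem~\ref{th:n34} directly when $n = 4$; for general $n \geq 4$ one checks combinatorial equivalence with $\partial \Delta^n$ in the $r = n+1$ case (the $n+1$ facets match, pairwise intersections are $(n-2)$-disks obtained by gluing a simplex face to a prism face along $(F_i \cap F_j) \times \{\epsilon\}$, and higher intersections reduce inductively), and counts the vertices of $\mathcal{C}(P, c)$ to confirm the $\mathcal{C}(n,n)$-structure in configurations A and B (configuration~A has vertices $(u_i, 0)$ and $(u_i, 1)$, configuration~B has $(u_i, 0)$ and $(u_j, 1)$, where $u_k$ is the vertex of $\Delta^{n-1}$ opposite $F_k$).

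Finally, the branch set follows from Lemma~\ref{lem:RZF} applied to each defining simplex $F = F_i \times \{\epsilon\}$: its $n-1$ facets $(F_i \cap F_k) \times \{\epsilon\}$, $k \ne i$, lie on the distinct side facets $S_k$, each of which belongs to a distinct $C$-facet in all three configurations, so the lemma gives the preimage as $2^{r - 1 - (n - 1)}$ copies of $\mathbb{R}\mathcal{Z}_{\Delta^{n-2}} = S^{n-2}$. For the $\mathcal{C}(n, n+1)$ case this yields $2^{1} = 2$ copies from the single defining face, while for each $\mathcal{C}(n, n)$ case it yields $2^{0} = 1$ copy per defining face, totaling $2$ copies; in all three cases the branch set is a disjoint union of two $(n-2)$-spheres. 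The main obstacle will be the combinatorial verification for $n \geq 5$ that each candidate genuinely realizes the type $\mathcal{C}(n, r)$, since Theorem~\ref{th:n34} handles only $n \leq 4$ and one must instead build the equivalence with $\mathcal{C}(n,r)$ directly.
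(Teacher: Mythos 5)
Your proposal is correct and reaches the same classification and the same branch sets, but it organizes the exclusion step differently from the paper. The paper argues directly with intersection patterns: a prism-type face $\Delta^{n-3}\times I$ meets every simplex-type face and (for $n\geqslant 5$) every other prism-type face, and whenever the two caps survive as distinct facets of the subcomplex they fail to intersect, which is impossible in $\mathcal{C}(n,r)$; this kills all configurations involving prism-type defining faces and shows there are at most two defining faces, after which the two orbits of opposite-cap simplex pairs are listed. You instead feed the two face types ($n-1$ versus $2n-4$ vertices) into the vertex-count identity $2n=V_{n,r}+\sum_q f_0(M_q)$ with $s=m-r$ from the corollary to Proposition~\ref{prop:fkform}, which rules out $r=n-1$ and all prism-type faces in one uniform computation (and handles the $n=4$ pair of disjoint $I\times I$ faces without the paper's separate case); the subsequent symmetry analysis, the Hamiltonicity check via Proposition~\ref{prop:critdef}, and the branch-set computation via Lemma~\ref{lem:RZF} and Proposition~\ref{prop:Ham2sh} coincide with the paper's. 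The step you flag as the main obstacle --- verifying that each candidate is genuinely equivalent to $\mathcal{C}(n,n+1)$ or $\mathcal{C}(n,n)$ --- is asserted without proof in the paper as well, so you are not missing anything the paper supplies; note, however, that invoking Theorem~\ref{th:n34} ``directly'' for $n=4$ is not quite right, since that theorem presupposes knowing $N(P,\widetilde{\Lambda}_C)\simeq S^n$; what can be used for $n\leqslant 4$ is the combinatorial criterion of Lemma~\ref{lem:34nM}, and for general $n$ the equivalences follow most cleanly from the cutting-off machinery the paper develops immediately afterwards (Construction~\ref{con:cutF}, Lemma~\ref{lem:defPG}), since $\Delta^{n-1}\times I$ is $\Delta^n$ with a vertex truncated.
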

\begin{proof}
Indeed, up~to~symmetries $\Delta^{n-1}\times I$ has two types of~$(n-2)$-faces:
$\Delta^{n-3}\times I$ and~$\Delta^{n-2}$. In~the~Hamiltonian subcomplex corresponding
to~$\Delta^{n-3}\times I$ the~facets corresponding to~$\Delta^{n-1}\times 0$ and $\Delta^{n-1}\times 1$
do not intersect. Therefore, this subcomplex is not equivalent to~$\mathcal{C}(n,r)$ for all~$r$.
The~face $\Delta^{n-2}$ corresponds to~a~Hamiltonian $\mathcal{C}(n,n+1)$-subcomplex.  

Any face $\Delta^{n-3}\times I$ intersects any face of the type $\Delta^{n-2}$. For $n\geqslant 5$
it also intersects any other face of~the~type $\Delta^{n-3}\times I$. Hence it can not be~a~defining face
of~a~Hamiltonian $\mathcal{C}(n,r)$-subcomplex. For $n=4$ and any face $I\times I$ there is a unique face $I\times I$ 
such that these faces are disjoint. In the corresponding Hamiltonian subcomplex the~facets corresponding
to $\Delta^3\times 0$ and $\Delta^3\times 1$ do not intersect. 
Therefore, this subcomplex is not equivalent to~$\mathcal{C}(n,r)$ for all~$r$.
Any face $\Delta^{n-2}\times \boldsymbol{x}$ intersects any other face $\Delta^{n-2}\times \boldsymbol{x}$.
Two faces $\Delta^{n-2}_1\times 0$ and $\Delta^{n-2}_2\times 1$ do not intersect. 
Up to symmetries there can be two types of such faces. In the first case $\Delta^{n-2}_1=\Delta^{n-2}_2$.
In~the~second case $\Delta^{n-2}_1\cap\Delta^{n-2}_2$ is an~$(n-3)$-simplex. Both these pairs of faces 
define Hamiltonian $\mathcal{C}(n,n)$-subcomplexes.  From the above argument there can not
be more than two defining faces. The structure of the branch set follows from Lemma~\ref{lem:RZF}.
\end{proof}

\begin{construction}[(Cutting off a face)]\label{con:cutF}
Let $P=\{\boldsymbol{a}_i\boldsymbol{x}+b_i\geqslant 0, i=1,\dots,m\}\subset \mathbb R^n$ be 
a~simple $n$-polytope and $G=F_{i_1}\cap\dots\cap F_{i_k}$ be its $(n-k)$-face, $k>1$.
Set $\boldsymbol{a}_G=\sum\limits_{s=1}^k\boldsymbol{a}_{i_s}$ and $b_G=\sum\limits_{s=1}^kb_{i_s}$.
Take the~halfspace $\mathcal{H}_{\geqslant \varepsilon}=\{\boldsymbol{a}_G\boldsymbol{x}+
b_G\geqslant \varepsilon\}$.
For small $\varepsilon>0$ the intersection $P_{G,\varepsilon}=P\cap \mathcal{H}_{\geqslant \varepsilon}$
is~a~simple $n$-polytope. It has an additional facet $F$ corresponding 
to~the~intersection of~$P$ with the hyperplane 
$\mathcal{H}_{\varepsilon}=\partial \mathcal{H}_{\geqslant \varepsilon}$.
We have $F\cap F_{j_1}\cap\dots\cap F_{j_l}\ne\varnothing$ if and only if $F_{j_1}\cap\dots\cap F_{j_l}$ 
is~a~face of $P$ intersecting $G$ and not lying in $G$.  That is, 
$F_{j_1}\cap\dots\cap F_{j_l}\cap F_{i_1}\cap\dots\cap F_{i_k}\ne\varnothing$ and $\{i_1,\dots,i_k\}\not\subset\{j_1,\dots,j_l\}$. 
\end{construction}
\begin{lemma}
The facet $F$ of $P_{G,\varepsilon}$ 
is combinatorially equivalent to $G\times \Delta^{k-1}$.
\end{lemma}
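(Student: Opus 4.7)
The plan is to exploit simplicity of $P$ at $G$ to describe $F$ in explicit affine coordinates near $G$ and match its face lattice to that of $G\times\Delta^{k-1}$. First I would set up local coordinates. Since $P$ is simple and $G=F_{i_1}\cap\dots\cap F_{i_k}$, the normals $\boldsymbol{a}_{i_1},\dots,\boldsymbol{a}_{i_k}$ are linearly independent, so $t_s(\boldsymbol{x})=\boldsymbol{a}_{i_s}\boldsymbol{x}+b_{i_s}$ for $s=1,\dots,k$ together with $n-k$ affine functions $u_1,\dots,u_{n-k}$, chosen so that $(u_1,\dots,u_{n-k})$ restricts to an affine isomorphism of $G$ onto a copy $G'\subset\R^{n-k}$, form an affine coordinate system $(\boldsymbol{t},\boldsymbol{u})$ on $\R^n$. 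Set $f=\sum_s t_s=\boldsymbol{a}_G\boldsymbol{x}+b_G$; on $P$ one has $f\geqslant 0$ with equality exactly on $G$, so by compactness $\min_{P\setminus U}f>0$ for any open neighbourhood $U$ of $G$. Choosing $U$ small enough that only the inequalities $t_s\geqslant 0$ and those $\boldsymbol{a}_j\boldsymbol{x}+b_j\geqslant 0$ with $F_j\cap G\ne\varnothing$ are ever active on $\overline U$, any $\varepsilon\in(0,\min_{P\setminus U}f)$ gives $F\subset U$.

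Next I would describe $F$ explicitly in these coordinates. Writing $\boldsymbol{a}_j\boldsymbol{x}+b_j=\alpha_j\cdot\boldsymbol{t}+\beta_j\cdot\boldsymbol{u}+\gamma_j$, one has
\[
F=\bigl\{(\boldsymbol{t},\boldsymbol{u}):t_s\geqslant 0,\ \textstyle\sum_s t_s=\varepsilon,\ \beta_j\cdot\boldsymbol{u}+\gamma_j\geqslant -\alpha_j\cdot\boldsymbol{t}\ \text{for all }j\text{ with }F_j\cap G\ne\varnothing\bigr\}.
\]
At $\boldsymbol{t}=0$ the inequalities $\beta_j\cdot\boldsymbol{u}+\gamma_j\geqslant 0$ cut $G'$ out of $\R^{n-k}$, and for $\boldsymbol{t}$ in the simplex $\Delta^{k-1}(\varepsilon)=\{t_s\geqslant 0,\sum_s t_s=\varepsilon\}$ the perturbation $-\alpha_j\cdot\boldsymbol{t}$ is $O(\varepsilon)$. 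Since the combinatorial type of a simple polytope is stable under sufficiently small perturbations of its defining hyperplanes, for all small enough $\varepsilon$ the face lattice of $F$ is the product of the face lattices of $\Delta^{k-1}$ and $G$.

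Finally I would make the face matching explicit. By Construction~\ref{con:cutF} the facets of $F$ are the $F\cap F_{i_s}$ for $s=1,\dots,k$ together with the $F\cap F_j$ for $j\notin\{i_1,\dots,i_k\}$ with $F_j\cap G\ne\varnothing$. The first family, $F\cap F_{i_s}=\{t_s=0\}\cap F$, matches the facet $G\times\Delta^{k-2}_s$ of $G\times\Delta^{k-1}$ supported on the $s$th facet of $\Delta^{k-1}$; the second family matches $(F_j\cap G)\times\Delta^{k-1}$. Higher intersections $F\cap F_{i_{s_1}}\cap\dots\cap F_{i_{s_p}}\cap F_{j_1}\cap\dots\cap F_{j_q}$ correspond under the same dictionary to products of faces of $\Delta^{k-1}$ and $G$, yielding a bijection of face lattices and hence the combinatorial equivalence $F\simeq G\times\Delta^{k-1}$. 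The main technical point is the perturbation/compactness argument of the second paragraph — making sure the combinatorial type of the slice stabilises for small $\varepsilon$; once this is under control, the face-level identification is an immediate consequence of Construction~\ref{con:cutF} and the local form of $P$ around $G$.
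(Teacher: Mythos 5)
Your argument is correct, but it follows a genuinely different route from the paper's. The paper's proof is a three-line combinatorial one: the facets of $F$ are in bijection with the facets of $P$ meeting $G$ (namely $F_{i_1},\dots,F_{i_k}$ and the $F_j$ with $F_j\cap G$ a facet of $G$), and the intersection criterion recorded in Construction~\ref{con:cutF} (a collection of facets of $P_{G,\varepsilon}$ containing $F$ meets iff the corresponding face of $P$ meets $G$ without lying in $G$) shows that a family of facets of $F$ has nonempty intersection exactly when the corresponding family of facets of $G\times\Delta^{k-1}$ does; since both polytopes are simple, this identifies the face lattices. Your third paragraph is precisely this argument, so the genuinely new content is your middle paragraph: the explicit coordinate/perturbation proof, which has the merit of actually \emph{establishing} the intersection pattern for small $\varepsilon$ rather than quoting it from the construction. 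Three points there should be tightened. (i) As written, the perturbing term $-\alpha_j\cdot\boldsymbol{t}$ is not a small perturbation of the defining affine form: its coefficients $\alpha_j$ do not shrink with $\varepsilon$, only its values on the slice do. The stability of the combinatorial type of a simple polytope is applied correctly only after rescaling the simplex factor, $\boldsymbol{t}=\varepsilon\tilde{\boldsymbol{t}}$ with $\tilde{\boldsymbol{t}}\in\Delta^{k-1}(1)$, which turns your system into an $O(\varepsilon)$ perturbation of a facet description of $\Delta^{k-1}\times G'$. (ii) For that stability statement you need the description to be irredundant: here simplicity of $P$ guarantees that every $F_j$ with $F_j\cap G\ne\varnothing$ and $j\notin\{i_1,\dots,i_k\}$ cuts out a facet of $G$, so no weakly redundant inequality can become active after perturbation. (iii) Your displayed description of $F$ needs the omitted inequalities (facets disjoint from $G$) to be strict on the whole slice, not just on $U$; this follows, e.g., from the fact that the slices of the reduced polyhedron $\{t_s\ge 0,\ \beta_j\cdot\boldsymbol{u}+\alpha_j\cdot\boldsymbol{t}+\gamma_j\ge 0\}$ by $\{\sum_s t_s=\varepsilon\}$ are compact and converge to $G$ as $\varepsilon\to 0$. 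These are routine repairs, which you essentially flag yourself; with them in place your proof stands, and it is more self-contained than the paper's (at the cost of being longer), while the paper's version is shorter but relies on the unproved intersection criterion stated in Construction~\ref{con:cutF}.
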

\begin{proof} 
Indeed, facets of $F$ are in bijection with
facets of $P$ intersecting $G$. These are $F_{i_1}$, $\dots$, $F_{i_k}$, and facets $F_j$ such that $F_j\cap G$
is a facet of $G$. The facets $F\cap F_{i_{a_1}}$, $\dots$, $F\cap F_{i_{a_l}}$, $F\cap F_{j_1}$, $\dots$, $F\cap F_{j_s}$ intersect
if and only if~$\{i_{a_1},\dots, i_{a_l}\}\ne \{i_1,\dots, i_k\}$, and 
$G\cap F_{j_1}\cap \dots\cap F_{j_s}=(G\cap F_{j_1})\cap\dots\cap (G\cap F_{j_s})\ne\varnothing$. Thus, 
$F\simeq G\times\Delta^{k-1}$.
\end{proof}
\begin{lemma}\label{lem:Ppart}
The other part $P\cap \mathcal{H}_{\leqslant \varepsilon}$ of~the~polytope $P$ 
is combinatorially equivalent to $G\times\Delta^k$. 
\end{lemma}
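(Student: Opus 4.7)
The plan is to exploit the local product structure of the simple polytope $P$ along the face $G$. Since $P$ is simple and $G=F_{i_1}\cap\dots\cap F_{i_k}$ has codimension $k$, a sufficiently small open neighbourhood $U$ of $G$ in $P$ is PL-homeomorphic to the product $G\times\mathbb R^k_{\geqslant 0}$, with coordinates on the second factor given by the linear functions $y_s=\boldsymbol{a}_{i_s}\boldsymbol{x}+b_{i_s}$. Under this identification $F_{i_s}\cap U$ corresponds to $G\times\{y_s=0\}$; moreover, any other facet $F_j$ meeting $U$ must satisfy $F_j\cap G\ne\varnothing$, and since $P$ is simple this intersection is necessarily the facet $G':=F_j\cap G=F_{i_1}\cap\dots\cap F_{i_k}\cap F_j$ of $G$ (uniquely determined by $G'$ because each codimension-$(k+1)$ face of a simple polytope is cut out by a unique collection of $k+1$ facets), and $F_j\cap U$ corresponds to $G'\times\mathbb R^k_{\geqslant 0}$. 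The existence of this global product chart is standard for simple polytopes: Lemma~\ref{lem:Mc} supplies local product charts at each point of $G$, and the transition maps are determined by the common linear functions $y_1,\dots,y_k$.

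Under this identification the linear function $\boldsymbol{a}_G\boldsymbol{x}+b_G=\sum_{s=1}^{k}(\boldsymbol{a}_{i_s}\boldsymbol{x}+b_{i_s})$ becomes simply $y_1+\dots+y_k$. Taking $\varepsilon>0$ small enough (using compactness of $G$) so that $P\cap\mathcal H_{\leqslant\varepsilon}\subset U$, the cap therefore corresponds under the product identification to
\[
G\times\bigl\{(y_1,\dots,y_k)\in\mathbb R^k_{\geqslant 0}\colon y_1+\dots+y_k\leqslant\varepsilon\bigr\}\;\simeq\;G\times\Delta^k,
\]
since the second factor is an affine $k$-simplex whose $k+1$ facets are $\{y_s=0\}$ for $s=1,\dots,k$ together with $\{y_1+\dots+y_k=\varepsilon\}$.

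The combinatorial equivalence is then read off directly from the product structure: the new facet $F=P\cap\mathcal H_\varepsilon$ corresponds to $G\times\{\sum y_s=\varepsilon\}\simeq G\times\Delta^{k-1}$ (consistent with the preceding lemma); each $F_{i_s}\cap\mathcal H_{\leqslant\varepsilon}$ corresponds to the facet $G\times\{y_s=0,\ \sum_t y_t\leqslant\varepsilon\}$ of $G\times\Delta^k$; and each $F_j\cap\mathcal H_{\leqslant\varepsilon}$ with $G'=F_j\cap G$ corresponds to $G'\times\Delta^k$. Higher intersection patterns match because they commute with the cartesian product, so the face lattice of $P\cap\mathcal H_{\leqslant\varepsilon}$ agrees with that of $G\times\Delta^k$. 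The only genuinely nontrivial step is justifying the global product identification of $U$ with $G\times\mathbb R^k_{\geqslant 0}$ rather than merely pointwise-local product charts; the face-lattice verification afterwards is a routine check.
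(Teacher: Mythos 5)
Your picture is right, but the argument as written has a genuine gap, and it sits exactly at the point you flag yourself: the existence of a \emph{global}, facet-compatible product identification of a neighbourhood $U$ of the closed face $G$ with $G\times\mathbb R^k_{\geqslant 0}$ (equivalently, of the cap $P\cap\mathcal H_{\leqslant\varepsilon}$ with $G\times\Delta^k$). This claim is essentially the content of Lemma~\ref{lem:Ppart} itself, so appealing to it as ``standard'' is close to circular. Lemma~\ref{lem:Mc} only gives product charts at single points; the functions $y_s=\boldsymbol{a}_{i_s}\boldsymbol{x}+b_{i_s}$ are indeed globally defined, but the complementary ``$G$-direction'' projection is chart-dependent, and the whole difficulty is to patch these local retractions into one retraction $r\colon U\to G$ with $r(F_j\cap U)\subset F_j\cap G$ for every facet $F_j$ meeting $U$ near $\partial G$, so that $(r,y_1,\dots,y_k)$ is a face-preserving homeomorphism. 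Saying ``the transition maps are determined by the common linear functions $y_1,\dots,y_k$'' does not produce such an $r$; some actual construction (a collar/trivialization argument over the manifold-with-corners $G$, or an inductive combinatorial check) is required, and nothing in the paper can simply be cited for it. Once that identification is granted, your reading off of the facets and their intersections is fine, but that part was never the issue.

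For contrast, the paper avoids the geometric trivialization altogether: it lists the facets of $P\cap\mathcal H_{\leqslant\varepsilon}$ (the new facet $F$, the facets $F_{i_1},\dots,F_{i_k}$, and the facets $F_j$ with $F_j\cap G$ a facet of $G$) and then records precisely which collections of these facets have nonempty intersection, namely $\{F,F_{i_{a_1}},\dots,F_{i_{a_l}},F_{j_1},\dots,F_{j_s}\}$ meets iff $\{i_{a_1},\dots,i_{a_l}\}\ne\{i_1,\dots,i_k\}$ and $(G\cap F_{j_1})\cap\dots\cap(G\cap F_{j_s})\ne\varnothing$, with the condition on the $F_{j_t}$ alone when $F$ is omitted. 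Since both $P\cap\mathcal H_{\leqslant\varepsilon}$ (simple for small $\varepsilon$, by Construction~\ref{con:cutF}) and $G\times\Delta^k$ are simple polytopes, matching these intersection patterns with the facet structure of $G\times\Delta^k$ already gives the combinatorial equivalence. If you want to keep your geometric route, you should either prove the product-neighbourhood statement (and note that the natural proof of it is again the combinatorial comparison above), or replace it by the direct nerve comparison.
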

\begin{proof}
Indeed, by the same argument as above the facets of~the~polytope $P\cap \mathcal{H}_{\leqslant \varepsilon}$
are in~bijection with the facet $F$,  facets $F_{i_1}$, $\dots$, $F_{i_k}$, and facets $F_j$ such that $F_j\cap G$
is~a~facet of~$G$. Moreover, its collection of facets 
 $\{F, F_{i_{a_1}}, \dots, F_{i_{a_l}}, F_{j_1},\dots, F_{j_s}\}$ 
intersects if any only if $\{i_{a_1},\dots, i_{a_l}\}\ne \{i_1,\dots, i_k\}$ and $G\cap F_{j_1}\cap \dots\cap F_{j_s}=(G\cap F_{j_1})\cap\dots\cap (G\cap F_{j_s})\ne\varnothing$,
while for $\{F_{i_{a_1}}, \dots, F_{i_{a_l}}, F_{j_1},\dots, F_{j_s}\}$ the~criterion of~intersection 
is just $G\cap F_{j_1}\cap \dots\cap F_{j_s}\ne\varnothing$.
\end{proof}

\begin{lemma}\label{lem:defPG}
Any~$(n-2)$-face $F\cap F_{i_q}\simeq G\times\Delta^{k-2}$, $i_q\in\{i_1,\dots, i_k\}$, of $P_{G,\varepsilon}$ defines a Hamiltonian subcomplex
$C\simeq \partial P$.
\end{lemma}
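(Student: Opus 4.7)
The plan is to set up the coloring, verify the Hamiltonian condition via Proposition~\ref{cor:n-2f}, and then establish the equivalence $C\simeq\partial P$ by matching face posets of the two regular CW-decompositions of the boundary sphere.

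I would define the coloring $c$ of $P_{G,\varepsilon}$ by assigning $F$ and $F_{i_q}$ a single common color and each other facet its own distinct color. The only monochromatic pair of facets is $\{F,F_{i_q}\}$, so the defining $(n-2)$-faces of $C=\mathcal{C}(P_{G,\varepsilon},c)$ are the connected components of $F\cap F_{i_q}\simeq G\times\Delta^{k-2}$. Since $G$ is connected, so is $F\cap F_{i_q}$; hence there is a unique defining face, the pairwise-disjointness hypothesis of Proposition~\ref{cor:n-2f} is vacuous, and $C$ is Hamiltonian.

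For the equivalence $C\simeq\partial P$, write $P=P_{G,\varepsilon}\cup P'$ with $P_{G,\varepsilon}\cap P'=F$, and identify $P'\simeq G\times\Delta^k$ via Lemma~\ref{lem:Ppart}, so that $F$ corresponds to $G\times F_0^\Delta$ and each $F_{i_s}\cap P'$ corresponds to $G\times F_s^\Delta$. The simplex transposition $\tau=(v_0\leftrightarrow v_q)$ of $\Delta^k$ swaps the facets $F_0^\Delta\leftrightarrow F_q^\Delta$, giving a combinatorial equivalence between $F\simeq G\times F_0^\Delta$ and $F_{i_q}\cap P'\simeq G\times F_q^\Delta$. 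Since $\widetilde{F}_{i_q}=F_{i_q}(P_{G,\varepsilon})\cup F$ and $F_{i_q}(P)=F_{i_q}(P_{G,\varepsilon})\cup(F_{i_q}\cap P')$ are glued along the same $(n-2)$-face $F\cap F_{i_q}$, they are combinatorially equivalent CW-disks. Likewise, for each $s\ne q$ the facet $F_{i_s}(P)=F_{i_s}(P_{G,\varepsilon})\cup(F_{i_s}\cap P')$ matches $F_{i_s}(P_{G,\varepsilon})$ after absorbing its cap piece via the induced permutation of the other facets of $\Delta^k$, and analogously for side facets $F_j$ with $F_j\cap G$ a facet of $G$; facets $F_j$ disjoint from $G$ match by the identity. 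The resulting isomorphism of face posets of $C$ and $\partial P$ is then realized by a PL homeomorphism $\phi\colon P_{G,\varepsilon}\to P$ sending facets of $C$ to facets of $\partial P$ via the standard PL-topology principle that isomorphic regular CW-decompositions of a PL manifold are PL-equivalent.

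The main obstacle is the cell-by-cell verification of the face-poset isomorphism, especially inside the cap region governed by the product $G\times\Delta^k$. One must check that for every pair of complex-facets of $C$, the entire chain of intersections down to dimension $0$ matches the corresponding chain in $\partial P$. This reduces to the product-structure observation that $\tau$ preserves the facet adjacency on $\bigcup_{s\ne0,q}F_s^\Delta\subset\partial\Delta^k$ while exchanging the boundary structure of $F_0^\Delta$ with that of $F_q^\Delta$; multiplying by $\mathrm{id}_G$ extends this to the $G$-factor and yields the coherent matching of all cap faces of $C$ and $\partial P$. Once this bookkeeping is carried out, the construction of $\phi$ is an inductive PL realization, cell by cell, of the combinatorial isomorphism.
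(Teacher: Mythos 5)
Your proposal is essentially correct, but it takes a genuinely different route from the paper. The paper never matches face posets: it rotates the cutting hyperplane about the $(n-2)$-plane containing $F\cap F_{i_q}$, via $\cos\varphi\,(\boldsymbol{a}_G\boldsymbol{x}+b_G-\varepsilon)+\sin\varphi\,(\boldsymbol{a}_{i_q}\boldsymbol{x}+b_{i_q})=0$, notes that for $-\delta\leqslant\varphi<\pi/2$ the piece $P\cap\mathcal{H}_{\geqslant 0}(\varphi)$ keeps the combinatorial type of $P_{G,\varepsilon}$ while every intermediate lens keeps the type $G\times\Delta^k$, and then assembles the required homeomorphism as the identity on $P\cap\mathcal{H}_{\geqslant 0}(-\delta)$ glued with a PL equivalence (via barycentric subdivisions, identical on the common section $P\cap\mathcal{H}(-\delta)$) between the lens cut off by $\mathcal{H}(0)$ and the lens cut off by $\mathrm{aff}(F_{i_q})$; the sweep rotates $F$ onto the cut-off part of $F_{i_q}$ and yields the equivalence $C\simeq\partial P$ with no cell-by-cell bookkeeping. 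Your route --- the decomposition $P=P_{G,\varepsilon}\cup P'$ with $P'\simeq G\times\Delta^{k}$ from Lemma~\ref{lem:Ppart}, the transposition $v_0\leftrightarrow v_q$ of the simplex factor (which is indeed the identity on $G\times(F_0^{\Delta}\cap F_q^{\Delta})\simeq F\cap F_{i_q}$), and a face-poset isomorphism of $C$ with $\partial P$ realized by a cell-preserving homeomorphism --- is viable, and your Hamiltonicity argument (a single connected defining face, so Proposition~\ref{cor:n-2f} applies vacuously) is exactly what the paper leaves implicit. What your version still owes, and what the rotation buys for free, is the deferred bookkeeping, and you should not understate it: (i) for every family of facets of $C$ containing the merged facet $F\cup F_{i_q}^{\mathrm{cut}}$ you must check that the intersection is nonempty exactly when $F_{i_q}\cap F_{j_1}\cap\dots\cap F_{j_l}\neq\varnothing$ in $P$ \emph{and} that it is connected; the latter holds because, by the intersection rule of Construction~\ref{con:cutF}, whenever $F\cap\bigcap_t F_{j_t}$ and $F_{i_q}^{\mathrm{cut}}\cap\bigcap_t F_{j_t}^{\mathrm{cut}}$ are both nonempty the face $F_{i_q}\cap\bigcap_t F_{j_t}$ meets $G$ and is not contained in $G$, so the two pieces meet along $F\cap F_{i_q}\cap\bigcap_t F_{j_t}$ --- without this the posets need not be isomorphic; and (ii) you must note that $C$ is a \emph{regular} CW decomposition (each of its faces is a ball, being at most two polytopal faces of $P_{G,\varepsilon}$ glued along a common facet, with boundary a subcomplex), since the principle ``isomorphic face posets give a cell-preserving homeomorphism'' is valid for regular CW complexes but fails for complexes $\mathcal{C}(P,c)$ in general. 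With these two verifications written out, and the boundary homeomorphism coned off to $P_{G,\varepsilon}\to P$, your argument is a complete, purely combinatorial alternative to the paper's geometric sweep.
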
 
\begin{proof}
Let us rotate the hyperplane $\mathcal{H}_{\varepsilon}$ around the $(n-2)$-plane containing 
the~face $F\cap F_{i_q}$. The rotating plane $\mathcal{H}(\varphi)$ is defined by the formula 
$$
\cos \varphi(\boldsymbol{a}_G\boldsymbol{x}+b_G-\varepsilon)+\sin \varphi(\boldsymbol{a}_{i_q}\boldsymbol{x}+b_{i_q})=0.
$$ 
We have $\mathcal{H}(0)=\mathcal{H}_{\varepsilon}={\rm aff}(F)$ and $\mathcal{H}(\frac{\pi}{2})={\rm aff}(F_{i_q})$. 
Moreover, for small $\delta>0$ at each moment 
$-\delta\leqslant \varphi<\frac{\pi}{2}$ the polytope $P\cap \mathcal{H}_{\geqslant 0}(\varphi)$ has the same combinatorial
type $P_{\varepsilon}$ and for any 
$-\delta\leqslant \varphi_1<\varphi_2\leqslant\frac{\pi}{2}$ the polytope $P\cap \mathcal{H}_{\leqslant 0}(\varphi_1)
\cap \mathcal{H}_{\geqslant 0}(\varphi_2)$ has the same combinatorial type 
$G\times \Delta^k\simeq P\cap \mathcal{H}_{\leqslant \varepsilon}$. In particular, the polytope 
$P\cap \mathcal{H}_{\leqslant 0}(-\delta)\cap \mathcal{H}_{\geqslant 0}(0)$ is~combinatorially equivalent 
to~$P\cap\mathcal{H}_{\leqslant 0}(-\delta)\cap \mathcal{H}_{\geqslant 0}(\frac{\pi}{2})$. Then the piecewise
linear homeomorphism defined using the barycentric subdivisions preserves the face structures and is identical
on~$P\cap\mathcal{H}(-\delta)$. This homeomorphism together with the
identical mapping on $P\cap\mathcal{H}_{\geqslant 0}(-\delta)$ defines the~desired equivalence $C\simeq \partial P$.
\end{proof}
\begin{corollary}\label{cor:cutDn}
For any set $G_1$, $\dots$, $G_k$ of pairwise disjoint faces of the simplex $\Delta^n$
the~polytope obtained from $\Delta^n$ by cutting off all these faces by different hyperplanes
has a~Hamiltonian $\mathcal{C}(n,n+1)$-subcomplex given by a~set of $k$ defining $(n-2)$-faces lying 
in different cutting hyperplanes. 
\end{corollary}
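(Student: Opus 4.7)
The plan is to proceed by induction on $k$, with the single-cut step provided by Lemma~\ref{lem:defPG}. Let $P_k$ denote the polytope obtained from $\Delta^n$ by cutting off $G_1,\dots,G_k$ by hyperplanes $\mathcal{H}_1,\dots,\mathcal{H}_k$ with sufficiently small parameters $\varepsilon_1,\dots,\varepsilon_k$, and write $F_{G_j}$ for the new facet produced by the $j$-th cut. For each $j$, pick any facet $F_{i(j)}$ of $\Delta^n$ containing $G_j$ and set $M_j := F_{G_j}\cap F_{i(j)}$; this is an $(n-2)$-face of $P_k$ lying on $\mathcal{H}_j$. The claim is that $\{M_1,\dots,M_k\}$ is the set of defining faces of the desired Hamiltonian $\mathcal{C}(n,n+1)$-subcomplex $C_k\subset \partial P_k$.

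The candidate subcomplex $C_k$ is produced by the coloring $c_2$ that keeps the $n+1$ original facets of $\Delta^n$ in distinct classes and assigns each $F_{G_j}$ the color of $F_{i(j)}$, giving $n+1$ classes in total, matching $\mathcal{C}(n,n+1)$. I would first identify the defining faces of $C_k$ explicitly: distinct original facets of $\Delta^n$ have distinct colors, and two different cut facets $F_{G_j},F_{G_{j'}}$ are disjoint because $G_j$ and $G_{j'}$ are disjoint faces of $\Delta^n$ and the cuts are small. So the only pairs of distinct facets of $\partial P_k$ of the same $c_2$-color with nonempty intersection are exactly the pairs $(F_{G_j}, F_{i(j)})$, and the defining faces are precisely $M_1,\dots,M_k$. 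Their pairwise disjointness follows immediately from $F_{G_j}\cap F_{G_{j'}}=\varnothing$ for $j\ne j'$, so by Proposition~\ref{cor:n-2f} the subcomplex $C_k$ is Hamiltonian.

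What remains, and is the main obstacle, is to verify the equivalence $C_k\simeq \mathcal{C}(n,n+1)=\partial\Delta^n$. I would handle this by applying the rotation argument of Lemma~\ref{lem:defPG} simultaneously to all $k$ cuts: since $G_1,\dots,G_k$ are pairwise disjoint and the $\varepsilon_j$'s are small, the rotations that send each cutting hyperplane $\mathcal{H}_j$ back to the hyperplane of $F_{i(j)}$ around the $(n-2)$-plane of $M_j$ are supported in pairwise disjoint neighbourhoods. They therefore commute and can be glued, via the identity on the complement, into a single PL homeomorphism $\partial P_k\to \partial\Delta^n$ that identifies the color classes of $c_2$ with the $n+1$ facets of $\partial\Delta^n$, establishing $C_k\simeq \partial\Delta^n$. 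Alternatively, one may give an inductive argument invoking Lemma~\ref{lem:defPG} one cut at a time together with a \emph{transitivity} statement for Hamiltonian subcomplexes --- namely, that a Hamiltonian subcomplex of a Hamiltonian subcomplex is again Hamiltonian with the union of defining faces --- which via Proposition~\ref{cor:n-2f} reduces to the same disjointness already established.
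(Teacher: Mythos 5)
Your proposal is correct and takes essentially the approach the paper intends: the corollary is stated there as a direct consequence of Construction~\ref{con:cutF}, Lemma~\ref{lem:defPG} and Proposition~\ref{cor:n-2f}, exactly as you argue — for small cuts at pairwise disjoint faces the new facets and hence the defining $(n-2)$-faces $F_{G_j}\cap F_{i(j)}$ are pairwise disjoint (giving Hamiltonicity), and the rotation homeomorphisms of Lemma~\ref{lem:defPG}, having pairwise disjoint supports and being the identity on their boundaries, glue to an equivalence of the resulting complex with $\partial\Delta^n\simeq\mathcal{C}(n,n+1)$. Your alternative inductive route via a transitivity statement is not needed; the simultaneous-rotation argument already suffices.
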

\begin{example}
Any hyperplane separating a face $\Delta^k$ of the simplex $\Delta^n$ from the other
vertices leaves a face $\Delta^{n-k-1}$ on the other side. Then Lemma~\ref{lem:Ppart}
implies that if we cut off  $\Delta^k$ we~obtain combinatorially the polytope $\Delta^{n-k-1}\times\Delta^{k+1}$.
Moreover, the defining face $F\cap F_{i_q}$ from Lemma~\ref{lem:defPG} has the form $\Delta^{n-k-2}\times\Delta^k$.
\end{example}
\begin{corollary}\label{cor:spsq}
For any $p,q\geqslant 1$ the face $\Delta^{p-1}\times\Delta^{q-1}$ defines a~Hamiltonian
$\mathcal{C}(n,n+1)$-subcomplex in $P^n=\Delta^p\times\Delta^q$. In particular, for $p,q\geqslant 2$
the geometry $\mathbb S^p\times \mathbb S^q$ arises in~Construction~\ref{con:hypgeom}.
(For $p=1$ or $q=1$ we obtain the polytope $\Delta^{n-1}\times I$ and the geometry $\mathbb S^{n-1}\times\mathbb R$
already covered by Proposition~\ref{prop:Sn-1R}.) 
For $p,q\geqslant 2$~the branch set of~the~coverings $S^p\times S^q\to S^n$ 
is~homeomorphic to~$S^{p-1}\times S^{q-1}$. 
\end{corollary}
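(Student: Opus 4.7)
The plan is to obtain $\Delta^p\times\Delta^q$ as the result of cutting a single face off a simplex, and then invoke the cutting-off machinery already established in Construction~\ref{con:cutF}--Lemma~\ref{lem:defPG}. Set $n=p+q$. By the example immediately following Lemma~\ref{lem:Ppart} (applied with $k=q-1$), cutting off a $(q-1)$-face of~$\Delta^n$ by a generic hyperplane yields a polytope combinatorially equivalent to $\Delta^{p}\times\Delta^{q}$, and the candidate defining face $F\cap F_{i_q}$ from Lemma~\ref{lem:defPG} has the form $\Delta^{p-1}\times\Delta^{q-1}$. Lemma~\ref{lem:defPG} (equivalently the $k=1$ case of Corollary~\ref{cor:cutDn}) then immediately furnishes a Hamiltonian $\mathcal{C}(n,n+1)$-subcomplex $C\subset\partial P^n$ with the single defining $(n-2)$-face $\Delta^{p-1}\times\Delta^{q-1}$.

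Next I would verify the geometric statement for $p,q\geqslant 2$. The regular spherical simplex with all dihedral angles $\pi/2$ is a compact right-angled polytope in $\mathbb{S}^p$, so $\Delta^p\times\Delta^q$ is a compact right-angled polytope in the product geometry $\mathbb{S}^p\times\mathbb{S}^q$. Construction~\ref{con:hypgeom} applied to $C$ then produces a geometric hyperelliptic manifold $N(P,\widetilde{\Lambda}_C)$ modelled on $\mathbb{S}^p\times\mathbb{S}^q$. Since $\mathcal{C}(n,n+1)$ has $n+1$ facets and $P$ has $m=n+2$ facets, Corollary~\ref{cor:HCnk} forces $H(C)\subset\mathbb Z_2^m$ to have rank zero, so $H(C)=0$ and
$$
N(P,\widetilde{\Lambda}_C)=\mathbb R\mathcal Z_P=\mathbb R\mathcal Z_{\Delta^p}\times\mathbb R\mathcal Z_{\Delta^q}=S^p\times S^q.
$$

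For the branch set I would apply Lemma~\ref{lem:RZF} to the unique defining face $F=\Delta^{p-1}\times\Delta^{q-1}$, which has $m_F=p+q=n$ facets. For a $\mathcal{C}(n,n+1)$-subcomplex the lemma is invoked with $k=n+1$, giving $2^{k-1-m_F}=2^{0}=1$ copy of $\mathbb R\mathcal Z_F=\mathbb R\mathcal Z_{\Delta^{p-1}}\times\mathbb R\mathcal Z_{\Delta^{q-1}}=S^{p-1}\times S^{q-1}$, as required. To apply the lemma I must check its hypothesis, namely that each of the $n$ facets of $F$ lies in a distinct facet of $C$. This holds because $C$ has exactly $n+1$ facets and the two facets of~$P$ whose intersection is $F$ are precisely the pair merged into one facet of~$C$ by the definition of the subcomplex; the other $n$ facets of $P$ each lie in their own facet of $C$, and the $n$ facets of $F$ are precisely their intersections with $F$.

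The main obstacle I expect is this last verification of the hypothesis of Lemma~\ref{lem:RZF}; once the precise bijection between facets of~$P$ and facets of~$C$ coming from the cutting construction is unwound, the remainder of the proof is a direct citation of the results already listed above.
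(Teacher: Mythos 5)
Your proposal is correct and follows essentially the same route as the paper: the paper obtains the corollary directly from the Example after Lemma~\ref{lem:defPG} (cutting a $(q-1)$-face off $\Delta^n$ yields $\Delta^p\times\Delta^q$ with defining face $\Delta^{p-1}\times\Delta^{q-1}$), and the branch set from Lemma~\ref{lem:RZF}, exactly as you do. One small correction: Corollary~\ref{cor:HCnk} gives $H(C)$ rank $m-(n+1)=1$ (it is generated by the lift of the hyperelliptic involution), not zero; the group that is trivial is the kernel of the induced coloring $\widetilde{\Lambda}_C\colon\mathbb Z_2^m\to\mathbb Z_2^{k+1}$, since its rank $k+1=n+2$ equals $m$, and this is what yields $N(P,\widetilde{\Lambda}_C)=\mathbb R\mathcal Z_P\simeq S^p\times S^q$.
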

\begin{remark}
The structure of the branch set follows from Lemma~\ref{lem:RZF}. Moreover, 
this implies that in~Corollary~\ref{cor:cutDn} the branch set is a disjoint union of the products $S^{n-k-2}\times S^k$
corresponding to faces $\Delta^k$, $0\leqslant k\leqslant n-2$.
\end{remark}
\begin{proposition}\label{prop:Dn-1}
If an $n$-polytope $P$ has a~Hamiltonian $\mathcal{C}(n,n+1)$-subcomplex such
that its~defining faces $\{M_1,\dots, M_s\}$ do not intersect a facet $F_i\simeq \Delta^{n-1}$,
then for any $k\geqslant 1$ the~polytope $\Delta^k\times P$ has a~Hamiltonian 
$\mathcal{C}(n+k,n+k+1)$-subcomplex with defining faces $\Delta^{k-1}\times F_i$ and $\Delta^k\times M_1$,
$\dots$, $\Delta^k\times M_s$. 
\end{proposition}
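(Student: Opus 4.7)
The plan is to exhibit the Hamiltonian subcomplex on $Q=\Delta^k\times P$ by prescribing its facet coloring $c_2$, verify via Proposition~\ref{prop:critdef} that the advertised set $\mathcal{S}=\{\Delta^{k-1}_0\times F_i\}\cup\{\Delta^k\times M_q:q=1,\dots,s\}$ is its defining set, and then match the intersection pattern of $c_2$ to that of $\partial\Delta^{n+k}$ to conclude equivalence with $\mathcal{C}(n+k,n+k+1)$.

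Denote by $\Delta^{k-1}_0,\dots,\Delta^{k-1}_k$ the facets of $\Delta^k$ (with $\Delta^{k-1}_0$ distinguished), and let $\widetilde G_1=F_i,\widetilde G_2,\dots,\widetilde G_{n+1}$ be the facets of the original subcomplex $C_P$ of $P$. Since $F_i$ meets no $M_q$, Proposition~\ref{prop:critdef} applied to $C_P$ forces $F_i$ to be the sole facet of $P$ in its class $\widetilde G_1$. I would define $c_2$ to have $n+k+1$ classes: the merged class $\mathcal K_0=\{\Delta^{k-1}_0\times P,\,\Delta^k\times F_i\}$; for each $\ell=2,\dots,n+1$ the class $\mathcal K_\ell=\{\Delta^k\times F_\alpha:F_\alpha\subset\widetilde G_\ell\}$; and the singleton classes $\{\Delta^{k-1}_j\times P\}$ for $j=1,\dots,k$. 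The elements of $\mathcal S$ are then pairwise disjoint $(n+k-2)$-faces of $\mathcal{C}(Q)$: $\Delta^{k-1}_0\times F_i$ meets no $\Delta^k\times M_q$ because $F_i\cap M_q=\varnothing$ by hypothesis, and the $\Delta^k\times M_q$ are disjoint because the $M_q$ are (Proposition~\ref{cor:n-2f}).

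The only nontrivial same-class intersections to check for Proposition~\ref{prop:critdef} are $(\Delta^{k-1}_0\times P)\cap(\Delta^k\times F_i)=\Delta^{k-1}_0\times F_i\in\mathcal S$ and, for $F_a,F_b\subset\widetilde G_\ell$, $(\Delta^k\times F_a)\cap(\Delta^k\times F_b)=\Delta^k\times(F_a\cap F_b)$, whose connected components are precisely $\Delta^k\times M_q\in\mathcal S$. Hence Proposition~\ref{prop:critdef} yields a Hamiltonian subcomplex $C'=\mathcal{C}(Q,c_2)$ with defining set $\mathcal S$.

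To see $C'\simeq\mathcal{C}(n+k,n+k+1)$, I would directly verify that the $n+k+1$ classes of $c_2$ intersect in the pattern of $\partial\Delta^{n+k}$. Using that $C_P\simeq\partial\Delta^n$, any $n$ of its facets meet in a single vertex while all $n+1$ meet in $\varnothing$; letting $v_0$ denote the vertex of $\Delta^k$ opposite $\Delta^{k-1}_0$ and $w$ the vertex of $C_P$ opposite $\widetilde G_1=F_i$, the intersection of all classes except $\mathcal K_0$ equals $v_0\times w$, while the intersection of all $n+k+1$ classes is empty because $v_0\notin\Delta^{k-1}_0$ and $w\notin F_i$; analogous direct computations handle omitting each $\mathcal K_\ell$ or each $\mathcal K_{n+j}$. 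The main obstacle is upgrading this intersection coincidence to an actual equivalence of complexes: in dimensions $n+k\leqslant 4$ this follows immediately from Lemma~\ref{lem:34nM}, while for general $n+k$ one would build the PL homeomorphism inductively from the topological disk structure of the faces of $C'$, controlled by Proposition~\ref{prop:Gtilde} and Corollary~\ref{cor:GneMq}.
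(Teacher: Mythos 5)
Your first half is sound: the coloring $c_2$ you prescribe on $\Delta^k\times P$, the pairwise disjointness of $\mathcal{S}$, and the verification of the same-class intersections via Proposition~\ref{prop:critdef} do produce a Hamiltonian subcomplex with the advertised defining faces; this parallels what the paper leaves implicit. The gap is in the second half, which is the actual content of the proposition: you never establish the equivalence of this subcomplex with $\mathcal{C}(n+k,n+k+1)$. Equivalence in this paper means a homeomorphism of the polytopes carrying facets to facets, and matching the pattern of nonempty intersections of the $n+k+1$ classes (your sample computations treat only a few of the required collections and say nothing about the topology of the multi-intersections) does not yield such a homeomorphism. Your fallback Lemma~\ref{lem:34nM} requires knowing that every set $G_{\omega_1,\dots,\omega_l}$ is a disk or sphere of the appropriate dimension, not merely the nerve, and in any case covers only $n+k\leqslant 4$, whereas the proposition is stated and later used for arbitrary $k$ (for instance in Corollaries~\ref{cor:pqI} and~\ref{cor:SxX2}). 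The ``inductive PL homeomorphism'' you invoke for general $n+k$ is precisely the missing argument, not a proof of it.

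The paper sidesteps this difficulty: the hypothesis that $P$ carries a Hamiltonian $\mathcal{C}(n,n+1)$-subcomplex already supplies a homeomorphism $h\colon P\to\Delta^n$ taking the facets $\widetilde{G}_\ell$ to facets of $\Delta^n$; then $\mathrm{id}_{\Delta^k}\times h$ identifies the subcomplex of $\partial(\Delta^k\times P)$ defined by $\Delta^k\times M_1,\dots,\Delta^k\times M_s$ with $\partial(\Delta^k\times\Delta^n)$ and carries the extra defining face $\Delta^{k-1}\times F_i$ to $\Delta^{k-1}\times\Delta^{n-1}$. Corollary~\ref{cor:spsq}, which rests on Construction~\ref{con:cutF} and the explicit hyperplane-rotation homeomorphism of Lemma~\ref{lem:defPG}, then says exactly that this face transforms $\partial(\Delta^k\times\Delta^n)$ into a complex equivalent to $\partial\Delta^{n+k}$, in every dimension. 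If you replace your nerve comparison by this reduction to the already-settled model case $\Delta^k\times\Delta^n$, your construction of $c_2$ becomes a complete proof.
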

\begin{proof}
Indeed, the faces  $\Delta^k\times M_1$,
$\dots$, $\Delta^k\times M_s$ define the subcomplex equivalent to $\partial (\Delta^k\times \Delta^n)$,
and the face $\Delta^{k-1}\times F_i\simeq \Delta^{k-1}\times\Delta^{n-1}$ transforms it to $\partial \Delta^{n+k}$.
\end{proof}
\begin{corollary}\label{cor:pqI}
For any $p,q\geqslant 1$ the polytope $P^n=\Delta^p\times\Delta^q\times I$
has a Hamiltonian $C(n, n+1)$-subcomplex defined by two faces $\Delta^{p-1}\times\Delta^q\times \{0\}$
and $\Delta^p\times\Delta^{q-1}\times \{1\}$. 
In particular, the~geometries $\mathbb S^p\times \mathbb S^q\times \mathbb R$, $p,q\geqslant 2$, 
$\mathbb S^p\times \mathbb R^2$, $p\geqslant 2$, and $\mathbb R^3$ arise in~Construction~\ref{con:hypgeom}.
The~branch set of~the~covering  $N(P,\widetilde{\Lambda}_C)\to S^n$ is  $S^{p-1}\times S^q\sqcup S^p\times S^{q-1}$.
\end{corollary}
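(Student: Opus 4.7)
The plan is to apply Proposition~\ref{prop:Dn-1} to a base Hamiltonian subcomplex of $\Delta^q \times I$ that comes from cutting off a vertex of $\Delta^{q+1}$, and then to read off the geometric consequences and the branch set.

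\emph{Construction.} By the example preceding Corollary~\ref{cor:spsq} (an instance of Construction~\ref{con:cutF} and Lemma~\ref{lem:defPG} with $k=0$, valid for all $q \geqslant 1$), cutting off a vertex of $\Delta^{q+1}$ produces a polytope combinatorially equivalent to $\Delta^q \times I$, carrying a Hamiltonian $\mathcal{C}(q+1, q+2)$-subcomplex whose unique defining $(q-1)$-face is a $\Delta^{q-1}$ sitting in one of the simplicial endpoint facets. Choosing the identification so that the new cut facet becomes $\Delta^q \times \{1\}$, this defining face is $D = \Delta^{q-1} \times \{1\}$; it is then disjoint from the opposite simplicial facet $F := \Delta^q \times \{0\} \simeq \Delta^q$ of $\Delta^q \times I$. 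Applying Proposition~\ref{prop:Dn-1} with $k = p$, $P = \Delta^q \times I$, $F_i = F$, and $M_1 = D$ yields a Hamiltonian $\mathcal{C}(n, n+1)$-subcomplex of $\Delta^p \times \Delta^q \times I$ with defining faces $\Delta^{p-1} \times F = \Delta^{p-1} \times \Delta^q \times \{0\}$ and $\Delta^p \times M_1 = \Delta^p \times \Delta^{q-1} \times \{1\}$, exactly as claimed.

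\emph{Geometries.} Since $\Delta^p$ is right-angled in $\mathbb{S}^p$ for $p \geqslant 2$, the interval $I$ is right-angled in $\mathbb{R}$, and $I^2$ is right-angled in $\mathbb{R}^2$, and since products of right-angled polytopes remain right-angled in the product geometry, the polytope $P^n$ is right-angled in $\mathbb{S}^p \times \mathbb{S}^q \times \mathbb{R}$ when $p, q \geqslant 2$; in $\mathbb{S}^p \times \mathbb{R}^2$ when $p \geqslant 2$ and $q = 1$ (identifying $\Delta^1 \times I$ with $I^2$); and in $\mathbb{R}^3$ when $p = q = 1$ (the cube $I^3$). Construction~\ref{con:hypgeom} then produces the claimed geometric hyperelliptic $n$-manifolds.

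\emph{Branch set.} Each defining face $M$ has $m_M = p + q + 1 = n$ facets: for instance $M_1 = \Delta^{p-1} \times \Delta^q \times \{0\}$ has $p$ facets of the form $\Delta^{p-2} \times \Delta^q \times \{0\}$ (from facets of $\Delta^{p-1}$) plus $q+1$ of the form $\Delta^{p-1} \times \Delta^{q-1} \times \{0\}$ (from facets of $\Delta^q$), and symmetrically for $M_2$. By direct inspection these facets of $M$ lie in pairwise distinct facets of $C$, so the hypothesis of Lemma~\ref{lem:RZF} is satisfied. Applied with rank $k=n+1$, Lemma~\ref{lem:RZF} gives that the preimage of each $M$ consists of $2^{(n+1)-1-n} = 1$ copy of $\mathbb{R}\mathcal{Z}_M$. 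Using $\mathbb{R}\mathcal{Z}_{\Delta^a \times \Delta^b} \simeq S^a \times S^b$, Proposition~\ref{prop:Ham2sh} identifies the branch set as $S^{p-1} \times S^q \sqcup S^p \times S^{q-1}$. The only delicate step is this facet-counting and the verification of the distinct-facets hypothesis of Lemma~\ref{lem:RZF}; everything else is a direct assembly of previously established machinery.
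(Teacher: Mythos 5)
Your argument is correct and is essentially the paper's own proof: the paper likewise obtains the subcomplex by applying Proposition~\ref{prop:Dn-1} to $P=\Delta^q\times I$ (whose Hamiltonian $\mathcal{C}(q+1,q+2)$-subcomplex with defining face $\Delta^{q-1}\times\{1\}$, disjoint from the facet $\Delta^q\times\{0\}$, is already available from Proposition~\ref{prop:Sn-1R} and Corollary~\ref{cor:spsq}), and it reads off the branch set from Lemma~\ref{lem:RZF} exactly as you do. One small correction: your facet count $m_M=n$ tacitly assumes $p,q\geqslant 2$; for $p=1$ the defining face $\Delta^{p-1}\times\Delta^q\times\{0\}$ has only $q+1=n-1$ facets, so Lemma~\ref{lem:RZF} gives $2^{n-m_M}=2$ copies of $\mathbb{R}\mathcal{Z}_{\Delta^q}\simeq S^q$ rather than one copy --- which is precisely $S^0\times S^q=S^{p-1}\times S^q$, so the stated branch set is unaffected (and symmetrically for $q=1$).
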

\begin{proof}
Indeed, $\Delta^{q-1}\times\{1\}\cap \Delta^q\times \{0\}=\varnothing$. Hence, we can apply Proposition~\ref{prop:Dn-1}
to $P=\Delta^q\times I$. The structure of the branch set follows from Lemma~\ref{lem:RZF}.
\end{proof}

\begin{proposition}\label{prop:nn}
If an $n$-polytope $P$ has a~Hamiltonian $\mathcal{C}(n,n+1)$-subcomplex $C$ such
that its defining faces $\{M_1,\dots, M_s\}$ do not intersect an $(n-2)$-face $G\subset P$, then $G\simeq \Delta^{n-2}$,
and the~faces $\{M_1,\dots, M_s, G\}$ define  a~Hamiltonian $\mathcal{C}(n,n)$-subcomplex in~$\partial P$.
\end{proposition}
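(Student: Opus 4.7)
The plan is to proceed in three steps: (1) establish $G\simeq\Delta^{n-2}$; (2) construct the coloring $c_2'$ merging the two facets of $c_2$ containing $G$, and identify its defining faces; (3) argue equivalence with $\mathcal{C}(n,n)$.

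For (1), since $G$ is an $(n-2)$-face of $\partial P$ disjoint from all $M_q$, Corollary~\ref{cor:GneMq} identifies $G$ with an $(n-2)$-face of $C=\mathcal{C}(P,c_2)$ of the same combinatorial type. Because $C\simeq\mathcal{C}(n,n+1)\simeq\partial\Delta^n$, every $(n-2)$-face of $C$ is a combinatorial $(n-2)$-simplex, so $G\simeq\Delta^{n-2}$.

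For (2), write $G=F_i\cap F_j$ for the unique pair of facets of $P$ whose intersection is $G$. The assumption that $G$ meets no $M_q$ forces $F_i$ and $F_j$ to lie in distinct facets $\widetilde{G}_a,\widetilde{G}_b$ of $C$ (else $G$ itself would be one of the defining faces $M_q$, contradicting $G\cap M_q=\varnothing$). By Proposition~\ref{prop:Gtilde}, the $(n-2)$-face $\widetilde{G}_a\cap\widetilde{G}_b$ of $C$ is a union of $(n-2)$-faces of $P$ glued along $(n-3)$-faces of defining faces; since $G$ avoids all defining faces, this union reduces to $G$, so $\widetilde{G}_a\cap\widetilde{G}_b=G$. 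I define $c_2'$ by merging the colors of $\widetilde{G}_a$ and $\widetilde{G}_b$. The new defining faces of $\mathcal{C}(P,c_2')$ are $\{M_1,\ldots,M_s\}$ together with connected components of $F_{i'}\cap F_{j'}$ with $F_{i'}\subset\widetilde{G}_a$ and $F_{j'}\subset\widetilde{G}_b$; the preceding shows this additional set equals $\{G\}$. Pairwise disjointness of $\{M_1,\ldots,M_s,G\}$ and Proposition~\ref{cor:n-2f} then imply $\mathcal{C}(P,c_2')\subset\partial P$ is Hamiltonian.

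For (3), I model the merging at the level of $\mathcal{C}(n,k)$: writing $B^n_{n-1,\geqslant}$ as the union of $B^n_{n,\geqslant}\simeq\Delta^n$ with its reflection across $\{x_n=0\}$, one recognizes $\partial B^n_{n-1,\geqslant}$ as obtained from two copies of $\partial\Delta^n$ by identifying their $\{x_n=0\}$ facets, which merges the two spherical pieces into one facet while leaving the $n-1$ flat facets intact. The merged complex $\mathcal{C}(P,c_2')$ on $\partial P\simeq\partial\Delta^n$ exhibits the same combinatorial pattern: the merged facet $\widetilde{G}_a\cup\widetilde{G}_b$ plays the role of the sphere facet (with $G$ in its relative interior serving as the equator), and each unchanged $\widetilde{G}_k$ plays the role of a flat facet; intersections not involving the merged facet are $(n-k)$-simplices as in $\partial\Delta^n$, while intersections involving it split as unions of two such simplices glued along a common sub-simplex, matching $\partial B^n_{n-1,\geqslant}$. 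The main obstacle is verifying this combinatorial correspondence uniformly in $n$; for $n\leqslant 4$ the explicit case analysis of Lemma~\ref{lem:34nM} applies verbatim, and for larger $n$ one proceeds by matching the face lattices inductively.
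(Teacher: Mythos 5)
Your steps (1) and (2) are correct and essentially follow the paper's (much terser) route: Corollary~\ref{cor:GneMq} gives that $G$ is an $(n-2)$-face of $C\simeq\mathcal{C}(n,n+1)\simeq\partial\Delta^n$, hence a simplex; your verification that the merged coloring $c_2'$ has defining faces exactly $\{M_1,\dots,M_s,G\}$, combined with the disjointness criterion of Proposition~\ref{cor:n-2f}, correctly yields that the merged subcomplex is Hamiltonian.

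The genuine gap is in step (3), and you flag it yourself: you never actually establish that the merged complex is equivalent to $\mathcal{C}(n,n)$ for general $n$, deferring to ``matching the face lattices inductively'', and the fallback for $n\leqslant 4$ does not work ``verbatim'' --- Lemma~\ref{lem:34nM} requires knowing in advance that all the sets $G_{\omega_1,\dots,\omega_k}$ are disks or spheres of the appropriate dimensions, which is precisely the kind of nontrivial verification you are trying to avoid. The missing idea, which is what the paper uses and which makes the step immediate and uniform in $n$, is that $\mathcal{C}(n,k)$ is by definition the equivalence class of $\mathcal{C}(\Delta^n,c)$ for \emph{any} coloring $c$ of the simplex in $k$ colors. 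The equivalence $C\simeq\mathcal{C}(n,n+1)$ supplies a homeomorphism $P\to\Delta^n$ carrying the $n+1$ facets of $C$ bijectively onto the facets of $\Delta^n$; the very same homeomorphism carries the facets of your merged complex $\mathcal{C}(P,c_2')$ (namely $\widetilde{G}_a\cup\widetilde{G}_b$ and the $n-1$ unchanged facets) onto the facets of $\mathcal{C}(\Delta^n,c)$ for the coloring $c$ in $n$ colors that identifies the two corresponding facets of $\Delta^n$, so $\mathcal{C}(P,c_2')\simeq\mathcal{C}(n,n)$ at once, with no case analysis. This is exactly how the paper argues: $G$ is an $(n-2)$-face of $C$, hence (as in Proposition~\ref{prop:Sn}) it defines a Hamiltonian $\mathcal{C}(n,n)$-subcomplex in $C$, and then $\{M_1,\dots,M_s,G\}$ defines the corresponding subcomplex of $\partial P$. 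A side inaccuracy in your model: when $B^n_{n-1,\geqslant}$ is written as $B^n_{n,\geqslant}$ glued to its reflection across $\{x_n=0\}$, the $n-1$ flat facets are not ``left intact'' --- each is itself a union of two pieces, one from each copy --- so the cellwise matching you describe is not the one realizing the equivalence; this issue disappears once the merging is transported through the homeomorphism furnished by $C\simeq\mathcal{C}(n,n+1)$.
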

\begin{proof}
By Corollary \ref{cor:GneMq} $G$ is~an~$(n-2)$-face  of $C$, hence $G\simeq\Delta^{n-2}$ and 
it defines in $C$ the~Hamiltonian $\mathcal{C}(n,n)$-subcomplex. Then  the faces $\{M_1,\dots, M_s, G\}$ 
define  the~corresponding Hamiltonian $\mathcal{C}(n,n)$-subcomplex in~$\partial P$.
\end{proof}
\begin{corollary}\label{cor:pk}
Let $\Delta^2={\rm conv}\{v_1,v_2,v_3\}$. Then the faces ${\rm conv}\{v_1,v_2\}\times \Delta^{n-3}$ and 
$v_3\times\Delta^{n-2}$ define a~Hamiltonian
$\mathcal{C}(n,n)$-subcomplex $C$~in $\Delta^2\times \Delta^{n-2}$ for $n\geqslant 3$. The~branch set
of~the~covering $N(P,\widetilde{\Lambda}_C)\to S^n$ is a disjoint union of $S^1\times S^{n-3}$ and $S^{n-2}$ .
\end{corollary}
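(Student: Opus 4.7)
The plan is to deduce the subcomplex from earlier propositions and then compute the branch set via Proposition~\ref{prop:Ham2sh}. By Corollary~\ref{cor:spsq} with $p=2$, $q=n-2$, the face $\Delta^{p-1}\times\Delta^{q-1}=e_3\times\Delta^{n-3}$ (with $e_3={\rm conv}\{v_1,v_2\}$ and some facet $g_{n-1}$ of $\Delta^{n-2}$ identified with $\Delta^{n-3}$) is the defining face of a Hamiltonian $\mathcal{C}(n,n+1)$-subcomplex in $\partial P$. Since $M_1=v_3\times\Delta^{n-2}\simeq\Delta^{n-2}$ is an $(n-2)$-face of $P$ disjoint from $M_2=e_3\times\Delta^{n-3}$ (because $v_3\notin e_3$), Proposition~\ref{prop:nn} yields the desired Hamiltonian $\mathcal{C}(n,n)$-subcomplex $C$ with defining faces $\{M_1,M_2\}$.

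To analyze the branch set I write out $\widetilde{\Lambda}_C$ explicitly. The facets of $P$ are $e_i\times\Delta^{n-2}$ ($i=1,2,3$) and $\Delta^2\times g_j$ ($j=1,\dots,n-1$); $M_1$ merges $e_1\times\Delta^{n-2}$ and $e_2\times\Delta^{n-2}$ into one facet $G_A$ of $C$, $M_2$ merges $e_3\times\Delta^{n-2}$ and $\Delta^2\times g_{n-1}$ into a facet $G_B$, and the remaining $\Delta^2\times g_j$ for $j<n-1$ are singleton facets of $C$. Picking a basis $\boldsymbol{e}_A,\boldsymbol{e}_B,\boldsymbol{e}_1,\dots,\boldsymbol{e}_{n-2},\boldsymbol{f}$ of $\mathbb Z_2^{n+1}$ with hyperelliptic involution $\tau=\boldsymbol{f}$, Construction~\ref{con:LHC} (via Construction~\ref{con:indfrom} and Example~\ref{ex:vci}) assigns $\widetilde{\Lambda}_C$-vectors $\boldsymbol{e}_A$, $\boldsymbol{e}_A+\boldsymbol{f}$, $\boldsymbol{e}_B$, $\boldsymbol{e}_B+\boldsymbol{f}$, $\boldsymbol{e}_j$ to the facets listed above. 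Proposition~\ref{prop:Ham2sh} then presents $\pi_\Lambda^{-1}(M_q)$ as $2^{(n+1)-2-\dim V_{M_q}}$ copies of $N(M_q,\Lambda_{M_q})$.

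For $M_1$ the $n-1$ facets $v_3\times g_j$ lie in distinct facets of $C$, so Lemma~\ref{lem:RZF} applies directly and contributes one copy of $N(\Delta^{n-2},\Lambda_{M_1})=\mathbb R\mathcal Z_{\Delta^{n-2}}=S^{n-2}$. The main obstacle is $M_2\simeq I\times\Delta^{n-3}$: the two facets $v_1\times\Delta^{n-3}\subset e_2\times\Delta^{n-2}$ and $v_2\times\Delta^{n-3}\subset e_1\times\Delta^{n-2}$ both lie in $G_A$, so Lemma~\ref{lem:RZF} fails. A direct computation in $\mathbb Z_2^{n+1}/\langle\boldsymbol{e}_B,\boldsymbol{f}\rangle$ shows that both of these facets get induced color $[\boldsymbol{e}_A]$, while the $n-2$ facets $e_3\times f'_j$ get colors $[\boldsymbol{e}_j]$; together these $n-1$ vectors span the quotient, giving $\dim V_{M_2}=n-1$ and again a single copy. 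Because $V_{M_2}=\langle[\boldsymbol{e}_A]\rangle\oplus\langle[\boldsymbol{e}_1],\dots,[\boldsymbol{e}_{n-2}]\rangle$ splits along the product $M_2=I\times\Delta^{n-3}$, $N(M_2,\Lambda_{M_2})$ factors as $N(I,\Lambda_I)\times N(\Delta^{n-3},\Lambda_{\Delta^{n-3}})$; the first factor is $S^1$ (both endpoints of $I$ carry the same nonzero color, gluing two copies of $I$ into a circle), and the second is the standard small cover $S^{n-3}$. Hence the branch set is $S^{n-2}\sqcup(S^1\times S^{n-3})$, as claimed.
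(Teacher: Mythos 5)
Your argument is correct and follows essentially the same route as the paper: Corollary~\ref{cor:spsq} together with Proposition~\ref{prop:nn} produces the Hamiltonian $\mathcal{C}(n,n)$-subcomplex, Lemma~\ref{lem:RZF} handles the face $v_3\times\Delta^{n-2}$, and Proposition~\ref{prop:Ham2sh} applied to ${\rm conv}\{v_1,v_2\}\times\Delta^{n-3}$ (whose facets $v_1\times\Delta^{n-3}$ and $v_2\times\Delta^{n-3}$ lie in the same facet $G_A$) gives $S^1\times S^{n-3}$, which is exactly the paper's computation made explicit via the induced coloring. The only small caveat is the degenerate case $n=3$: there $\Delta^{n-3}$ is a point, the facets $e_3\times f'_j$ do not exist, $\dim V_{M_2}=1$, and Proposition~\ref{prop:Ham2sh} yields two copies of $S^1$ rather than one copy of a product; since two circles are precisely $S^1\times S^0$, the stated conclusion is unaffected.
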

\begin{proof}
The structure of the branch set over $v_3\times\Delta^{n-2}$ follows from Lemma~\ref{lem:RZF}. 
For~the~face $F={\rm conv}\{v_1,v_2\}\times \Delta^{n-3}$ its facets corresponding to $\Delta^{n-3}$ 
lie in different facets of $C$, while the~facets $v_1\times \Delta^{n-3}$ and $v_2\times \Delta^{n-3}$
lie in~the~same facet of $C$ different from the above facets. Hence, by Proposition~\ref{prop:Ham2sh}
the~branch set over $F$ is homeomorphic to $S^1\times S^{n-3}$. 
\end{proof}
\begin{corollary}\label{cor:SxX2}
Let $P_k$ be a~$k$-gon, $k\geqslant 3$. Then for $p\geqslant 1$ the polytope $P^n=\Delta^p\times P_k$
admits Hamiltonian $\mathcal{C}(n,n)$- and $\mathcal{C}(n,n+1)$-subcomplexes $C_n$ and $C_{n+1}$. 
In particular, taking a~right-angled triangle in $\mathbb S^2$, a~square in~$\mathbb R^2$ and 
a~right-angled $k$-gon, $k\geqslant 5$, in $\mathbb L^2$ we see that the~geometries $\mathbb S^p\times \mathbb S^2$,
$\mathbb S^p\times \mathbb R^2$
and $\mathbb S^p\times \mathbb L^2$, $p\geqslant 2$, arise in~Construction~\ref{con:hypgeom}.
The~branch set of~the~covering $N(P,\widetilde{\Lambda}_{C_{n+1}})\to S^n$ 
is a disjoint union of $2(k-3)$ copies of $S^{n-2}$ and one~$S^{n-3}\times S^1$, and
for the~covering $N(P,\widetilde{\Lambda}_{C_n})\to S^n$ it~is~a~disjoint union of~$(k-2)$ copies of~$S^{n-2}$
and one~$S^{n-3}\times S^1$.
\end{corollary}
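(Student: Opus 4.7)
The plan is to construct both subcomplexes from an appropriate Hamiltonian $\mathcal{C}(2,3)$-subcomplex of $P_k$: lift it to $C_{n+1}$ via Proposition~\ref{prop:Dn-1} and then convert it to $C_n$ via Proposition~\ref{prop:nn}. Label the vertices of $P_k$ cyclically by $v_1,\dots,v_k$ and the edges by $e_j=v_jv_{j+1}$ (indices mod $k$). The triangle with vertices $v_1,v_2,v_3$ and arcs $\{e_1\}$, $\{e_2\}$, $\{e_3,\dots,e_k\}$ defines a Hamiltonian $\mathcal{C}(2,3)$-subcomplex $C'$ of $P_k$ whose $k-3$ defining vertices $v_4,\dots,v_k$ all avoid $e_1$. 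Applying Proposition~\ref{prop:Dn-1} with $P=P_k$, $n=2$, $F_i=e_1\simeq\Delta^1$, and the cross-factor $\Delta^p$ yields a Hamiltonian $\mathcal{C}(n,n+1)$-subcomplex $C_{n+1}$ of $\Delta^p\times P_k$ whose defining faces are $\Delta^{p-1}\times e_1$ and $\Delta^p\times v_j$ for $j=4,\dots,k$.

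The face $G=\Delta^p\times v_3\simeq\Delta^{n-2}$ is disjoint from every defining face of $C_{n+1}$ (since $v_3\notin e_1$ and $v_3\ne v_j$ for $j\geqslant 4$), so Proposition~\ref{prop:nn} produces the Hamiltonian $\mathcal{C}(n,n)$-subcomplex $C_n$ with defining faces $\Delta^{p-1}\times e_1$ and $\Delta^p\times v_j$ for $j=3,\dots,k$. The geometric assertion follows from Construction~\ref{con:hypgeom}: a right-angled triangle exists in $\mathbb S^2$, a right-angled square in $\mathbb R^2$, a right-angled $k$-gon in $\mathbb L^2$ for $k\geqslant 5$, and the right-angled simplex lives in $\mathbb S^p$, so $\Delta^p\times P_k$ is a right-angled polytope in each of the three product geometries.

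The branch sets I would compute face by face via Lemma~\ref{lem:RZF}. For each defining face $\Delta^p\times v_j$ the $p+1$ facets $F^{**}\times v_j$ embed into distinct facets $F^{**}\times P_k$ of $\Delta^p\times P_k$, and these remain in distinct facets of $C_{n+1}$ and of $C_n$; the Lemma contributes $2^{(n+1)-1-(p+1)}=2$ copies of $\mathbb R\mathcal Z_{\Delta^p}=S^{n-2}$ per face in $C_{n+1}$ and one copy in $C_n$, yielding the advertised $2(k-3)$ and $(k-2)$ copies of $S^{n-2}$ respectively. For $F_0=\Delta^{p-1}\times e_1$ inside $C_{n+1}$ all $n$ facets lie in distinct facets of $C_{n+1}$ (in particular $F^*\times v_1$ sits in the merged facet containing $\Delta^p\times e_k$, while $F^*\times v_2$ sits in the singleton facet $\Delta^p\times e_2$), so the Lemma yields one copy of $\mathbb R\mathcal Z_{\Delta^{p-1}\times I}=\mathbb R\mathcal Z_{\Delta^{p-1}}\times\mathbb R\mathcal Z_I=S^{n-3}\times S^1$.

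The main obstacle is the remaining case of $F_0$ inside $C_n$: the extra defining face $\Delta^p\times v_3$ merges $\Delta^p\times e_2$ with the facet containing $\Delta^p\times e_k$, so $F^*\times v_1$ and $F^*\times v_2$ now lie in a common facet of $C_n$ and Lemma~\ref{lem:RZF} no longer applies. I plan to invoke Proposition~\ref{prop:Ham2sh} and compute $\dim V_{F_0}$ directly from the induced coloring $\widetilde\Lambda_{C_n}$: the $p$ facets $(F^*\cap F^{**})\times e_1$ contribute $p$ independent classes in the quotient, while $F^*\times v_1$ and $F^*\times v_2$ share one additional class, giving $\dim V_{F_0}=p+1=n-1$ and hence $2^{(n+1)-2-(n-1)}=1$ copy of $N(F_0,\widetilde\Lambda_{F_0})$. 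Identifying this manifold reduces to taking the quotient of $\mathbb R\mathcal Z_{\Delta^{p-1}}\times\mathbb R\mathcal Z_I=S^{p-1}\times S^1$ by the diagonal $\mathbb Z_2$ that identifies the two $I$-generators; this acts as the order-two rotation on the $S^1$ factor and trivially on the $S^{p-1}$ factor, so the quotient is again $S^{p-1}\times S^1=S^{n-3}\times S^1$, completing the branch-set count.
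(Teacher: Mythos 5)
Your construction is correct and is essentially the paper's own proof up to a cyclic relabeling of the vertices of $P_k$: the same choice of $k-3$ defining vertices plus one disjoint edge fed through Proposition~\ref{prop:Dn-1}, one extra vertex via Proposition~\ref{prop:nn}, Lemma~\ref{lem:RZF} for all faces whose facets land in distinct facets of the subcomplex, and Proposition~\ref{prop:Ham2sh} for the edge-type face inside $C_n$. Your explicit computation of $\dim V_{F_0}$ and the identification of $N(F_0,\widetilde{\Lambda}_{F_0})$ with $S^{n-3}\times S^1$ just spells out the step the paper leaves as a one-line appeal to Proposition~\ref{prop:Ham2sh}, and it checks out.
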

\begin{proof}
Indeed, let $P_k={\rm conv}\{v_1,\dots, v_k\}$. Then the vertices $v_1$, $\dots$, $v_{k-3}$ 
define a~Hamiltonian~$\mathcal{C}(2,3)$-subcomplex in $P_k$, and the edge ${\rm conv}\{v_{k-2},v_{k-1}\}$
does not intersect these vertices. 
By~Proposition~\ref{prop:Dn-1} the faces $\Delta^p\times v_1$, $\dots$, $\Delta^p\times v_{k-3}$,
$\Delta^{p-1}\times {\rm conv}\{v_{k-2},v_{k-1}\}$ define a~Hamiltonian $\mathcal{C}(n,n+1)$-subcomplex $C_{n+1}$.
By Proposition~\ref{prop:nn} the addition of the face $\Delta^p\times v_k$ defines
a~Hamiltonian $\mathcal{C}(n,n)$-subcomplex $C_n$. For each defining face~of~$C_{n+1}$
its facets correspond to different facets of $C_{n+1}$, hence the structure of the branch set 
follows from Lemma~\ref{lem:RZF}. For defining faces $\Delta^p\times v_j$ of $C_n$ the same
argument works. For the defining face $\Delta^{p-1}\times {\rm conv}\{v_{k-2},v_{k-1}\}$ of~$C_n$
its facets corresponding to $\Delta^{p-1}$ lie in~different facets of~$C_n$, while the facets 
$\Delta^{p-1}\times v_{k-2}$ and $\Delta^{p-1}\times v_{k-1}$ lie in~the~same facet of~$C_n$
different from the~above facets. Hence,  by Proposition~\ref{prop:Ham2sh}
the~branch set over this facet is homeomorphic to $S^{p-1}\times S^1$. 
\end{proof}
\begin{example}\label{ex:l2l2}
On Fig.~\ref{5x5} we show a set of five disjoint quadrangles  defining 
a~Hamiltonian $\mathcal{C}(4,5)$-subcomplex in~$P_5\times P_5$. In particular,
the geometry $\mathbb L^2\times\mathbb L^2$ arises in Construction~\ref{con:hypgeom}.
It follows from Lemma~\ref{lem:RZF} that~the branch set of~the~covering $N(P,\widetilde{\Lambda}_C)\to S^4$
is~a~disjoint union of~five tori $\mathbb T^2=S^1\times S^1$.
Comparing Fig.~\ref{5x5}
and \cite[Fig. 13 and 17]{M24} it can be proved that these five tori coincide with the Ivan\v{s}i\'{c}  link \cite{I04,I12},
and the manifold $N(P,\widetilde{\Lambda}_C)$ coincides with its double branched covering space $W$ from 
\cite[Section 2.2]{M24}, where on this manifolds the geometric structure $\mathbb L^2\times \mathbb L^2$ is also defined.
The complement to the Ivan\v{s}i\'{c} link in $S^4$ has a complete hyperbolic structure obtained
by glueing $32$ copies of the right-angled hyperbolic  $4$-polytope of finite volume 
$P^4\subset \mathbb L^4$. It has $5$ ideal and $5$ finite vertices \cite[Section 1.2]{M24}. The dual polytope
coincides with the hypersimplex $\Delta(5,2)$. Each facet of  $P$ is a triangular bipyramid and has three ideal and
two finite vertices. The Ivan\v{s}i\'{c} link  is a $4$-dimensional generalisation of the Borromean rings: each
two tori form a trivial link, but each three do not \cite[Theorem 1]{M24}. The author is grateful to Vladimir
Gorchakov for pointing out a connection between this example and the work \cite{M24}.
\begin{figure}[h]
\begin{center}
\includegraphics[width=0.8\textwidth]{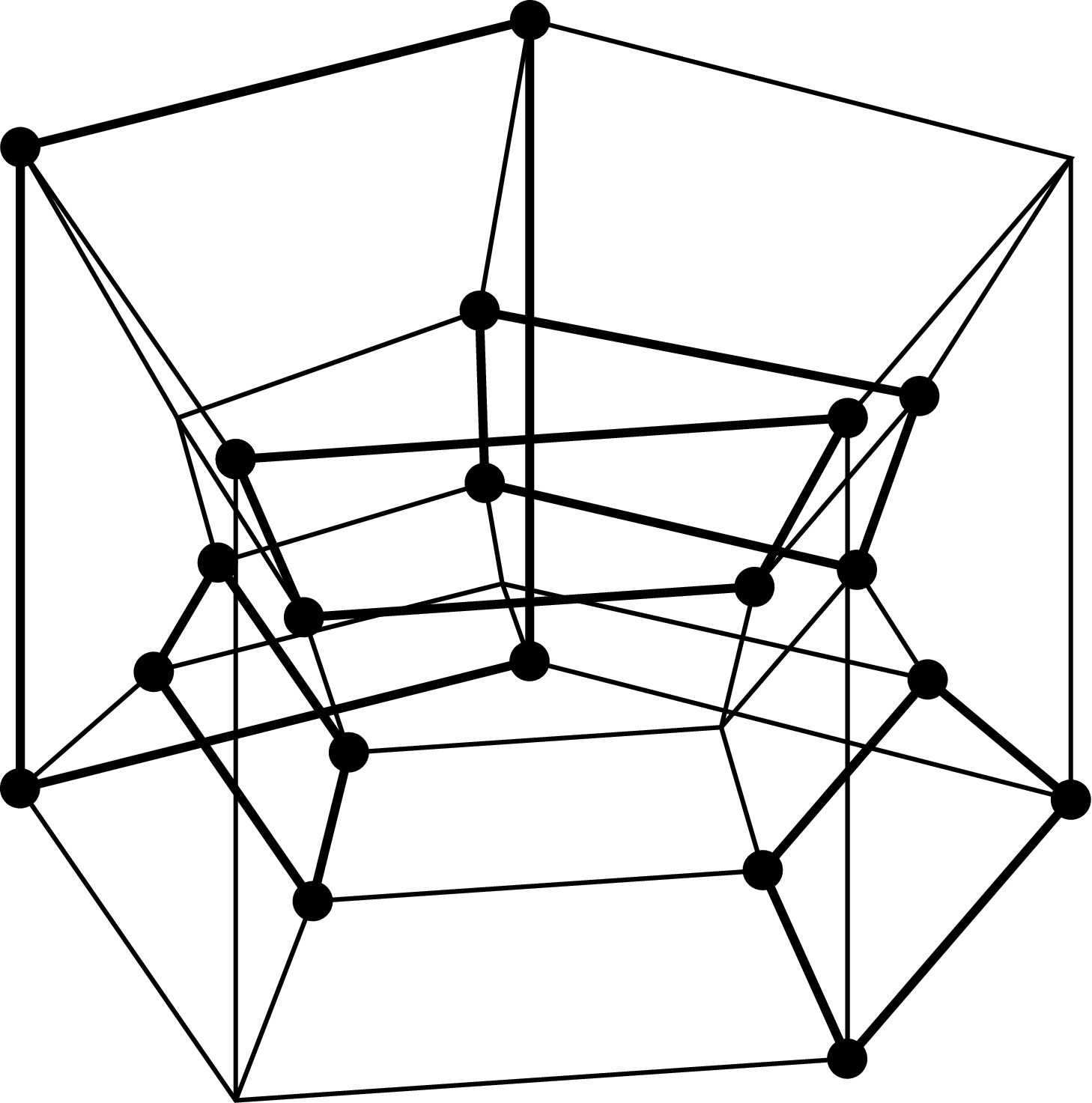}
\end{center}
\caption{A set of disjoint quadrangles defining a~Hamiltonian $\mathcal{C}(4,5)$-subcomplex 
in~$P_5\times P_5$}\label{5x5}
\end{figure}
\end{example}
\begin{remark} 
In \cite{E25} there is a construction of a $3$-dimensional generalisation
of the Borromean rings: each Hamiltonian theta-subgraph $\Gamma$ in the $1$-skeleton of 
a~compact right-angled hyperbolic $3$-polytope $P$ 
such that each edge of $P$ not lying in $\Gamma$ connects vertices on different paths of $\Gamma$ corresponds
to a link $L$ of circles in $S^3$ with the complement having a complete hyperbolic structure. Any two 
circles of $L$ form a trivial link, but $L$ contains the Borromean rings and therefore is nontrivial.
\end{remark}

In dimension $n=4$ all the products of Euclidean, spherical and hyperbolic geometries (including
these geometries) are $10$ geometries: 
$\mathbb S^4$, $\mathbb R^4$, $\mathbb L^4$, $\mathbb S^3\times \mathbb R$,
$\mathbb S^2\times\mathbb S^2$, $\mathbb S^2\times\mathbb R^2$, $\mathbb S^2\times \mathbb L^2$,
$\mathbb L^3\times \mathbb R$, $\mathbb L^2\times \mathbb R^2$, and $\mathbb L^2\times\mathbb L^2$.
\begin{theorem}\label{th:geomn=4}
In dimension $n=4$ among all products of Euclidean, spherical and hyperbolic geometries (including
these geometries) the geometries $\mathbb S^4$, $\mathbb S^3\times \mathbb R$, 
$\mathbb S^2\times \mathbb S^2$, $\mathbb S^2\times \mathbb R^2$, $\mathbb S^2\times \mathbb L^2$,
and $\mathbb L^2\times \mathbb L^2$ admit hyperelliptic manifolds build by Construction~\ref{con:hypgeom},
and the geometries $\mathbb R^4$, $\mathbb L^4$, $\mathbb L^3\times \mathbb R$, $\mathbb L^2\times\mathbb R^2$
do not admit.
\end{theorem}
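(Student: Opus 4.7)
The plan is to split the ten geometries into the six existence cases and four non-existence cases, and handle each by invoking the relevant result from Subsection~\ref{ssec:geom}. In each existence case I must produce a compact right-angled $4$-polytope $P$ in the corresponding geometry together with a proper Hamiltonian $\mathcal{C}(4,r)$-subcomplex $C \subset \partial P$; Construction~\ref{con:hypgeom} then yields the desired hyperelliptic manifold $N(P,\widetilde{\Lambda}_C)$. In each non-existence case, I must rule out the existence of any such pair $(P,C)$.

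For existence, I would proceed as follows. For $\mathbb S^4$, take $P=\Delta^4$ (spherical, right-angled in $\mathbb S^4$) and apply Proposition~\ref{prop:Sn}. For $\mathbb S^3\times\mathbb R$, take $P=\Delta^3\times I$ and apply Proposition~\ref{prop:Sn-1R} with $n=4$. For $\mathbb S^2\times\mathbb S^2$, take $P=\Delta^2\times\Delta^2$ (a right-angled polytope in $\mathbb S^2\times \mathbb S^2$ since the factors are spherical right-angled triangles) and apply Corollary~\ref{cor:spsq} with $p=q=2$. For both $\mathbb S^2\times\mathbb R^2$ and $\mathbb S^2\times\mathbb L^2$, take $P=\Delta^2\times P_k$ with $k=4$ (Euclidean square) or $k\ge 5$ (right-angled hyperbolic $k$-gon), and apply Corollary~\ref{cor:SxX2} with $p=2$. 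For $\mathbb L^2\times\mathbb L^2$, take $P=P_5\times P_5$ with $P_5$ a right-angled hyperbolic pentagon, and invoke Example~\ref{ex:l2l2}.

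For non-existence, the argument is essentially a list of obstructions already proved. For $\mathbb R^4$, every compact right-angled Euclidean $4$-polytope is the cube $I^4$ (a product of right-angled $1$-polytopes), so Corollary~\ref{cor:In} rules out Hamiltonian $\mathcal{C}(4,r)$-subcomplexes. For $\mathbb L^4$, the obstruction is Proposition~\ref{prop:Ln}: any compact right-angled hyperbolic $4$-polytope has all $2$-faces of $\ge 5$ edges, contradicting the conclusion that a Hamiltonian $\mathcal{C}(4,r)$-subcomplex forces a triangular or quadrangular $2$-face. For $\mathbb L^3\times\mathbb R$ and $\mathbb L^2\times\mathbb R^2$, every compact right-angled polytope has the form $Q\times I$ (with $Q$ a compact right-angled polytope in $\mathbb L^3$, respectively a product of a right-angled hyperbolic polygon with an interval), so Proposition~\ref{prop:IX} applies and forces a triangular defining face, which is impossible since such $Q\times I$ has only $2$-faces of shape $F\times I$ or $F$ with $F$ a face of $Q$, none of which can be triangles for the relevant $Q$.

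The main obstacle is making the last step rigorous: one must verify that in each of the four non-existence geometries, every admissible $P$ is of the form covered by the relevant obstruction (a cube for $\mathbb R^4$, a product with $I$ for $\mathbb L^3\times\mathbb R$ and $\mathbb L^2\times\mathbb R^2$), and that the defining-face structure actually forbids triangles. This requires classifying compact right-angled polytopes in each factor geometry (trivially $I$ for $\mathbb R^1$, right-angled hyperbolic $k$-gons for $\mathbb L^2$, the unique compact right-angled polytope in $\mathbb L^3$ being the dodecahedron, and invoking Nikulin's theorem \cite{N82} for $\mathbb L^n$, $n\ge 5$) and checking the shape of all $2$-faces. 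Once this bookkeeping is done, everything else reduces to quoting the results of Subsection~\ref{ssec:geom}.
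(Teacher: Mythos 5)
Your proposal is correct and takes essentially the same route as the paper: the existence half is exactly the paper's proof (Propositions~\ref{prop:Sn} and \ref{prop:Sn-1R}, Corollaries~\ref{cor:spsq} and \ref{cor:SxX2}, Example~\ref{ex:l2l2}), and the non-existence half quotes the same obstructions (Corollary~\ref{cor:In}, Propositions~\ref{prop:Ln} and \ref{prop:IX}), with you merely making explicit the bookkeeping about which compact right-angled polytopes occur in each geometry. The only slip is your claim that the dodecahedron is the unique compact right-angled polytope in $\mathbb L^3$ --- there are infinitely many (e.g.\ the L\"obell polytopes) --- but this is harmless, since your argument for $\mathbb L^3\times\mathbb R$ only uses that every $2$-face of such a polytope has at least five edges, which holds for all of them.
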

\begin{proof}
The cases of $\mathbb S^4$, $\mathbb S^3\times \mathbb R$,
$\mathbb S^2\times \mathbb S^2$, $\mathbb S^2\times \mathbb R^2$, $\mathbb S^2\times \mathbb L^2$, $\mathbb L^2\times \mathbb L^2$
are covered by Propositions~\ref{prop:Sn}, \ref{prop:Sn-1R}, Corollaries \ref{cor:spsq}, \ref{cor:SxX2}, and Example \ref{ex:l2l2}.

The cases of $\mathbb R^4$, $\mathbb L^4$, $\mathbb L^3\times \mathbb R$, $\mathbb L^2\times\mathbb R^2$ are covered
by Corollary~\ref{cor:In}, and Propositions~\ref{prop:Ln}, \ref{prop:IX}.
\end{proof}

\section{Acknowledgements}
The study has been funded within the framework of the HSE University Basic Research Program.

The author is grateful to V.\,M.~Buchstaber for his permanent attention, to A.\,D.~Mednykh 
for~motivating discussions, in particular for the~suggestion 
to consider geometries arising on~hyperelliptic $4$-manifolds $N(P,\Lambda)$ and the~corresponding branch sets,
and to D.\,V.~Gugnin, A.\,Yu.~Vesnin, and V.\,Yu.~Gorchakov for~useful discussions.

\end{document}